\documentclass{article}
\usepackage{amsmath}
\usepackage{amssymb}
\usepackage{amsthm}
\usepackage{algorithm}
\usepackage{algorithmic}
\usepackage{array}
\usepackage{ascmac}
\usepackage{bm}
\usepackage{comment}
\usepackage{multirow}
\usepackage{graphicx}
\usepackage{color}
\usepackage{lscape}
\usepackage{hyperref}
\hypersetup{
    colorlinks=true, citecolor=black, linkcolor=black, anchorcolor=blue,urlcolor=blue
    }
\usepackage[top=20truemm,bottom=20truemm,left=16truemm,right=16truemm]{geometry}
\usepackage{subcaption}
\captionsetup{compatibility=false}
\usepackage{here}
\usepackage{booktabs}
\def\@themcountersep{}
\setlength{\evensidemargin}{0 in}
\setlength{\oddsidemargin}{0 in}
\setlength{\textwidth}{6.25in}
\setlength{\textheight}{8.82in}
\setlength{\topmargin}{0 in}
\setlength{\headheight}{0 in}
\setlength{\headsep}{0 in}
\setlength{\itemsep}{-\parsep}

\parskip10pt
\parindent0pt
\newtheorem{theorem}{Theorem}[section]
\newtheorem{assumption}[theorem]{Assumption}
\newtheorem{corollary}[theorem]{Corollary}
\newtheorem{definition}[theorem]{Definition}
\newtheorem{example}[theorem]{Example}
\newtheorem{lemma}[theorem]{Lemma}
\newtheorem{proposition}[theorem]{Proposition}

\renewcommand{\algorithmicrequire}{\textbf{Input:}}
\renewcommand{\algorithmicensure}{\textbf{Output:}}
\newcommand{\argmax}{\mathop{\rm arg~max}\limits}
\newcommand{\argmin}{\mathop{\rm arg~min}\limits}

\usepackage{threeparttable}
\usepackage{url}

\begin{document}

\title{Post-Processing with Projection and Rescaling Algorithms for Semidefinite Programming}

\author{Shin-ichi Kanoh\thanks{
Graduate School of Systems and Information Engineering, University of Tsukuba, Tsukuba, Ibaraki 305-8573, and Japan Society for the Promotion of Science, 5-3-1 Kojimachi, Chiyoda-ku, Tokyo 102-0083, Japan. email: s2130104@s.tsukuba.ac.jp
}
and 
Akiko Yoshise\thanks{Corresponding author. Faculty of Engineering, Information and Systems, University of Tsukuba, Tsukuba, Ibaraki 305-8573, Japan. email: yoshise@sk.tsukuba.ac.jp
}     
}


\maketitle     
\begin{abstract}
We propose the algorithm that solves the symmetric cone programs (SCPs) by iteratively calling the projection and rescaling methods the algorithms for solving exceptional cases of SCP. 
Although our algorithm can solve SCPs by itself, we propose it intending to use it as a post-processing step for interior point methods since it can solve the problems more efficiently by using an approximate optimal (interior feasible) solution. 
We also conduct numerical experiments to see the numerical performance of the proposed algorithm when used as a post-processing step of the solvers implementing interior point methods, using several instances where the symmetric cone is given by a direct product of positive semidefinite cones. 
Numerical results show that our algorithm can obtain approximate optimal solutions more accurately than the solvers. 
When at least one of the primal and dual problems did not have an interior feasible solution, the performance of our algorithm was slightly reduced in terms of optimality. 
However, our algorithm stably returned more accurate solutions than the solvers when the primal and dual problems had interior feasible solutions. 
\end{abstract}


\section{Introduction}
\label{sec1}
Let $\mathbb{E}$ be a real-valued vector space with an inner product $\langle \cdot, \cdot \rangle$.
Consider the following symmetric cone programs (SCPs): 
\begin{equation}
\notag
\begin{array}{ccccll}
{\rm (P)} & \displaystyle \inf_x &\langle c,x  \rangle & {\rm s.t.} & \mathcal{A}x = b, &x \in \mathcal{K}, \\
{\rm (D)} & \displaystyle \sup_{(z,y)} &b^\top y & {\rm s.t.} & z = c - \mathcal{A}^*y, &(z,y) \in \mathcal{K}^* \times \mathbb{R}^m,
\end{array}
\end{equation}
where $\mathcal{K} \subseteq \mathbb{E}$ is a symmetric cone, $\mathcal{K}^*$ is a dual cone of $\mathcal{K}$, i.e., $\mathcal{K}^* = \{s \in \mathbb{E} : \langle s,x \rangle \geq 0, \forall x \in \mathcal{K}\}$, $\mathcal{A} : \mathbb{E} \rightarrow \mathbb{R}^m$ is a linear operator, $b \in \mathbb{R}^m$, $c \in \mathbb{E}$ and $\mathcal{A}^* : \mathbb{R}^m \rightarrow \mathbb{E}$ is the adjoint operator of $\mathcal{A}$, i.e., $\langle \mathcal{A}x , y \rangle = \left \langle x , \mathcal{A}^* y \right\rangle$ for all $x \in \mathbb{E}$ and $y \in \mathbb{R}^m$.
Note that $\mathcal{K}^*=\mathcal{K}$ holds since $\mathcal{K}$ is a symmetric cone.
If we choose a positive semidefinite cone as a conic constraint, SCP results in semidefinite programming (SDP).
In this study, we are mainly interested in the case where $\mathcal{K}$ is a Cartesian product of $p$ simple positive semidefinite cones $\mathbb{S}^{r_1}_+, \dots, \mathbb{S}^{r_p}_+$, i.e., $\mathcal{K} = \mathbb{S}^{r_1}_+ \times \dots \times \mathbb{S}^{r_p}_+$.

Interior point methods are among the most popular and practical algorithms to solve SCPs.
In addition to its practical performance, interior point methods have the theoretical strength of a polynomial-time algorithm and have been implemented in many solvers.
Recently, projection and rescaling methods were proposed as new polynomial-time algorithms for the special case of SCP by Louren\c{c}o et al. \cite{Lourenco2019}, and Kanoh and Yoshise \cite{Kanoh2023}.
Their studies were motivated by a work by Chubanov \cite{Chubanov2015}.
A similar type of algorithm, although not a polynomial-time algorithm, was proposed by Pe\~{n}a and Soheili \cite{Pena2017}.
The numerical result of \cite{Kanoh2023} showed that projection and rescaling algorithms solved ill-conditioned instances, i.e., instances with feasible solutions only near the boundaries of the cone, more stably than the commercial solver Mosek.
It is well known that in iterations of interior point methods, the closer the current solution moves to the optimal solution, the more difficult it becomes to compute the search direction accurately.
In other words, interior point methods cannot work stably near the boundaries of the cone. Therefore, the numerical results of \cite{Kanoh2023} motivated us to use projection and rescaling algorithms to solve general SCPs more accurately.

In this study, we propose the algorithms to find approximate optimal solutions to (P) and (D) using projection and rescaling algorithms as an inner procedure.
Although the proposed methods can find approximate optimal solutions to (P) and (D) by themselves, they can work more efficiently by using an approximate optimal (interior feasible) solution.
To take advantage of this feature, the proposed algorithm is designed to be used as a post-processing step for interior point methods.
We also conduct numerical experiments to compare the performance of our algorithm with the solvers. 

The main contribution of this study is to provide comprehensive numerical results showing whether the projection and rescaling algorithms can be used to obtain accurate optimal solutions, which will lead to the development of practical aspects of the projection and rescaling methods. 
We now mention two studies \cite{SDPAGMP, Henrion2016} focusing on accurately solving SDP. 
SDPA-GMP \cite{SDPAGMP} is a very accurate SDP solver that executes the primal-dual interior point method using multiple-precision arithmetic libraries.
Using multiple-precision arithmetic enhances the accuracy of the solution but, at the same time, increases the computational time.
In \cite{Henrion2016}, Henrion, Naldi and Din proposed an algorithm that solves Linear matrix inequality (LMI) problems in exact arithmetic, which means that their algorithm can be used to check the feasibility of dual SDPs.
Although their algorithm can accurately solve small LMI problems, it is unsuitable for moderate or large problems due to exact computations. 
This study differs from these previous studies in that it focuses on the projection and rescaling methods.
Furthermore, this study differs from these studies in that it proposes an algorithm that can be used in a post-processing step. 
Our algorithm is available on the website: \url{https://github.com/Shinichi-K-4649/Post-processing-using-projection-and-rescaling-methods.git}

We developed our algorithm for SDP.
The theoretical foundations of our algorithm discussed in Section 3 hold for SCP. 

\subsection{Motivation}
\label{sec1-1}
The primary motivation for focusing on solving SDP accurately is to address the practical barriers that prevent the implementation of the Facial Reduction Algorithm (FRA) \cite{Borwein1981-1, Borwein1981-2, Pataki2013, Waki2013}. 
The FRA is a regularization technique to address SDP that does not satisfy Slater's condition and was developed originally by Browein and Wolkowicz \cite{Borwein1981-1, Borwein1981-2}. 

We say that (P) satisfies Slater's condition or (P) is strongly feasible if there exists a vector $x \in {\rm int} \mathcal{K}$ satisfying $\mathcal{A}x = b$, where ${\rm int} \mathcal{K}$ is an interior of $\mathcal{K}$. 
Similarly, Slater's condition holds for (D) if there exists $z \in {\rm int} \mathcal{K}^*$ satisfying $c - \mathcal{A}^* y = z$. 
If (P) and/or (D) do not have the interior feasible solutions, optimal solutions might not exist, and a duality gap might exist \cite{Todd2001, Luo1997, Tunccel2012}, which prevents interior point methods from working stably. 

The FRA transforms SDP with no interior feasible solution into SDP satisfying Slater's condition or detects its infeasibility, without changing the optimal value by iteratively projecting the cone into a specific subspace. 
Let us briefly see how the FRA works for (P) here. 
If (P) has no interior feasible solution, then there exists a vector $f \in \mathbb{R}^m$ such that $b^\top f \geq 0$ and $-\mathcal{A}^* f \in \mathcal{K}^* \setminus \{0\}$, see Proposition \ref{pro: not sf iff} in Section \ref{sec2-1}.
If $b^\top f = 0$ holds, $f$ is called a reducing direction for (P).
Otherwise, $f$ is an infeasibility certificate for (P).
The FRA first finds such a vector.
If a reducing direction is obtained, then the FRA generates a new problem with a feasible region equivalent to the feasible region of (P) by replacing the conic constraint $\mathcal{K}$ with $\mathcal{K} \cap \{-\mathcal{A}^* f \}^\bot$, where $\{-\mathcal{A}^* f \}^\bot$ is the space of elements orthogonal to $-\mathcal{A}^* f$.
If a reducing direction exists for a new problem, the FRA tries to find it and generates a new reduced problem again.
The FRA repeats the above process until an infeasibility certificate for a reduced problem or a strongly feasible problem whose feasible region is equivalent to the feasible region of (P). 
Thus, one iteration of the FRA is equivalent to solving a certain SDP to find a reducing direction or an infeasibility certificate. 
For example, implementation at the initial iteration of the FRA for (P) corresponds to solving the following auxiliary system. 
\begin{equation}
\begin{array}{cccclll}
\mbox{(AUX-P)} & {\rm find} \ \ f \in \mathbb{R}^m &{\rm s.t.}  & b^\top f \geq 0, &-\mathcal{A}^*f \in \mathcal{K}^* \setminus \{ 0\}.
\end{array}
\notag
\end{equation}
Therefore, what is required to implement the FRA is to solve the auxiliary systems exactly.
However, this requirement is inaccessible for the current solvers because the auxiliary systems rarely satisfy Slater's condition.
Despite such a barrier to implementing the FRA, several studies have conducted the facial reduction scheme for some problems that arise in applications \cite{Zhao1998, Wolkowicz1999, Krislock2010, Waki2010}.  
These studies directly obtain reducing directions using the structure of the problem without solving the auxiliary systems numerically. 

While these approaches, recently, an implementation of the FRA for any SDP has been studied \cite{Permenter2018, Zhu2019, Permenter2017, Lourenco2021}.
Permenter and Parrilo \cite{Permenter2018}, and Zhu, Pataki and Tran-Dinh \cite{Zhu2019} proposed a practical facial reduction scheme. 
The common idea of these two studies is that instead of solving an auxiliary system, we solve another problem whose feasible region is contained in the feasible region of the original auxiliary system. 
Although their algorithms might not recover the interior feasible solutions or detect infeasibility, we can easily implement them. 
The algorithms of \cite{Permenter2018} and \cite{Zhu2019} can be implemented using only LP solvers and eigenvalue decomposition, respectively. 

Permenter, Friberg and Andersen \cite{Permenter2017}, and Louren\c{c}o, Muramatsu and Tsuchiya \cite{Lourenco2021} proposed an algorithm solving for arbitrary SDP based on a facial reduction scheme. 
Permenter, Friberg and Andersen \cite{Permenter2017} showed that reducing directions for (P) and (D) can be obtained from relative interior feasible solutions to a self-dual homogenous model of (P) and (D). 
In addition, based on this result, they proposed a theoretical algorithm for solving arbitrary SDP.
We remark that their algorithm finds reducing directions only when needed, unlike the conventional FRA. 
Even if (P) and/or (D) do not satisfy Slater's condition, their algorithm directly obtains the optimal solution as long as a complementary solution for (P) and (D) exists. 
However, their algorithm requires an oracle that returns relative interior feasible solutions to the self-dual homogenous model of (P) and (D). 
Although such solutions are theoretically obtained using an interior point method tracking a central path, the numerical experiments of \cite{Permenter2017} showed practical barriers.
On the other hand, the algorithm of \cite{Lourenco2021} requires only an oracle that returns the optimal solution to (P) and (D) satisfying Slater's condition, which is a milder requirement than \cite{Permenter2017}.
In \cite{Lourenco2021}, the authors showed that solving the auxiliary systems can be replaced by solving a certain SDP with primal and dual interior feasible points.
This study shows that the first step toward fully implementing the FRA is to solve SDP with primal and dual interior feasible points as accurately as possible. 

One might think this goal is already achievable with the current SDP solvers.
However, the accuracy of the solution that satisfies the termination tolerances specified by the solvers will not be sufficient for a stable execution of the FRA.
Furthermore, setting the value of the termination tolerances too small can lead to numerical instability of interior point methods.
Thus, it is worth studying how to solve SDP more accurately than ever before. 

To see practical barriers to implementing the FRA, let us consider a simple SDP:
\begin{example}
\begin{equation}
\begin{array}{llc}
{\rm (Ex1.1)} \ \ 
\displaystyle \inf_{x} \ \left \langle
\begin{pmatrix}
1 & 0 & 0 \\
0 & 1 & 0 \\
0 & 0 & 1 
\end{pmatrix}, x  \right \rangle
&{\rm s.t}
&
\left \langle \begin{pmatrix}
1 & 1 & 0 \\
1 & 0 & 0 \\
0 & 0 & 0 
\end{pmatrix}, x \right \rangle  = 1,	\\
&
&
\left \langle \begin{pmatrix}
0 & 0 & 0 \\
0 & 1 & 0 \\
0 & 0 & 0 
\end{pmatrix}, x \right \rangle  = 0,	\\
&
&
\left \langle \begin{pmatrix}
0 & 0 & 1 \\
0 & 0 & 0 \\
1 & 0 & 2 
\end{pmatrix}, x \right \rangle  = 0, \\
&
&
x \in \mathbb{S}^3_+.
\end{array}
\notag
\end{equation}
\end{example}
The optimal value of (Ex1.1) is $1$.
This problem is feasible but not strongly feasible.
Indeed, any feasible solution $x \in \mathbb{S}^3_+$ to (Ex1.1) can be represented as 
\begin{equation}
\notag
x = \begin{pmatrix}
1 & 0 & -t \\
0 & 0 & 0 \\
-t & 0 & t 
\end{pmatrix},
\end{equation}
where $t$ is a real value such that $0 \leq t \leq 1$. 
Thus, using a reducing direction, we can reformulate (Ex1.1) over a lower dimensional positive semidefinite cone.
Any reducing direction $f \in \mathbb{R}^3$ for (Ex1.1) can be represented as $(0, -k, 0)^\top$, where $k>0$.
Indeed, the following holds for such a vector $f$.
\begin{equation}
\notag
b^\top f = 1 \times 0 + 0 \times (-k) + 0 \times 0 = 0, \ {\rm and}
-\mathcal{A}^* f =
\begin{pmatrix}
0 & 0 & 0 \\
0 & k & 0 \\
0 & 0 & 0 
\end{pmatrix}
\in \mathbb{S}^3_+.
\end{equation}
Since $-\mathcal{A}^*f \in \mathbb{S}^3_+$, the region $\mathbb{S}^3_+ \cap \{-\mathcal{A}^* f \}^\bot$ can be represented as $\mathbb{S}^3_+ \cap \{-\mathcal{A}^* f \}^\bot = U \mathbb{S}^{3-r}_+ U^\top$, where $r$ is a rank of $-\mathcal{A}^*f$ and $U \in \mathbb{R}^{3 \times 3-r}$ is a matrix whose columns are the eigenvectors corresponding to the zero eigenvalues of $-\mathcal{A}^*f$.
Suppose that $U$ is given by
\begin{equation}
U = 
\begin{pmatrix}
1 &0 \\
0 &0 \\
0 &1
\end{pmatrix}
.
\notag
\end{equation}
Then, we can generate a reduced problem with interior feasible points as follows:
\begin{equation}
\begin{array}{llc}
\displaystyle \inf_{\bar{x}} \ \left \langle
\begin{pmatrix}
1 & 0 \\
0 & 1 
\end{pmatrix}, \bar{x}  \right \rangle
&{\rm s.t}
&
\left \langle
\begin{pmatrix}
1 & 0 \\
0 & 0 
\end{pmatrix}, \bar{x} \right \rangle  = 1,	\\
&
&
\left \langle \begin{pmatrix}
0 & 0 \\
0 & 0 
\end{pmatrix} , \bar{x} \right \rangle  = 0,	\\
&
&
\left \langle  \begin{pmatrix}
0 & 1 \\
1 & 2 
\end{pmatrix} , \bar{x} \right \rangle  = 0, \\
&
&
\bar{x} \in \mathbb{S}^2_+.
\end{array}
\notag
\end{equation}

Let us apply the FRA to this problem using the commercial solver Mosek.
First, we computed the reducing direction with Mosek by using the formulation proposed in \cite{Lourenco2021} (see Section \ref{sec: check status pre}). 
Then, the following vectors were obtained.  
\begin{equation}
\notag
f_{\rm Mosek} =
\begin{pmatrix}
\mbox{3.46e-09} \\
\mbox{-4.00e-00} \\
\mbox{-1.50e-08}
\end{pmatrix}
\ {\rm and} \
-\mathcal{A}^* f_{\rm Mosek} =
\begin{pmatrix}
\mbox{-3.46e-09} & \mbox{-3.46e-09} & \mbox{1.50e-08} \\
\mbox{-3.46e-09} & \mbox{4.00e-00} & 0 \\
\mbox{1.50e-08} & 0 & \mbox{3.00e-08}
\end{pmatrix}.
\end{equation}
The eigenvalue decomposition of $-\mathcal{A}^* f_{\rm Mosek}$ was computed as follows:
\begin{equation}
\notag
-\mathcal{A}^* f_{\rm Mosek} =
P_{\rm Mosek}
\begin{pmatrix}
\mbox{-9.20e-09}	&0	&0 \\
0	&\mbox{3.57e-08}	&0 \\
0	&0	&\mbox{4.00e-00}
\end{pmatrix}
P_{\rm Mosek}^\top,
\end{equation}
where $P_{\rm Mosek} \in \mathbb{R}^{3 \times 3}$ is a matrix whose columns are the eigenvectors corresponding to the eigenvalues of $-\mathcal{A}^*f_{\rm Mosek}$.
Next, we generated a reduced problem.
To identify $\mathbb{S}^3_+ \cap \{-\mathcal{A}^* f_{\rm Mosek} \}^\bot$, for each eigenvalue of $-\mathcal{A}^* f_{\rm Mosek}$, we must correctly determine whether it is zero or not. 
In this example, we estimated eigenvalues of $-\mathcal{A}^* f_{\rm Mosek}$ are zero if they are less than a threshold $T_\lambda$.
Table \ref{Table: Ex-Status} shows that the computed optimal value of the reduced problem is very sensitive to the value of $T_\lambda$.
If the threshold $T_\lambda$ is set to 1e-7, the rank of $-\mathcal{A}^* f_{\rm Mosek}$ is 1, yielding a reduced problem with an optimal value very close to 1. 
On the other hand, if the threshold $T_\lambda$ is set to 1e-8, the rank of $-\mathcal{A}^* f_{\rm Mosek}$ is 2. 
Then, solving this reduced problem with Mosek, we obtained the infeasibility certificate.

\begin{table}
\caption{Comparison of the reduced problems}
\label{Table: Ex-Status}
\begin{center}
\begin{tabular}{ccccc} \toprule
Thresholds	&$\mathbb{S}^3_+ \cap \{-\mathcal{A}^* f_{\rm Mosek} \}^\bot$	& Computed optimal value of the reduced problem	\\	\midrule
$T_\lambda =$ 1e-7      &$U \mathbb{S}^2_+ U^\top$   &1	 \\
$T_\lambda =$ 1e-8      &$U \mathbb{S}^1_+ U^\top$   &infeasible    \\
\bottomrule
\end{tabular}
\end{center}
\end{table}

Let us now compute the reducing direction using our algorithm proposed in Section \ref{sec: numerical results} and the approximate optimal solution returned by Mosek. 
The computed reducing direction was as follows. 
\begin{equation}
\notag
f_{\rm Pro} =
\begin{pmatrix}
\mbox{7.34e-14} \\
\mbox{-4.00e-00} \\
\mbox{-4.90e-14}
\end{pmatrix}
\ {\rm and} \ 
-\mathcal{A}^* f_{\rm Pro} =
\begin{pmatrix}
\mbox{-7.34e-14} & \mbox{-7.34e-14} & \mbox{4.90e-14} \\
\mbox{-7.34e-14} & \mbox{4.00e-00} & 0 \\
\mbox{4.90e-14} & 0 & \mbox{9.80e-14}
\end{pmatrix}.
\end{equation}
The eigenvalue decomposition of $-\mathcal{A}^* f_{\rm Pro}$ was computed as follows:
\begin{equation}
\notag
-\mathcal{A}^* f_{\rm Pro} =
P_{\rm Pro}
\begin{pmatrix}
\mbox{-8.67e-14}	&0	&0 \\
0	&\mbox{1.11e-13}	&0 \\
0	&0	&\mbox{4.00e-00}
\end{pmatrix}
P_{\rm Pro}^\top,
\end{equation}
where $P_{\rm Pro} \in \mathbb{R}^{3 \times 3}$ is a matrix whose columns are the eigenvectors corresponding to the eigenvalues of $-\mathcal{A}^*f_{\rm Pro}$.
The proposed method found a more accurate solution to the formulation proposed in \cite{Lourenco2021}, resulting in a higher accuracy of the computed reducing direction.
The behavior of the FRA is more stable if the reduced problem is generated using the vector $f_{\rm Pro}$ rather than the vector $f_{\rm Mosek}$.
From this example, we can see that a more accurate method of solving SDP is very effective in obtaining accurate reducing directions and that accurate reducing directions are essential for a stable execution of the FRA. 
\subsection{Structure of this paper}
\label{sec1-2}
The remainder of this paper is organized as follows. 
Section \ref{sec2} is devoted to some preliminaries and notations. 
We review the feasibility statuses of SCP and the projection and rescaling algorithm proposed by Kanoh and Yoshise \cite{Kanoh2023}.
Section \ref{sec3} presents the theoretical foundations of our algorithm and explains our algorithm. 
We also describe a practical version of our algorithm that employs some implementation strategies. 
Section \ref{sec: numerical results} presents numerical results showing that our algorithm stably obtains optimal solutions with higher accuracy than the solvers. 
The conclusions are summarized in Section \ref{sec: pp conclusion}.

\section{Preliminaries and Notations}
\label{sec2}
This section describes the preliminaries and notations used throughout this paper. 
Section \ref{sec2-1} provides an overview of SCP, focusing on the feasibility statuses. 
Section \ref{sec2-2} briefly introduces Euclidean Jordan algebras. 
Although our algorithm uploaded on the website is an algorithm for solving SDP, its theoretical foundations can be extended to SCP without problems.
The algorithm for SCP can be concisely described using Euclidean Jordan algebras.
Thus, we summarize the minimum knowledge of Euclidean Jordan algebras required to understand this study. 
Section \ref{sec2-3} briefly describes the projection and rescaling algorithm proposed by \cite{Kanoh2023}, which is used in our algorithm. 
Our algorithm employed the algorithm of \cite{Kanoh2023} because the numerical results in \cite{Kanoh2023} showed that their algorithm was superior to the other methods in terms of computation time.

\subsection{Feasibility classes of SCP}
\label{sec2-1}
Let $\theta_p \in \mathbb{R} \cup \{ \pm \infty \}$ and $\theta_d \in \mathbb{R} \cup \{ \pm \infty \}$ be the primal and dual optimal values, respectively. 
If (P) is infeasible, $\theta_p$ takes $+\infty$ and if (D) is infeasible, $\theta_d$ takes $-\infty$. 
By $C^\bot \subseteq \mathbb{E}$, we denote the space of elements orthogonal to a set $C \subseteq \mathbb{E}$. 
Here, we review four different mutually exclusive feasibility classes of SCP. 
These feasibility classes are defined in the field of conic linear programming.
Therefore, in this section, we purposely leave $\mathcal{K}^*$ as it is.
First, let us look at the definition of strong feasibility.
\begin{definition}[Strongly feasible]
We say that (P) is strongly feasible (or satisfies Slater's condition) if there exists $x \in \mbox{int } \mathcal{K}$ satisfying $\mathcal{A}x=b$. 
Similarly, we say that (D) is strongly feasible (or satisfies Slater's condition) if there exists $y \in \mathbb{R}^m$ satisfying $c - \mathcal{A}^*y \in \mbox{int } \mathcal{K}^*$. 
\end{definition}

If primal and dual problems are strongly feasible, $\theta_p=\theta_d$ and the existence of optimal solutions to both problems are guaranteed.
\begin{proposition}
\label{pro: strong duality}
\begin{enumerate}
\item If (P) is strongly feasible, then $\theta_p=\theta_d$. In addition, if (D) is feasible, then (D) has an optimal solution.
\item If (D) is strongly feasible, then $\theta_p=\theta_d$. In addition, if (P) is feasible, then (P) has an optimal solution.
\end{enumerate}
\end{proposition}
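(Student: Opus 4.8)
The plan is to prove part~1 in full and then obtain part~2 from it, so let me concentrate on part~1. I would first record \textbf{weak duality}: for any $x$ feasible for (P) and any $(z,y)$ feasible for (D),
\[
\langle c,x\rangle - b^\top y \;=\; \langle c,x\rangle - \langle \mathcal{A}x,y\rangle \;=\; \langle x,\, c-\mathcal{A}^*y\rangle \;=\; \langle x,z\rangle \;\ge\; 0,
\]
since $x\in\mathcal{K}$ and $z\in\mathcal{K}^*$. Hence $\theta_d\le\theta_p$ whenever both problems are feasible, and $\theta_p\ge\theta_d>-\infty$ as soon as (D) is feasible.

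Now assume (P) is strongly feasible and pick $\hat{x}\in\mathrm{int}\,\mathcal{K}$ with $\mathcal{A}\hat{x}=b$. Introduce the \emph{value function}
\[
v(d) \;:=\; \inf\{\langle c,x\rangle : \mathcal{A}x=d,\ x\in\mathcal{K}\},\qquad d\in\mathbb{R}^m,
\]
with $v(d)=+\infty$ when the constraint set is empty, so that $v(b)=\theta_p$. Since $\mathrm{epi}\,v$ is the image of the convex set $\{(x,t):x\in\mathcal{K},\ \langle c,x\rangle\le t\}$ under the linear map $(x,t)\mapsto(\mathcal{A}x,t)$, the function $v$ is convex. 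Strong feasibility is precisely what puts $b$ in the interior of $\mathrm{dom}\,v$: after reducing to the case where $\mathcal{A}$ is onto (otherwise pass to $\mathrm{range}\,\mathcal{A}$ and read ``interior'' as ``relative interior''), every $d$ close to $b$ can be written $d=\mathcal{A}(\hat{x}+h)$ with $\hat{x}+h\in\mathcal{K}$ for $\|h\|$ small, so $v(d)\le\langle c,\hat{x}+h\rangle<\infty$. If $\theta_p=-\infty$, then (D) must be infeasible (else weak duality gives $\theta_p\ge\theta_d>-\infty$), hence $\theta_d=-\infty=\theta_p$ and the clause ``(D) feasible'' is vacuous. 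Otherwise $\theta_p$ is finite --- automatic once (D) is feasible, by weak duality --- so $v$ is proper (being finite at $b$ it cannot take $-\infty$ by convexity), convex, and finite at the interior point $b$ of its domain; therefore $\partial v(b)\ne\emptyset$, and I choose $y_0$ with $v(d)\ge v(b)+\langle y_0,\,d-b\rangle$ for all $d$.

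It remains to turn $y_0$ into an optimal dual solution. For every $x\in\mathcal{K}$ we have $v(\mathcal{A}x)\le\langle c,x\rangle$, hence
\[
\langle c,x\rangle \;\ge\; v(b) + \langle y_0,\,\mathcal{A}x-b\rangle \;=\; \big(v(b)-\langle y_0,b\rangle\big) + \langle \mathcal{A}^*y_0,\,x\rangle ,
\]
i.e. $\langle c-\mathcal{A}^*y_0,\,x\rangle\ge v(b)-\langle y_0,b\rangle$ for all $x\in\mathcal{K}$. Taking $x=0$ gives $\langle y_0,b\rangle\ge v(b)$, and replacing $x$ by $\lambda x$ and letting $\lambda\to+\infty$ forces $\langle c-\mathcal{A}^*y_0,\,x\rangle\ge 0$ on $\mathcal{K}$; thus $z_0:=c-\mathcal{A}^*y_0\in\mathcal{K}^*$, so $(z_0,y_0)$ is feasible for (D). Its objective value satisfies $b^\top y_0=\langle y_0,b\rangle\ge v(b)=\theta_p$, while weak duality gives $b^\top y_0\le\theta_d\le\theta_p$; hence $b^\top y_0=\theta_p=\theta_d$ and $(z_0,y_0)$ is dual optimal. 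This settles both assertions of part~1 at once. Part~2 follows by applying part~1 to the pair in which (D), rewritten as a standard conic program, is the primal; equivalently, one may stay symmetric and repeat the argument with the perturbation placed in the cost data of (D), namely the concave function $e\mapsto\sup\{b^\top y:c-e-\mathcal{A}^*y\in\mathcal{K}^*\}$, which has $0$ in the interior of its domain by strong feasibility of (D), is finite at $0$ once (P) is feasible, and whose supergradient at $0$ produces a feasible --- hence optimal --- solution of (P) by the same cone-scaling step.

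The genuinely delicate points, and where I expect the main work to be, are two bookkeeping items: (i) the honest verification that $b\in\mathrm{int}(\mathrm{dom}\,v)$, which is the unique place Slater's condition enters and which needs the ``$\mathcal{A}$ onto'' reduction or working inside the affine hull of $\mathrm{dom}\,v$; and (ii) making the symmetry between parts~1 and~2 rigorous, since (P) and (D) do not have literally the same shape, so one must either carry out the standard reformulation of (D) as a primal conic program and check that its conic dual is (P), or redo the value-function argument directly for (D) as sketched above. The two convex-analysis facts invoked --- that a proper convex function has a nonempty subdifferential at interior points of its domain, and that a convex function finite at an interior point of its domain cannot also take the value $-\infty$ --- are standard, and I would simply cite them.
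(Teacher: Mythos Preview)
Your proof is correct and self-contained; the paper, by contrast, does not prove the proposition at all but simply cites Theorems~3.2.6 and~3.2.8 of Renegar's monograph. Your value-function/perturbation argument is the standard route to conic strong duality (and is essentially what one finds in Renegar, though his presentation is organized around barrier functions and the interior-point viewpoint). The two bookkeeping items you flag---the surjectivity reduction for $b\in\mathrm{int}(\mathrm{dom}\,v)$ and the primal--dual symmetry for part~2---are exactly the places where care is needed, and you have identified them correctly. Since the paper treats this as a black-box citation, your write-up already goes well beyond what the authors provide.
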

\begin{proof}
See Theorems 3.2.6 and 3.2.8 in~\cite{Renegar2001}.
\end{proof}

We next see the characterization of strong infeasibility.
\begin{definition}[Strongly infeasible]
We say that (P) is strongly infeasible if there exists $y \in \mathbb{R}^m$ such that $-\mathcal{A}^*y \in \mathcal{K}^*$ and $b^\top y > 0$. 
Similarly, we say that (D) is strongly infeasible if there exists $x \in \mathcal{K}$ such that $\mathcal{A}x=0$ and $\langle c,x \rangle <0$. 
\end{definition}

A vector $y \in \mathbb{R}^m$ satisfying $-\mathcal{A}^*y \in \mathcal{K}^*$ and $b^\top y > 0$ is called an improving ray of (D) because $y$ makes (D) unbounded, i.e., $\theta_d = +\infty$, if (D) is feasible.
We also call $x \in \mathcal{K}$ an improving ray of (P) if $\mathcal{A}x=0$ and $\langle c,x \rangle <0$. 
If there exists an improving ray of (P) (or (D)), then we can see that the hyperplane $\{ \mathcal{A}^*y \}^\bot$ (or $\{ x \}^\bot$) strictly separates the affine space of (P) (or (D)) and $\mathcal{K}$ (or $\mathcal{K}^*$), which implies the infeasibility of (P) (or (D)).

To define the remaining feasibility class, we introduce the following proposition.  
\begin{proposition}
\label{pro: not sf iff}
\begin{enumerate}
\item (P) is not strongly feasible if and only if there exists $y \in \mathbb{R}^m$ such that $-\mathcal{A}^* y \in \mathcal{K}^*$, $\mathcal{A}^* y  \neq 0$ and $b^\top y \geq 0$. 
\item (D) is not strongly feasible if and only if there exists a nonzero $x \in \mathcal{K}$ such that $\mathcal{A}x=0$ and $\langle c,x \rangle \leq 0$. 
\end{enumerate}
\end{proposition}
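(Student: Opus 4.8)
The plan is to derive both equivalences from the separating hyperplane theorem, using two elementary facts about the symmetric cone: $x\in\mathrm{int}\,\mathcal{K}$ if and only if $\langle x,s\rangle>0$ for every $s\in\mathcal{K}^*\setminus\{0\}$ (and the analogous statement with $\mathcal{K}$ and $\mathcal{K}^*$ interchanged), and $\mathcal{K}^{**}=\mathcal{K}$.

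I would first dispose of the two ``if'' directions, which need only the adjoint identity $\langle\mathcal{A}x,y\rangle=\langle x,\mathcal{A}^*y\rangle$. For (1), suppose some $y$ with $-\mathcal{A}^*y\in\mathcal{K}^*$, $\mathcal{A}^*y\neq0$, $b^\top y\ge0$ exists, and suppose for contradiction that $x\in\mathrm{int}\,\mathcal{K}$ satisfies $\mathcal{A}x=b$. Then $0\le b^\top y=\langle x,\mathcal{A}^*y\rangle=-\langle x,-\mathcal{A}^*y\rangle\le0$, since $x\in\mathcal{K}$ and $-\mathcal{A}^*y\in\mathcal{K}^*$; hence $\langle x,-\mathcal{A}^*y\rangle=0$, which is impossible because $x\in\mathrm{int}\,\mathcal{K}$ and $-\mathcal{A}^*y\in\mathcal{K}^*\setminus\{0\}$. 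For (2), suppose $z:=c-\mathcal{A}^*y\in\mathrm{int}\,\mathcal{K}^*$ for some $y$ and that $x\in\mathcal{K}\setminus\{0\}$ satisfies $\mathcal{A}x=0$; then $\langle c,x\rangle=\langle\mathcal{A}^*y,x\rangle+\langle z,x\rangle=\langle z,x\rangle>0$, contradicting $\langle c,x\rangle\le0$.

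For the ``only if'' directions I would translate failure of Slater's condition into a point lying outside a (relative) interior and then separate. For (1): (P) is not strongly feasible iff $b\notin\mathcal{A}(\mathrm{int}\,\mathcal{K})=\mathrm{ri}(\mathcal{A}\mathcal{K})$, so the (proper) separating hyperplane theorem yields $y\neq0$ with $\langle y,v\rangle\le b^\top y$ for all $v\in\mathcal{A}\mathcal{K}$. Taking $v=0$ gives $b^\top y\ge0$; replacing $v$ by $t\mathcal{A}x$ with $x\in\mathcal{K}$ and letting $t\to+\infty$ forces $\langle\mathcal{A}^*y,x\rangle\le0$ for all $x\in\mathcal{K}$, i.e. $-\mathcal{A}^*y\in\mathcal{K}^*$; and $\mathcal{A}^*y\neq0$ because $y\neq0$ and, under the standing assumption that the constraints of (P) are linearly independent, $\mathcal{A}^*$ is injective. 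For (2): (D) is not strongly feasible iff $c\notin\mathrm{range}(\mathcal{A}^*)+\mathrm{int}\,\mathcal{K}^*=\mathrm{int}\bigl(\mathrm{range}(\mathcal{A}^*)+\mathcal{K}^*\bigr)$, so a supporting hyperplane gives $w\neq0$ with $\langle w,v\rangle\le\langle w,c\rangle$ for all $v$ in the convex cone $\mathrm{range}(\mathcal{A}^*)+\mathcal{K}^*$. Since this cone contains the subspace $\mathrm{range}(\mathcal{A}^*)$, scaling forces $\langle w,\mathcal{A}^*y\rangle=0$ for all $y$, i.e. $\mathcal{A}w=0$; since it contains $\mathcal{K}^*$, scaling forces $\langle w,s\rangle\le0$ for all $s\in\mathcal{K}^*$, i.e. $-w\in\mathcal{K}^{**}=\mathcal{K}$; and $v=0$ gives $\langle c,-w\rangle\le0$. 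Then $x:=-w$ is nonzero, lies in $\mathcal{K}$, and satisfies $\mathcal{A}x=0$ and $\langle c,x\rangle\le0$.

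The main obstacle is bookkeeping with interiors rather than any deep idea: one must be careful that ``not strongly feasible'' is the right thing, namely $b\notin\mathrm{ri}(\mathcal{A}\mathcal{K})$ (respectively $c$ outside the interior of $\mathrm{range}(\mathcal{A}^*)+\mathcal{K}^*$), which uses that $\mathcal{K}$ is full-dimensional and that $\mathcal{A}(\mathrm{int}\,\mathcal{K})=\mathrm{ri}(\mathcal{A}\mathcal{K})$, and that the separating functional retains the nondegeneracy required by the certificate, namely $\mathcal{A}^*y\neq0$ in (1) (which is exactly where linear independence of the constraints enters) and $-w\neq0$ in (2). Everything else is the routine cone arithmetic $\langle\mathcal{A}x,y\rangle=\langle x,\mathcal{A}^*y\rangle$, $\mathcal{K}^{**}=\mathcal{K}$, and the interior/dual-cone pairing characterization.
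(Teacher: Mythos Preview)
Your argument is correct and is essentially the standard separation-based proof one finds in the references the paper cites; the paper itself gives no argument at all, simply pointing to Lemma~3.2 of Waki--Muramatsu and Theorems~3.1.3--3.1.4 of Drusvyatskiy--Wolkowicz.  So there is no ``paper's approach'' to compare against, and what you wrote is exactly the kind of proof those references contain.

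One remark worth recording: you were right to flag the linear-independence (surjectivity of $\mathcal{A}$) assumption in part~(1).  Proper separation of $\{b\}$ from $\mathcal{A}\mathcal{K}$ yields a nonzero $y$, but if $\mathcal{A}^*$ has nontrivial kernel the separating $y$ can land there, and in the degenerate case $\mathcal{A}=0$, $b\neq 0$ the conclusion $\mathcal{A}^*y\neq 0$ genuinely fails.  The cited references make this assumption, and the paper is implicitly relying on it as well, so your identification of where it enters is exactly on point.  The rest of your bookkeeping---$\mathcal{A}(\mathrm{int}\,\mathcal{K})=\mathrm{ri}(\mathcal{A}\mathcal{K})$, $\mathrm{range}(\mathcal{A}^*)+\mathrm{int}\,\mathcal{K}^*=\mathrm{int}(\mathrm{range}(\mathcal{A}^*)+\mathcal{K}^*)$ via Rockafellar's relative-interior calculus, and the scaling arguments to extract the cone and subspace conditions from the separating functional---is clean and complete.
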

\begin{proof}
See Lemma 3.2 in~\cite{Waki2013} or Theorems 3.1.3 and 3.1.4 in~\cite{Drusvyatskiy2017}.
\end{proof}

Here, we define the following feasibility class.
\begin{definition}[Weak status]
We say that (P) is in weak status if there exists $y \in \mathbb{R}^m$ such that $-\mathcal{A}^*y \in \mathcal{K}^*$, $\mathcal{A}^* y  \neq 0$ and $b^\top y = 0$. 
Similarly, we say that (D) is in weak status if there exists a nonzero $x \in \mathcal{K}$ such that $\mathcal{A}x=0$ and $\langle c,x \rangle = 0$. 
\end{definition}
We sometimes divide this class into two classes and call them weakly feasible and weakly infeasible, respectively. 
It is understood that a problem is weakly feasible if it is feasible and does not have interior feasible solutions. 
Similarly, it is understood that a problem is weakly infeasible if it is infeasible and its dual does not have an improving ray. 
See~\cite{Luo1997} for more details.

A vector $y \in \mathbb{R}^m$ (or a vector $-\mathcal{A}^*y \in \mathbb{E}$) satisfying $-\mathcal{A}^*y \in \mathcal{K}^*$, $\mathcal{A}^* y  \neq 0$ and $b^\top y = 0$ is called a reducing direction for (P).
We also call a nonzero $x \in \mathcal{K}$ a reducing direction for (D) if $\mathcal{A}x=0$ and $\langle c,x \rangle = 0$. 
As discussed in Section \ref{sec1-1}, reducing directions are used to regularize problems in the FRA.

\subsection{Euclidean Jordan algebras and Symmetric cones}
\label{sec2-2}

Let $\mathbb{E}$ be a real-valued vector space equipped with an inner product $\langle \cdot, \cdot \rangle$ and a bilinear operation $\circ$ : $\mathbb{E} \times \mathbb{E} \rightarrow \mathbb{E}$,
and $e$ be the identity element, i.e.,$x \circ e = e \circ x = x$ holds for any $ x \in \mathbb{E}$.
$(\mathbb{E}, \circ)$ is called a Euclidean Jordan algebra if it satisfies
\begin{align*}
x \circ y = y  \circ x,  \ \
x \circ (x^2 \circ y) = x^2 \circ (x \circ y), \ \
\langle x \circ y , z \rangle = \langle y , x \circ z \rangle
\end{align*}
for all $x,y,z \in \mathbb{E}$ and $x^2 := x \circ x$.
We denote $y \in \mathbb{E}$ as $x^{-1}$ if  $y$ satisfies $x \circ y = e$.
$c \in \mathbb{E}$ is called an {\em idempotent} if it satisfies $c \circ c = c$, and an idempotent $c$ is called {\em primitive} if it can not be written as a sum of two or more nonzero idempotents. 
A set of primitive idempotents $c_1, c_2, \ldots c_k$ is called a {\em Jordan frame} if $c_1, \ldots c_k$ satisfy
\begin{equation}
\notag
c_i \circ c_j = 0 \ (i \neq j), \ \
c_i \circ c_i = c_i \ (i= 1 , \dots , k), \ \
\sum_{i=1}^k c_i = e.
\end{equation}
For $x \in  \mathbb{E}$, the {\em degree} of $x$ is the smallest integer $d$ such that the set $\{e,x,x^2,\ldots,x^d\}$ is linearly independent.
The {\em rank} of $\mathbb{E}$ is the maximum integer $r$ of the degree of $x$ over all $x \in \mathbb{E}$.
The following properties are known.

\begin{proposition}[Spectral theorem (cf. Theorem III.1.2 of \cite{Faraut1994})]
\label{prop:spectral}
Let $(\mathbb{E}, \circ)$ be a Euclidean Jordan algebra having rank $r$. 
For any $x \in \mathbb{E} $, there exist real numbers $\lambda_1 , \dots , \lambda_r$ and a Jordan frame $c_1 , \dots , c_r $ for which the following holds:
\begin{equation}
x = \sum_{i=1}^r \lambda_i c_i \notag.
\end{equation}
The numbers $\lambda_1 , \dots , \lambda_r$ are uniquely determined {\em eigenvalues} of $x$ (with their multiplicities).
Furthermore, ${\rm trace}(x) := \sum_{i=1}^r \lambda_i$, $\det(x) := \prod_{i=1}^r \lambda_i$.
\end{proposition}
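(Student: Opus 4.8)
The plan is to reduce everything to the commutative, associative subalgebra generated by the single element $x$, where the statement becomes elementary one-variable polynomial algebra, and then to refine the idempotents produced there into primitive ones. First I would check that $\mathbb{R}[x]$, the linear span of $e, x, x^2, \dots$, is a commutative associative algebra: the Jordan identity $x \circ (x^2 \circ y) = x^2 \circ (x \circ y)$ forces power-associativity, so $x^i \circ x^j = x^{i+j}$ is unambiguous and $\mathbb{R}[x] \cong \mathbb{R}[t]/(m_x(t))$, where $m_x$ is the minimal polynomial of $x$ and $\deg m_x$ is exactly the degree $d$ of $x$ in the sense of the statement.

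The crux is to show that $m_x$ is a product of \emph{distinct linear} factors, and this is where the Euclidean hypothesis enters. From $\langle x \circ y, z \rangle = \langle y, x \circ z \rangle$ one gets $\langle u^2, e \rangle = \langle u, u \rangle$, so $u^2 = 0$ implies $u = 0$, and more generally $\sum_i u_i^2 = 0$ forces every $u_i = 0$; that is, $\mathbb{E}$ is formally real. Consequently $\mathbb{R}[x]$ contains no nonzero nilpotents, hence as an $\mathbb{R}$-algebra it is a finite product of copies of $\mathbb{R}$ and $\mathbb{C}$; a $\mathbb{C}$-factor with identity idempotent $c'$ would contain an element $u$ with $u^2 = -c'$, giving $u^2 + (c')^2 = 0$ with $c' \neq 0$, contradicting formal reality. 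Therefore $\mathbb{R}[x] \cong \mathbb{R}^k$, i.e., $m_x(t) = \prod_{i=1}^k (t - \lambda_i)$ with $\lambda_1, \dots, \lambda_k$ real and distinct.

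Given this, the coordinate units of $\mathbb{R}^k$ pull back to $\circ$-orthogonal idempotents $c_i = \ell_i(x)$ (with $\ell_i$ the Lagrange polynomial, $\ell_i(\lambda_j) = \delta_{ij}$) satisfying $\sum_i c_i = e$ and $x = \sum_i \lambda_i c_i$. I would then split each $c_i$ into a sum of pairwise orthogonal primitive idempotents — a nonprimitive idempotent is a sum of two nonzero orthogonal idempotents and the splitting terminates for dimension reasons — and assemble all the pieces into a single Jordan frame $c_1', \dots, c_N'$, so that $x = \sum_\nu \mu_\nu c_\nu'$ with each $\lambda_i$ repeated as many times as $c_i$ splits. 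To see $N = r$, I would use that every Jordan frame has cardinality equal to the rank: a frame of size $N$ produces, via $\sum_\nu \nu\, c_\nu'$, an element of degree $N$, so $r \geq N$; conversely a degree-$r$ element already has $k = r$ distinct eigenvalues, forcing each of its $c_i$ to be primitive, so every frame has exactly $r$ members. The distinct values $\lambda_i$ are the roots of $m_x$ and their multiplicities are the ranks (traces) of the idempotents $c_i = \ell_i(x)$, hence all determined by $x$ alone, which gives the uniqueness; and ${\rm trace}(x) := \sum_\nu \mu_\nu$, $\det(x) := \prod_\nu \mu_\nu$ are well defined for the same reason, being the relevant coefficients of the degree-$r$ characteristic polynomial $\prod_\nu (t - \mu_\nu)$, which depends polynomially on $x$.

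The hard part will be the middle step: extracting formal reality from the bare axioms and using it to kill both repeated roots and complex roots of $m_x$; and, secondarily, the well-definedness of the rank — that \emph{every} maximal system of orthogonal primitive idempotents has exactly $r$ elements — which requires linking such systems to the maximal degree of an element of $\mathbb{E}$. The remaining steps are routine single-variable polynomial manipulations and a finite induction, so I would keep them brief.
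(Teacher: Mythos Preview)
The paper does not prove this proposition; it is quoted as background with a reference to Faraut--Kor\'anyi, Theorem III.1.2. Your outline is essentially the classical argument found there: pass to the associative, power-associative subalgebra $\mathbb{R}[x]$, use formal reality (derived from the Euclidean inner product) to force the minimal polynomial to split into distinct real linear factors, extract the Lagrange idempotents, and refine to a Jordan frame. That is the right route, and your identification of formal reality as the crux is accurate.

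Two technical points deserve more care than your sketch gives them. First, when you ``assemble all the pieces into a single Jordan frame,'' you implicitly use that a sub-idempotent $a$ of $c_i$ (i.e.\ $a$ idempotent with $c_i \circ a = a$) is automatically $\circ$-orthogonal to $c_j$ for $j \neq i$. This is true, but it needs the Peirce decomposition: $L_{c_i}$ and $L_{c_j}$ are commuting self-adjoint operators with spectra in $\{0,\tfrac12,1\}$, and $a \in V_1(c_i)$ together with $L_{c_i}+L_{c_j}$ also having spectrum in $\{0,\tfrac12,1\}$ forces $a \in V_0(c_j)$. You do not invoke this machinery. Second, your argument for $N = r$ establishes only that every Jordan frame has size $\leq r$ and that \emph{some} frame (from a regular element) has size exactly $r$; the clause ``so every frame has exactly $r$ members'' does not follow, since you have not shown all frames are equinumerous. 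You rightly flag this as a hard point at the end, and indeed Faraut--Kor\'anyi resolve it not by the combinatorial route you sketch but via the generic minimal (characteristic) polynomial, which has constant degree $r$ on a Zariski-dense set and hence everywhere.
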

For any $x,y \in \mathbb{E}$, we define the inner product $\langle \cdot,\cdot \rangle$ and the norm $\| \cdot \|$ as $\langle x , y \rangle := {\rm trace}(x \circ y)$ and $\|x\| := \sqrt{\langle x,x \rangle}$, respectively.
For any $x \in \mathbb{E}$ having spectral decomposition  $x = \sum_{i=1}^r \lambda_i c_i$ as in  Proposition \ref{prop:spectral}, we also define $\|x\|_\infty := \max \{|\lambda_1| , \dots , |\lambda_r| \}$.

It is known that the set of squares $\mathcal{K} = \{ x^2 : x \in \mathbb{E} \}$ is the symmetric cone of  $\mathbb{E}$  (cf. Theorems III.2.1 and III.3.1 of \cite{Faraut1994}).
The following properties can be derived from the results in \cite{Faraut1994}, as in  Corollary 2.3 of \cite{Yoshise2007}:
\begin{proposition}
\label{prop:lambda-Jordan}
Let $x \in \mathbb{E}$ and let $\sum_{j=1}^r \lambda_j c_j$ be a decomposition of $x$ given by Propositoin \ref{prop:spectral}. Then
\begin{description}
\item[(i)]
$x \in \mathcal{K}$ if and only if  $\lambda_j \geq 0 \ (j=1,2,\ldots,r)$, 
\item[(ii)]
$x \in {\rm int} \hspace{0.75mm} \mathcal{K}$ if and only if $\lambda_j > 0 \ (j=1,2,\ldots,r)$.
\end{description}
\end{proposition}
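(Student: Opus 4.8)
The plan is to prove part (i) by a direct computation with Jordan frames together with the uniqueness of eigenvalues, and then to deduce part (ii) from (i) by an elementary perturbation argument. Throughout I use the Jordan-frame identities $c_i\circ c_j=0$ for $i\neq j$, $c_j\circ c_j=c_j$, and $\sum_j c_j=e$. For (i), the ``if'' direction is immediate: if $\lambda_j\geq 0$ for all $j$, put $w:=\sum_{j=1}^r\sqrt{\lambda_j}\,c_j\in\mathbb{E}$; the Jordan-frame identities give $w\circ w=\sum_{j=1}^r\lambda_j c_j=x$, so $x$ is a square and hence $x\in\mathcal{K}=\{z^2:z\in\mathbb{E}\}$. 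For the ``only if'' direction, write $x=y^2$ with $y\in\mathbb{E}$ and take a spectral decomposition $y=\sum_{i=1}^r\mu_i d_i$ as in Proposition~\ref{prop:spectral}. Then $y^2=\sum_{i=1}^r\mu_i^2 d_i$ is again of the form in Proposition~\ref{prop:spectral}, so by the uniqueness of the eigenvalues the numbers $\mu_1^2,\dots,\mu_r^2$ are exactly the eigenvalues of $x$ (with multiplicities); in particular every eigenvalue of $x$ is nonnegative.

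For (ii), the crucial point is that, since $e=\sum_{j=1}^r c_j$ for the Jordan frame $c_1,\dots,c_r$ of $x$, the element $e$ is simultaneously decomposed in the same frame as $x$: for any $t\in\mathbb{R}$ we have $x-te=\sum_{j=1}^r(\lambda_j-t)c_j$, whose eigenvalues are the numbers $\lambda_j-t$. I also use the bound $\|h\|_\infty\leq\|h\|$, valid for every $h\in\mathbb{E}$. Suppose first that $\lambda_j>0$ for all $j$ and set $\lambda_{\min}:=\min_j\lambda_j>0$. For any $h\in\mathbb{E}$ with $\|h\|<\lambda_{\min}$, writing the eigenvalues of $h$ as $\mu_1,\dots,\mu_r$, the element $h+\|h\|e$ has eigenvalues $\mu_i+\|h\|\geq\|h\|-\|h\|_\infty\geq 0$, so $h+\|h\|e\in\mathcal{K}$ by (i); likewise $x-\lambda_{\min}e\in\mathcal{K}$ by (i), and $(\lambda_{\min}-\|h\|)e\in\mathcal{K}$. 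Adding these three elements of $\mathcal{K}$ yields $x+h\in\mathcal{K}$, so an open ball around $x$ lies in $\mathcal{K}$, i.e., $x\in\mathrm{int}\,\mathcal{K}$. Conversely, if $x\in\mathrm{int}\,\mathcal{K}$ then $x-\varepsilon e\in\mathcal{K}$ for all sufficiently small $\varepsilon>0$, and by (i) this forces $\lambda_j-\varepsilon\geq 0$ for every $j$, so $\lambda_j\geq\varepsilon>0$.

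The only ingredient above that is not purely formal is the inequality $\|h\|_\infty\leq\|h\|$, which follows from $\|h\|^2=\mathrm{trace}(h^2)=\sum_i\mu_i^2$ once one uses the standard fact that $\mathrm{trace}(c)=1$ for every primitive idempotent $c$ (cf.~\cite{Faraut1994}); I expect this to be the only mildly delicate bookkeeping. The genuine conceptual content of the proof is the trivial but decisive observation that $e$ is the sum of the elements of every Jordan frame, which makes it possible to control the eigenvalues of a perturbation $x+h$ via the estimate $\lambda_{\min}(x+h)\geq\lambda_{\min}(x)-\|h\|$ without ever having to diagonalize $x$ and $h$ in a common frame — the latter being impossible in general.
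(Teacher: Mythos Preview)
Your argument is correct. The paper itself does not prove this proposition; it simply records it as a known consequence of results in Faraut--Kor\'anyi \cite{Faraut1994}, pointing to Corollary~2.3 of \cite{Yoshise2007}. You have instead supplied a self-contained elementary proof: part~(i) follows directly from the definition $\mathcal{K}=\{z^2:z\in\mathbb{E}\}$ together with the uniqueness of eigenvalues in the spectral theorem, and part~(ii) is obtained from~(i) by the perturbation estimate $\lambda_{\min}(x+h)\geq\lambda_{\min}(x)-\|h\|$, which you establish by decomposing $x+h$ into three pieces that lie in $\mathcal{K}$ by~(i). The only external inputs you use beyond Proposition~\ref{prop:spectral} are that $\mathcal{K}$ is closed under addition (which is part of its being a symmetric cone, as the paper already records) and that $\operatorname{trace}(c)=1$ for primitive idempotents; both are standard and appropriately cited. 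So where the paper defers entirely to the literature, you give a short direct argument that stays within the Jordan-algebraic framework already set up in Section~\ref{sec2-2}.
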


A Euclidean Jordan algebra $(\mathbb{E}, \circ)$ is called {\em simple} if it cannot be written as any Cartesian product of non-zero Euclidean Jordan algebras.
If the Euclidean Jordan algebra $(\mathbb{E}, \circ)$ associated with a symmetric cone $\mathcal{K}$ is simple, then we say that $\mathcal{K}$ is {\em simple}.
In this study, we will consider that $\mathcal{K}$ is given by a Cartesian product of $p$ simple positive semidefinite cones $\mathbb{S}^{r_\ell}_+$,
$\mathcal{K} := \mathbb{S}^{r_1}_+ \times \dots \times \mathbb{S}^{r_p}_+$, whose rank and identity element are $r_\ell$ and $e_\ell$ $(\ell=1, \ldots, p)$.
The rank $r$ and the identity element of  $\mathcal{K}$ are given by
\begin{equation}
\notag
r = \sum_{\ell=1}^p r_\ell, \ \ e = (e_1 , \dots , e_p).
\end{equation}
Furthermore, for any $x \in \mathbb{E}_1 \times \dots \times \mathbb{E}_p$, $\det(x) = \prod_{\ell=1}^p \prod_{i=1}^{r_\ell} \lambda_i$.

In what follows, $x_\ell$ stands for the $\ell$-th block element of $x \in \mathbb{E}$, i.e., $x = (x_1, \dots, x_p) \in \mathbb{E}_1 \times \dots \times \mathbb{E}_p$.
For each $\ell=1, \cdots, p$, we define $\lambda_{\min}(x_\ell) := \min\{ \lambda_1, \cdots, \lambda_{r_\ell} \}$ and $\lambda_{\max}(x_\ell) := \max\{ \lambda_1, \cdots, \lambda_{r_\ell} \}$ where $\lambda_1, \cdots,  \lambda_{r_\ell}$ are eigenvalues of $x_\ell$.
The minimum and maximum eigenvalues of $x \in \mathcal{K}$ are given by $\lambda_{\min}(x) = \min \{ \lambda_{\min}(x_1), \cdots, \lambda_{\min}(x_p) \}$ and $\lambda_{\max}(x) = \max \{ \lambda_{\max}(x_1), \cdots, \lambda_{\max}(x_p) \}$, respectively.

Next, we consider the {\em quadratic representation}  $Q_v(x)$ defined by $Q_v(x) := 2 v \circ ( v \circ x ) - v^2 \circ x$.
For the Euclidean Jordan algebra $(\mathbb{E} , \circ)$ such as $\mathbb{E} = \mathbb{E}_1 \times \dots \times \mathbb{E}_p$, the quadratic representation $Q_v(x)$ of $x \in \mathbb{E}$ is denoted by $Q_v(x) = \left(Q_{v_1} (x_1) , \dots , Q_{v_p}(x_p) \right)$.
Letting $\mathcal{I}_\ell$ be the identity operator of the Euclidean Jordan algebra $(\mathbb{E}_\ell, \circ_\ell)$ associated with the cone $\mathcal{K}_\ell$, we have $Q_{e_\ell} = \mathcal{I}_\ell$ for $\ell=1, \ldots, p$.
The following properties can also be retrieved from the results in \cite{Faraut1994} as in Proposition 3 of \cite{Lourenco2019}:

\begin{proposition}
\label{ptop:quadratic}
For any  $v \in {\rm int}\mathcal{K}$, $ Q_v (\mathcal{K}) = \mathcal{K}$.
\end{proposition}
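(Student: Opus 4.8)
The plan is to show that $Q_v$ restricts to a bijection of $\mathcal{K}$ onto itself whenever $v \in \mathrm{int}\,\mathcal{K}$, exploiting that $Q_v$ is a linear (hence continuous) operator together with two standard facts about quadratic representations from \cite{Faraut1994}: the multiplicativity of the determinant, $\det Q_v(x) = (\det v)^2 \det(x)$, and the identity $Q_v^{-1} = Q_{v^{-1}}$ for invertible $v$. Since $\mathcal{K} = \mathcal{K}_1 \times \cdots \times \mathcal{K}_p$ and $Q_v(x) = (Q_{v_1}(x_1), \dots, Q_{v_p}(x_p))$, one has $Q_v(\mathcal{K}) = \prod_{\ell} Q_{v_\ell}(\mathcal{K}_\ell)$ and $v \in \mathrm{int}\,\mathcal{K}$ iff each $v_\ell \in \mathrm{int}\,\mathcal{K}_\ell$, so it suffices to treat the case where $\mathcal{K}$ is simple, which I assume below.

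First I would record the easy bookkeeping. For $v \in \mathrm{int}\,\mathcal{K}$, the spectral theorem (Proposition \ref{prop:spectral}) gives $v = \sum_{i=1}^r \mu_i c_i$ with all $\mu_i > 0$ by Proposition \ref{prop:lambda-Jordan}; hence $v$ is invertible with $v^{-1} = \sum_i \mu_i^{-1} c_i \in \mathrm{int}\,\mathcal{K}$ and $v^2 = \sum_i \mu_i^2 c_i \in \mathrm{int}\,\mathcal{K}$, and $Q_v(e) = v^2$. Moreover, by Proposition \ref{prop:lambda-Jordan}, an element $x \in \mathbb{E}$ is invertible iff all its eigenvalues are nonzero iff $\det x \neq 0$, and such an $x$ lies in $\mathcal{K}$ iff it lies in $\mathrm{int}\,\mathcal{K}$; writing $\mathbb{E}^{\times}$ for the (open) set of invertible elements of $\mathbb{E}$, this says $\mathrm{int}\,\mathcal{K} = \mathcal{K} \cap \mathbb{E}^{\times}$, which is therefore relatively open in $\mathbb{E}^{\times}$ (being open in $\mathbb{E}$) and relatively closed in $\mathbb{E}^{\times}$ (since $\mathcal{K}$ is closed).

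The key step is to prove $Q_v(\mathrm{int}\,\mathcal{K}) \subseteq \mathrm{int}\,\mathcal{K}$. Since $\mathrm{int}\,\mathcal{K}$ is convex, hence connected, and $Q_v$ is continuous, $Q_v(\mathrm{int}\,\mathcal{K})$ is connected; by the determinant identity, $\det Q_v(x) = (\det v)^2 \det x \neq 0$ for every $x \in \mathrm{int}\,\mathcal{K}$, so $Q_v(\mathrm{int}\,\mathcal{K}) \subseteq \mathbb{E}^{\times}$. A connected subset of $\mathbb{E}^{\times}$ that meets the relatively clopen set $\mathrm{int}\,\mathcal{K}$ must be contained in it, and $Q_v(\mathrm{int}\,\mathcal{K})$ does meet $\mathrm{int}\,\mathcal{K}$ because it contains $Q_v(e) = v^2$. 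Thus $Q_v(\mathrm{int}\,\mathcal{K}) \subseteq \mathrm{int}\,\mathcal{K}$, and taking closures (using continuity of $Q_v$ and $\mathcal{K} = \overline{\mathrm{int}\,\mathcal{K}}$ for the closed convex cone $\mathcal{K}$) yields $Q_v(\mathcal{K}) \subseteq \mathcal{K}$. For the reverse inclusion I would apply this to $v^{-1} \in \mathrm{int}\,\mathcal{K}$ to get $Q_{v^{-1}}(\mathcal{K}) \subseteq \mathcal{K}$, and then use $Q_{v^{-1}} = Q_v^{-1}$ to rewrite this as $Q_v^{-1}(\mathcal{K}) \subseteq \mathcal{K}$, i.e.\ $\mathcal{K} \subseteq Q_v(\mathcal{K})$. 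Combining the two inclusions gives $Q_v(\mathcal{K}) = \mathcal{K}$.

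I expect the only genuinely non-trivial ingredient to be the determinant identity: it is precisely what rules out that the image $Q_v(\mathrm{int}\,\mathcal{K})$ "leaks through" the boundary $\partial\mathcal{K}$, which is what makes the connectedness argument run. Without it one only obtains the operator-level estimate $\langle Q_v(x), x\rangle = \|v \circ x\|^2 \ge 0$, far too weak to conclude $Q_v(\mathcal{K}) \subseteq \mathcal{K}$. Everything else is the spectral theorem and elementary point-set topology; alternatively, the statement can simply be quoted from \cite{Faraut1994} (e.g.\ Proposition III.2.2), as is done for the neighbouring facts in this subsection.
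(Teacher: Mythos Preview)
Your argument is correct and is in fact essentially the standard proof from \cite{Faraut1994} (Proposition~III.2.2): the determinant identity forces $Q_v(\mathrm{int}\,\mathcal{K})$ to avoid the boundary, the connectedness/clopen argument traps it inside $\mathrm{int}\,\mathcal{K}$, and inverting via $Q_{v^{-1}} = Q_v^{-1}$ gives equality. The paper itself does not supply a proof at all---it simply cites the result from \cite{Faraut1994} via Proposition~3 of \cite{Lourenco2019}---so there is nothing further to compare; your final remark that one could just quote the reference is precisely what the authors do.
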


More detailed descriptions, including concrete examples of symmetric cone optimization, can be found in, e.g., \cite{Faraut1994,Faybusovich1997,Schmieta2003}. 
Here, we will explain the bilinear operation, the identity element, the inner product, the eigenvalues, the primitive idempotents, and the quadratic representation of the cone when the cone is a positive semidefinite cone.

\begin{example}[$\mathcal{K}$ is the semidefinite cone $\mathbb{S}^n_+$]
{\rm 
Let $\mathbb{S}^n$ be the set of symmetric matrices of $n \times n$.
The semidefinite cone $\mathbb{S}^n_+$ is given by $\mathbb{S}^n_+ = \{ X \in \mathbb{S}^n : X \succeq O \}$.
For any symmetric matrices $X , Y \in \mathbb{S}^n$, define the bilinear operation $\circ$ and inner product as $X \circ Y = \frac{ XY + YX } {2}$ and $\langle X , Y \rangle  = \mbox{tr}(XY) = \sum_{i=1}^n \sum_{j=1}^n X_{ij} Y_{ij}$, respectively.
For any $X \in \mathbb{S}^n$, perform the eigenvalue decomposition and let $u_1 , \dots , u_n$ be the corresponding normalized eigenvectors for the eigenvalues $\lambda_1 , \dots , \lambda_n$: $X = \sum_{i=1}^n \lambda_i u_i u_i^T$.
The eigenvalues of $X$ in the Jordan algebra are $\lambda_1 , \dots , \lambda_n$ and the primitive idempotents are $c_1 = u_1 u_1^T , \dots , c_n = u_n u_n^T$, which implies that the rank of the semidefinite cone $\mathbb{S}^n_+$ is $r=n$.
The identity element is the identity matrix $I$.
The quadratic representation of $V \in \mathbb{S}^n$ is given by $Q_V(X) = V X V$.
}
\end{example}

\subsection{Projection and rescaling algorithm}
\label{sec2-3}
For two sets $C_1$ and $C_2$, we denote by $\mbox{FP} (C_1, C_2)$ the feasibility problem
\begin{equation}
\notag
\begin{array}{ll}
\mbox{find}   &x \in C_1 \cap C_2.
\end{array}
\end{equation}
The projection and rescaling algorithms \cite{Pena2017, Lourenco2019, Kanoh2023} solve $\mbox{FP} (\mathcal{L}, \mbox{int } \mathcal{K})$, where $\mathcal{K} \subseteq \mathbb{E}$ is a symmetric cone and $\mathcal{L} \subseteq \mathbb{E}$ is a linear subspace.
The feasibility of $\mbox{FP} (\mathcal{L}, \mbox{int } \mathcal{K})$ is closely related to the feasibility of $\mbox{FP} (\mathcal{L}^\bot, \mathcal{K} \setminus \{0\})$, where $\mathcal{L}^\bot$ is the orthogonal complement of $\mathcal{L}$.
The proof of Proposition \ref{Proposition: alternative relation} is straightforward using Theorem 20.2 of \cite{Rockafellar1997} and therefore omitted.
\begin{proposition}
\label{Proposition: alternative relation}
$\mbox{FP} (\mathcal{L}, \mbox{int } \mathcal{K})$ is infeasible if and only if $\mbox{FP} (\mathcal{L}^\bot, \mathcal{K} \setminus \{0\})$ is feasible.
\end{proposition}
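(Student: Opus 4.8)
The plan is to prove the two implications separately, using throughout that $\mathcal{K}^* = \mathcal{K}$ (since $\mathcal{K}$ is symmetric), so that both feasibility problems are posed over the same cone. The easier direction is that feasibility of $\mbox{FP}(\mathcal{L}^\bot, \mathcal{K} \setminus \{0\})$ forces infeasibility of $\mbox{FP}(\mathcal{L}, \mbox{int } \mathcal{K})$. First I would argue by contradiction: suppose both hold, so there are $y \in \mathcal{L}^\bot$ with $y \in \mathcal{K} \setminus \{0\}$ and $x \in \mathcal{L} \cap \mbox{int } \mathcal{K}$. Orthogonality of $\mathcal{L}$ and $\mathcal{L}^\bot$ gives $\langle x, y \rangle = 0$, whereas Proposition~\ref{prop:lambda-Jordan} gives $\lambda_{\min}(x) > 0$ and $x - \lambda_{\min}(x)\,e \in \mathcal{K}$, hence $\langle x, y \rangle \ge \lambda_{\min}(x)\,\langle e, y \rangle = \lambda_{\min}(x)\,\mathrm{trace}(y) > 0$, the last inequality because the eigenvalues of $y$ are nonnegative and not all zero. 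The contradiction finishes this direction.

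For the converse I would use proper separation. Assume $\mathcal{L} \cap \mbox{int } \mathcal{K} = \emptyset$. Since $\mathcal{K}$ is solid, $\mathrm{ri}\,\mathcal{K} = \mbox{int } \mathcal{K}$, and $\mathrm{ri}\,\mathcal{L} = \mathcal{L}$, so the relative interiors of the two nonempty convex sets $\mathcal{L}$ and $\mathcal{K}$ are disjoint; Theorem~20.2 of~\cite{Rockafellar1997} then yields a nonzero $y$ and $\alpha \in \mathbb{R}$ with $\langle y, u \rangle \le \alpha \le \langle y, v \rangle$ for all $u \in \mathcal{L}$ and $v \in \mathcal{K}$. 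From here it is bookkeeping: because $\mathcal{L}$ is a subspace on which $\langle y, \cdot \rangle$ is bounded above, that functional vanishes on $\mathcal{L}$, so $y \in \mathcal{L}^\bot$ and $\alpha \ge 0$; evaluating at $v = 0 \in \mathcal{K}$ forces $\alpha \le 0$, so $\alpha = 0$; and then $\langle y, v \rangle \ge 0$ for all $v \in \mathcal{K}$ means $y \in \mathcal{K}^* = \mathcal{K}$. Hence $y \in \mathcal{L}^\bot \cap (\mathcal{K} \setminus \{0\})$, i.e., $\mbox{FP}(\mathcal{L}^\bot, \mathcal{K} \setminus \{0\})$ is feasible.

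The only point needing care — and the main obstacle, such as it is — is ensuring that the separating functional $y$ is genuinely nonzero, i.e., that the separation is \emph{proper}; this is precisely what disjointness of the relative interiors buys, and it is the reason the hypothesis is stated with $\mbox{int } \mathcal{K}$ rather than with $\mathcal{K}$. Everything else (the subspace reduction, the sign-chasing on $\alpha$, and the strict-positivity estimate on $\langle x, y \rangle$) is elementary, and symmetry of $\mathcal{K}$ enters only through the identification $\mathcal{K}^* = \mathcal{K}$.
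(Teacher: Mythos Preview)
Your proof is correct and matches the paper's indicated approach: the paper omits the argument but points to Theorem~20.2 of \cite{Rockafellar1997}, which is precisely the proper-separation result you invoke for the converse, and your eigenvalue estimate for the easy direction is the standard way to see that $x \in \mbox{int }\mathcal{K}$ and $0 \neq y \in \mathcal{K}$ force $\langle x, y \rangle > 0$. One small expository remark: in your final paragraph, the nonzeroness of $y$ is already built into the definition of a separating hyperplane, whereas \emph{properness} is the statement that not both sets lie in the hyperplane---but this does not affect the validity of your argument.
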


For any $k >0$ and feasible solution $x$ of $\mbox{FP} (\mathcal{L}, \mbox{int } \mathcal{K})$, since $kx$ is a feasible solution of $\mbox{FP} (\mathcal{L}, \mbox{int } \mathcal{K})$, it makes sense to consider the positive scaled version of $\mbox{FP} (\mathcal{L}, \mbox{int } \mathcal{K})$. 
In \cite{Kanoh2023}, they consider the following feasibility problem.
We will denote it by $\mbox{FP}_{S_\infty} (\mathcal{L}, \mbox{int } \mathcal{K})$ in this study. 
\begin{equation}
\notag
\begin{array}{ll}
\mbox{find}   &x \in \mathcal{L} \cap \mbox{int } \mathcal{K} \ \ \ \mbox{s.t.} \ \ \|x\|_\infty \leq 1.
\end{array}
\end{equation}
 
\begin{proposition}
$\mbox{FP} (\mathcal{L}, \mbox{int } \mathcal{K})$ is feasible if and only if $\mbox{FP}_{S_\infty} (\mathcal{L}, \mbox{int } \mathcal{K})$ is feasible.
\end{proposition}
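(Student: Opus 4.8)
The plan is to prove the two implications separately; both are short, and the only substantive observation is the one already flagged in the paragraph preceding the statement, namely that feasibility of $\mbox{FP}(\mathcal{L}, \mbox{int }\mathcal{K})$ is invariant under positive rescaling. The ``if'' direction is immediate: any $x$ feasible for $\mbox{FP}_{S_\infty}(\mathcal{L}, \mbox{int }\mathcal{K})$ satisfies in particular $x \in \mathcal{L} \cap \mbox{int }\mathcal{K}$ (the norm bound being an extra restriction), so $x$ already solves $\mbox{FP}(\mathcal{L}, \mbox{int }\mathcal{K})$, and there is nothing more to check.

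For the ``only if'' direction I would argue as follows. Suppose $x$ solves $\mbox{FP}(\mathcal{L}, \mbox{int }\mathcal{K})$, i.e. $x \in \mathcal{L}$ and $x \in \mbox{int }\mathcal{K}$. Writing the spectral decomposition $x = \sum_{i=1}^r \lambda_i c_i$ from Proposition \ref{prop:spectral}, Proposition \ref{prop:lambda-Jordan}(ii) gives $\lambda_i > 0$ for all $i$, hence $\|x\|_\infty = \max_i |\lambda_i| > 0$ (in particular $x \neq 0$), so the scalar $k := 1/\|x\|_\infty$ is well defined and positive. I then check that $kx$ is feasible for $\mbox{FP}_{S_\infty}(\mathcal{L}, \mbox{int }\mathcal{K})$: since $\mathcal{L}$ is a linear subspace, $kx \in \mathcal{L}$; the eigenvalues of $kx$ are $k\lambda_i > 0$, so $kx \in \mbox{int }\mathcal{K}$ by Proposition \ref{prop:lambda-Jordan}(ii) again; and $\|kx\|_\infty = k\|x\|_\infty = 1 \le 1$. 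Thus $kx$ witnesses feasibility of $\mbox{FP}_{S_\infty}(\mathcal{L}, \mbox{int }\mathcal{K})$.

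I do not anticipate a genuine obstacle here: the statement is essentially a normalization argument, and the one point that could conceivably go wrong — dividing by $\|x\|_\infty$ — is harmless precisely because membership in $\mbox{int }\mathcal{K}$ forces all eigenvalues to be strictly positive and hence $\|x\|_\infty > 0$. The only care needed is to invoke Proposition \ref{prop:lambda-Jordan}(ii) in both directions (to conclude $\|x\|_\infty>0$ from $x\in\mbox{int }\mathcal{K}$, and to conclude $kx\in\mbox{int }\mathcal{K}$ from the positivity of its eigenvalues), and to note that the block structure of $\mathcal{K}=\mathbb{S}^{r_1}_+\times\cdots\times\mathbb{S}^{r_p}_+$ changes nothing, since scaling acts blockwise and $\|\cdot\|_\infty$ is the maximum of the blockwise spectral radii.
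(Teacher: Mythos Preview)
Your proof is correct and essentially identical to the paper's. The paper scales by $1/\lambda_{\max}(x)$ while you scale by $1/\|x\|_\infty$, but since $x\in\mbox{int }\mathcal{K}$ forces all eigenvalues to be positive these coincide; otherwise the structure and justifications match exactly.
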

\begin{proof}
If $\mbox{FP}_{S_\infty} (\mathcal{L}, \mbox{int } \mathcal{K})$ is feasible, it is clear that $\mbox{FP} (\mathcal{L}, \mbox{int } \mathcal{K})$ is feasible. 

Let $x$ be a feasible solution of $\mbox{FP} (\mathcal{L}, \mbox{int } \mathcal{K})$ and let $\lambda_{\max} (x)$ and $\lambda_{\min} (x)$ be a maximum eigenvalue and a minimum eigenvalue of $x$, respectively. 
Since $x \in \mbox{int } \mathcal{K}$, we can see that $\lambda_{\max} (x) \geq \lambda_{\min} (x)  > 0$ and hence $\frac{1}{\lambda_{\max} (x)} x \in \mbox{int } \mathcal{K}$. 
In addition, $\| \frac{1}{\lambda_{\max} (x)} x \|_\infty = 1$ holds. 
Thus, $\frac{1}{\lambda_{\max} (x)} x$ is a feasible solution of $\mbox{FP}_{S_\infty} (\mathcal{L}, \mbox{int } \mathcal{K})$. 
\end{proof}

The projection and rescaling algorithms consist of two ingredients: the ``main algorithm'' and the ``basic procedure.''
The structure of the method is as follows: In the outer iteration, the main algorithm calls the basic procedure with $\mathcal{L}$ and $\mathcal{K}$.
The basic procedure proposed in \cite{Kanoh2023} generates a sequence in $\mathbb{E}$ using projection to $\mathcal{L}$ and terminates in a finite number of iterations returning one of the following:
(i). a solution of problem $\mbox{FP} (\mathcal{L}, \mbox{int } \mathcal{K})$, 
(ii). a solution of problem $\mbox{FP} (\mathcal{L}^\bot, \mathcal{K} \setminus \{0\})$, or
(iii). a cut of $\mbox{FP}_{S_\infty} (\mathcal{L}, \mbox{int } \mathcal{K})$, i.e., a Jordan frame $\{c_1, c_2, \dots, c_r \}$ such that $\langle c_i, x \rangle \leq \xi$ holds for any feasible solution $x$ of problem $\mbox{FP}_{S_\infty} (\mathcal{L}, \mbox{int } \mathcal{K})$ and for some $i \in \{1, 2, \dots, r\}$, where $r$ is a rank of $\mathcal{K}$ and $\xi$ is a parameter specified by the user such that $0 < \xi < 1$.
If the result (i) or (ii) is returned by the basic procedure, then the feasibility of problem $\mbox{FP} (\mathcal{L}, \mbox{int } \mathcal{K})$ can be determined, and the main procedure stops.
If the result (iii) is returned, then the main procedure scales the problem $\mbox{FP}_{S_\infty} (\mathcal{L}, \mbox{int } \mathcal{K})$ as $\mbox{FP}_{S_\infty} (Q_v (\mathcal{L}), \mbox{int } \mathcal{K})$, where $v = \frac{1}{\sqrt{\xi}}  \sum_{h \in H}  c_h + \sum_{h \notin H} c_h$ and $H = \{ i: \langle c_i, x \rangle \leq \xi \}$.
Then, the main procedure calls the basic procedure with $Q_v (\mathcal{L})$ and $\mathcal{K}$.
Noting that $v \in {\rm int} \mathcal{K}$ and Proposition \ref{ptop:quadratic}, the feasibility of $\mbox{FP} (\mathcal{L}, \mbox{int } \mathcal{K})$ can be checked by solving $\mbox{FP}_{S_\infty} (Q_v (\mathcal{L}), \mbox{int } \mathcal{K})$.
Thus, the projection and rescaling algorithm of \cite{Kanoh2023} checks the feasibility of $\mbox{FP} (\mathcal{L}, \mbox{int } \mathcal{K})$ by repeating the above procedures.

Their algorithm has a feature that the main algorithm works while keeping information about the minimum eigenvalues of any feasible solution of $\mbox{FP}_{S_\infty} (\mathcal{L}, \mbox{int } \mathcal{K})$.
That is, their method can determine whether there exists a feasible solution of $\mbox{FP}_{S_\infty} (\mathcal{L}, \mbox{int } \mathcal{K})$ whose minimum eigenvalue is greater than $\varepsilon$, where $\varepsilon >0$ is a parameter specified by the user.
In \cite{Kanoh2023}, a feasible solution of $\mbox{FP}_{S_\infty} (\mathcal{L}, \mbox{int } \mathcal{K})$ whose minimum eigenvalue is greater than or equal to $\varepsilon$ is called an $\varepsilon$-feasible solution of $\mbox{FP}_{S_\infty} (\mathcal{L}, \mbox{int } \mathcal{K})$.

 \begin{algorithm}[H]
 \caption{The projection and rescaling algorithm of \cite{Kanoh2023}}
 \label{pr method}
 \begin{algorithmic}[1]
 \renewcommand{\algorithmicrequire}{\textbf{Input: }}
 \renewcommand{\algorithmicensure}{\textbf{Output: }}
 \renewcommand{\stop}{\textbf{stop }}
 \renewcommand{\return}{\textbf{return }}

 \STATE \algorithmicrequire $\mathcal{L}$, $\mathcal{K}$, $\varepsilon > 0$ and a constant $\xi$ such that $0 < \xi < 1$.
 \STATE \algorithmicensure A solution to $\mbox{FP} (\mathcal{L}, \mbox{int } \mathcal{K})$ or $\mbox{FP} (\mathcal{L}^\bot, \mathcal{K} \setminus \{0\})$ or a certificate that there is no $\varepsilon$ feasible solution to problem $\mbox{FP}_{S_\infty} (\mathcal{L}, \mbox{int } \mathcal{K})$.

 \STATE initialization: $k \leftarrow 1, \ \mathcal{L}^k \leftarrow \mathcal{L}$
 \STATE Call the basic procedure with $\mathcal{L}^k$, $\mathcal{K}$ and $\xi$.
 \IF {a solution to $\mbox{FP} (\mathcal{L}^k, \mbox{int } \mathcal{K})$ is obtained}
 \STATE Rescale the obtained solution to the solution of $\mbox{FP} (\mathcal{L}, \mbox{int } \mathcal{K})$
 \STATE \return the solution to $\mbox{FP} (\mathcal{L}, \mbox{int } \mathcal{K})$
 \ELSIF {a solution to $\mbox{FP} ({\mathcal{L}^k}^\bot, \mathcal{K} \setminus \{0\})$ is obtained}
 \STATE Rescale the obtained solution to the solution of $\mbox{FP} (\mathcal{L}^\bot, \mathcal{K} \setminus \{0\})$
 \STATE \return the solution to $\mbox{FP} (\mathcal{L}^\bot, \mathcal{K} \setminus \{0\})$
 \ELSE
 \STATE Compute an upper bound for the minimum eigenvalue of any feasible solution of $\mbox{FP}_{S_\infty} (\mathcal{L}, \mbox{int } \mathcal{K})$
 \IF {the computed upper bound is less than $\varepsilon$}
 \STATE \stop Algorithm \ref{pr method} (There is no $\varepsilon$ feasible solution to $\mbox{FP}_{S_\infty} (\mathcal{L}, \mbox{int } \mathcal{K})$.)
 \ENDIF
 \STATE Compute the vector $v$ used to scale the problem
 \ENDIF
 \STATE Scale the linear subspace, i.e., $\mathcal{L}^{k+1} \leftarrow Q_v (\mathcal{L}^k)$
 \STATE $k \leftarrow k + 1$. Go back to line 4.
 \end{algorithmic} 
 \end{algorithm}

\section{Proposed algorithms}
\label{sec3}
Since projection and rescaling algorithms can only solve the special case of SCP, i.e., $\mbox{FP} (\mathcal{L}, \mbox{int } \mathcal{K})$ for a linear subspace $\mathcal{L} \subseteq \mathbb{E}$, we consider how to use projection and rescaling algorithms to obtain an approximate optimal solution to (P) or (D). 
In Section \ref{sec: modeling}, we introduce two types of formulations to which projection and rescaling methods can be applied. 
These formulations require a real value $\theta \in \mathbb{R}$, and their feasibilities depend on the value of $\theta$.  
In Section \ref{sec: modeling1}, we introduce the formulation that gives the interior feasible solution $x$ of (P) such that $\langle c, x \rangle < \theta$ when (P) is strongly feasible and $\theta > \theta_p$.  
In Section \ref{sec: modeling2}, we introduce the formulation that gives the interior feasible solution $(y, z)$ of (D) such that $b^\top y > \theta$ when (D) is strongly feasible and $\theta < \theta_d$. 
Then, we present a basic idea of our algorithm in Section \ref{sec: concept}.
We also discuss some implementation strategies of our algorithm in Section \ref{sec: tech}.
We then develop a practical version of our algorithm in Section \ref{sec: Algorithm}.

\subsection{Theoretical foundations}
\label{sec: modeling}

\subsubsection{Primal model}
\label{sec: modeling1}
Let $\theta \in \mathbb{R}$. 
For $\theta$, let us define the linear operator $\mathcal{A}(\theta)$ as follows: 
\begin{equation}
\notag
\mathcal{A}(\theta) =
\begin{pmatrix}
\mathcal{A}	&-b	&\bm{0}	\\
c^\top 	&-\theta	&1
\end{pmatrix}
.
\end{equation}

We define the symmetric cone $\bar{\mathcal{K}} = \mathcal{K} \times \mathbb{R}^2_+$ and consider the feasibility problem $\mbox{FP} (\mbox{ker} \mathcal{A}(\theta), \mbox{int } \bar{\mathcal{K}})$.

\begin{proposition}
\label{pro: model1-sol}
Suppose that $\mbox{FP} (\mbox{ker} \mathcal{A}(\theta), \mbox{int } \bar{\mathcal{K}})$ is feasible and $(x, \tau, \rho)$ is a feasible solution of it. 
Then, $\frac{1}{\tau} x$ is an interior feasible solution to (P), and $\langle c, \frac{1}{\tau} x \rangle < \theta$ holds. 
\end{proposition}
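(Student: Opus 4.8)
The plan is to unpack the definitions of $\mbox{ker}\,\mathcal{A}(\theta)$ and $\mbox{int}\,\bar{\mathcal{K}}$ and then verify the two assertions by direct substitution. First I would write out explicitly what it means for $(x,\tau,\rho)$ to lie in $\mbox{ker}\,\mathcal{A}(\theta)$: applying $\mathcal{A}(\theta)$ to $(x,\tau,\rho)$ and equating the result to zero gives the two equations $\mathcal{A}x = \tau b$ and $\langle c,x\rangle - \tau\theta + \rho = 0$. Membership in $\mbox{int}\,\bar{\mathcal{K}} = \mbox{int}\,\mathcal{K} \times \mbox{int}\,\mathbb{R}^2_+$ means $x \in \mbox{int}\,\mathcal{K}$, $\tau > 0$ and $\rho > 0$; in particular $\tau > 0$ ensures that $\tfrac{1}{\tau}x$ is well defined, and since $\mbox{int}\,\mathcal{K}$ is a cone, $\tfrac{1}{\tau}x \in \mbox{int}\,\mathcal{K}$.

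Next I would check primal feasibility of $\tfrac{1}{\tau}x$: dividing $\mathcal{A}x = \tau b$ by $\tau$ yields $\mathcal{A}\!\left(\tfrac{1}{\tau}x\right) = b$, so together with $\tfrac{1}{\tau}x \in \mbox{int}\,\mathcal{K}$ this shows that $\tfrac{1}{\tau}x$ is an interior feasible solution of (P).

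Finally, for the objective bound I would divide the second kernel equation by $\tau$ to obtain $\langle c, \tfrac{1}{\tau}x \rangle = \theta - \tfrac{\rho}{\tau}$; since $\rho > 0$ and $\tau > 0$, the term $\tfrac{\rho}{\tau}$ is strictly positive, hence $\langle c, \tfrac{1}{\tau}x \rangle < \theta$, as claimed.

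There is essentially no hard step here: once the defining equations of the kernel are written down, the proof is a one-line computation. The only point requiring care is that the slack variable $\rho$ (the last coordinate, appearing with coefficient $1$ in the bottom row of $\mathcal{A}(\theta)$) is exactly what forces the inequality to be \emph{strict}, so one must invoke $\rho > 0$ rather than merely $\rho \ge 0$ — that is, one genuinely uses that $(x,\tau,\rho)$ lies in the \emph{interior} of $\bar{\mathcal{K}}$ and not just in $\bar{\mathcal{K}}$ itself.
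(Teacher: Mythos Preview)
Your proposal is correct and follows essentially the same approach as the paper: unpack the two kernel equations $\mathcal{A}x - \tau b = 0$ and $\langle c,x\rangle - \tau\theta + \rho = 0$, divide by $\tau > 0$, and use $x \in \mbox{int}\,\mathcal{K}$ and $\rho > 0$ to conclude. Your explicit remark that the \emph{strict} inequality hinges on $\rho > 0$ (i.e., on interior membership rather than mere membership in $\bar{\mathcal{K}}$) is a nice clarification that the paper leaves implicit.
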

\begin{proof}
Since $(x, \tau, \rho)$ is a feasible solution of $\mbox{FP} (\mbox{ker} \mathcal{A}(\theta), \mbox{int } \bar{\mathcal{K}})$, $\mathcal{A}x-\tau b = 0$ and $\langle c,x \rangle - \tau \theta + \rho = 0$ hold.
Noting that $\tau>0$, we have $\mathcal{A} \frac{1}{\tau} x - b = 0$ and $\langle c, \frac{1}{\tau} x \rangle - \theta + \frac{\rho}{\tau} = 0$ and hence, $\frac{1}{\tau}x$ is an interior feasible solution of (P) such that $\langle c, \frac{1}{\tau} x \rangle < \theta$ because $x \in \mbox{int } \mathcal{K}$ and $\rho >0$.
\end{proof}

From Proposition \ref{pro: model1-sol}, if we obtain the feasible solution of $\mbox{FP} (\mbox{ker} \mathcal{A}(\theta), \mbox{int } \bar{\mathcal{K}})$, then we can construct the interior feasible solution to (P) whose objective value is smaller than $\theta$.
The following proposition gives us a necessary and sufficient condition for $\mbox{FP} (\mbox{ker} \mathcal{A}(\theta), \mbox{int } \bar{\mathcal{K}})$ to be feasible. 

\begin{proposition}
\label{pro: model1-feas}
$\mbox{FP} (\mbox{ker} \mathcal{A}(\theta), \mbox{int } \bar{\mathcal{K}})$ is feasible if and only if (P) is strongly feasible and $\theta_p < \theta$.
\end{proposition}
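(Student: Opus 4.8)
The plan is to prove both directions by relating feasibility of $\mbox{FP}(\mbox{ker}\,\mathcal{A}(\theta), \mbox{int}\,\bar{\mathcal{K}})$ to strong feasibility of (P) together with the strict inequality $\theta_p < \theta$. The ``only if'' direction is essentially Proposition \ref{pro: model1-sol}: if the feasibility problem has a feasible solution $(x,\tau,\rho)$, then $\frac{1}{\tau}x \in \mbox{int}\,\mathcal{K}$ is feasible for (P), so (P) is strongly feasible, and moreover $\langle c, \frac{1}{\tau}x\rangle < \theta$ forces $\theta_p \le \langle c, \frac{1}{\tau}x\rangle < \theta$. So that half is a one-line invocation of the previous proposition.

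For the ``if'' direction I would argue as follows. Assume (P) is strongly feasible with $\theta_p < \theta$. First I would use strong feasibility to pick $\hat{x} \in \mbox{int}\,\mathcal{K}$ with $\mathcal{A}\hat{x} = b$. The value $\langle c, \hat{x}\rangle$ need not be below $\theta$, but since $\theta_p < \theta$ there exists (by definition of the infimum, or by slightly perturbing within the open set $\mbox{int}\,\mathcal{K}$) a feasible point with objective value strictly less than $\theta$; the subtlety is that $\theta_p$ may not be attained, so I cannot just take an optimal solution. The clean way is: since $\theta_p = \inf\{\langle c,x\rangle : \mathcal{A}x=b,\ x\in\mathcal{K}\} < \theta$, there is some $x_0 \in \mathcal{K}$ with $\mathcal{A}x_0 = b$ and $\langle c, x_0\rangle < \theta$. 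Then form the segment $x_\mu = (1-\mu)x_0 + \mu\hat{x}$ for small $\mu \in (0,1]$: each $x_\mu$ lies in $\mbox{int}\,\mathcal{K}$ (convex combination of a point in $\mathcal{K}$ and a point in $\mbox{int}\,\mathcal{K}$), satisfies $\mathcal{A}x_\mu = b$, and by continuity $\langle c, x_\mu\rangle < \theta$ for $\mu$ small enough. Fix such an $x^* := x_\mu \in \mbox{int}\,\mathcal{K}$ with $\mathcal{A}x^* = b$ and $\langle c, x^*\rangle < \theta$. Now set $\tau = 1$, $\rho = \theta - \langle c, x^*\rangle > 0$, and check that $(x^*, \tau, \rho)$ lies in $\mbox{ker}\,\mathcal{A}(\theta)$: the first block of equations gives $\mathcal{A}x^* - \tau b = \mathcal{A}x^* - b = 0$, and the last row gives $c^\top x^* - \tau\theta + \rho = \langle c, x^*\rangle - \theta + (\theta - \langle c,x^*\rangle) = 0$. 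Since $x^* \in \mbox{int}\,\mathcal{K}$ and $\tau, \rho > 0$, we have $(x^*,\tau,\rho) \in \mbox{int}\,\bar{\mathcal{K}}$, so the feasibility problem is feasible.

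The main obstacle is the non-attainment issue: one must not assume (P) has an optimal solution, since strong feasibility of (P) alone does not guarantee this (Proposition \ref{pro: strong duality} needs (D) feasible as well). The fix above — combining a near-optimal feasible point with an interior feasible point via a short convex-combination/continuity argument — handles this cleanly, and it is the only place where any real care is needed; everything else is bookkeeping with the definition of $\mathcal{A}(\theta)$. One should also double-check the edge behavior when $\theta_p = -\infty$ or when the near-optimal point $x_0$ already lies in $\mbox{int}\,\mathcal{K}$ (then the convex combination step is trivial), but these cause no difficulty.
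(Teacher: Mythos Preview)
Your proposal is correct and follows essentially the same approach as the paper's proof. The paper's ``if'' direction is terser---it simply asserts the existence of an interior feasible $x$ with $\theta_p < \langle c, x\rangle < \theta$ and then exhibits $(x, 1, \theta - \langle c,x\rangle)$---whereas you supply the convex-combination/continuity argument that justifies this existence; otherwise the arguments coincide.
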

\begin{proof}
If $\mbox{FP} (\mbox{ker} \mathcal{A}(\theta), \mbox{int } \bar{\mathcal{K}})$ is feasible, then there exists a feasible solution $(x, \tau, \rho)$ to $\mbox{FP} (\mbox{ker} \mathcal{A}(\theta), \mbox{int } \bar{\mathcal{K}})$.
For $x$ and $\tau$, $\frac{1}{\tau} x$ is an interior feasible solution to (P) satisfying $\langle c, \frac{1}{\tau} x \rangle < \theta$ from Proposition \ref{pro: model1-sol}, which implies that (P) has an interior feasible solution and $\theta_p \leq \langle c, \frac{1}{\tau} x \rangle < \theta$.

Conversely, if (P) is stronlgy feasible and $\theta_p < \theta$, then there exists an interior feasible solution $x$ to (P) such that $\theta_p < \langle c, x \rangle < \theta$.
We can easily see that $(x, 1, \theta - \langle c,x \rangle)$ is a feasible solution for $\mbox{FP} (\mbox{ker} \mathcal{A}(\theta), \mbox{int } \bar{\mathcal{K}})$.
\end{proof}

Combining Proposition \ref{pro: model1-feas} with Proposition \ref{Proposition: alternative relation}, we have a necessary and sufficient condition for alternative problem $\mbox{FP} (\mbox{range} {\mathcal{A}(\theta)}^*, \bar{\mathcal{K}} \setminus \{0\})$ of $\mbox{FP} (\mbox{ker} \mathcal{A}(\theta), \mbox{int } \bar{\mathcal{K}})$ to be feasible. 
\begin{corollary}
\label{pro: model1-alt-feas}
$\mbox{FP} (\mbox{range} {\mathcal{A}(\theta)}^*, \bar{\mathcal{K}} \setminus \{0\})$ is feasible if and only if (P) is not strongly feasible or $\theta_p \geq \theta$.
\end{corollary}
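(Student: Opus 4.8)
The plan is to derive this corollary as an immediate consequence of Proposition~\ref{pro: model1-feas} combined with the alternative relation in Proposition~\ref{Proposition: alternative relation}. First I would record two routine facts. The cone $\bar{\mathcal{K}} = \mathcal{K} \times \mathbb{R}^2_+$ has already been declared a symmetric cone, so Proposition~\ref{Proposition: alternative relation} applies with $\mathcal{K}$ replaced by $\bar{\mathcal{K}}$ and with the linear subspace taken to be $\mathcal{L} = \mbox{ker}\,\mathcal{A}(\theta)$. Next, since $\mathcal{A}(\theta)$ is a linear operator between finite-dimensional inner product spaces, the standard identity $(\mbox{ker}\,\mathcal{A}(\theta))^\bot = \mbox{range}\,{\mathcal{A}(\theta)}^*$ holds; hence the alternative problem $\mbox{FP}(\mathcal{L}^\bot, \bar{\mathcal{K}} \setminus \{0\})$ of $\mbox{FP}(\mbox{ker}\,\mathcal{A}(\theta), \mbox{int}\,\bar{\mathcal{K}})$ is precisely $\mbox{FP}(\mbox{range}\,{\mathcal{A}(\theta)}^*, \bar{\mathcal{K}} \setminus \{0\})$.

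With these in place, the argument is a short chain of equivalences. By Proposition~\ref{Proposition: alternative relation}, $\mbox{FP}(\mbox{range}\,{\mathcal{A}(\theta)}^*, \bar{\mathcal{K}} \setminus \{0\})$ is feasible if and only if $\mbox{FP}(\mbox{ker}\,\mathcal{A}(\theta), \mbox{int}\,\bar{\mathcal{K}})$ is infeasible. By Proposition~\ref{pro: model1-feas}, the latter is feasible exactly when ``(P) is strongly feasible and $\theta_p < \theta$'', so it is infeasible exactly when the negation holds; applying De Morgan's law, this negation is ``(P) is not strongly feasible or $\theta_p \geq \theta$'', which is the assertion of the corollary.

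Since both input propositions are already established in the excerpt, there is essentially no obstacle: the only points needing a moment's attention are confirming that $\bar{\mathcal{K}}$ is a legitimate symmetric cone so that Proposition~\ref{Proposition: alternative relation} is applicable, and that $(\mbox{ker}\,\mathcal{A}(\theta))^\bot = \mbox{range}\,{\mathcal{A}(\theta)}^*$. Both are routine, so I would keep the write-up to a single short paragraph.
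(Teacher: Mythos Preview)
Your proposal is correct and matches the paper's approach exactly: the paper presents this corollary without a written proof, stating only that it follows by ``combining Proposition~\ref{pro: model1-feas} with Proposition~\ref{Proposition: alternative relation}'', which is precisely the chain of equivalences you spell out. Your added remarks about $\bar{\mathcal{K}}$ being symmetric and the identity $(\ker \mathcal{A}(\theta))^\bot = \mbox{range}\,\mathcal{A}(\theta)^*$ are the natural details one would fill in, and nothing more is needed.
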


While feasible solutions of $\mbox{FP} (\mbox{ker} \mathcal{A}(\theta), \mbox{int } \bar{\mathcal{K}})$ give us interior feasible solutions to (P) whose objective value is smaller than $\theta$, feasible solutions of $\mbox{FP} (\mbox{range} {\mathcal{A}(\theta)}^*, \bar{\mathcal{K}} \setminus \{0\})$ give us information about the feasibility of (P) or a feasible solution to (D) whose objective value is greater than or equal to $\theta$.
\begin{proposition}
\label{pro: model1-alt-sol}
Suppose that $\mbox{FP} (\mbox{range} {\mathcal{A}(\theta)}^*, \bar{\mathcal{K}} \setminus \{0\})$ is feasible and $(z, \omega, \kappa)$ is a feasible solution of $\mbox{FP} (\mbox{range} {\mathcal{A}(\theta)}^*, \bar{\mathcal{K}} \setminus \{0\})$. 
Then, there exists $y \in \mathbb{R}^m$ such that $z = \mathcal{A}^*y + \kappa c$ and $\omega = -b^\top y - \kappa \theta$.
For $(y, \kappa)$, one of the following three cases holds:
\begin{enumerate}
\item $\kappa >0$ meaning that $(- \frac{1}{\kappa} y, c-\mathcal{A}^*(\frac{-1}{\kappa} y))$ is a feasible solution to (D) and its objective value is greater than or equal to $\theta$, 
\item $\kappa = 0$ and $\omega > 0$ meaning that $-y$ is an improving ray of (D), i.e., $-\mathcal{A}^*(-y) \in \mathcal{K}$ and $b^\top (-y) > 0$, or 
\item $\kappa=\omega=0$ meaning that $-y$ is a reucing direction for (P), i.e., $-\mathcal{A}^*(-y) \in \mathcal{K} \setminus \{0\}$ and $b^\top (-y) = 0$.
\end{enumerate}
\end{proposition}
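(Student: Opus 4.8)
The plan is to identify $\mbox{range}\,{\mathcal{A}(\theta)}^*$ explicitly and then to read off the three cases from the sign of the last coordinate $\kappa$, and, when $\kappa=0$, from the sign of $\omega$.

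First I would compute the adjoint. Writing a generic element of the codomain $\mathbb{R}^{m+1}$ as $(y,\kappa)$ with $y\in\mathbb{R}^m$ and $\kappa\in\mathbb{R}$, a short calculation from $\langle \mathcal{A}(\theta)(x,\tau,\rho),\,(y,\kappa)\rangle=\langle (x,\tau,\rho),\,{\mathcal{A}(\theta)}^*(y,\kappa)\rangle$ yields
\[
{\mathcal{A}(\theta)}^*(y,\kappa)=\bigl(\mathcal{A}^*y+\kappa c,\ -b^\top y-\kappa\theta,\ \kappa\bigr).
\]
Consequently, if $(z,\omega,\kappa)$ is a feasible solution of $\mbox{FP}(\mbox{range}\,{\mathcal{A}(\theta)}^*,\bar{\mathcal{K}}\setminus\{0\})$, there is some $y\in\mathbb{R}^m$ with $z=\mathcal{A}^*y+\kappa c$ and $\omega=-b^\top y-\kappa\theta$, together with the membership constraints $z\in\mathcal{K}$, $\omega\geq0$, $\kappa\geq0$ and $(z,\omega,\kappa)\neq0$; this is exactly the first assertion of the proposition.

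Next I would carry out the case analysis on $\kappa\geq0$. If $\kappa>0$, set $\tilde{y}:=-\tfrac{1}{\kappa}y$; dividing $z=\mathcal{A}^*y+\kappa c$ by $\kappa$ gives $c-\mathcal{A}^*\tilde{y}=\tfrac{1}{\kappa}z\in\mathcal{K}=\mathcal{K}^*$, so $(\tilde{y},c-\mathcal{A}^*\tilde{y})$ is feasible for (D), and $\omega\geq0$ rearranges to $-b^\top y\geq\kappa\theta$, i.e. $b^\top\tilde{y}=-\tfrac{1}{\kappa}b^\top y\geq\theta$ (Case 1). If $\kappa=0$, then $z=\mathcal{A}^*y\in\mathcal{K}$ and $\omega=-b^\top y$. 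When additionally $\omega>0$, we have $-\mathcal{A}^*(-y)=z\in\mathcal{K}$ and $b^\top(-y)=\omega>0$, so $-y$ is an improving ray of (D) (Case 2). When $\kappa=\omega=0$, the requirement $(z,\omega,\kappa)\neq0$ forces $z\neq0$, hence $-\mathcal{A}^*(-y)=z\in\mathcal{K}\setminus\{0\}$ and $b^\top(-y)=0$, so $-y$ is a reducing direction for (P) (Case 3). Since $\kappa\geq0$ and, when $\kappa=0$, also $\omega\geq0$, these three cases exhaust all possibilities.

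The argument is elementary and I do not expect a genuine obstacle. The only places that need care are keeping the signs straight in the adjoint computation, and observing in Case 3 that, because $\omega$ and $\kappa$ both vanish there, the nonzeroness of the triple $(z,\omega,\kappa)$ is precisely what delivers the ``$\setminus\{0\}$'' in the reducing-direction conclusion.
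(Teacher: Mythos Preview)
Your proposal is correct and follows essentially the same approach as the paper's proof: the paper writes out the matrix form of ${\mathcal{A}(\theta)}^*$ to extract $(y,\gamma)$ and then observes $\kappa=\gamma$, which is exactly your adjoint computation, and the subsequent case analysis on $\kappa$ and $\omega$ is identical in both arguments.
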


\begin{proof}
Since $(z, \omega, \kappa) \in \mbox{range} {\mathcal{A}(\theta)}^*$, there exists $(y, \gamma) \in \mathbb{R}^{m+1}$ such that
\begin{equation}
\notag
\begin{pmatrix}
z\\
\omega \\
\kappa
\end{pmatrix}
=
\begin{pmatrix}
\mathcal{A}^* & c \\
-b^\top & -\theta \\
\bm{0}^\top & 1
\end{pmatrix}
\begin{pmatrix}
y \\
\gamma
\end{pmatrix}
.
\end{equation}
From this equation, we can easily see that $\kappa = \gamma$, and hence $z = \mathcal{A}^*y + \kappa c$ and $\omega = -b^\top y - \kappa \theta$ hold for $(y, \kappa)$.
\medskip \\
\noindent (1): If $\kappa >0$, then $-\mathcal{A}^*(\frac{-1}{\kappa} y) + c \in \mathcal{K}$ and $b^\top (\frac{-1}{\kappa} y) - \theta \geq 0$ hold since $(z, \omega, \kappa) \in \bar{\mathcal{K}} \setminus \{0\}$.
\medskip \\
\noindent (2) \& (3): If $\kappa =0$, then $z = -\mathcal{A}^*(-y) \in \mathcal{K}$ and $\omega = b^\top (-y) \geq 0$ hold for $y$ since $(z, \omega, \kappa) \in \bar{\mathcal{K}} \setminus \{0\}$.

If $\omega>0$, $y$ satisfies $-\mathcal{A}^*(-y) \in \mathcal{K}$ and $b^\top (-y) > 0$ and hence, $-y$ is an improving ray of (D).

If $\omega = 0$, we can easily see that $b^\top (-y) = 0$.
In addition, $-\mathcal{A}^*(-y) \in \mathcal{K} \setminus \{0\}$ holds since $(z, \omega, \kappa) \neq (0,0,0)$.
Thus, $-y$ is a reducing direction for (P).
\end{proof}

Proposition \ref{pro: model1-alt-sol} ensures that feasible solutions of $\mbox{FP} (\mbox{range} {\mathcal{A}(\theta)}^*, \bar{\mathcal{K}} \setminus \{0\})$ give feasible solutions to (D) if (P) is strongly feasible and (D) is feasible.
By adding one more assumption, we can guarantee that the feasible solution of $\mbox{FP} (\mbox{range} {\mathcal{A}(\theta)}^*, \bar{\mathcal{K}} \setminus \{0\})$ gives the optimal solution of (D). 
\begin{corollary}
\label{coro: model1-alt-opt-sol}
Suppose that (P) is strongly feasible, (D) is feasible, and $\theta = \theta_p$. 
Then, $\mbox{FP} (\mbox{range} {\mathcal{A}(\theta)}^*, \bar{\mathcal{K}} \setminus \{0\})$ is feasible, i.e., there exists $y \in \mathbb{R}^m$ such that $z = \mathcal{A}^*y + \kappa c$ and $\omega = -b^\top y - \kappa \theta$ for any feasible solution $(z, \omega, \kappa)$ of $\mbox{FP} (\mbox{range} {\mathcal{A}(\theta)}^*, \bar{\mathcal{K}} \setminus \{0\})$. 
Furthermore, $(-\frac{1}{\kappa} y, \frac{1}{\kappa} z)$ is an optimal solution for (D). 
\end{corollary}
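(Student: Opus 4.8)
The plan is to combine the feasibility criterion of Corollary~\ref{pro: model1-alt-feas} with the structural result of Proposition~\ref{pro: model1-alt-sol}, and then to eliminate two of the three cases listed in Proposition~\ref{pro: model1-alt-sol} using the hypotheses. First I would observe that, since $\theta=\theta_p$, we in particular have $\theta_p\geq\theta$, so Corollary~\ref{pro: model1-alt-feas} immediately tells us that $\mbox{FP}(\mbox{range}\,{\mathcal{A}(\theta)}^*, \bar{\mathcal{K}}\setminus\{0\})$ is feasible. Picking any feasible solution $(z,\omega,\kappa)$ of it, Proposition~\ref{pro: model1-alt-sol} produces $y\in\mathbb{R}^m$ with $z=\mathcal{A}^*y+\kappa c$ and $\omega=-b^\top y-\kappa\theta$, and places $(y,\kappa)$ in one of its three cases; this already yields the ``i.e.'' part of the statement.

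Next I would rule out cases~(2) and~(3). As a preliminary, I record that $\theta_p\in\mathbb{R}$: since (P) is strongly feasible and (D) is feasible, Proposition~\ref{pro: strong duality}(1) gives $\theta_p=\theta_d$ and moreover (D) attains its optimum, so $\theta_d$ is finite. In case~(2), $-y$ would be an improving ray of (D); because (D) is feasible this forces $\theta_d=+\infty$, contradicting finiteness. In case~(3), $-y$ would be a reducing direction for (P), i.e.\ $-\mathcal{A}^*(-y)\in\mathcal{K}\setminus\{0\}$ (hence $\in\mathcal{K}^*\setminus\{0\}$) with $b^\top(-y)=0$, which by Proposition~\ref{pro: not sf iff}(1) makes (P) not strongly feasible, a contradiction. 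Therefore case~(1) holds, so $\kappa>0$.

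Finally, with $\kappa>0$, case~(1) of Proposition~\ref{pro: model1-alt-sol} states that $\bigl(-\tfrac{1}{\kappa}y,\, c-\mathcal{A}^*(-\tfrac{1}{\kappa}y)\bigr)$ is feasible for (D) with objective value $b^\top(-\tfrac1\kappa y)\geq\theta=\theta_p$; a one-line computation from $z=\mathcal{A}^*y+\kappa c$ shows $c-\mathcal{A}^*(-\tfrac1\kappa y)=\tfrac1\kappa z$. On the other hand, weak duality (equivalently, Proposition~\ref{pro: strong duality}(1) with $\theta_d=\theta_p$) gives that every dual feasible point has objective value $\leq\theta_d=\theta_p$. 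Hence the objective value equals $\theta_d$, so $\bigl(-\tfrac1\kappa y,\tfrac1\kappa z\bigr)$ is optimal for (D).

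\textbf{Main obstacle.} The only genuine content is excluding cases~(2) and~(3); the rest is bookkeeping. The subtle point there is ensuring $\theta_p$ is genuinely finite, so that producing an improving ray of (D) is an outright contradiction rather than being consistent with $\theta_d=+\infty$; this is exactly why I invoke the attainment clause of Proposition~\ref{pro: strong duality}(1), not merely the equality $\theta_p=\theta_d$.
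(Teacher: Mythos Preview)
Your proposal is correct and follows essentially the same route as the paper's proof: invoke Corollary~\ref{pro: model1-alt-feas} for feasibility, apply Proposition~\ref{pro: model1-alt-sol} to obtain $y$ and the three-case split, exclude cases~(2) and~(3) via the hypotheses, and finish with Proposition~\ref{pro: strong duality}. One minor remark: your exclusion of case~(2) via ``$\theta_d=+\infty$ contradicts finiteness'' is valid, but a slightly shorter argument is that an improving ray of (D) already certifies strong infeasibility of (P) (as noted after Definition of strong infeasibility), which directly contradicts (P) being strongly feasible and spares you the preliminary finiteness discussion.
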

\begin{proof}
Since $\theta = \theta_p$, $\mbox{FP} (\mbox{range} {\mathcal{A}(\theta)}^*, \bar{\mathcal{K}} \setminus \{0\})$ is feasible by Corollary \ref{pro: model1-alt-feas}.
In addition, for any feasible solution  $(z, \omega, \kappa)$ for $\mbox{FP} (\mbox{range} {\mathcal{A}(\theta)}^*, \bar{\mathcal{K}} \setminus \{0\})$, there exists $y \in \mathbb{R}^m$ satisfying $z = \mathcal{A}^*y + \kappa c$ and $\omega = -b^\top y - \kappa \theta$ by Proposition \ref{pro: model1-alt-sol}.

Noting that (P) is strongly feasible and (D) is feasible, we can see that $\kappa > 0$ holds for any feasible solution  $(z, \omega, \kappa)$ for $\mbox{FP} (\mbox{range} {\mathcal{A}(\theta)}^*, \bar{\mathcal{K}} \setminus \{0\})$ from Proposition \ref{pro: model1-alt-sol}.
Thus, $(- \frac{1}{\kappa} y, c-\mathcal{A}^*(\frac{-1}{\kappa} y))$ is a feasible solution for (D) and $b^\top (\frac{-1}{\kappa} y) \geq \theta_p$ holds.

Since (P) is strongly feasible and (D) is feasible, by Proposition \ref{pro: strong duality}, (D) has an optimal solution and $\theta_p = \theta_d$ and hence, $(- \frac{1}{\kappa} y, c-\mathcal{A}^*(\frac{-1}{\kappa} y))$ is an optimal solution for (D).
\end{proof}

\subsubsection{Dual model}
\label{sec: modeling2}

Let $\theta \in \mathbb{R}$. 
We define the linear operator $\mathcal{A}(\theta)$ and the symmetric cone $\bar{\mathcal{K}}$ in the same way as defined in Section \ref{sec: modeling1}.
In this section, we consider the feasibility problem $\mbox{FP} (\mbox{range} \mathcal{A}(\theta)^*, \mbox{int } \bar{\mathcal{K}})$.
\begin{proposition}
\label{pro: model2-sol}
Suppose that $\mbox{FP} (\mbox{range} \mathcal{A}(\theta)^*, \mbox{int } \bar{\mathcal{K}})$ is feasible and $(z, \omega, \kappa)$ is a feasible solution of it.
Then, there exists $y \in \mathbb{R}^m$ such that $z = \mathcal{A}^*y + \kappa c$ and $\omega = -b^\top y - \kappa \theta$.
In addition, $(- \frac{1}{\kappa} y, c-\mathcal{A}^*(\frac{-1}{\kappa} y))$ is an interior feasible solution to (D), and $b^\top (\frac{-1}{\kappa} y) > \theta$ holds. 
\end{proposition}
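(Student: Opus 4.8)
The plan is to mirror closely the argument used in the proof of Proposition~\ref{pro: model1-alt-sol}, but now exploiting that the feasible point lies in $\mbox{int }\bar{\mathcal{K}}$ rather than in $\bar{\mathcal{K}}\setminus\{0\}$. First I would unpack membership in $\mbox{range }\mathcal{A}(\theta)^*$. Writing out
\begin{equation}
\notag
\mathcal{A}(\theta)^* =
\begin{pmatrix}
\mathcal{A}^* & c \\
-b^\top & -\theta \\
\bm{0}^\top & 1
\end{pmatrix},
\end{equation}
there exists $(y,\gamma)\in\mathbb{R}^{m+1}$ with $z=\mathcal{A}^*y+\gamma c$, $\omega=-b^\top y-\gamma\theta$ and $\kappa=\gamma$. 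Reading off the last coordinate gives $\gamma=\kappa$, which yields the claimed identities $z=\mathcal{A}^*y+\kappa c$ and $\omega=-b^\top y-\kappa\theta$.

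Next I would use that $(z,\omega,\kappa)\in\mbox{int }\bar{\mathcal{K}}=\mbox{int }\mathcal{K}\times\mathbb{R}^2_{++}$, so in particular $z\in\mbox{int }\mathcal{K}$, $\omega>0$ and $\kappa>0$. Because $\kappa>0$ we may divide: set $\hat{y}:=-\tfrac{1}{\kappa}y$. From $z=\mathcal{A}^*y+\kappa c$ we get $\tfrac{1}{\kappa}z=c-\mathcal{A}^*\hat{y}$, and since $\tfrac{1}{\kappa}z\in\mbox{int }\mathcal{K}=\mbox{int }\mathcal{K}^*$ (using $\mathcal{K}^*=\mathcal{K}$), the pair $(\hat{y},\,c-\mathcal{A}^*\hat{y})$ is an interior feasible solution of (D). Likewise $\omega=-b^\top y-\kappa\theta$ gives $\tfrac{1}{\kappa}\omega=b^\top\hat{y}-\theta>0$, i.e.\ $b^\top(-\tfrac{1}{\kappa}y)>\theta$, which is the desired strict inequality.

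There is essentially no hard step here; the only points requiring attention are the sign bookkeeping in passing through $\mathcal{A}(\theta)^*$ and the observation that the interior condition forces $\kappa>0$, which is exactly what legitimizes the rescaling by $1/\kappa$. (By contrast, in Proposition~\ref{pro: model1-alt-sol} only $\kappa\ge 0$ was available, which is why three cases arose there; here the interior assumption collapses everything to the first case.)
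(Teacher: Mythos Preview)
Your proof is correct and is essentially what the paper has in mind: the paper omits the proof, pointing to Proposition~\ref{pro: model1-sol} as the template. You instead cite Proposition~\ref{pro: model1-alt-sol} because it shares the $\mbox{range }\mathcal{A}(\theta)^*$ structure, but the substance is identical---unpack the range to extract $(y,\gamma)$, read off $\kappa=\gamma$, then use $(z,\omega,\kappa)\in\mbox{int }\bar{\mathcal{K}}$ to get $\kappa>0$ and divide through. Your remark that the interior assumption collapses the three cases of Proposition~\ref{pro: model1-alt-sol} to the first is exactly the point.
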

\begin{proof}
Since this proposition can be proved in the same way as the proof of Proposition \ref{pro: model1-sol}, we omit the proof. 
%
\end{proof}

Similar to Proposition \ref{pro: model1-sol}, Proposition \ref{pro: model2-sol} implies that we can construct the interior feasible solution of (D) whose objective value is greater than $\theta$ using the feasible solutions of $\mbox{FP} (\mbox{range} \mathcal{A}(\theta)^*, \mbox{int } \bar{\mathcal{K}})$
The following proposition gives us a necessary and sufficient condition for $\mbox{FP} (\mbox{range} \mathcal{A}(\theta)^*, \mbox{int } \bar{\mathcal{K}})$ to be feasible. 
\begin{proposition}
\label{pro: model2-feas}
$\mbox{FP} (\mbox{range} \mathcal{A}(\theta)^*, \mbox{int } \bar{\mathcal{K}})$ is feasible if and only if (D) is strongly feasible and $\theta_d > \theta$.
\end{proposition}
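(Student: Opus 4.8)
The plan is to reduce the statement to a purely primal--dual fact about (D) and then argue in parallel with the proof of Proposition~\ref{pro: model1-feas}. First I would record, exactly as in the proof of Proposition~\ref{pro: model2-sol}, that a point of $\mbox{range}\,\mathcal{A}(\theta)^*$ has the form $(z,\omega,\kappa)=(\mathcal{A}^*y+\kappa c,\,-b^\top y-\kappa\theta,\,\kappa)$ for some $(y,\kappa)\in\mathbb{R}^{m+1}$. Hence $\mbox{FP}(\mbox{range}\,\mathcal{A}(\theta)^*,\mbox{int}\,\bar{\mathcal{K}})$ is feasible if and only if there is a pair $(y,\kappa)$ with $\kappa>0$, $\mathcal{A}^*y+\kappa c\in\mbox{int}\,\mathcal{K}$ and $-b^\top y-\kappa\theta>0$; dividing by $\kappa$ and setting $\hat y=-y/\kappa$, this is equivalent to the existence of $\hat y\in\mathbb{R}^m$ with $c-\mathcal{A}^*\hat y\in\mbox{int}\,\mathcal{K}$ and $b^\top\hat y>\theta$, i.e.\ to (D) having an interior feasible solution whose objective value exceeds $\theta$. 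So it suffices to show that (D) admits such a point if and only if (D) is strongly feasible and $\theta_d>\theta$.

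For the ``only if'' direction I would simply invoke Proposition~\ref{pro: model2-sol}: a feasible solution $(z,\omega,\kappa)$ of $\mbox{FP}(\mbox{range}\,\mathcal{A}(\theta)^*,\mbox{int}\,\bar{\mathcal{K}})$ already yields an interior feasible solution $(-\tfrac1\kappa y,\,c-\mathcal{A}^*(-\tfrac1\kappa y))$ of (D) with $b^\top(-\tfrac1\kappa y)>\theta$. In particular (D) is strongly feasible, and since $\theta_d$ is the supremum of $b^\top y$ over feasible $(z,y)$ we get $\theta_d\ge b^\top(-\tfrac1\kappa y)>\theta$.

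For the ``if'' direction, assume (D) is strongly feasible and $\theta_d>\theta$. Because $\theta_d=\sup\{b^\top y: c-\mathcal{A}^*y\in\mathcal{K}\}>\theta$ (this also covers $\theta_d=+\infty$), there is a feasible point $\bar y$ with $b^\top\bar y>\theta$; strong feasibility gives a point $\tilde y$ with $c-\mathcal{A}^*\tilde y\in\mbox{int}\,\mathcal{K}$. I would then set $y_\lambda=\lambda\tilde y+(1-\lambda)\bar y$ for $\lambda\in(0,1]$. Since $c-\mathcal{A}^*y_\lambda=\lambda(c-\mathcal{A}^*\tilde y)+(1-\lambda)(c-\mathcal{A}^*\bar y)$ is a point of $\mbox{int}\,\mathcal{K}$ plus a point of $\mathcal{K}$, it lies in $\mbox{int}\,\mathcal{K}$; and $b^\top y_\lambda\to b^\top\bar y>\theta$ as $\lambda\downarrow0$, so $b^\top y_\lambda>\theta$ for all sufficiently small $\lambda>0$. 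Fixing such a $\lambda$, the triple $(z,\omega,\kappa):=(c-\mathcal{A}^*y_\lambda,\,b^\top y_\lambda-\theta,\,1)=\mathcal{A}(\theta)^*(-y_\lambda,1)$ lies in $\mbox{range}\,\mathcal{A}(\theta)^*$ and satisfies $z\in\mbox{int}\,\mathcal{K}$, $\omega>0$, $\kappa=1>0$, hence lies in $\mbox{int}\,\bar{\mathcal{K}}$; so $\mbox{FP}(\mbox{range}\,\mathcal{A}(\theta)^*,\mbox{int}\,\bar{\mathcal{K}})$ is feasible.

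The only genuinely non-mechanical step is the construction of an interior feasible point of (D) with objective above $\theta$: one must perturb a feasible point having the right objective value toward a strictly feasible point while keeping the objective strictly above $\theta$, which is precisely the convex-combination argument above and relies on $\mbox{int}\,\mathcal{K}+\mathcal{K}\subseteq\mbox{int}\,\mathcal{K}$. Everything else is bookkeeping with the adjoint $\mathcal{A}(\theta)^*$, entirely parallel to the proof of Proposition~\ref{pro: model1-feas}.
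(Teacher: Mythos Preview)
Your proof is correct and follows the same approach as the paper, which simply refers back to the proof of Proposition~\ref{pro: model1-feas} (the primal analogue) without further detail. The only difference is that where the paper's primal proof just asserts the existence of an interior feasible solution with the required objective value, you supply the explicit convex-combination argument (using $\mbox{int}\,\mathcal{K}+\mathcal{K}\subseteq\mbox{int}\,\mathcal{K}$) to justify this step.
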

\begin{proof}
Since this proposition can be proved in the same way as the proof of Proposition \ref{pro: model1-feas}, we omit the proof. 
%
\end{proof}

Combining Proposition \ref{pro: model2-feas} with Proposition \ref{Proposition: alternative relation}, we have a necessary and sufficient condition for alternative problem $\mbox{FP} (\mbox{ker} \mathcal{A}(\theta), \bar{\mathcal{K}} \setminus \{0\})$ of $\mbox{FP} (\mbox{range} \mathcal{A}(\theta)^*, \mbox{int } \bar{\mathcal{K}})$ to be feasible. 
\begin{corollary}
\label{pro: model2-alt-feas}
$\mbox{FP} (\mbox{ker} \mathcal{A}(\theta), \bar{\mathcal{K}} \setminus \{0\})$ is feasible if and only if (D) is not strongly feasible or $\theta_d \leq \theta$.
\end{corollary}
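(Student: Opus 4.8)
This corollary is a direct analogue of Corollary~\ref{pro: model1-alt-feas}, and the plan is to prove it in exactly the same way: combine the feasibility characterization of the primal-side problem with the alternative-relation proposition. First I would apply Proposition~\ref{Proposition: alternative relation} with $\mathcal{L} = \mbox{range}\,\mathcal{A}(\theta)^*$ and $\mathcal{K}$ replaced by $\bar{\mathcal{K}}$. Since $(\mbox{range}\,\mathcal{A}(\theta)^*)^\bot = \mbox{ker}\,\mathcal{A}(\theta)$ (the standard orthogonality between the range of an adjoint and the kernel), Proposition~\ref{Proposition: alternative relation} tells us that $\mbox{FP}(\mbox{range}\,\mathcal{A}(\theta)^*, \mbox{int}\,\bar{\mathcal{K}})$ is infeasible if and only if $\mbox{FP}(\mbox{ker}\,\mathcal{A}(\theta), \bar{\mathcal{K}}\setminus\{0\})$ is feasible.

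Next I would invoke Proposition~\ref{pro: model2-feas}, which states that $\mbox{FP}(\mbox{range}\,\mathcal{A}(\theta)^*, \mbox{int}\,\bar{\mathcal{K}})$ is feasible if and only if (D) is strongly feasible and $\theta_d > \theta$. Taking the logical negation, $\mbox{FP}(\mbox{range}\,\mathcal{A}(\theta)^*, \mbox{int}\,\bar{\mathcal{K}})$ is infeasible if and only if (D) is not strongly feasible or $\theta_d \leq \theta$. Chaining this with the equivalence from the previous paragraph yields: $\mbox{FP}(\mbox{ker}\,\mathcal{A}(\theta), \bar{\mathcal{K}}\setminus\{0\})$ is feasible if and only if (D) is not strongly feasible or $\theta_d \leq \theta$, which is exactly the claim.

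There is essentially no obstacle here — the proof is a two-line composition of results already established in the excerpt. The only point requiring a word of care is the identity $(\mbox{range}\,\mathcal{A}(\theta)^*)^\bot = \mbox{ker}\,\mathcal{A}(\theta)$, so that Proposition~\ref{Proposition: alternative relation} can be applied in this ``transposed'' orientation; but this is the elementary fact that for a linear operator $\mathcal{M}$ one has $(\mbox{range}\,\mathcal{M}^*)^\bot = \mbox{ker}\,\mathcal{M}$, which also underlies the passage from Proposition~\ref{pro: model1-feas} to Corollary~\ref{pro: model1-alt-feas}. Hence the statement follows immediately, and I would simply write ``Combine Proposition~\ref{pro: model2-feas} with Proposition~\ref{Proposition: alternative relation}, using $(\mbox{range}\,\mathcal{A}(\theta)^*)^\bot = \mbox{ker}\,\mathcal{A}(\theta)$.''
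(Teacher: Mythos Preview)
Your proposal is correct and matches the paper's approach exactly: the paper introduces this corollary by saying ``Combining Proposition~\ref{pro: model2-feas} with Proposition~\ref{Proposition: alternative relation},'' which is precisely the two-step composition you describe. Your added remark about the orthogonality $(\mbox{range}\,\mathcal{A}(\theta)^*)^\bot = \mbox{ker}\,\mathcal{A}(\theta)$ is the only detail one might spell out, and you handle it appropriately.
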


While feasible solutions for $\mbox{FP} (\mbox{range} {\mathcal{A}(\theta)}^*, \mbox{int } \bar{\mathcal{K}})$ give us interior feasible solutions to (D) whose objective value is greater than $\theta$, feasible solutions for $\mbox{FP} (\mbox{ker} \mathcal{A}(\theta), \bar{\mathcal{K}} \setminus \{0\})$ give us information about the feasibility of (D) or a feasible solution to (P) whose objective value is smaller than or equal to $\theta$.
\begin{proposition}
\label{pro: model2-alt-sol}
Suppose that $\mbox{FP} (\mbox{ker} \mathcal{A}(\theta), \bar{\mathcal{K}} \setminus \{0\})$ is feasible and $(x, \tau, \rho)$ is a feasible solution of $\mbox{FP} (\mbox{ker} \mathcal{A}(\theta), \bar{\mathcal{K}} \setminus \{0\})$. 
Then, for $(x, \tau, \rho)$, one of the following three cases holds:
\begin{enumerate}
\item $\tau >0$ meaning that $\frac{1}{\tau} x$ is a feasible solution to (P) and its objective value is smaller than or equal to $\theta$, 
\item $\tau = 0$ and $\rho > 0$, meaning that $x$ is an improving ray of (P), i.e., $x \in \mathcal{K}$, $\mathcal{A} x = 0$ and $\langle c,x \rangle < 0$, or 
\item $\tau=\rho=0$ meaning that $x$ is a reucing direction for (D), i.e., $x \in \mathcal{K} \setminus \{0\}$, $\mathcal{A} x = 0$ and $\langle c,x \rangle = 0$.
\end{enumerate}
\end{proposition}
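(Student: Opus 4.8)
The plan is to unpack the two defining conditions of a feasible solution $(x,\tau,\rho)$ of $\mbox{FP}(\mbox{ker}\,\mathcal{A}(\theta),\bar{\mathcal{K}}\setminus\{0\})$ and then split into cases according to the sign of $\tau$, mirroring the argument of Proposition \ref{pro: model1-alt-sol} but with the roles reversed: here the $\mbox{ker}$ side is the ``primitive'' one, so no adjoint needs to be introduced. Membership in $\mbox{ker}\,\mathcal{A}(\theta)$ yields the two scalar relations $\mathcal{A}x-\tau b=0$ and $\langle c,x\rangle-\tau\theta+\rho=0$, while membership in $\bar{\mathcal{K}}\setminus\{0\}=(\mathcal{K}\times\mathbb{R}^2_+)\setminus\{0\}$ gives $x\in\mathcal{K}$, $\tau\ge 0$, $\rho\ge 0$ and $(x,\tau,\rho)\neq(0,0,0)$. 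Since $\tau\ge 0$, either $\tau>0$ or $\tau=0$, which is the top-level case distinction.

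First I would treat $\tau>0$. Dividing the first relation by $\tau$ gives $\mathcal{A}(\tfrac{1}{\tau}x)=b$, and $\tfrac{1}{\tau}x\in\mathcal{K}$ because $x\in\mathcal{K}$ and $\tau>0$, so $\tfrac{1}{\tau}x$ is feasible for (P); dividing the second relation by $\tau$ gives $\langle c,\tfrac{1}{\tau}x\rangle=\theta-\tfrac{\rho}{\tau}\le\theta$ since $\rho\ge 0$. This is case~1.

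Next I would treat $\tau=0$, where the two relations collapse to $\mathcal{A}x=0$ and $\langle c,x\rangle=-\rho$. If $\rho>0$, then $\langle c,x\rangle=-\rho<0$, and together with $x\in\mathcal{K}$ and $\mathcal{A}x=0$ this is precisely the statement that $x$ is an improving ray of (P), giving case~2. If $\rho=0$, then $\langle c,x\rangle=0$; moreover $\tau=\rho=0$ combined with $(x,\tau,\rho)\neq(0,0,0)$ forces $x\neq 0$, so $x\in\mathcal{K}\setminus\{0\}$ with $\mathcal{A}x=0$ and $\langle c,x\rangle=0$, i.e. $x$ is a reducing direction for (D), giving case~3. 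These three sub-cases are mutually exclusive and exhaustive, which completes the proof.

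I do not anticipate a genuine obstacle here; the proof is a routine case analysis. The only point requiring a moment's care is that $x\neq 0$ in case~3 (and implicitly in case~2), which must be deduced from the requirement $(x,\tau,\rho)\neq(0,0,0)$ built into $\bar{\mathcal{K}}\setminus\{0\}$ rather than assumed outright — the same subtlety already present in Proposition \ref{pro: model1-alt-sol}.
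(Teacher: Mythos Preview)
Your proposal is correct and is precisely the argument the paper has in mind: the paper omits the proof and refers back to Proposition~\ref{pro: model1-alt-sol}, and your case analysis on $\tau$ (and then $\rho$) using the two kernel equations $\mathcal{A}x-\tau b=0$, $\langle c,x\rangle-\tau\theta+\rho=0$ together with $(x,\tau,\rho)\in(\mathcal{K}\times\mathbb{R}^2_+)\setminus\{0\}$ is exactly that analogous argument. The care you take in deducing $x\neq 0$ from $(x,\tau,\rho)\neq(0,0,0)$ in case~3 is the right point to flag.
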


\begin{proof}
Since this proposition can be proved in the same way as the proof of Proposition \ref{pro: model1-alt-sol}, we omit the proof. 
\end{proof}

Proposition \ref{pro: model2-alt-sol} ensures that feasible solutions for $\mbox{FP} (\mbox{ker} \mathcal{A}(\theta), \bar{\mathcal{K}} \setminus \{0\})$ provide feasible solutions for (P) if (D) is strongly feasible and (P) is feasible.
Similar to Corollary \ref{coro: model1-alt-opt-sol}, by adding one more assumption, we can guarantee that the feasible solution of $\mbox{FP} (\mbox{ker} \mathcal{A}(\theta), \bar{\mathcal{K}} \setminus \{0\})$ gives the optimal solution of (P). 

\begin{corollary}
\label{coro: model2-alt-opt-sol}
Suppose that (D) is strongly feasible, (P) is feasible, and $\theta = \theta_d$. 
Then, $\mbox{FP} (\mbox{ker} \mathcal{A}(\theta), \bar{\mathcal{K}} \setminus \{0\})$ is feasible.
In addition, for any feasible solution $(x, \tau, \rho)$ for $\mbox{FP} (\mbox{ker} \mathcal{A}(\theta), \bar{\mathcal{K}} \setminus \{0\})$, $\frac{1}{\tau} x$ is an optimal solution for (P). 
\end{corollary}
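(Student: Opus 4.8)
The plan is to mirror, nearly verbatim, the argument used for Corollary~\ref{coro: model1-alt-opt-sol}, exchanging the roles of (P) and (D). First I would dispose of the feasibility claim: since $\theta = \theta_d$ we trivially have $\theta_d \le \theta$, so Corollary~\ref{pro: model2-alt-feas} immediately gives that $\mbox{FP} (\mbox{ker}\, \mathcal{A}(\theta), \bar{\mathcal{K}} \setminus \{0\})$ is feasible, with no use of the extra hypotheses. The substantive part is then to show that \emph{every} feasible solution $(x,\tau,\rho)$ is of the form that produces an optimal point of (P).

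So next I would fix an arbitrary feasible solution $(x,\tau,\rho)$ of $\mbox{FP} (\mbox{ker}\, \mathcal{A}(\theta), \bar{\mathcal{K}} \setminus \{0\})$ and invoke Proposition~\ref{pro: model2-alt-sol}, which says exactly one of its three cases holds. The key observation is that cases (2) and (3) can be excluded \emph{simultaneously}: in both of them $\tau = 0$, and in each the vector $x$ is a nonzero element of $\mathcal{K}$ satisfying $\mathcal{A}x = 0$ and $\langle c,x \rangle \le 0$ (strict in case (2), an equality in case (3); nonzeroness in case (2) follows from $\langle c,x\rangle < 0$). By Proposition~\ref{pro: not sf iff}(2), the existence of such an $x$ forces (D) to be not strongly feasible, contradicting the hypothesis. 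Hence only case (1) survives: $\tau > 0$, and $\frac{1}{\tau} x$ is a feasible solution of (P) with $\langle c, \frac{1}{\tau} x \rangle \le \theta = \theta_d$.

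Finally, since (D) is strongly feasible and (P) is feasible, Proposition~\ref{pro: strong duality}(2) gives $\theta_p = \theta_d$ and the attainment of the optimum by (P). Combining the lower bound $\langle c, \frac{1}{\tau} x \rangle \ge \theta_p$ (valid because $\frac{1}{\tau}x$ is feasible for (P)) with $\langle c, \frac{1}{\tau} x \rangle \le \theta_d = \theta_p$ yields $\langle c, \frac{1}{\tau} x \rangle = \theta_p$, so $\frac{1}{\tau} x$ is optimal for (P). I do not expect any real obstacle here: the whole argument is a routine transcription of the primal-model corollary, and the only point deserving a line of care is the uniform exclusion of cases (2) and (3) via the ``not strongly feasible'' characterization of Proposition~\ref{pro: not sf iff}(2) (rather than arguing separately through unboundedness of (P) and weak duality).
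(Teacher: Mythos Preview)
Your proposal is correct and follows exactly the route the paper intends: the paper omits the proof entirely, saying it is proved ``in the same way as the proof of Corollary~\ref{coro: model1-alt-opt-sol},'' and your argument is precisely that mirror (feasibility from Corollary~\ref{pro: model2-alt-feas}, exclusion of the $\tau=0$ cases via strong feasibility of (D), then strong duality from Proposition~\ref{pro: strong duality}(2)). Your unified exclusion of cases (2) and (3) through Proposition~\ref{pro: not sf iff}(2) is a minor streamlining over arguing the two cases separately, but otherwise there is nothing to distinguish your proof from the intended one.
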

\begin{proof}
Since this proposition can be proved in the same way as the proof of Corollary \ref{coro: model1-alt-opt-sol}, we omit the proof. 
\end{proof}

\subsection{Basic concept of the proposed algorithm }
\label{sec: concept}
To briefly illustrate the concept of the proposed method, suppose that (P) is strongly feasible.
Then, the feasibility of $\mbox{FP} (\mbox{ker} \mathcal{A}(\theta), \mbox{int } \bar{\mathcal{K}})$ only depends on the value of $\theta$ by Proposition \ref{pro: model1-feas}.
If $\theta > \theta_p$, $\mbox{FP} (\mbox{ker} \mathcal{A}(\theta), \mbox{int } \bar{\mathcal{K}})$ is feasible and its solution gives an interior feasible solution $x$ to (P) such that $\langle c, x \rangle < \theta$ by Proposition \ref{pro: model1-sol}.
If $\theta \leq \theta_p$, $\mbox{FP} (\mbox{ker} \mathcal{A}(\theta), \mbox{int } \bar{\mathcal{K}})$ is infeasible and the infeasibility certificates for $\mbox{FP} (\mbox{ker} \mathcal{A}(\theta), \mbox{int } \bar{\mathcal{K}})$, i.e., feasible solutions for $\mbox{FP} (\mbox{range} {\mathcal{A}(\theta)}^*, \bar{\mathcal{K}} \setminus \{0\})$, give the dual feasible solution $y$ such that $b^\top y \geq \theta$ by Proposition \ref{pro: model1-alt-sol}. 
Therefore, the closer the value of $\theta$ is to the optimal value of (P) and (D), the more accurate the approximate optimal solution of (P) and (D) obtained by solving $\mbox{FP} (\mbox{ker} \mathcal{A}(\theta), \mbox{int } \bar{\mathcal{K}})$. 
In addition, we can know whether $\theta > \theta_p$ or $\theta \leq \theta_p$ by solving $\mbox{FP} (\mbox{ker} \mathcal{A}(\theta), \mbox{int } \bar{\mathcal{K}})$. 

We present our algorithm for finding an approximate optimal interior feasible solution to (P). 
 \begin{algorithm}
 \caption{Projection and rescaling algorithm with the primal model}
 \label{p alg}
 \begin{algorithmic}[1]
 \renewcommand{\algorithmicrequire}{\textbf{Input: }}
 \renewcommand{\algorithmicensure}{\textbf{Output: }}
  \renewcommand{\stop}{\textbf{stop }}
 \renewcommand{\return}{\textbf{return }}

 \STATE \algorithmicrequire $\mathcal{A}$, $b$, $c$, $\mathcal{K}$, and $\theta_{acc} > 0$.
 \STATE \algorithmicensure A feasible solution to (P) or a vector that determines the feasibility status of (P).
 \STATE initialization: $k \leftarrow 0, \ LB \leftarrow -\infty, UB \leftarrow \infty$, $\bar{\mathcal{K}} \leftarrow \mathcal{K} \times \mathbb{R}^{2}_+$
 \STATE Choose $\theta^k \in (LB, UB)$ and construct
\begin{equation}
\notag
\mathcal{A}(\theta^k) =
\begin{pmatrix}
\mathcal{A}	&-b	&\bm{0}	\\
c^\top 	&-\theta^k	&1
\end{pmatrix}
.
\end{equation}
 \STATE Let $\varepsilon$ be a sufficiently small positive value and $\xi$ be a constant such that $0<\xi<1$.
 \WHILE {$UB-LB > \theta_{acc}$}
 \STATE Call the projection and rescaling algorithm of \cite{Kanoh2023} with $\mbox{ker} \mathcal{A}(\theta^k)$, $\bar{\mathcal{K}}$, $\varepsilon$, and $\xi$.
 \IF {a solution $(x, \tau, \rho)$ to $\mbox{FP} (\mbox{ker} \mathcal{A}(\theta^k), \mbox{int } \bar{\mathcal{K}})$ is obtained}
 \STATE $x_{tmp} \leftarrow \frac{1}{\tau} x$, $UB \leftarrow \theta^k$
 \ELSIF {a solution $(z, \omega, \kappa)$ to $\mbox{FP} (\mbox{range} {\mathcal{A}(\theta^k)}^*, \bar{\mathcal{K}} \setminus \{0\})$ is obtained}
 \STATE Compute $(y, \gamma) \in \mathbb{R}^{m+1}$ such that $z = \mathcal{A}^*y + \gamma c$, $\omega = -b^\top y - \gamma \theta$ and $\kappa = \gamma$.
 \IF {$-y$ is an improving ray of (D) or a reducing direction for (P)}
 \STATE \stop Algorithm \ref{p alg} and \return $(-y, \mathcal{A}^*y)$
 \ENDIF
 \STATE $y_{tmp} \leftarrow \frac{-1}{\kappa} y$, \ $z_{tmp} \leftarrow c - \mathcal{A}^*y_{tmp}$, \ $LB \leftarrow \theta^k$ 
 \ELSE
 \STATE $LB \leftarrow \theta^k$
 \ENDIF
 \STATE Choose $\theta^{k+1} \in (LB, UB)$, \ $k \leftarrow k+1$
 \ENDWHILE
 \STATE \return $x_{tmp}$
 \end{algorithmic} 
 \end{algorithm}

Algorithm \ref{p alg} works as follows.
First, Algorithm \ref{p alg} chooses the input value $\theta^k \in (LB, UB)$ and execute a projection and rescaling algorithm with the corresponding problem $\mbox{FP} (\mbox{ker} \mathcal{A}(\theta^k), \mbox{int } \bar{\mathcal{K}})$.
Next, Algorithm \ref{p alg} performs the operations according to the returned result, as follows:
\begin{enumerate}
\item If a solution to (P) is obtained, the current primal solution $x_{tmp}$ and $UB$ are updated. 
\item If a solution to (D) is obtained, the current dual solution $y_{tmp}$ and $LB$ are updated. 
\item If a reducing direction for (P) or an improving ray of (D) is obtained, then Algorithm \ref{p alg} terminates. 
\item If a projection and rescaling algorithm determines that the minimum eigenvalue of any feasible solution of the input problem, i.e., $\mbox{FP}_{S_\infty}  (\mbox{ker } \mathcal{A}(\theta^k), \mbox{int } \bar{\mathcal{K}})$, is less than $\varepsilon$, then $LB$ is updated. 
\end{enumerate}
Algorithm \ref{p alg} repeats the above operations until $UB - LB > \theta_{acc}$ holds.
$UB$ and $LB$ play the role of upper and lower bounds on $\theta_p$, respectively.
Thus, if $\theta_{acc}$ is sufficiently small, the output $x_{tmp}$ from Algorithm \ref{p alg} can be seen as an approximate optimal solution to (P). 

Here, we note that Algorithm \ref{p alg} updates $LB$ on line 17. 
Suppose that Algorithm \ref{p alg} reaches line 17 at the $k$-th iteration. 
In this case, it can be inferred that the projection and rescaling algorithm terminates with the same termination criteria with $\mbox{FP}_{S_\infty} (\mbox{ker } \mathcal{A}(\theta), \mbox{int } \bar{\mathcal{K}})$ for any $\theta$ such that $\theta < \theta^k$, unless $\theta^k$ is not too large compared to the maximum value of the objective function in (P). 
Noting that Algorithm \ref{p alg} is used as a post-processing step, it is reasonable to update $LB$ as on line 17.
We also note that Algorithm \ref{p alg} can find an approximate optimal solution to (D).
Suppose that Algorithm \ref{p alg} reaches line 15 at the $k$-th iteration and $\theta^k$ is very close to the optimal value of (D).
Then, $(y_{tmp}, z_{tmp})$ satisfies $b^\top y_{tmp} \geq \theta^k$ and $c-\mathcal{A}^* y_{tmp} = z_{tmp} \in \mathcal{K}$ by Proposition \ref{pro: model1-alt-sol}.
However, even if $\mbox{FP} (\mbox{ker} \mathcal{A}(\theta^k), \mbox{int } \bar{\mathcal{K}})$ is infeasible, the projection and rescaling algorithm does not necessarily return a solution to its alternative problem. 
Thus, Algorithm \ref{p alg} is just a method to find the approximate optimal interior feasible solution of (P).

From the contents of Section \ref{sec: modeling2}, the algorithm for finding an approximate optimal interior feasible solution to (D) can be considered similarly. (See Algorithm \ref{d alg}.)
 \begin{algorithm}
 \caption{Projection and rescaling algorithm with the dual model}
 \label{d alg}
 \begin{algorithmic}[1]
 \renewcommand{\algorithmicrequire}{\textbf{Input: }}
 \renewcommand{\algorithmicensure}{\textbf{Output: }}
  \renewcommand{\stop}{\textbf{stop }}
 \renewcommand{\return}{\textbf{return }}

 \STATE \algorithmicrequire $\mathcal{A}$, $b$, $c$, $\mathcal{K}$, and $\theta_{acc} > 0$.
 \STATE \algorithmicensure A feasible solution to (D) or a vector that determines the feasibility status of (D).
 \STATE Same as lines 3-5 of Algorithm \ref{p alg}
 \WHILE {$UB-LB > \theta_{acc}$}
 \STATE Call the projection and rescaling algorithm of \cite{Kanoh2023} with $\mbox{range} {\mathcal{A}(\theta^k)}^*$, $\bar{\mathcal{K}}$, $\varepsilon$, and $\xi$.
 \IF {a solution $(z, \omega, \kappa)$ to $\mbox{FP} (\mbox{range} {\mathcal{A}(\theta^k)}^*, \mbox{int } \bar{\mathcal{K}})$ is obtained}
 \STATE Compute $(y, \gamma) \in \mathbb{R}^{m+1}$ such that $z = \mathcal{A}^*y + \gamma c$, $\omega = -b^\top y - \gamma \theta$ and $\kappa = \gamma$.
 \STATE $y_{tmp} \leftarrow \frac{-1}{\kappa} y$, \ $z_{tmp} \leftarrow c - \mathcal{A}^*y_{tmp}$, \ $LB \leftarrow \theta^k$
 \ELSIF  {a solution $(x, \tau, \rho)$ to $\mbox{FP} (\mbox{ker } \mathcal{A}(\theta^k), \bar{\mathcal{K}} \setminus \{0\} )$ is obtained}
 \IF {$x$ is an improving ray of (P) or a reducing direction for (D)}
 \STATE \stop Algorithm \ref{d alg} and \return $x$
 \ENDIF
 \STATE $x_{tmp} \leftarrow \frac{1}{\tau} x$, $UB \leftarrow \theta^k$
 \ELSE
 \STATE $UB \leftarrow \theta^k$
 \ENDIF
 \STATE Choose $\theta^{k+1} \in (LB, UB)$, \ $k \leftarrow k+1$
 \ENDWHILE
 \STATE \return $y_{tmp}$
 \end{algorithmic} 
 \end{algorithm}

\subsection{Implementation strategies}
\label{sec: tech}
Algorithms \ref{p alg} and \ref{d alg} can operate more efficiently by fully utilizing the information obtained at each iteration. 
In this section, we modify Algorithm \ref{p alg} to make it more practical.
These modifications can also be employed in Algorithm \ref{d alg}.
%

\subsubsection{Updating LB using a dual feasible solution}
\label{sec: tech1}
Algorithm \ref{p alg} keeps $LB$ as the lower bound on the $\theta_p$. 
If a dual feasible solution $(y_{tmp}, z_{tmp})$ is obtained at the $k$-th iteration of Algorithm \ref{p alg}, we update $LB$ with $\theta^k$. 
Here, we note that $b^\top y_{tmp} \geq \theta^k$ by Proposition \ref{pro: model1-alt-sol} and all dual feasible solutions give the lower bound on the $\theta_p$. 
Therefore, $LB$ can be updated with $b^\top y_{tmp}$ instead of $\theta^k$, which reduces the number of iterations of Algorithm \ref{p alg}.
However, Algorithm \ref{p alg} obtains approximate feasible solutions in practice.
That is, $z_{tmp} \notin \mathcal{K}$ or $b^\top y_{tmp} < \theta^k$ might hold.
Considering that $(y_{tmp}, z_{tmp})$ might be an approximate dual feasible solution, we update $LB$ as follows:
\begin{equation}
\label{LB update1}
LB := 
\begin{cases}
\max \{ \theta^k, b^\top y_{tmp} \} & z_{tmp} \in \mathcal{K} \\
\theta^k & \mbox{otherwise}
\end{cases}
.
\end{equation}

\subsubsection{Keeping and updating a current dual feasible solution}
\label{sec: tech2}
The modification in this section has the same spirit as Section \ref{sec: tech1}.
That is, we update $LB$ using dual feasible solutions.
In the previous section, we propose to update $LB$ as in (\ref{LB update1}) only if $(y_{tmp}, z_{tmp})$ is a feasible solution for (D).
However, even if $(y_{tmp}, z_{tmp})$ is an approximate feasible solution, it can be used to update $LB$ as long as we have a dual feasible solution $(\bar{y}, \bar{z})$. 
By considering the linear combination of $y_{tmp}$ and $\bar{y}$, $(y_{new}, z_{new})$ might be obtained such that $z_{new} = c- \mathcal{A}^* y_{new} \in \mathcal{K}$ and $b^\top y_{new} \geq \max \{ \theta^k, b^\top \bar{y} \}$, which leads to reducing the number of iterations of Algorithm \ref{p alg}.
In addition, this modification allows Algorithm \ref{p alg} to return the current dual feasible solution $(\bar{y}, \bar{z})$.

Therefore, the following operations are added to Algorithm \ref{p alg}. 
\begin{enumerate}
\item Initialize $\bar{y}$ as $\bar{y} \leftarrow \emptyset$. ( If a dual feasible solution $y$ is known in advance, initialize $\bar{y}$ and $LB$ as $\bar{y} \leftarrow y$ and $LB \leftarrow b^\top y$, respectively.)
\item Suppose that $(y_{tmp}, z_{tmp})$ is obtained at the $k$-th iteration. Then, we perform the following operations. 
\begin{itemize}
\item If $z_{tmp} \in \mathcal{K}$ and $\bar{y}$ is empty, then $\bar{y} = y_{tmp}$. 
\item If $\bar{y}$ is not empty, compute $y_{new}$ such that $b^\top y_{new} \geq b^\top \bar{y}$ and $c-\mathcal{A}^* y_{new} \in \mathcal{K}$, and then update $LB$ and $\bar{y}$ as $LB = \max \{ \theta^k, b^\top y_{new} \}$ and $\bar{y} = y_{new}$, respectively.
\end{itemize}
\item Return $\bar{y}$ on line 21 if $\bar{y}$ is not empty. 
\end{enumerate}

In Algorithm \ref{p alg}, $\bar{y}$ plays the role of the current dual feasible solution.
If we have a dual feasible solution before running Algorithm \ref{p alg}, it can be used to initialize $\bar{y}$.
Since Algorithm \ref{p alg} is used as a post-processing step for the interior point method, $\bar{y}$ can be initialized with the output of the interior point method.
The method for computing $y_{new}$ is described in the Appendix \ref{Appendix A}.

\subsubsection{Use of scaling information from previous iterations}
\label{sec: tech5}
Algorithm \ref{p alg} calls the projection and rescaling algorithm of \cite{Kanoh2023} iteratively.
It is natural to consider ways to reduce the computational time of the projection and rescaling algorithm at the $k+1$-th iteration using the information obtained by the $k$-th iteration. 
Recall that the projection and rescaling algorithm of \cite{Kanoh2023} solves the feasibility problem $\mbox{FP}_{S_\infty} (\mathcal{L}, \mbox{int } \mathcal{K})$ by repeating two steps: 
(i). find a cut for $\mbox{FP}_{S_\infty} (\mathcal{L}, \mbox{int } \mathcal{K})$,
(ii). scale the problem to an isomorphic problem equivalent to $\mbox{FP}_{S_\infty} (\mathcal{L}, \mbox{int } \mathcal{K})$ such that the region narrowed by the cut is efficiently explored. 
Therefore, in Algorithm \ref{p alg}, if the cuts obtained by a projection and rescaling algorithm at the $i(<k)$-th iteration hold for any feasible solution of the feasibility problem considered at the $k$-th iteration, such cuts can be used to reduce the execution time of the projection and rescaling algorithm at the $k$-th iteration. 
Proposition \ref{pro: p scale info inherit 2} provides the sufficient condition for a cut to $\mbox{FP}_{S_\infty} (\mbox{ker} \mathcal{A}(\bar{\theta}), \mbox{int } \bar{\mathcal{K}})$ to be valid for $\mbox{FP}_{S_\infty} (\mbox{ker} \mathcal{A}(\theta), \mbox{int } \bar{\mathcal{K}})$ for two real values $\bar{\theta} \in \mathbb{R}$ and $\theta \in \mathbb{R}$.

\begin{proposition}
\label{pro: p scale info inherit 2}
Suppose that (P) is strongly feasible and that $\bar{\theta} \in \mathbb{R}$ satisfies $\bar{\theta} > \theta_p$ and $\bar{\theta} - \theta_p \leq 1$.
Then, $\mbox{FP}_{S_\infty} (\mbox{ker} \mathcal{A}(\theta), \mbox{int } \bar{\mathcal{K}})$ is feasible for any $\theta \in \mathbb{R}$ such that $\theta_p < \theta < \bar{\theta}$.
Furthermore, if $\langle v,\bar{x} \rangle \leq \xi_x$, $\bar{\tau} \leq \xi_\tau$ and $\bar{\rho} \leq \xi_\rho$ hold for any feasible solution $(\bar{x}, \bar{\tau}, \bar{\rho})$ of $\mbox{FP}_{S_\infty} (\mbox{ker} \mathcal{A}(\bar{\theta}), \mbox{int } \bar{\mathcal{K}})$ and for some $\xi_x <1, \xi_\tau<1, \xi_\rho < 1$ and $v \in \mathcal{K}$, then, for any $\theta \in \mathbb{R}$ such that $\theta_p < \theta < \bar{\theta}$ and for any feasible solution $(x, \tau, \rho)$ of $\mbox{FP}_{S_\infty} (\mbox{ker} \mathcal{A}(\theta), \mbox{int } \bar{\mathcal{K}})$, $\langle v,x \rangle \leq \xi_x$, $\tau \leq \xi_\tau$ and $\rho \leq \xi_\rho$ hold.
\end{proposition}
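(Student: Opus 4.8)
The plan is to dispatch the two assertions separately. For the first one there is essentially nothing to do: since (P) is strongly feasible and $\theta_p<\theta$, Proposition~\ref{pro: model1-feas} says $\mbox{FP}(\mbox{ker}\,\mathcal{A}(\theta),\mbox{int }\bar{\mathcal{K}})$ is feasible, and the equivalence between feasibility of $\mbox{FP}(\mathcal{L},\mbox{int }\mathcal{K})$ and of $\mbox{FP}_{S_\infty}(\mathcal{L},\mbox{int }\mathcal{K})$ established in Section~\ref{sec2-3} then yields feasibility of $\mbox{FP}_{S_\infty}(\mbox{ker}\,\mathcal{A}(\theta),\mbox{int }\bar{\mathcal{K}})$. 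The assumption $\bar\theta-\theta_p\leq 1$ is not needed here.

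For the second assertion, fix $\theta$ with $\theta_p<\theta<\bar\theta$ and a feasible solution $(x,\tau,\rho)$ of $\mbox{FP}_{S_\infty}(\mbox{ker}\,\mathcal{A}(\theta),\mbox{int }\bar{\mathcal{K}})$, and set $\bar\rho:=\rho+\tau(\bar\theta-\theta)$. The key observation is that the linear map $(x,\tau,\rho)\mapsto(x,\tau,\rho+\tau(\bar\theta-\theta))$ carries $\mbox{ker}\,\mathcal{A}(\theta)$ onto $\mbox{ker}\,\mathcal{A}(\bar\theta)$, and I claim it carries the chosen feasible solution to a feasible solution of $\mbox{FP}_{S_\infty}(\mbox{ker}\,\mathcal{A}(\bar\theta),\mbox{int }\bar{\mathcal{K}})$. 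This amounts to three checks. First, $(x,\tau,\bar\rho)\in\mbox{ker}\,\mathcal{A}(\bar\theta)$: the relation $\mathcal{A}x=\tau b$ is inherited, and $\langle c,x\rangle-\tau\bar\theta+\bar\rho=\langle c,x\rangle-\tau\theta+\rho=0$. Second, $(x,\tau,\bar\rho)\in\mbox{int }\bar{\mathcal{K}}$: we have $x\in\mbox{int }\mathcal{K}$, $\tau>0$, and $\bar\rho=\rho+\tau(\bar\theta-\theta)>0$ since $\bar\theta>\theta$. Third, $\|(x,\tau,\bar\rho)\|_\infty\leq 1$: the bounds $\|x\|_\infty\leq1$ and $\tau\leq1$ are inherited from feasibility of $(x,\tau,\rho)$, while for the last coordinate I would use that $\tau>0$ together with $(x,\tau,\rho)\in\mbox{ker}\,\mathcal{A}(\theta)$ make $\tfrac{1}{\tau}x$ an interior feasible point of (P) (cf.\ Proposition~\ref{pro: model1-sol}), so that $\langle c,\tfrac{1}{\tau}x\rangle\geq\theta_p$ and hence
\[
\bar\rho=\tau\bar\theta-\langle c,x\rangle=\tau\Bigl(\bar\theta-\langle c,\tfrac{1}{\tau}x\rangle\Bigr)\leq\tau(\bar\theta-\theta_p)\leq\tau\leq1,
\]
the middle inequality being exactly where $\bar\theta-\theta_p\leq1$ is used. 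With $(x,\tau,\bar\rho)$ established as a feasible solution of $\mbox{FP}_{S_\infty}(\mbox{ker}\,\mathcal{A}(\bar\theta),\mbox{int }\bar{\mathcal{K}})$, the assumed cut conditions apply to it and give $\langle v,x\rangle\leq\xi_x$, $\tau\leq\xi_\tau$ and $\bar\rho\leq\xi_\rho$; since $\bar\theta>\theta$ and $\tau>0$ we have $\rho<\bar\rho$, so $\rho\leq\xi_\rho$ as well, which are precisely the three inequalities claimed for $(x,\tau,\rho)$.

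I expect the one genuine subtlety to be recognizing why $\bar\theta-\theta_p\leq1$ is the right hypothesis. The transported point $(x,\tau,\bar\rho)$ need not satisfy $\bar\rho\leq1$ a priori, and the naive remedy of rescaling it into the unit ball before invoking the hypothesis costs a multiplicative factor of $\bar\rho$ and only yields $\langle v,x\rangle\leq\bar\rho\,\xi_x$, which is worthless once $\bar\rho>1$. The displayed chain of inequalities is the way out: $\bar\theta-\theta_p\leq1$ together with the trivial lower bound $\langle c,\tfrac{1}{\tau}x\rangle\geq\theta_p$ forces $\bar\rho\leq\tau\leq1$, so the transported point already lies in the unit ball and no lossy rescaling is needed; everything after that is bookkeeping. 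Note also that $\bar\theta\in\mathbb{R}$ together with $\bar\theta-\theta_p\leq1$ forces $\theta_p>-\infty$, so the inequality $\langle c,\tfrac{1}{\tau}x\rangle\geq\theta_p$ is meaningful.
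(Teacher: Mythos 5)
Your proof is correct and follows essentially the same route as the paper's: both transport a feasible solution $(x,\tau,\rho)$ of $\mbox{FP}_{S_\infty}(\mbox{ker}\,\mathcal{A}(\theta),\mbox{int }\bar{\mathcal{K}})$ to $(x,\tau,\rho+\tau(\bar\theta-\theta))$, use $\langle c,\tfrac{1}{\tau}x\rangle\geq\theta_p$ and $\bar\theta-\theta_p\leq 1$ to show the new third coordinate is at most $\tau\leq 1$, and then invoke the cut conditions on the transported point. Your write-up is in fact slightly more explicit than the paper's on the final step ($\rho<\bar\rho\leq\xi_\rho$) and on why the hypothesis $\bar\theta-\theta_p\leq 1$ is exactly what prevents a lossy rescaling.
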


\begin{proof}
For any $\theta \in \mathbb{R}$ such that $\theta_p < \theta < \bar{\theta}$, $\mbox{FP}_{S_\infty} (\mbox{ker} \mathcal{A}(\theta), \mbox{int } \bar{\mathcal{K}})$ is feasible from Proposition \ref{pro: model1-feas}.
Let $(x, \tau, \rho)$ be a feasible solution for $\mbox{FP}_{S_\infty} (\mbox{ker} \mathcal{A}(\theta), \mbox{int } \bar{\mathcal{K}})$.
Then, we have
\begin{equation}
\label{proof: eq1}
\langle c, x \rangle - \tau \theta + \rho = 0.
\end{equation}
Noting that $\tau > 0$, we have $\langle c, \frac{1}{\tau} x \rangle + \frac{1}{\tau} \rho = \theta$.
Since $\rho >0$ and $\frac{1}{\tau} x$ is an interior feasible solution for (P), $\theta_p < \langle c, \frac{1}{\tau} x \rangle < \theta < \bar{\theta}$.
Thus, we find 
\begin{equation}
\label{proof: eq2}
\bar{\theta} - \langle c, \frac{1}{\tau} x \rangle <  \bar{\theta} - \theta_p.
\end{equation}
Let $\alpha = \bar{\theta} - \theta$.
By substituting $\theta = \bar{\theta} - \alpha$ into (\ref{proof: eq1}), we have
\begin{equation}
\label{proof: eq3}
\langle c, x \rangle - \tau \bar{\theta} + \tau \alpha + \rho = 0.
\end{equation}
Since $\bar{\theta} - \theta_p \leq 1$ holds from the assumption, we find
\begin{align*}
\tau \alpha + \rho
&= \tau \bar{\theta} - \langle c,x \rangle \ \ \ \mbox{(by  (\ref{proof: eq3}))}	\\
&< \tau (\bar{\theta} - \theta_p)	\ \ \ \mbox{(by  (\ref{proof: eq2}))}	\\
&\leq \tau \leq 1.
\end{align*}

Therefore, $\tau \alpha + \rho < 1$ holds, and hence $(x, \tau, \tau \alpha + \rho)$ is a feasible solution for $\mbox{FP}_{S_\infty} (\mbox{ker} \mathcal{A}(\bar{\theta}), \mbox{int } \bar{\mathcal{K}})$.
From the assumption, $\langle v,x \rangle \leq \xi_x$, $\tau \leq \xi_\tau$ and $\tau \alpha + \rho \leq \xi_\rho$ hold, which completes the proof.
\end{proof}

Based on Proposition \ref{pro: p scale info inherit 2}, we add the following operations to Algorithm \ref{p alg}. 
\begin{enumerate}
\item Initialize $\bar{v}$ as $\bar{v} \leftarrow (e,1,1)$.
\item Call the projection and rescaling algorithm with $Q_{\bar{v}} \left( \mbox{ker} \mathcal{A}(\theta^k) \right)$ and $\bar{\mathcal{K}}$ to solve $\mbox{FP} ( \mbox{ker} \mathcal{A}(\theta^k), \mbox{int } \bar{\mathcal{K}})$.
\begin{itemize}
\item Suppose that the projection and rescaling algorithm returns the solution of the input problem by finding the solution of the scaled problem $\mbox{FP}_{S_\infty} ( Q_v \left( \mbox{ker} \mathcal{A}(\theta^k) \right), \mbox{int } \bar{\mathcal{K}})$, where $v = (v_1, v_2, v_3) \in \mbox{int } \bar{\mathcal{K}}$. 
\begin{itemize}
\item After updating $UB$ as $UB \leftarrow \theta^k$, if $UB - LB \leq 1$ holds, then preserve the scaling information as $\bar{v} \leftarrow v$. 
\end{itemize}
\end{itemize}
\end{enumerate}

\subsubsection{Use of approximate optimal solutions}
\label{sec: tech7}
Let us consider the indicator $\delta_\infty (\mathcal{L} \cap \mbox{int } \mathcal{K}) := \max_x \left \{ {\rm det}(x) \mid x \in \mathcal{L} \cap \mbox{int } \mathcal{K}, \|x\|_\infty = 1 \right\}$, which is equivalent to the indicators used in  \cite{Pena2017} and Section 6.2 of \cite{Kanoh2023}.
If $\mathcal{L} \cap \mbox{int } \mathcal{K} \neq \emptyset$, then $\delta_\infty (\mathcal{L} \cap \mbox{int } \mathcal{K}) \in (0,1]$ holds, and if $e \in \mathcal{L} \cap \mbox{int } \mathcal{K}$, then $\delta_\infty (\mathcal{L} \cap \mbox{int } \mathcal{K}) =1$ holds.
The larger the value of the indicator $\delta_\infty (\mathcal{L} \cap \mbox{int } \mathcal{K})$, the sooner projection and rescaling algorithms will find a solution for $\mbox{FP} (\mathcal{L}, \mbox{int } \mathcal{K})$.
Therefore, scaling with $v \in \mbox{int } \bar{\mathcal{K}}$ such that $\delta_\infty (Q_v (\mbox{ker} \mathcal{A} (\theta)) \cap \mbox{int } \bar{\mathcal{K}}) \geq \delta_\infty (\mbox{ker} \mathcal{A} (\theta) \cap \mbox{int } \bar{\mathcal{K}})$ holds can reduce the computational time of Algorithm \ref{p alg}. 

Let us introduce Proposition \ref{pro: basic idea}, which gives us the basic idea of obtaining $v \in \mbox{int } \bar{\mathcal{K}}$ such that $\delta_\infty (Q_v (\mbox{ker} \mathcal{A} (\theta)) \cap \mbox{int } \bar{\mathcal{K}}) \geq \delta_\infty (\mbox{ker} \mathcal{A} (\theta) \cap \mbox{int } \bar{\mathcal{K}})$.
\begin{proposition}
\label{pro: basic idea}
Suppose that (P) is strongly feasible and for a given $\theta \in \mathbb{R}$ and an interior feasible solution $x \in \mathbb{E}$ for (P), $\max \{ \theta_p, \theta - 1 \} < \langle c,x \rangle < \theta$ holds. 
Define $v \in \mathbb{E} \times \mathbb{R}^2$ as $v := (x^{-\frac{1}{2}}, 1, 1)$.
Then, the following inequality holds:
\begin{equation}
\notag
\delta_\infty (Q_v (\mbox{ker} \mathcal{A} (\theta)) \cap \mbox{int } \bar{\mathcal{K}}) \geq \theta - \langle c,x \rangle.
\end{equation}
\end{proposition}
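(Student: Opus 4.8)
The plan is to exhibit, for each admissible $\theta$, an explicit point in $Q_v(\mbox{ker}\,\mathcal{A}(\theta)) \cap \mbox{int}\,\bar{\mathcal{K}}$ whose $\|\cdot\|_\infty$ equals $1$ and whose determinant is at least $\theta - \langle c,x\rangle$, and then invoke the definition of $\delta_\infty$ as a max. The natural candidate is the image under $Q_v$ of the point of $\mbox{ker}\,\mathcal{A}(\theta)$ that corresponds, via Proposition~\ref{pro: model1-sol}, to the given interior feasible solution $x$. Concretely, since $\mathcal{A}x = b$ implies $\mathcal{A}x - 1\cdot b = 0$, and $\langle c,x\rangle - \theta + (\theta - \langle c,x\rangle) = 0$, the triple $(x, 1, \theta - \langle c,x\rangle)$ lies in $\mbox{ker}\,\mathcal{A}(\theta)$; moreover it lies in $\mbox{int}\,\bar{\mathcal{K}}$ because $x \in \mbox{int}\,\mathcal{K}$ and both $1 > 0$ and $\theta - \langle c,x\rangle > 0$ (the latter by hypothesis). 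Write $w := (x, 1, \theta - \langle c,x\rangle)$.

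Next I would compute $Q_v(w)$. With $v = (x^{-1/2}, 1, 1)$ and the block structure of the quadratic representation, $Q_v(w) = (Q_{x^{-1/2}}(x),\, Q_1(1),\, Q_1(\theta - \langle c,x\rangle))$. Using $Q_{x^{-1/2}}(x) = x^{-1/2} \circ (x \circ x^{-1/2})$-type identities — more cleanly, $Q_{x^{-1/2}}(x) = e$ since $Q_{x^{1/2}}(e) = x$ and $Q_{x^{-1/2}} = Q_{x^{1/2}}^{-1}$ — the first block is the identity $e$. The scalar blocks are $Q_1(t) = t$ for $t \in \mathbb{R}_+$, so $Q_v(w) = (e,\, 1,\, \theta - \langle c,x\rangle)$. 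This is manifestly in $\mbox{int}\,\bar{\mathcal{K}}$ (using $0 < \theta - \langle c,x\rangle < 1$, where the upper bound $\theta - \langle c,x\rangle < 1$ comes from $\max\{\theta_p, \theta-1\} < \langle c,x\rangle$), and its eigenvalues are the $r$ ones from $e$ together with $1$ and $\theta - \langle c,x\rangle$, all lying in $(0,1]$; hence $\|Q_v(w)\|_\infty = 1$. Its determinant is $1^r \cdot 1 \cdot (\theta - \langle c,x\rangle) = \theta - \langle c,x\rangle$. Since $Q_v(w) \in Q_v(\mbox{ker}\,\mathcal{A}(\theta)) \cap \mbox{int}\,\bar{\mathcal{K}}$ and has $\|\cdot\|_\infty = 1$, the max defining $\delta_\infty$ is at least $\det(Q_v(w)) = \theta - \langle c,x\rangle$, which is the claim.

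The steps, in order: (1) verify $w = (x,1,\theta-\langle c,x\rangle) \in \mbox{ker}\,\mathcal{A}(\theta)\cap\mbox{int}\,\bar{\mathcal{K}}$, using the hypotheses $\langle c,x\rangle < \theta$ and strong feasibility to get $x \in \mbox{int}\,\mathcal{K}$; (2) compute $Q_v(w) = (e, 1, \theta - \langle c,x\rangle)$ via the block decomposition of $Q$ and the identity $Q_{x^{-1/2}}(x) = e$; (3) check $\|Q_v(w)\|_\infty = 1$ and $Q_v(w) \in \mbox{int}\,\bar{\mathcal{K}}$, invoking $\theta - \langle c,x\rangle < 1$ from $\langle c,x\rangle > \theta - 1$; (4) read off $\det(Q_v(w)) = \theta - \langle c,x\rangle$ and conclude by the definition of $\delta_\infty$ as a supremum over feasible $\|\cdot\|_\infty = 1$ points. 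The main obstacle is step (2): one must be careful with the precise form of the quadratic representation and confirm $Q_{x^{-1/2}}(x) = e$ in the Euclidean Jordan algebra setting — this is where I would use Proposition~\ref{ptop:quadratic} together with the standard facts $Q_{x^{1/2}}(e) = x$ and $Q_v$ invertible with $Q_v^{-1} = Q_{v^{-1}}$ for $v \in \mbox{int}\,\mathcal{K}$; everything else is bookkeeping about eigenvalues and determinants across the Cartesian product $\mathbb{E} \times \mathbb{R}^2$.
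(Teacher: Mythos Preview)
Your proposal is correct and follows essentially the same approach as the paper: exhibit the point $w=(x,1,\theta-\langle c,x\rangle)\in\ker\mathcal{A}(\theta)\cap\mbox{int }\bar{\mathcal{K}}$, compute $Q_v(w)=(e,1,\theta-\langle c,x\rangle)$, and use $\theta-\langle c,x\rangle<1$ to get $\|Q_v(w)\|_\infty=1$ so that $\det(Q_v(w))=\theta-\langle c,x\rangle$ bounds $\delta_\infty$ from below. The paper's proof is terser, simply asserting the key identity $Q_v(w)=(e,1,\theta-\langle c,x\rangle)$ without justifying $Q_{x^{-1/2}}(x)=e$, whereas you spell this out.
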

\begin{proof}
It is obvious that $(x, 1, \theta - \langle c,x \rangle) \in \mbox{ker} \mathcal{A} (\theta) \cap \mbox{int } \bar{\mathcal{K}}$.
Noting that $Q_v \left( (x, 1, \theta - \langle c,x \rangle) \right) = (e, 1, \theta - \langle c,x \rangle) \in Q_v (\mbox{ker} \mathcal{A} (\theta)) \cap \mbox{int } \bar{\mathcal{K}}$ and $\theta - \langle c,x \rangle < 1$, we find $\delta_\infty (Q_v (\mbox{ker} \mathcal{A} (\theta)) \cap \mbox{int } \bar{\mathcal{K}}) \geq \det(e) \times 1 \times (\theta - \langle c,x \rangle) = \theta - \langle c,x \rangle$.
\end{proof}

The next corollary follows similarly to Proposition \ref{pro: basic idea}.
\begin{corollary}
\label{coro: basic idea}
Suppose that (P) is strongly feasible and for a given $\theta \in \mathbb{R}$, $\theta_p < \theta - 1$ holds. 
For any interior feasible solution $x \in \mathbb{E}$ for (P) such that $\langle c,x \rangle = \theta-1$, define $v \in \mathbb{E} \times \mathbb{R}^2$ as $v := (x^{-\frac{1}{2}}, 1, 1)$.
Then, $\delta_\infty (Q_v (\mbox{ker} \mathcal{A} (\theta)) \cap \mbox{int } \bar{\mathcal{K}}) = 1$ holds. 
\end{corollary}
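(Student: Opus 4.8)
The plan is to establish the two inequalities $\delta_\infty (Q_v (\mbox{ker} \mathcal{A} (\theta)) \cap \mbox{int } \bar{\mathcal{K}}) \le 1$ and $\delta_\infty (Q_v (\mbox{ker} \mathcal{A} (\theta)) \cap \mbox{int } \bar{\mathcal{K}}) \ge 1$ separately. The upper bound is immediate from the general fact recalled at the start of this subsection: whenever $\mathcal{L} \cap \mbox{int } \bar{\mathcal{K}} \neq \emptyset$ we have $\delta_\infty (\mathcal{L} \cap \mbox{int } \bar{\mathcal{K}}) \in (0,1]$. Since $x \in \mbox{int } \mathcal{K}$ we have $v = (x^{-1/2},1,1) \in \mbox{int } \bar{\mathcal{K}}$, so $Q_v$ is a linear isomorphism (and maps $\bar{\mathcal{K}}$ onto itself by Proposition \ref{ptop:quadratic}); hence $Q_v (\mbox{ker} \mathcal{A} (\theta))$ is again a linear subspace and the general fact applies as soon as we know the intersection is nonempty, which will come out of the construction below.

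For the reverse inequality, I would exhibit a single point of $Q_v (\mbox{ker} \mathcal{A} (\theta)) \cap \mbox{int } \bar{\mathcal{K}}$ whose infinity-norm and determinant are both $1$, namely the identity element $\bar{e} := (e,1,1)$ of $\bar{\mathcal{K}}$. First I would check that its preimage $(x,1,1)$ lies in $\mbox{ker} \mathcal{A} (\theta) \cap \mbox{int } \bar{\mathcal{K}}$: it is interior because $x \in \mbox{int } \mathcal{K}$ and $1>0$, and it lies in $\mbox{ker} \mathcal{A} (\theta)$ because $\mathcal{A}x - b = 0$ (feasibility of $x$ for (P)) and $\langle c,x \rangle - \theta + 1 = (\theta-1) - \theta + 1 = 0$ (using the hypothesis $\langle c,x \rangle = \theta - 1$). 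Next, since the quadratic representation acts blockwise, $Q_v(x,1,1) = (Q_{x^{-1/2}}(x), Q_1(1), Q_1(1)) = (Q_{x^{-1/2}}(x),1,1)$, and the standard Euclidean Jordan algebra identity $Q_{x^{-1/2}}(x) = e$ (for the SDP case simply $x^{-1/2} x x^{-1/2} = I$) gives $Q_v(x,1,1) = (e,1,1) = \bar{e}$. Therefore $\bar{e} \in Q_v (\mbox{ker} \mathcal{A} (\theta)) \cap \mbox{int } \bar{\mathcal{K}}$; since all eigenvalues of $\bar{e}$ equal $1$, we have $\|\bar{e}\|_\infty = 1$ and $\det(\bar{e}) = 1$, so $\delta_\infty (Q_v (\mbox{ker} \mathcal{A} (\theta)) \cap \mbox{int } \bar{\mathcal{K}}) \ge \det(\bar{e}) = 1$. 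Combining with the upper bound yields the claimed equality.

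This is essentially the argument of Proposition \ref{pro: basic idea} specialized to the boundary case $\langle c,x \rangle = \theta - 1$, where the last coordinate of the scaled point becomes exactly $1$ rather than strictly less than $1$, so the lower bound $\theta - \langle c,x \rangle$ there sharpens to $1$ here. The only mild point requiring care — the "main obstacle," such as it is — is justifying $Q_{x^{-1/2}}(x) = e$ cleanly; this follows from $Q_{x^{1/2}}(e) = x$ together with $Q_{x^{-1/2}} = Q_{x^{1/2}}^{-1}$ (equivalently, the multiplicativity of the quadratic representation on powers of a single element), and is routine. Everything else is the same bookkeeping as in the proof of Proposition \ref{pro: basic idea}.
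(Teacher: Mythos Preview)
Your proof is correct and follows exactly the approach the paper indicates: the paper simply remarks that the corollary ``follows similarly to Proposition \ref{pro: basic idea},'' and your argument is precisely that specialization, showing that when $\langle c,x\rangle=\theta-1$ the scaled point $Q_v(x,1,1)$ becomes the identity $\bar e=(e,1,1)$, which forces $\delta_\infty=1$ by the general fact that $e\in\mathcal L\cap\mathrm{int}\,\mathcal K$ implies $\delta_\infty(\mathcal L\cap\mathrm{int}\,\mathcal K)=1$.
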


Proposition \ref{pro: basic idea} and Corollary \ref{coro: basic idea} raise interest in whether the following assumption holds or not. 
\begin{assumption}
\label{assumption}
Suppose that (P) is strongly feasible.
Let $x \in \mathbb{E}$ and $s \in \mathbb{E}$ be interior feasible solutions for (P) such that $\max \{ \theta_p, \theta - 1 \} < \langle c,x \rangle < \langle c, s \rangle < \theta$ for a given $\theta \in \mathbb{R}$. 
Define $v_x \in \mathbb{E} \times \mathbb{R}^2$ and $v_s \in \mathbb{E} \times \mathbb{R}^2$ as $v_x  := (x^{-\frac{1}{2}}, 1, 1)$ and $v_s  := (s^{-\frac{1}{2}}, 1, 1)$, respectively. 
Then, the following relation holds.
\begin{equation}
\notag
\delta_\infty (Q_{v_x} (\mbox{ker} \mathcal{A} (\theta)) \cap \mbox{int } \bar{\mathcal{K}}) > \delta_\infty (Q_{v_s} (\mbox{ker} \mathcal{A} (\theta)) \cap \mbox{int } \bar{\mathcal{K}}).
\end{equation}
\end{assumption}

Unfortunately, the authors could not prove whether Assumption \ref{assumption} holds. 
What prevents us from proving this assumption is revealed by the following proposition. 

\begin{proposition}
\label{pro: assumption}
Suppose that (P) is strongly feasible and that for a given $\theta \in \mathbb{R}$ and an interior feasible solution $s \in \mathbb{E}$ for (P), $\max \{ \theta_p,\theta-1\} < \langle c,s \rangle < \theta$ holds.
Let $v_s  := (s^{-\frac{1}{2}}, 1, 1)$ and $(s_1, s_2, s_3)$ be the point giving the maximum value of $\delta_\infty (Q_{v_s} (\mbox{ker} \mathcal{A} (\theta)) \cap \mbox{int } \bar{\mathcal{K}})$.
If $s_1 = e$, then for any interior feasible solution $x \in \mathbb{E}$ for (P) such that $\max \{ \theta_p, \theta - 1 \} < \langle c,x \rangle < \langle c, s \rangle$, 
\begin{equation}
\notag
\delta_\infty (Q_{v_x} (\mbox{ker} \mathcal{A} (\theta)) \cap \mbox{int } \bar{\mathcal{K}}) > \delta_\infty (Q_{v_s} (\mbox{ker} \mathcal{A} (\theta)) \cap \mbox{int } \bar{\mathcal{K}})
\end{equation}
holds, where $v_x  := (x^{-\frac{1}{2}}, 1, 1)$.

\end{proposition}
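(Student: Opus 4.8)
The plan is to show that, under the hypothesis $s_1 = e$, the point realizing $\delta_\infty \big( Q_{v_s} (\mbox{ker} \mathcal{A}(\theta)) \cap \mbox{int } \bar{\mathcal{K}} \big)$ is forced to be the single explicit triple $(e,1,\theta - \langle c,s \rangle)$, so that this indicator equals \emph{exactly} $\theta - \langle c,s \rangle$; then Proposition \ref{pro: basic idea} applied to $x$ produces a strictly larger number, which is the claim.

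First I would make the scaled set $Q_{v_s}(\mbox{ker} \mathcal{A}(\theta))$ explicit. Since $v_s = (s^{-1/2},1,1)$ and the quadratic representation of $1$ on each $\mathbb{R}_+$ block is the identity, $Q_{v_s}$ sends $(w,\tau,\rho)$ to $(Q_{s^{-1/2}}(w),\tau,\rho)$; because $Q_{s^{-1/2}}$ is invertible with inverse $Q_{s^{1/2}}$ and $Q_{s^{1/2}}(e)=s$, a triple $(w,\tau,\rho)$ lies in $Q_{v_s}(\mbox{ker} \mathcal{A}(\theta))$ iff $\big(Q_{s^{1/2}}(w),\tau,\rho\big)\in\mbox{ker} \mathcal{A}(\theta)$. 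Applying this to the maximizer $(s_1,s_2,s_3)$ with $s_1=e$ gives $(s,s_2,s_3)\in\mbox{ker} \mathcal{A}(\theta)$, i.e. $\mathcal{A}s - s_2 b = 0$ and $\langle c,s\rangle - s_2\theta + s_3 = 0$. Feasibility of $s$ for (P) means $\mathcal{A}s=b$, so $(1-s_2)b=0$ and hence $s_2=1$ (assuming $b\neq 0$; if $b=0$ then (P) is homogeneous and can be handled separately). The second equation then forces $s_3 = \theta - \langle c,s\rangle$, and the hypothesis $\max\{\theta_p,\theta-1\} < \langle c,s\rangle < \theta$ gives $0 < s_3 < 1$, so $\|(e,1,s_3)\|_\infty = \max\{1,1,s_3\} = 1$ as required by the definition of $\delta_\infty$. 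Hence the maximum value is $\det(e)\,s_2\,s_3 = \theta - \langle c,s\rangle$.

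It then remains to feed $x$ into Proposition \ref{pro: basic idea}: the inequality $\max\{\theta_p,\theta-1\} < \langle c,x\rangle$ is assumed, and $\langle c,x\rangle < \langle c,s\rangle < \theta$ supplies $\langle c,x\rangle < \theta$, so Proposition \ref{pro: basic idea} gives $\delta_\infty \big( Q_{v_x}(\mbox{ker} \mathcal{A}(\theta)) \cap \mbox{int } \bar{\mathcal{K}} \big) \geq \theta - \langle c,x\rangle$. Since $\langle c,x\rangle < \langle c,s\rangle$, we have $\theta - \langle c,x\rangle > \theta - \langle c,s\rangle$, and chaining this with the previous paragraph yields $\delta_\infty \big( Q_{v_x}(\mbox{ker} \mathcal{A}(\theta)) \cap \mbox{int } \bar{\mathcal{K}} \big) \geq \theta - \langle c,x\rangle > \theta - \langle c,s\rangle = \delta_\infty \big( Q_{v_s}(\mbox{ker} \mathcal{A}(\theta)) \cap \mbox{int } \bar{\mathcal{K}} \big)$.

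I expect the subtle step to be the middle one: the hypothesis $s_1=e$ is exactly what collapses the maximizer to a unique point, forcing the indicator at $v_s$ down to the lower bound of Proposition \ref{pro: basic idea}. Without it, the maximizer could have first block $w\in\mbox{int } \mathcal{K}$ with $w\neq e$, and the extra factor $\det(w)$ might push $\delta_\infty \big( Q_{v_s}(\mbox{ker} \mathcal{A}(\theta)) \cap \mbox{int } \bar{\mathcal{K}} \big)$ strictly above $\theta - \langle c,s\rangle$, destroying the monotonicity; this is precisely why Assumption \ref{assumption} resists a proof in general. Everything else is routine: the action of $Q$ on the one-dimensional blocks, the inversion identity $Q_v^{-1}=Q_{v^{-1}}$, and the elementary bound $0<\theta-\langle c,s\rangle<1$.
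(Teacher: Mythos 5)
Your proposal is correct and follows essentially the same route as the paper's proof: use $s_1=e$ and the inverse scaling $Q_{s^{1/2}}$ to pin the maximizer down to $(e,1,\theta-\langle c,s\rangle)$, conclude $\delta_\infty\bigl(Q_{v_s}(\mbox{ker}\,\mathcal{A}(\theta))\cap\mbox{int }\bar{\mathcal{K}}\bigr)=\theta-\langle c,s\rangle$, and then chain with Proposition \ref{pro: basic idea} applied to $x$. Your extra care about the degenerate case $b=0$ and the check that $\|(e,1,s_3)\|_\infty=1$ are fine refinements of details the paper treats as obvious.
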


\begin{proof}
It is obvious that $(e, 1, \theta - \langle c,s \rangle ) \in Q_{v_s} (\mbox{ker} \mathcal{A} (\theta)) \cap \mbox{int } \bar{\mathcal{K}}$ and $\left( Q_{s^\frac{1}{2}} (s_1), s_2, s_3 \right) \in \mbox{ker} \mathcal{A} (\theta) \cap \mbox{int } \bar{\mathcal{K}}$ holds. 

If $s_1 = e$, then $Q_{s^\frac{1}{2}} (s_1) = s$ holds. 
In addition, we can easily see that $s_2 = 1$ and $s_3 = \theta - \langle c,s \rangle$ since $s$ is a feasible solution for (P) and $(s, s_2, s_3) \in \mbox{ker} \mathcal{A} (\theta) \cap \mbox{int } \bar{\mathcal{K}}$. 
Thus, we have
\begin{equation}
\notag
\delta_\infty (Q_{v_s} (\mbox{ker} \mathcal{A} (\theta)) \cap \mbox{int } \bar{\mathcal{K}}) = \det(e) \times 1 \times ( \theta -  \langle c,s \rangle) = \theta - \langle c,s \rangle.
\end{equation}
By Proposition \ref{pro: basic idea}, for any interior feasible solution $x$ for (P) such that $\max \{ \theta_p, \theta-1 \} < \langle c,x \rangle  < \langle c,s \rangle$, we find $\delta_\infty (Q_{v_x} (\mbox{ker} \mathcal{A} (\theta)) \cap \mbox{int } \bar{\mathcal{K}}) \geq \theta - \langle c,x \rangle$, and hence we have $\delta_\infty (Q_{v_x} (\mbox{ker} \mathcal{A} (\theta)) \cap \mbox{int } \bar{\mathcal{K}}) > \delta_\infty (Q_{v_s} (\mbox{ker} \mathcal{A} (\theta)) \cap \mbox{int } \bar{\mathcal{K}})$.
%
\end{proof}

As in the proof of Proposition \ref{pro: assumption}, Assumption \ref{assumption} holds in the case $s_1 = e$. 
However, proving whether Assumption \ref{assumption} holds when $s_1 \neq e$ is difficult. 
In this case, the specific value or upper bound of $\delta_\infty (Q_{v_s} (\mbox{ker} \mathcal{A} (\theta)) \cap \mbox{int } \bar{\mathcal{K}})$ is challenging to obtain. 
Even if $(s_1, s_2, s_3)$ were obtained, it is unclear how the existence of $(s_1, s_2, s_3)$ such that $s_1 \neq e$ affects the value of $\delta_\infty (Q_{v_x} (\mbox{ker} \mathcal{A} (\theta)) \cap \mbox{int } \bar{\mathcal{K}})$ for any interior feasible solution $x$ satisfying $\max \{ \theta_p, \theta - 1 \} < \langle c,x \rangle < \langle c, s \rangle$.
These obstacles prevent us from proving Assumption \ref{assumption}.

So far, we have focused on scaling with an interior feasible solution $x \in \mathbb{E}$ such that $\max \{ \theta_p, \theta - 1 \} < \langle c,x \rangle < \theta$. 
The critical concern for our algorithm is whether scaling with such $x$ can reduce the execution time of Algorithm \ref{p alg}.
In other words, our algorithm needs to determine whether the following relation holds for any interior feasible solution $x$ satisfying $\max \{ \theta_p, \theta - 1 \} < \langle c,x \rangle < \theta$ and $v = (x^{-\frac{1}{2}}, 1, 1)$.
\begin{equation}
\notag
\delta_\infty (Q_v (\mbox{ker} \mathcal{A} (\theta)) \cap \mbox{int } \bar{\mathcal{K}}) \geq \delta_\infty (\mbox{ker} \mathcal{A} (\theta) \cap \mbox{int } \bar{\mathcal{K}}).
\end{equation}

Unfortunately, the above relation does not always hold.
The easiest counterexample is when $e$ is a feasible solution for (P) and $\langle c, e \rangle = \theta-1$, i.e., $\delta_\infty (\mbox{ker} \mathcal{A} (\theta) \cap \mbox{int } \bar{\mathcal{K}}) = 1$. 
Even if $e$ is not a feasible solution for (P), for the same reason that it is challenging to prove Assumption \ref{assumption}, it is also difficult to ascertain whether this relation holds.
It isn't easy to see whether this relation holds when $x$ is an approximate feasible solution. 
However, the authors believe that Assumption \ref{assumption} holds and scaling with an arbitrary interior feasible solution $x$ such that $\max \{ \theta_p, \theta - 1 \} < \langle c,x \rangle < \theta$ can reduce the computational time of Algorithm \ref{p alg} based on the observation of a simple example discussed in Appendix \ref{Appendix B}. 

The observations in Appendix \ref{Appendix B} imply that even if $x$ is an approximate solution for (P), we can expect that the scaling with such $x$ reduces the computational time of Algorithm \ref{p alg}, as long as $\langle c, x \rangle \simeq \max \{ \theta_p, \theta - 1 \}$ and $e$ is not an approximate or feasible solution for (P) such that $\langle c, e \rangle \simeq \max \{ \theta_p, \theta - 1 \}$. 
Noting that Algorithm \ref{p alg} will be used as a post-processing step of interior point methods, there is no problem in supposing that we have an approximate optimal solution $x \in \mbox{int } \mathcal{K}$ and $\theta$ such that $\theta \simeq \theta_p$, i.e., $\max \{ \theta_p, \theta - 1 \} = \theta_p$, before running Algorithm \ref{p alg}.

Thus, the following operations are added to Algorithm \ref{p alg}.
\begin{enumerate}
\item Initialize $\bar{v}$ as $\bar{v} \leftarrow (e,1,1)$. ( If an approximate primal solution $x \in \mbox{int } \mathcal{K}$ is known in advance, initialize $\bar{v}$ as $\bar{v} \leftarrow (x^{-\frac{1}{2}} ,1,1)$.)
\item Call the projection and rescaling algorithm with $Q_{\bar{v}} \left( \mbox{ker} \mathcal{A}(\theta^k) \right)$ and $\bar{\mathcal{K}}$ to solve $\mbox{FP} ( \mbox{ker} \mathcal{A}(\theta^k), \mbox{int } \bar{\mathcal{K}})$.
\begin{itemize}
\item Suppose that the projection and rescaling algorithm returns the solution of the input problem, and we obtain the solution $(x,\tau,\rho)$ for $\mbox{FP} (\mbox{ker} \mathcal{A}(\theta^k), \mbox{int } \bar{\mathcal{K}})$.
\begin{itemize}
\item After updating $UB$ as $UB \leftarrow \theta^k$, if $UB - LB > 1$ holds, then update $\bar{v}$ as $\bar{v} \leftarrow ((\frac{1}{\tau}x)^{-\frac{1}{2}} ,1,1)$. 
\end{itemize}
\end{itemize}
%
\end{enumerate}
Note that Algorithm \ref{p alg} with this modification does not update $\bar{v}$ in the item (2) above if $UB - LB \leq 1$.
This is because the technique in Section \ref{sec: tech5} updates $\bar{v}$ when $UB - LB \leq 1$.
This modification is not theoretically guaranteed to work.
Our numerical experiments in Section \ref{sec: numerical results} confirm whether these techniques work well.

\subsubsection{Extracting a highly accurate solution}
\label{sec: tech3}
Algorithm \ref{p alg} will obtain approximate solutions many times before they terminate. 
Such solutions can be used to make the accuracy of the outputs from Algorithm \ref{p alg} robust.

The following operations are added to Algorithm \ref{p alg}.
Since our algorithm is intended to be used as a post-processing step, it is assumed that an approximate optimal solution $(x^0, y^0, z^0)$ is known in advance.
\begin{enumerate}
\item Initialize $Sol_{\rm P}$ and $Sol_{\rm D}$ as $Sol_{\rm P} \leftarrow x^0$ and $Sol_{\rm D} \leftarrow y^0$, respectively.
\item If $x_{tmp}$ is obtained at the $k$-th iteration, then we add $x_{tmp}$ to $Sol_{\rm P}$.
\item If $y_{tmp}$ is obtained at the $k$-th iteration, then we add $y_{tmp}$ to $Sol_{\rm D}$.
\item Choose $y^*$ as in (\ref{extract d sol}) and then compute $x^*$ as in (\ref{extract p sol}) using $y^*$ before line 21
\item Return $x^*$ instead of $x_{tmp}$.
\end{enumerate}

Since the set $Sol_{\rm P}$ will contain many approximate primal solutions, Algorithm \ref{p alg} can extract the highly accurate approximate optimal solution $x^*$ from this set as in
\begin{equation}
\label{extract p sol}
x^* \leftarrow \argmin_{x \in Sol_{\rm P}} f(x,y^*,c-\mathcal{A}^* y^*),
\end{equation}
where
\begin{equation}
\notag
f(x,y,z) = \frac{\|\mathcal{A}x - b\|_2}{1+ \displaystyle \max_{i =1, \dots ,m}| b_i |} + 
\max \left\{ 0, \frac{-\lambda_{\min} (x)}{1+\displaystyle \max_{i =1, \dots ,m}| b_i |} \right \} + 
\frac{|\langle c, x \rangle - b^\top y|}{1+|\langle c,x \rangle| + |b^\top y|} + 
\frac{|\langle x, z\rangle|}{1+|\langle c,x \rangle| + |b^\top y|}.
\end{equation}
Each term of the function $f(x,y,z)$ is based on the DIMACS Error \cite{Mittelmann2003}, which is a measure of accuracy as an optimal solution for (P) and (D). 
Note that a dual solution $(y,z)$ is required to extract $x^*$ as in (\ref{extract p sol}).
If the dual optimal solution were known, $f(x,y,z)$ could be used to accurately evaluate the accuracy of $x$ as the primal optimal solution, but such cases would be sporadic. 
Thus, we choose $y^*$ from $Sol_{\rm D}$ and use it as the approximate optimal dual solution to extract $x^*$.
Algorithm \ref{p alg} chooses $y^*$ as in
\begin{equation}
\label{extract d sol}
y^* \leftarrow \argmax_{y \in Sol} b^\top y,
\end{equation}
where $Sol := \left \{ y \in Sol_{\rm D} : \lambda_{\min} (c-\mathcal{A}^*y) \geq \min \{ \lambda_{\min} (c-\mathcal{A}^*y^0), 0 \} \right\}$.
With the modification of Algorithm \ref{p alg} proposed in Section \ref{sec: tech2}, we can use a current dual solution $(\bar{y}, c-\mathcal{A}^* \bar{y})$ to extract $x^*$ as long as $\bar{y}$ is not empty.

\subsection{Practical versions of Algorithms \ref{p alg} and \ref{d alg}}
\label{sec: Algorithm}
We now describe Algorithms \ref{p alg} and \ref{d alg} employing the modifications proposed in the previous section.
Both algorithms are designed to use the approximate optimal solutions $(x^0, y^0, z^0)$ of (P) and (D).
Algorithms \ref{Practical p alg} and \ref{Practical d alg} are practical versions of Algorithms \ref{p alg} and \ref{d alg}, respectively.
These algorithms terminate when $UB-LB \leq \theta_{acc}$ holds or when a vector that determines the feasibility status of (P) or (D) is found.
We consider that a reducing direction for (P) is obtained when Algorithm \ref{Practical p alg} finds a vector $(y, \gamma)$ such that
\begin{equation}
\notag
|\gamma| \leq \mbox{1e-12}, \ |-b^\top y| \leq \mbox{1e-12}, \ \lambda_{\min} (\mathcal{A}^*y) \geq \mbox{-1e-12}, \ \mbox{and} \ \|\mathcal{A}^*y\| > \mbox{1e-12}.
\end{equation}
In addition, we consider that an improving ray of (D) is obtained when Algorithm \ref{Practical p alg} finds a vector $(y, \gamma)$ such that
\begin{equation}
\notag
|\gamma| \leq \mbox{1e-12}, \ -b^\top y > \mbox{1e-12}, \ \mbox{and} \ \frac{1}{-b^\top y} \lambda_{\min} (\mathcal{A}^*y) \geq \mbox{-1e-12}.
\end{equation}
Similarly, when Algorithm \ref{Practical d alg} finds a vector $(x, \tau, \rho)$ satisfying
\begin{equation}
\notag
|\tau| \leq \mbox{1e-12}, \ | \langle c,x \rangle | \leq \mbox{1e-12}, \ \lambda_{\min} (x) \geq \mbox{-1e-12}, \ \mbox{and} \ \|x\| > \mbox{1e-12},
\end{equation}
we consider that a reducing direction for (D) is obtained, and when Algorithm \ref{Practical d alg} finds a vector satisfying
\begin{equation}
\notag
|\tau| \leq \mbox{1e-12}, \ \langle c,x \rangle < \mbox{-1e-12}, \ \mbox{and} \ \frac{1}{- \langle c,x \rangle} \lambda_{\min} (x) \geq \mbox{-1e-12},
\end{equation}
we consider that an improving ray of (P) is obtained. 

 \begin{algorithm}[H]
 \caption{Practical version of Algorithm \ref{p alg}}
 \label{Practical p alg}
 \begin{algorithmic}[1]
 \renewcommand{\algorithmicrequire}{\textbf{Input: }}
 \renewcommand{\algorithmicensure}{\textbf{Output: }}
  \renewcommand{\stop}{\textbf{stop }}
 \renewcommand{\return}{\textbf{return }}

 \STATE \algorithmicrequire $\mathcal{A}$, $b$, $c$, $\mathcal{K}$, $(x^0,y^0,z^0)$ and $\theta_{acc} > 0$.
 \STATE \algorithmicensure A primal solution $x$ (and a dual solution $y$) or a vector that determines the feasibility status of (P).
 \STATE initialization: $k \leftarrow 0, \ LB \leftarrow -\infty, UB \leftarrow \infty, \bar{\mathcal{K}} \leftarrow \mathcal{K} \times \mathbb{R}^{2}_+, \bar{y} \leftarrow \emptyset, \bar{v} \leftarrow (e,1,1), Sol_{\rm P} \leftarrow x^0, Sol_{\rm D} \leftarrow y^0$
 \\
 \ \ \ \ \ \ \ \ \ \ \ \ \ \ \ \ \ \ \  ( If $c-\mathcal{A}^* y^0 \in \mathcal{K}$, $\bar{y} \leftarrow y^0$, $LB \leftarrow b^\top \bar{y}$. If $x^0 \in {\rm int} \mathcal{K}$, $\bar{x} \leftarrow x^0$, \ $\bar{v} \leftarrow (\bar{x}^{- \frac{1}{2}},1,1)$. )
 \STATE Choose $\theta^k \in (LB, UB)$ and construct
$
\mathcal{A}(\theta^k) =
\begin{pmatrix}
\mathcal{A}	&-b	&\bm{0}	\\
c^\top 	&-\theta^k	&1
\end{pmatrix}
.
$
 \STATE Let $\varepsilon$ be a sufficiently small positive value and $\xi$ be a constant such that $0<\xi<1$.
 \WHILE {$UB-LB > \theta_{acc}$}
 \STATE Call the projection and rescaling algorithm of \cite{Kanoh2023} with $Q_{\bar{v}}(\mbox{ker} \mathcal{A}(\theta^k))$, $\bar{\mathcal{K}}$, $\varepsilon$, and $\xi$  \\ \ \ \ \ \ to solve $\mbox{FP} ( \mbox{ker} \mathcal{A}(\theta^k), \mbox{int } \bar{\mathcal{K}})$ and then obtain the scaling information $v = (v_1, v_2, v_3)$.
 \IF {a solution $(x, \tau, \rho)$ to $\mbox{FP} (\mbox{ker} \mathcal{A}(\theta^k), \mbox{int } \bar{\mathcal{K}})$ is obtained}
 \STATE $x_{tmp} \leftarrow \frac{1}{\tau} x$, $Sol_{\rm P} \leftarrow Sol_{\rm P} \cup \{ x_{tmp} \}$, $UB \leftarrow \theta^k$
 \IF {$UB-LB \leq 1$}
 \STATE $\bar{v} \leftarrow (v_1, v_2, v_3)$
 \ELSE
 \IF{$x_{tmp} \in {\rm int} \mathcal{K}$}
 \STATE $\bar{v} \leftarrow (x_{tmp}^{- \frac{1}{2}},1,1)$
 \ENDIF
 \ENDIF
 \ELSIF {a solution $(z, \omega, \kappa)$ to $\mbox{FP} (\mbox{range} {\mathcal{A}(\theta^k)}^*, \bar{\mathcal{K}} \setminus \{0\})$ is obtained}
 \STATE Compute $(y, \gamma) \in \mathbb{R}^{m+1}$ such that $z = \mathcal{A}^*y + \gamma c$, $\omega = -b^\top y - \gamma \theta$ and $\kappa = \gamma$.

 \IF {$-y$ is an improving ray of (D) or a reducing direction for (P)}
 \STATE \stop Algorithm \ref{Practical p alg} and \return $(-y, \mathcal{A}^*y)$
 \ELSE
 \STATE $y_{tmp} \leftarrow \frac{-1}{\kappa} y$, \ $z_{tmp} \leftarrow c - \mathcal{A}^*y_{tmp}$, $Sol_{\rm D} \leftarrow Sol_{\rm D} \cup \{ y_{tmp} \}$, $LB \leftarrow \theta^k$

 \IF {$\bar{y} \neq \emptyset$}
 \STATE Compute $y_{new}$ such that $b^\top y_{new} \geq b^\top \bar{y}$ and $c-\mathcal{A}^* y_{new} \in \mathcal{K}$ using $\bar{y}$ and $y_{tmp}$.
 \STATE $\bar{y} \leftarrow y_{new}$, $LB \leftarrow \max \{ \theta^k, b^\top \bar{y} \}$
 \ELSE
 \IF {$z_{tmp} \in \mathcal{K}$}
 \STATE $\bar{y} \leftarrow y_{tmp}$, $LB \leftarrow \max \{ \theta^k, b^\top \bar{y} \}$
 \ENDIF

 \ENDIF
 \ENDIF
 \ELSE
 \STATE $LB \leftarrow \theta^k$
 \ENDIF
 \STATE Choose $\theta^{k+1} \in (LB, UB)$, \ $k \leftarrow k+1$
 \ENDWHILE
 \STATE Choose $y^*$ from $Sol_{\rm D}$ as in (\ref{extract d sol})  \ \ \ \ \ \ \ \ \ \ \ \ \ //  If $\bar{y} = \emptyset$, $y^* \leftarrow \bar{y}$.
 \STATE Choose $x^*$ from $Sol_{\rm P}$ as in (\ref{extract p sol}) 
 \STATE \return $x^*$ (and $y^*$)
 \end{algorithmic} 
 \end{algorithm}

 \begin{algorithm}[H]
 \caption{Practical version of Algorithm \ref{d alg}}
 \label{Practical d alg}
 \begin{algorithmic}[1]
 \renewcommand{\algorithmicrequire}{\textbf{Input: }}
 \renewcommand{\algorithmicensure}{\textbf{Output: }}
  \renewcommand{\stop}{\textbf{stop }}
 \renewcommand{\return}{\textbf{return }}

 \STATE \algorithmicrequire $\mathcal{A}$, $b$, $c$, $\mathcal{K}$, $(x^0,y^0,z^0)$ and $\theta_{acc} > 0$.
 \STATE \algorithmicensure A dual solution $y$ (and a primal solution $x$) or a vector that determines the feasibility status of (D).
 \STATE initialization: $k \leftarrow 0, \ LB \leftarrow -\infty, UB \leftarrow \infty, \bar{\mathcal{K}} \leftarrow \mathcal{K} \times \mathbb{R}^{2}_+, \bar{y} \leftarrow \emptyset, \bar{v} \leftarrow (e,1,1), Sol_{\rm P} \leftarrow x^0, Sol_{\rm D} \leftarrow y^0$
\\
 \ \ \ \ \ \ \ \ \ \ \ \ \ \ \ \ \ \ \  ( If $c-\mathcal{A}^* y^0 \in \mathcal{K}$, $\bar{y} \leftarrow y^0$, $LB \leftarrow b^\top \bar{y}$. If $z^0 \in {\rm int} \mathcal{K}$, $\bar{z} \leftarrow z^0$, \ $\bar{v} \leftarrow (\bar{z}^{- \frac{1}{2}},1,1)$. )
 \STATE Choose $\theta^k \in (LB, UB)$ and construct
$
\mathcal{A}(\theta^k) =
\begin{pmatrix}
\mathcal{A}	&-b	&\bm{0}	\\
c^\top 	&-\theta^k	&1
\end{pmatrix}
.
$
 \STATE Let $\varepsilon$ be a sufficiently small positive value and $\xi$ be a constant such that $0<\xi<1$.
 \WHILE {$UB-LB > \theta_{acc}$}
 \STATE Call the projection and rescaling algorithm of \cite{Kanoh2023} with $Q_{\bar{v}}(\mbox{range} {\mathcal{A}(\theta^k)}^*)$, $\bar{\mathcal{K}}$, $\varepsilon$, and $\xi$  \\ \ \ \ \ \ to solve $\mbox{FP} ( \mbox{range} {\mathcal{A}(\theta^k)}^*, \mbox{int } \bar{\mathcal{K}})$ and then obtain the scaling information $v = (v_1, v_2, v_3)$.

 \IF {a solution $(z, \omega, \kappa)$ to $\mbox{FP} (\mbox{range} {\mathcal{A}(\theta^k)}^*, \mbox{int } \bar{\mathcal{K}})$ is obtained}
 \STATE Compute $(y, \gamma) \in \mathbb{R}^{m+1}$ such that $z = \mathcal{A}^*y + \gamma c$, $\omega = -b^\top y - \gamma \theta$ and $\kappa = \gamma$.
 \STATE $y_{tmp} \leftarrow \frac{-1}{\kappa} y$, \ $z_{tmp} \leftarrow c - \mathcal{A}^*y_{tmp}$, $Sol_{\rm D} \leftarrow Sol_{\rm D} \cup \{ y_{tmp} \}$, $LB \leftarrow \theta^k$
 \IF {$\bar{y} \neq \emptyset$}
 \STATE Compute $y_{new}$ such that $b^\top y_{new} \geq b^\top \bar{y}$ and $c-\mathcal{A}^* y_{new} \in \mathcal{K}$ using $\bar{y}$ and $y_{tmp}$.
 \STATE $\bar{y} \leftarrow y_{new}$, $LB \leftarrow \max \{ \theta^k, b^\top \bar{y} \}$
 \ELSE
 \IF {$z_{tmp} \in \mathcal{K}$}
 \STATE $\bar{y} \leftarrow y_{tmp}$, $LB \leftarrow \max \{ \theta^k, b^\top \bar{y} \}$
 \ENDIF
 \ENDIF
 \IF {$UB-LB \leq 1$}
 \STATE $\bar{v} \leftarrow (v_1, v_2, v_3)$
 \ELSE
 \IF{$z_{tmp} \in {\rm int} \mathcal{K}$}
 \STATE $\bar{v} \leftarrow (z_{tmp}^{- \frac{1}{2}},1,1)$
 \ENDIF
 \ENDIF
 \ELSIF  {a solution $(x, \tau, \rho)$ to $\mbox{FP} (\mbox{ker } \mathcal{A}(\theta^k), \bar{\mathcal{K}} \setminus \{0\})$ is obtained}
 \IF {$x$ is an improving ray of (P) or a reducing direction for (D)}
 \STATE \stop Algorithm \ref{Practical d alg} and \return $x$
 \ENDIF
 \STATE $x_{tmp} \leftarrow \frac{1}{\tau} x$, $Sol \leftarrow Sol \cup \{ x_{tmp} \}$, $UB \leftarrow \theta^k$
 \ELSE 
 \STATE $UB \leftarrow \theta^k$
 \ENDIF
 \STATE Choose $\theta^{k+1} \in (LB, UB)$, \ $k \leftarrow k+1$
 \ENDWHILE
 \STATE Choose $y^*$ from $Sol_{\rm D}$ as in (\ref{extract d sol})  \ \ \ \ \ \ \ \ \ \ \ \ \ //  If $\bar{y} = \emptyset$, $y^* \leftarrow \bar{y}$.
 \STATE Choose $x^*$ from $Sol_{\rm P}$ as in (\ref{extract p sol}) 
 \STATE \return $y^*$ (and $x^*$)
 \end{algorithmic} 
 \end{algorithm}

\section{Numerical results}
\label{sec: numerical results}
In this section, we apply our algorithm to some instances to show the numerical performance of the proposed methods. 
Figure \ref{figure: Flow} shows the flow of our numerical experiments.
First, we solve the instances with SDP solvers to obtain approximate optimal solutions $(x^0, y^0, z^0)$ to (P) and (D).
SDPA \cite{SDPA}, SDPT3 \cite{SDPT3}, and Mosek \cite{Mosek} were used in our experiment.
Next, we call Algorithm \ref{postpro alg} with $(x^0, y^0, z^0)$. 
Algorithm \ref{postpro alg} calls Algorithm \ref{Practical d alg} and Algorithm \ref{Practical p alg} in that order if $\lambda_{\min} (z^0) >$-1e-12. 
Algorithm \ref{Practical d alg} is called first to take advantage of the modification in Section \ref{sec: tech2}. 
Since Algorithm \ref{Practical d alg} can find an approximate optimal interior feasible solution to (D), a highly accurate optimal dual solution $y$ will likely be obtained after running Algorithm \ref{Practical d alg}. 
Such a vector $y$ can reduce the execution time of Algorithm \ref{Practical p alg} via the modification proposed in Section \ref{sec: tech2}.
In our experiment, SDPA and SPDT3 returned $z^0$ such that $\lambda_{\min} (z^0) >$-1e-14 for all instances. 
However, Mosek returned $z^0$ such that $\lambda_{\min} (z^0) <$-1e-10 for some instances in the experiment of Section \ref{sec: numerical results post-pro}. 
Algorithm \ref{postpro alg} processed these problems in the order of Algorithm \ref{Practical p alg} and Algorithm \ref{Practical d alg}. 
This is because we preferred to use the modifications proposed in Section \ref{sec: tech7} rather than find a highly accurate optimal dual solution before executing Algorithm \ref{Practical p alg}.
(The next section will show the effect of the modifications proposed in Section \ref{sec: tech7}.) 
Even if $\lambda_{\min} (z^0) < $-1e-12, we can generate an interior point of the cone $\mathcal{K}$ by adding a slight perturbation to $z^0$.
However, the interior points obtained in this way are not likely to be accurate dual feasible solutions.
On the other hand, Algorithm \ref{Practical p alg} can take advantage of the modifications proposed in Section \ref{sec: tech7} using $x^0$.
If Algorithm \ref{Practical p alg} can find a dual feasible solution, then Algorithm \ref{Practical d alg}, called after Algorithm \ref{Practical p alg}, can also use the modifications proposed in Section \ref{sec: tech7} from the first iteration. 
We note that Mosek did not yield $(x^0, y^0, z^0)$ such that $\lambda_{\min} (z^0) <$ -1e-12 and $\lambda_{\min} (x^0) <$ -1e-12 for all instances in our experiment. 
Thus, Algorithm \ref{postpro alg} did not reach step 30 in our experiment.
As long as $(x^0, y^0, z^0)$ is an approximate interior feasible solution, step 30 of Algorithm \ref{postpro alg} is not reached. 

We set the upper limit for the execution time of Algorithms \ref{Practical p alg} and \ref{Practical d alg} to 30 minutes and $\theta_{acc}$ = 1e-12.
In addition, we added practical termination conditions to Algorithms \ref{Practical p alg} and \ref{Practical d alg} to account for numerical errors.
(See Appendix \ref{Appendix C1}.) 
For detailed settings of our algorithm, see Appendix \ref{Appendix C}.

\begin{figure}[htb]
\begin{center}
\includegraphics[keepaspectratio, scale=0.4]{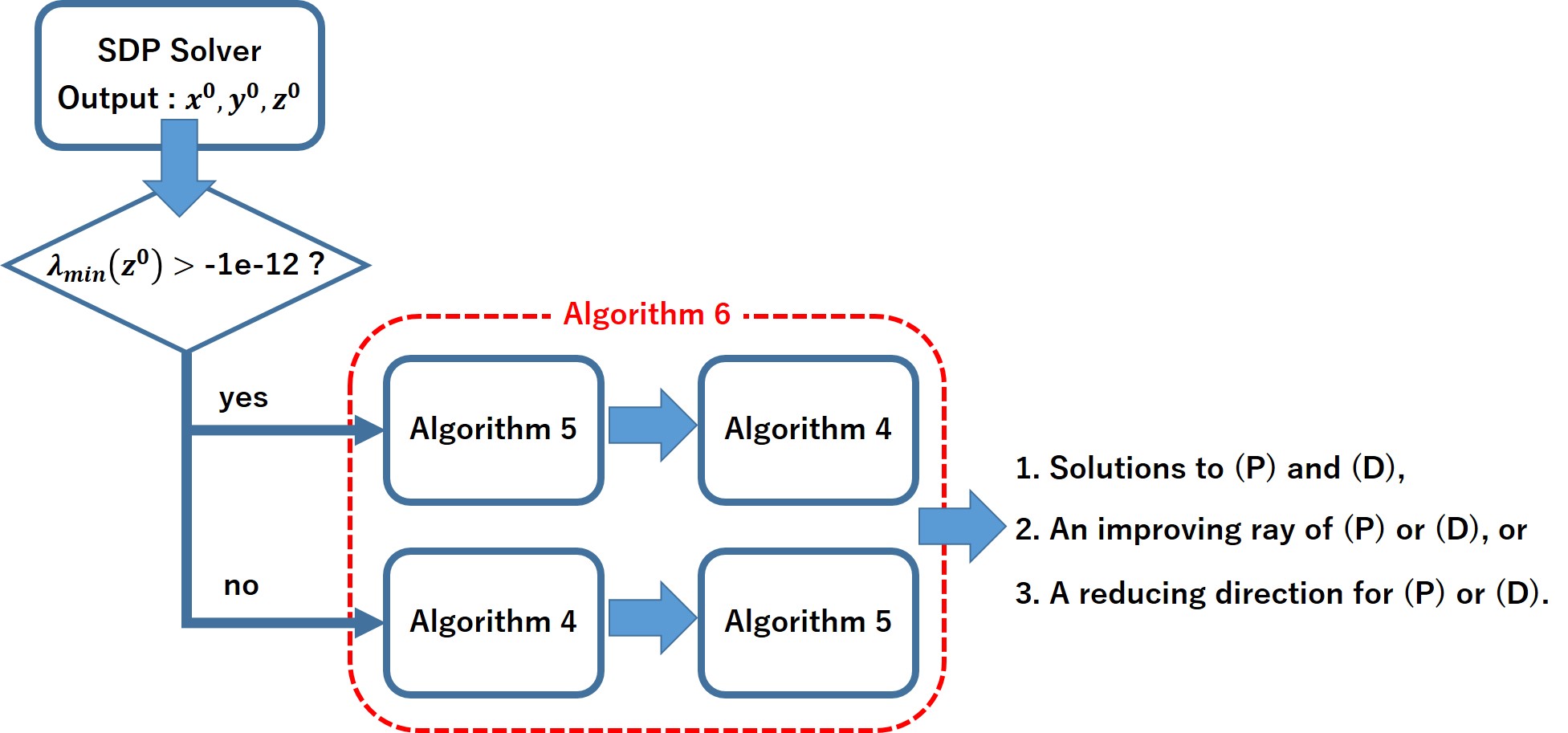}
\end{center}
\caption{Experiment flow}
\label{figure: Flow}
\end{figure}

\begin{algorithm}
 \caption{Proposed Algorithm}
 \label{postpro alg}
 \begin{algorithmic}[1]
 \renewcommand{\algorithmicrequire}{\textbf{Input: }}
 \renewcommand{\algorithmicensure}{\textbf{Output: }}
  \renewcommand{\stop}{\textbf{stop }}
 \renewcommand{\return}{\textbf{return }}

 \STATE \algorithmicrequire $\mathcal{A}$, $b$, $c$, $\mathcal{K}$, $(x^0,y^0,z^0)$ and $\theta_{acc} > 0$.
 \STATE \algorithmicensure Solutions to (P) and (D) or a vector that determines the feasibility status of (P) or (D).
 \IF {$\lambda_{\min} (z^0) \geq$ $-1$e-12}
 \STATE $(x^*,y^*,z^*) \leftarrow (x^0,y^0,z^0)$
 \IF{$z^* \notin \mbox{int} \mathcal{K}$}
 \STATE $z^* \leftarrow z^* + ( -\lambda_{\min} (z^*) +$1e-15$) e$
 \ENDIF
 \STATE Call Algorithm \ref{Practical d alg} with $\mathcal{A}$, $b$, $c$, $\mathcal{K}$, $(x^*,y^*,z^*) $ and $\theta_{acc} > 0$.
 \IF {a solution $(x, y)$ to (P) and (D) is obtained}
 \STATE $x^* \leftarrow x$, $y^* \leftarrow y$, $z^* \leftarrow c-\mathcal{A}^* y$
 \ELSIF {$x$, a reducing direction for (D) or an improving ray of (P), is obtained}
 \STATE \stop Algorithm \ref{postpro alg} and \return $x$
 \ENDIF
 \STATE Call Algorithm \ref{Practical p alg} with $\mathcal{A}$, $b$, $c$, $\mathcal{K}$, $(x^*,y^*,z^*) $ and $\theta_{acc} > 0$.
 \IF {a solution $(x, y)$ to (P) and (D) is obtained}
 \STATE $x^* \leftarrow x$, $y* \leftarrow y$, $z^* \leftarrow c-\mathcal{A}^* y$
 \ELSIF {$(y, z)$, a reducing direction for (P) or an improving ray of (D), is obtained}
 \STATE \stop Algorithm \ref{postpro alg} and \return $(y, z)$
 \ENDIF
 \ELSIF {$\lambda_{\min} (x^0) \geq -1$e-12}
 \STATE $(x^*,y^*,z^*) \leftarrow (x^0,y^0,z^0)$
 \IF {$x^* \notin \mbox{int} \mathcal{K}$}
 \STATE $x^* \leftarrow x^* + (-\lambda_{\min} (x^*) +$1e-15$) e$
 \ENDIF
 \STATE Call Algorithm \ref{Practical p alg} with $\mathcal{A}$, $b$, $c$, $\mathcal{K}$, $(x^*,y^*,z^*)$ and $\theta_{acc} > 0$.
 \STATE Same as lines 15 - 19
 \STATE Call Algorithm \ref{Practical d alg} with $\mathcal{A}$, $b$, $c$, $\mathcal{K}$, $(x^*,y^*,z^*)$ and $\theta_{acc} > 0$.
 \STATE Same as lines 9 - 13
 \ELSE
 \STATE $(x^*,y^*,z^*) \leftarrow (x^0,y^0,z^0)$
 \STATE Same as lines 8 - 19
 \ENDIF
 \STATE \return $(x^*, y^*, z^*)$
 \end{algorithmic} 
 \end{algorithm}

We conducted three types of numerical experiments.
In Section \ref{sec: numerical results heuristic}, we verified whether the modification proposed in Section \ref{sec: tech7} contributes to reducing the execution time of Algorithm \ref{postpro alg}.
In Section \ref{sec: numerical results post-pro}, we checked whether Algorithm \ref{postpro alg} can obtain approximate optimal solutions with higher accuracy than the SDP solvers.
In Section \ref{sec: numerical results check status}, we tested whether Algorithm \ref{postpro alg} detects the feasibility status of SDP more accurately than the SDP solvers.
All executions were performed using MATLAB R2022a on an Intel(R) Core(TM) i9-10980XE CPU @ 3.00GHz machine with 128GB of RAM. 

In Section \ref{sec: numerical results heuristic} and Section \ref{sec: numerical results post-pro}, we measured the accuracy of the output solution $(x,y,z)$ to (P) and (D) using the DIMACS errors \cite{Mittelmann2003}.
Letting $d$ be the dimension of the Euclidean space $\mathbb{E}$ corresponding to $\mathcal{K}$, the DIMACS errors consists of six measures as follows: 
\begin{equation}
\notag
\begin{array}{ll}
{\rm err_1}(x, y, z) = \cfrac{\|\mathcal{A}x - b\|_2}{1+ \displaystyle \max_{i =1, \dots ,m}| b_i |}, &{\rm err_2}(x, y, z) = \max \left\{ 0, \cfrac{-\lambda_{\min} (x)}{1+\displaystyle \max_{i =1, \dots ,m}| b_i |} \right \}, \\
{\rm err_3}(x, y, z) = \cfrac{\| c - \mathcal{A}^*y - z\|_J}{1+ \displaystyle \max_{i =1, \dots ,d}| c_i |},	&{\rm err_4}(x, y, z) = \max \left\{ 0, \cfrac{-\lambda_{\min} (z)}{1+ \displaystyle \max_{i =1, \dots ,d}| c_i | } \right\},	\\
{\rm err_5}(x, y, z) =  \cfrac{\langle c, x \rangle - b^\top y}{1+|\langle c,x \rangle| + |b^\top y|},	&{\rm err_6}(x, y, z) = \cfrac{\langle x, z\rangle}{1+|\langle c,x \rangle| + |b^\top y|}.
\end{array}
\end{equation}

Primal feasibility is measured by ${\rm err_1}(x, y, z)$ and ${\rm err_2}(x, y, z)$ and dual feasibility is measured by ${\rm err_3}(x, y, z)$ and ${\rm err_4}(x, y, z)$.
Optimality is measured by ${\rm err_5}(x, y, z)$ and ${\rm err_6}(x, y, z)$. 
We note that the values of ${\rm err_5}(x, y, z)$ and ${\rm err_6}(x, y, z)$ might be negative as long as $(x,y,z)$ is an approximate solution.
If ${\rm err_5}(x, y, z) = -\delta$ and $\delta > 0$, then ${\rm err_5}(x, y, z)$ corresponds to a ``worse'' solution than if ${\rm err_5}(x, y, z) = \delta$.

\subsection{The effectiveness of the modification in Section \ref{sec: tech7}}
\label{sec: numerical results heuristic}
The instances used in this experiment are picked from SDPLIB \cite{Borchers1999}.
The feasibility status of SDPLIB instances can be inferred based on Freund et al.'s study \cite{Freund2007}.
Since this experiment was only to evaluate the modification in Section \ref{sec: tech7}, we chose only small size instances where both the primal and dual problems are strongly feasible, i.e., control1, control2, control3, truss1, truss3, and truss4.
We first solved these instances with Mosek to obtain an approximate optimal interior feasible solution $(x^0, y^0, z^0)$.
However, Mosek returned dual solutions $(y^0,z^0)$ such that $c-\mathcal{A}^*y^0 \notin \mathcal{K}$ and $z^0 \notin \mathcal{K}$ to truss1, truss3, and truss4.
Thus, we only checked for six instances how the computation time of Algorithm \ref{Practical p alg} changed with and without the modification proposed in Section \ref{sec: tech7}.
In this section, we referred to Algorithm \ref{Practical p alg} without the modification proposed in Section \ref{sec: tech7} as the ``Naive" method. 

Table \ref{Table: heuristic tech} summarizes the results of our experiments.
The ``times(s)" column shows the average CPU time of the corresponding method, and the other columns show the average value of the DIMACS errors.
Table \ref{Table: heuristic tech} shows that the computational time of Algorithm \ref{Practical p alg} is significantly smaller than that of the ``Naive" method. 
Algorithm \ref{Practical p alg} scales the problem with an approximate optimal interior feasible solution before executing the projection and rescaling algorithm. 
Such scaling reduced the number of iterations of the projection and rescaling algorithm in this experiment. 
Thus, the computational time of Algorithm \ref{Practical p alg} is much smaller than that of the ``Naive" method. 
\begin{table}
\caption{Experimental results testing the effectiveness of the modification proposed in Section \ref{sec: tech7}}
\label{Table: heuristic tech}
\begin{center}
\setlength{\tabcolsep}{5pt}
\footnotesize
\begin{tabular}{cccccccr} \toprule
Method	&${\rm err_1}$&${\rm err_2}$	&${\rm err_3}$	&${\rm err_4}$	&${\rm err_5}$	&${\rm err_6}$	&time(s)	\\	\midrule
Naive		&7.15e-14	&0	&0	&0	&3.89e-10	&3.89e-10	&13.36		\\
Algorithm \ref{Practical p alg}		&1.50e-14	&4.81e-17	&0	&0	&1.81e-13	&1.81e-13	&2.66		\\ \bottomrule
\end{tabular}
\end{center}
\end{table}

\subsection{The effectiveness of our post-processing algorithm}
\label{sec: numerical results post-pro}
The outline of our numerical experiment in this section is as follows:
\begin{enumerate}
\item Solve the instances with the SDP solvers using the default settings to obtain approximate optimal solutions $(x_{\rm def}, y_{\rm def}, z_{\rm def})$. 
\item Solve the instances using $(x_{\rm def}, y_{\rm def}, z_{\rm def})$ and Algorithm \ref{postpro alg} to obtain approximate optimal solutions $(x_{\rm pro}, y_{\rm pro}, z_{\rm pro})$. 
\item Solve the instances with the SDP solvers using the tight settings to obtain approximate optimal solutions $(x_{\rm tight}, y_{\rm tight}, z_{\rm tight})$. 
\item Compare the above three results regarding accuracy and computational time. 
\end{enumerate}

In this experiment, we used SDPA, SDPT3, and Mosek.
These solvers have parameters that allow us to tune the feasibility and the optimality tolerances. 
For example, SDPA terminates if an approximate optimal solution $(x, y, z)$ is obtained for parameters $\epsilon_{\rm primal}$, $\epsilon_{\rm dual}$ and $\epsilon_{\rm gap}$ such that
\begin{equation}
\notag
\begin{array}{lll}
\displaystyle \max_{i =1, \dots ,m} | (\mathcal{A}x-b)_i | \leq \epsilon_{\rm primal},
&\displaystyle \max_{i =1, \dots ,d} | (c - \mathcal{A}^*y - z)_i | \leq \epsilon_{\rm dual}, \ \mbox{and} 
&\cfrac{|\langle c, x \rangle - b^\top y|}{ \max \left\{ \left( |\langle c,x \rangle| + |b^\top y| \right)/2,  1 \right\} } \leq \epsilon_{\rm gap}
\end{array}
\end{equation}
are satisfied.
That is, the parameters $\epsilon_{\rm primal}$, $\epsilon_{\rm dual}$ and $\epsilon_{\rm gap}$ are the tolerances for primal feasibility, dual feasibility, and optimality measures, respectively.
The definitions of feasibility and optimality measures are slightly different for each solver, but the other solvers have parameters that play the same role. 
Table \ref{Table: para setting} summarizes the values of the parameters used in our experiment. 
The ``default tolerances" column shows the default values of the parameters for each solver.
The ``tight tolerances" column shows the values of the parameters used as the tight setting in our experiment.

\begin{table}
\caption{Solver setting in our numerical experiments}
\label{Table: para setting}
\begin{center}
\begin{tabular}{cccc|ccc} \toprule
	&\multicolumn{3}{c|}{default tolerances}	&\multicolumn{3}{c}{tight tolerances}	\\	
Solver	&$\epsilon_{\rm primal}$	&$\epsilon_{\rm dual}$	&$\epsilon_{\rm gap}$	&$\epsilon_{\rm primal}$	&$\epsilon_{\rm dual}$	&$\epsilon_{\rm gap}$ \\ \midrule
Mosek	&\multicolumn{3}{c|}{1.0e-08}		&\multicolumn{3}{c}{\multirow{3}{*}{1.0e-12}}	\\
SDPT3	&\multicolumn{3}{c|}{1.0e-08}		\\
SDPA		&\multicolumn{3}{c|}{1.0e-07}		\\ \bottomrule
\end{tabular}
\end{center}
\end{table}

The instances of SDPLIB were used in this experiment.
Note that the instances for which the SDP solver using the default setting returned infeasibility criteria were excluded.
In addition, we excluded some instances due to memory limitations and then conducted numerical experiments with 57 instances in total.
To appropriately observe the results, we classified these instances into two groups, the well-posed and the ill-posed groups, based on Freund et al.'s study \cite{Freund2007}.
The well-posed group includes 34 instances where both the primal and dual problems are expected to be strongly feasible.
On the other hand, the ill-posed group includes 23 instances where either the primal or dual problem is not expected to be strongly feasible.
According to \cite{Freund2007}, SDPLIB does not include the instances whose primal problem is strongly feasible, but the dual is not. 
Thus, the ill-posed group includes only instances where the dual problem is expected to be strongly feasible, but the primal problem is not.

The numerical results for the well-posed group and ill-posed group are summarized in Figures \ref{fig: Mosek-well}-\ref{fig: SDPT3-well} and \ref{fig: Mosek-ill}-\ref{fig: SDPT3-ill}, respectively. 
Figures \ref{fig: Mosek-well}-\ref{fig: SDPT3-ill} summarize the values of DIMACS errors for $(x_{\rm def}, y_{\rm def}, z_{\rm def})$, $(x_{\rm tight}, y_{\rm tight}, z_{\rm tight})$ and $(x_{\rm pro}, y_{\rm pro}, z_{\rm pro})$. 
In this experiment, no output $(x_{\rm pro}, y_{\rm pro}, z_{\rm pro})$ was obtained such that ${\rm err_2} (x_{\rm pro}, y_{\rm pro}, z_{\rm pro})>$1e-11 or ${\rm err_4}(x_{\rm pro}, y_{\rm pro}, z_{\rm pro})>0$.
Therefore, we omitted figures summarizing the values of ${\rm err_2}$ and ${\rm err_4}$.
In Figures \ref{fig: Mosek-well}-\ref{fig: SDPT3-ill}, the black solid line, the blue circle and the inverted orange triangle show the values of the DIMACS errors logarithmized by the base $10$ for $(x_{\rm def}, y_{\rm def}, z_{\rm def})$, $(x_{\rm tight}, y_{\rm tight}, z_{\rm tight})$ and $(x_{\rm pro}, y_{\rm pro}, z_{\rm pro})$, respectively. 
It is not plotted if the corresponding DIMACS error value is $0$.  
We note that the values of $\log_{10} | {\rm err_5}(x,y,z) |$ and $\log_{10} | {\rm err_6}(x,y,z) |$ are plotted because the values of ${\rm err_5}(x,y,z)$ and ${\rm err_6}(x,y,z)$ can be negative.
In addition, we plot the points where the corresponding DIMACS error values are negative by filling them in.
In each graph of Figures \ref{fig: Mosek-well}-\ref{fig: SDPT3-ill}, the horizontal axis indicates the instances, sorted so that the corresponding DIMACS error values for $(x_{\rm def}, y_{\rm def}, z_{\rm def})$ are in ascending order. 

First, let us compare the results for the well-posed group. 
From Figures \ref{fig: Mosek-well}-\ref{fig: SDPT3-well}, we can observe that Algorithm \ref{postpro alg} returned an approximate optimal solution $(x_{\rm pro}, y_{\rm pro}, z_{\rm pro})$ that was superior to the $(x_{\rm def}, y_{\rm def}, z_{\rm def})$ and $(x_{\rm tight}, y_{\rm tight}, z_{\rm tight})$ in terms of feasibility and optimality for almost all instances.
Algorithm \ref{postpro alg} returns $(x_{\rm pro}, y_{\rm pro}, z_{\rm pro})$ such that $z_{\rm pro} = c- \mathcal{A}^* y_{\rm pro}$, thus the value of ${\rm err_3} (x_{\rm pro}, y_{\rm pro}, z_{\rm pro})$ should be $0$ for all instances. 
However, in (b) of Figures \ref{fig: Mosek-well}-\ref{fig: SDPT3-well}, four inverted triangles representing the results for four instances, theta1, theta2, theta3, and theta4, were plotted. 
This is probably due to slight numerical errors in computing $c- \mathcal{A}^* y_{\rm pro}$ for these instances. 
Note that $(x_{\rm pro}, y_{\rm pro}, z_{\rm pro})$ satisfies $\langle c, x_{\rm pro} \rangle – b^\top y_{\rm pro} > 0$ for almost all instances. 

We can also see that the solvers with the tight setting tend to return more accurate approximate optimal solutions $(x_{\rm tight}, y_{\rm tight}, z_{\rm tight})$ than $(x_{\rm def}, y_{\rm def}, z_{\rm def})$.  
However, Mosek and SDPA, with the tight setting, returned strange outputs for some instances. 
Mosek, with the default setting, determined the primal and dual problems are expected to be feasible and returned an approximate optimal solution for all instances. 
The error message ``rescode = 100006" was returned for only one instance. 
This error message means that ``the optimizer is terminated due to slow progress".
On the other hand, when using Mosek with the tight settings, it returned the error message ``rescode = 100006" for 21 instances. 
Among these instances, 4 instances were determined that their primal problems were expected to be infeasible, and 13 instances were not determined their feasibility. 
For the aforementioned 4 instances and 13 instances, Mosek, with the tight setting, provided incorrect infeasibility certificates and inaccurate dual feasible solutions, respectively.
SDPA, with the default setting, determined the primal and/or dual problems are feasible and returned an approximate optimal solution for all instances. 
However, when using SDPA with tight settings, it was determined that at least one of the primal or dual problems was expected to be infeasible for 13 instances, and incorrect primal solutions were returned for 11 instances of them.
A possible reason for these strange outputs is that tightening the tolerances might cause numerical instability. 
Figure \ref{fig: SDPT3-well} shows that SDPT3 with the tight setting worked stably and obtained more accurate approximate optimal solutions $(x_{\rm tight}, y_{\rm tight}, z_{\rm tight})$ than $(x_{\rm def}, y_{\rm def}, z_{\rm def})$ for almost all instances. 
We also solved all instances using SDPT3 with the tolerances $\epsilon_{\rm primal}$, $\epsilon_{\rm dual}$ and $\epsilon_{\rm gap}$ set to 1e-13 and obtained the same results as Figure \ref{fig: SDPT3-well}.

Next, let us compare the results for the ill-posed group. 
Since Mosek, using the default setting, returned the reducing direction for hinf12, Figure \ref{fig: Mosek-ill} excludes the plot representing the result for this instance. 
On the other hand, Algorithm \ref{postpro alg} did not find a reducing direction for (P) in this experiment. 
Thus, Algorithm \ref{postpro alg} terminated by returning an approximate optimal solution for all instances. 
From Figures \ref{fig: Mosek-ill}-\ref{fig: SDPT3-ill}, we can observe that Algorithm \ref{postpro alg} returned an approximate optimal solution $(x_{\rm pro}, y_{\rm pro}, z_{\rm pro})$ that was superior to the $(x_{\rm def}, y_{\rm def}, z_{\rm def})$ and $(x_{\rm tight}, y_{\rm tight}, z_{\rm tight})$ in terms of feasibility for almost all instances.
In addition, we can see that the value of $| {\rm err_5}(x_{\rm pro}, y_{\rm pro}, z_{\rm pro}) |$ or $|{\rm err_6}(x_{\rm pro}, y_{\rm pro}, z_{\rm pro})|$ was larger than the value of $| {\rm err_5}(x_{\rm def}, y_{\rm def}, z_{\rm def}) |$ or $|{\rm err_6}(x_{\rm def}, y_{\rm def}, z_{\rm def})|$ for some instances. 
However, this observation does not imply that $(x_{\rm pro}, y_{\rm pro}, z_{\rm pro})$ was inferior to $(x_{\rm def}, y_{\rm def}, z_{\rm def})$ and $(x_{\rm tight}, y_{\rm tight}, z_{\rm tight})$ in terms of optimality. 

Tables \ref{Table: compare obj value Mosek}-\ref{Table: compare obj value SDPT3} compare the dual objective value of $(x_{\rm pro}, y_{\rm pro}, z_{\rm pro})$ with the primal or dual objective values of $(x_{\rm def}, y_{\rm def}, z_{\rm def})$ and $(x_{\rm tight}, y_{\rm tight}, z_{\rm tight})$. 
From Tables \ref{Table: compare obj value Mosek}-\ref{Table: compare obj value SDPT3}, we can see that
\begin{equation}
\notag
b^\top y_{\rm pro} > \max \{ b^\top y_{\rm def}, b^\top y_{\rm tight} \}
\end{equation}
holds for almost all instances of the ill-posed group. 
Noting that $c - \mathcal{A}^* y_{\rm pro} = z_{\rm pro} \in \mathcal{K}$ holds for all instances, $(y_{\rm pro}, z_{\rm pro})$ can be regarded as a more accurate optimal dual solution than $(y_{\rm def}, z_{\rm def})$ and $(y_{\rm tight}, z_{\rm tight})$.
Therefore, the reason why the value of $| {\rm err_5}(x_{\rm pro}, y_{\rm pro}, z_{\rm pro}) |$ or $|{\rm err_6}(x_{\rm pro}, y_{\rm pro}, z_{\rm pro})|$ was sometimes greater than the corresponding DIMACS error values for $(x_{\rm def}, y_{\rm def}, z_{\rm def})$ and $(x_{\rm tight}, y_{\rm tight}, z_{\rm tight})$ is that Algorithm \ref{postpro alg} did not obtain a sufficient accurate primal solution $x_{\rm pro}$ such that $\langle c, x_{\rm pro} \rangle \simeq b^\top y_{\rm pro}$ for some instances. 
Here, we note that the primal problem of all instances in the ill-posed group is not expected to be strongly feasible, which might prevent the projection and rescaling methods in Algorithm \ref{postpro alg} from working stably and obtaining accurate optimal primal solutions.

\begin{table}
\caption{Comparison of the objective values of  $(x_{\rm pro}, y_{\rm pro}, z_{\rm pro})$, $(x_{\rm def}, y_{\rm def}, z_{\rm def})$ and $(x_{\rm tight}, y_{\rm tight}, z_{\rm tight})$ for the ill-posed group : Mosek}
\label{Table: compare obj value Mosek}
\begin{center}
\setlength{\tabcolsep}{5pt}
\footnotesize
\begin{tabular}{ccccc} \toprule
instance	&$b^\top y_{\rm pro} - \langle c, x_{\rm def} \rangle$	&$b^\top y_{\rm pro} - b^\top y_{\rm def}$		&$b^\top y_{\rm pro} - \langle c, x_{\rm tight} \rangle$		&$b^\top y_{\rm pro} - b^\top y_{\rm tight}$		\\	\midrule
gpp100   &7.38e-06   &7.37e-06   &7.38e-06   &7.37e-06  \\
gpp124-1   &4.88e-06   &4.86e-06   &1.24e-06   &1.23e-06  \\
gpp124-2   &5.75e-05   &5.81e-05   &5.75e-05   &5.81e-05  \\
gpp124-3   &4.76e-05   &4.94e-05   &4.76e-05   &4.94e-05  \\
gpp124-4   &5.10e-05   &5.09e-05   &5.10e-05   &5.09e-05  \\
hinf1   &6.65e-07   &6.01e-07   &1.30e-06   &1.18e-06  \\
hinf3   &3.79e-05   &3.59e-05   &3.85e-05   &3.64e-05  \\
hinf4   &6.85e-05   &6.61e-05   &1.62e-05   &1.53e-05  \\
hinf5   &8.33e+00   &8.33e+00   &8.33e+00   &8.33e+00  \\
hinf6   &1.86e-03   &1.75e-03   &7.51e-03   &7.21e-03  \\
hinf7   &7.17e+00   &7.17e+00   &7.17e+00   &7.17e+00  \\
hinf8   &4.63e-03   &4.46e-03   &4.63e-03   &4.46e-03  \\
hinf10   &7.17e-05   &4.00e-05   &3.09e-04   &2.43e-04  \\
hinf11   &2.23e-04   &1.93e-04   &1.60e-03   &1.48e-03  \\
hinf13   &3.40e-04   &2.59e-04   &3.40e-04   &2.59e-04  \\
hinf14   &5.49e-04   &5.25e-04   &5.58e-04   &5.34e-04  \\
hinf15   &8.12e-04   &6.42e-04   &2.24e-03   &2.08e-03  \\
qap5   &1.29e-06   &1.22e-06   &1.90e-09   &1.79e-09  \\
qap6   &4.38e-04   &4.36e-04   &2.68e-03   &2.61e-03  \\
qap7   &4.81e-05   &4.43e-05   &8.71e-04   &8.53e-04  \\
qap8   &5.26e-03   &5.25e-03   &4.50e-03   &4.49e-03  \\
qap9   &1.14e-03   &1.13e-03   &4.89e-03   &4.84e-03  \\
qap10   &1.56e-02   &1.55e-02   &1.56e-02   &1.55e-02  \\
\bottomrule
\end{tabular}
\end{center}
\end{table}

\begin{table}
\caption{Comparison of the objective values of  $(x_{\rm pro}, y_{\rm pro}, z_{\rm pro})$, $(x_{\rm def}, y_{\rm def}, z_{\rm def})$ and $(x_{\rm tight}, y_{\rm tight}, z_{\rm tight})$ for the ill-posed group : SDPA}
\label{Table: compare obj value SDPA}
\begin{center}
\setlength{\tabcolsep}{5pt}
\footnotesize
\begin{tabular}{ccccc} \toprule
instance	&$b^\top y_{\rm pro} - \langle c, x_{\rm def} \rangle$	&$b^\top y_{\rm pro} - b^\top y_{\rm def}$		&$b^\top y_{\rm pro} - \langle c, x_{\rm tight} \rangle$		&$b^\top y_{\rm pro} - b^\top y_{\rm tight}$		\\	\midrule
gpp100   &-1.88e-07   &2.07e-07   &-1.55e-03   &3.00e-05  \\
gpp124-1   &-1.29e-07   &5.94e-08   &-1.04e-03   &1.28e-05  \\
gpp124-2   &-4.51e-06   &1.03e-07   &-1.21e-03   &9.34e-06  \\
gpp124-3   &5.07e-05   &6.27e-05   &-2.05e-03   &1.04e-04  \\
gpp124-4   &-3.14e-05   &1.88e-06   &-4.54e-03   &1.03e-04  \\
hinf1   &1.18e-05   &1.27e-05   &-2.06e-04   &1.09e-04  \\
hinf3   &-5.02e-03   &9.78e-03   &4.19e-03   &5.67e-03  \\
hinf4   &3.46e-04   &3.46e-04   &5.96e-04   &4.22e-04  \\
hinf5   &2.05e+01   &2.00e+01   &2.06e+01   &2.01e+01  \\
hinf6   &4.65e-02   &2.41e-02   &2.86e-01   &2.40e-01  \\
hinf7   &-3.08e+00   &4.77e+00   &-3.08e+00   &4.77e+00  \\
hinf8   &1.69e-01   &1.69e-01   &1.68e-01   &1.68e-01  \\
hinf10   &1.07e-01   &5.37e-02   &9.72e+00   &6.91e+00  \\
hinf11   &1.25e-01   &6.30e-02   &3.11e+00   &2.58e+00  \\
hinf12   &2.63e+01   &3.21e+00   &4.93e+01   &4.17e+01  \\
hinf13   &4.39e+00   &2.88e+00   &4.39e+00   &2.88e+00  \\
hinf14   &-2.90e-03   &3.14e-03   &-2.03e-02   &1.84e-02  \\
hinf15   &3.63e+00   &2.64e+00   &3.63e+00   &2.64e+00  \\
qap5   &-2.24e-02   &3.36e-04   &-1.60e-03   &1.08e-04  \\
qap6   &-5.04e-02   &1.21e-02   &-1.93e-02   &5.86e-03  \\
qap7   &-1.01e-01   &2.34e-02   &-4.57e-02   &1.58e-02  \\
qap8   &-1.53e-01   &2.11e-02   &-4.95e-01   &1.62e-01  \\
qap9   &-3.95e-01   &8.01e-02   &-3.01e-01   &5.72e-02  \\
qap10   &-7.21e-01   &8.16e-02   &-6.57e-01   &7.75e-02  \\
\bottomrule
\end{tabular}
\end{center}
\end{table}

\begin{table}
\caption{Comparison of the objective values of  $(x_{\rm pro}, y_{\rm pro}, z_{\rm pro})$, $(x_{\rm def}, y_{\rm def}, z_{\rm def})$ and $(x_{\rm tight}, y_{\rm tight}, z_{\rm tight})$ for the ill-posed group : SDPT3}
\label{Table: compare obj value SDPT3}
\begin{center}
\setlength{\tabcolsep}{5pt}
\footnotesize
\begin{tabular}{ccccc} \toprule
instance	&$b^\top y_{\rm pro} - \langle c, x_{\rm def} \rangle$	&$b^\top y_{\rm pro} - b^\top y_{\rm def}$		&$b^\top y_{\rm pro} - \langle c, x_{\rm tight} \rangle$		&$b^\top y_{\rm pro} - b^\top y_{\rm tight}$		\\	\midrule
gpp100   &2.88e-06   &1.88e-06   &3.70e-06   &1.84e-06  \\
gpp124-1   &9.15e-07   &3.71e-07   &9.20e-07   &3.71e-07  \\
gpp124-2   &1.71e-06   &1.11e-06   &2.25e-06   &1.09e-06  \\
gpp124-3   &4.48e-06   &2.94e-06   &5.65e-06   &2.88e-06  \\
gpp124-4   &2.72e-05   &1.20e-05   &2.78e-05   &1.20e-05  \\
hinf1   &1.03e-04   &5.19e-05   &1.03e-04   &5.19e-05  \\
hinf3   &2.71e-02   &1.36e-02   &2.71e-02   &1.36e-02  \\
hinf4   &1.92e-03   &9.68e-04   &1.92e-03   &9.68e-04  \\
hinf5   &4.81e+00   &4.47e+00   &4.81e+00   &4.47e+00  \\
hinf6   &1.60e-02   &1.73e-02   &2.89e-02   &1.46e-02  \\
hinf7   &1.42e-02   &7.48e-03   &1.42e-02   &7.48e-03  \\
hinf8   &4.06e-02   &2.03e-02   &4.06e-02   &2.03e-02  \\
hinf10   &6.14e-02   &5.28e-02   &1.35e-01   &6.73e-02  \\
hinf11   &7.37e-02   &3.68e-02   &7.37e-02   &3.68e-02  \\
hinf12   &2.97e-05   &1.45e-05   &-8.09e-06   &-1.29e-05  \\
hinf13   &1.03e-02   &4.33e-03   &6.81e-03   &2.03e-03  \\
hinf14   &6.37e-05   &8.15e-05   &5.90e-05   &3.54e-05  \\
hinf15   &1.80e-02   &2.76e-02   &1.80e-02   &2.76e-02  \\
qap5   &-3.08e-07   &8.53e-08   &-4.55e-09   &1.58e-09  \\
qap6   &4.52e-02   &2.22e-02   &4.52e-02   &2.22e-02  \\
qap7   &3.15e-02   &1.55e-02   &3.15e-02   &1.55e-02  \\
qap8   &1.13e-01   &5.54e-02   &1.13e-01   &5.54e-02  \\
qap9   &2.19e-02   &1.06e-02   &2.19e-02   &1.06e-02  \\
qap10   &5.11e-02   &1.69e-02   &5.11e-02   &1.69e-02  \\
\bottomrule
\end{tabular}
\end{center}
\end{table}

From the results of the solvers for the ill-posed group, we can observe the same thing as for the well-posed group. 
When using SDPA and Mosek with the tight setting, they sometimes returned inaccurate primal solutions and inaccurate dual solutions, respectively. 
In addition, SDPT3 with the tight setting returned more accurate optimal solutions $(x_{\rm tight}, y_{\rm tight}, z_{\rm tight})$ than $(x_{\rm def}, y_{\rm def}, z_{\rm def})$ for almost all instances.
We note that SDPT3 obtained the same results when $\epsilon_{\rm primal} = \epsilon_{\rm dual} = \epsilon_{\rm gap}=$ 1e-13 as when $\epsilon_{\rm primal} = \epsilon_{\rm dual} = \epsilon_{\rm gap}=$ 1e-12.

Let us compare the results for the well-posed and ill-posed groups. 
Comparing Figures \ref{fig: Mosek-well}-\ref{fig: SDPT3-well} and \ref{fig: Mosek-ill}-\ref{fig: SDPT3-ill}, we can observe the following: 
\begin{itemize}
\item Algorithm \ref{postpro alg} did not consistently obtain an accurate optimal solution for the ill-posed group compared to the results for the well-posed group. 
\item Algorithm \ref{postpro alg} returned $(x_{\rm pro}, y_{\rm pro}, z_{\rm pro})$ such that $|{\rm err_5} (x_{\rm pro}, y_{\rm pro}, z_{\rm pro})| \simeq |{\rm err_6} (x_{\rm pro}, y_{\rm pro}, z_{\rm pro})|$ for almost all instances of the well-posed group, but such a relation did not hold for the ill-posed group. 
\end{itemize}

The first observation is evident in Figures \ref{fig: Mosek-well}-\ref{fig: SDPT3-ill}, (c). 
To clarify the second observation, see Table \ref{Table: compare err5 and err6}. 
Table \ref{Table: compare err5 and err6} summarizes the average value of $| |{\rm err_5} (x_{\rm pro}, y_{\rm pro}, z_{\rm pro})| - |{\rm err_6} (x_{\rm pro}, y_{\rm pro}, z_{\rm pro})| |$ for each group. 
Table \ref{Table: compare err5 and err6} shows that for all combinations of Algorithm \ref{postpro alg} and the solvers, the average value of $| |{\rm err_5} (x_{\rm pro}, y_{\rm pro}, z_{\rm pro})| - |{\rm err_6} (x_{\rm pro}, y_{\rm pro}, z_{\rm pro})| |$ for the ill-posed group is greater than for the well-posed group.
From this table, we can see that the relation $|{\rm err_5} (x_{\rm pro}, y_{\rm pro}, z_{\rm pro})| \simeq |{\rm err_6} (x_{\rm pro}, y_{\rm pro}, z_{\rm pro})|$ holds for the well-posed group but not for the ill-posed group. 

\begin{table}
\caption{Comparison of the average values of $\left| |{\rm err_5} (x_{\rm pro}, y_{\rm pro}, z_{\rm pro})| - |{\rm err_6} (x_{\rm pro}, y_{\rm pro}, z_{\rm pro})| \right|$}
\label{Table: compare err5 and err6}
\begin{center}
\setlength{\tabcolsep}{5pt}
\footnotesize
\begin{tabular}{ccc} \toprule
Method					&Well-posed group	&Ill-posed group	\\	\midrule
Mosek + Algorithm \ref{postpro alg}	&5.34e-11	&3.45e-03	\\
SDPA + Algorithm \ref{postpro alg}	&1.93e-11	&3.75e-03	\\
SDPT3 + Algorithm \ref{postpro alg}	&2.51e-12	&3.51e-04	\\
\bottomrule
\end{tabular}
\end{center}
\end{table}

There are two reasons for this.
The first one is Algorithm \ref{postpro alg} did not obtain accurate optimal primal solutions for some instances of the ill-posed group. 
As mentioned above, the projection and rescaling methods might not work stably due to the feasibility status of the primal problem for the ill-posed group, which results in the instability of Algorithm \ref{postpro alg} in obtaining accurate optimal primal solutions. 
Therefore, the relation $|{\rm err_5} (x_{\rm pro}, y_{\rm pro}, z_{\rm pro})| \simeq |{\rm err_6} (x_{\rm pro}, y_{\rm pro}, z_{\rm pro})|$ did not hold for the ill-posed group compared to the results for the well-posed group. 
The second one is the existence of a reducing direction for (P). 
Recall that $(f, -\mathcal{A}^* f) \in \mathbb{R}^m \times \mathbb{E}$ is called a reducing direction for (P) if $(f, -\mathcal{A}^* f)$ satisfies $b^\top f = 0$ and $-\mathcal{A}^* f \in \mathcal{K} \setminus \{0\}$. 
Thus, if (D) has an optimal solution and a reducing direction for (P) exists, then the optimal solution set of (D) is unbounded. 
Suppose that optimal dual solutions and reducing directions for (P) exist for all instances of the ill-posed group.
Then, for any optimal dual solution $(y, z)$ and any positive value $k > 0$, there exists a dual optima solution $(y_{opt}, z_{opt})$ and a reducing direction $(f, -\mathcal{A}^* f)$ such that $y = y_{opt} + f$ and $z_{opt} + k \mathcal{A}^* f \notin \mathcal{K}$, and we have
\begin{align*}
{\rm err_5} (x,y,z)
&= \cfrac{\langle c, x \rangle - b^\top (y_{opt} + f)}{1+|\langle c,x \rangle| + |b^\top y|} \\
&= \cfrac{\langle c, x \rangle - b^\top y_{opt}}{1+|\langle c,x \rangle| + |b^\top y|}, \ \ \ \mbox{(since $b^\top f = 0$)}
\end{align*}
and
\begin{align*}
{\rm err_6} (x,y,z) 
&= \cfrac{\langle x,  c - \mathcal{A}^* y_{opt} - \mathcal{A}^* f \rangle}{1+|\langle c,x \rangle| + |b^\top y|} \\
&= \cfrac{\langle x,  c \rangle - (\mathcal{A}x)^\top y_{opt} + \langle x, - \mathcal{A}^* f \rangle}{1+|\langle c,x \rangle| + |b^\top y|}.
\end{align*}
Therefore, even if $\|\mathcal{A} x_{\rm pro} - b\|_2$ is sufficiently small, the value of $\langle x_{pro}, - \mathcal{A}^* f \rangle$ might not be negligibly small as long as $y_{\rm pro} \simeq y_{opt} + f$ holds for some reducing direction $(f, -\mathcal{A}^* f)$ such that the value of $\|-\mathcal{A}^* f\|$ is very large.

Table \ref{Table: compare normalized dual var norm} summarizes the average values of $f(z) = \frac{\|z\|}{1+ \displaystyle \max_{i}| c_i | + \displaystyle \max_{i, j}| A_{ij} |}$ for $z_{\rm def}$ and $z_{\rm pro}$ by the well-posed and the ill-posed groups, where $A \in \mathbb{R}^{m \times d}$ is a matrix representation of the linear operator $\mathcal{A}$.
Since Mosek and SDPA with the tight settings did not work stably in this experiment, the average values of $f(z_{\rm tight})$ for each solver are omitted. 
In Table \ref{Table: compare normalized dual var norm}, the first, third, and fifth rows show the average values of $f(z)$ for $z_{\rm def}$ obtained from Mosek, SDPA, and SDPT3, respectively. 
The second, fourth, and sixth rows show the average values of $f(z)$ for $z_{\rm pro}$ obtained from Algorithm \ref{postpro alg} using $(x_{\rm def}, y_{\rm def}, z_{\rm def})$ returned from Mosek, SDPA, and SDPT3, respectively. 
We note that Algorithm \ref{postpro alg} returned $z_{\rm pro}$ such that $\| z_{\rm pro} \| >> \| z_{\rm def} \|$ for hinf2. 
Since hinf2 is included in the well-posed group, in Table \ref{Table: compare normalized dual var norm}, the values in rows 2, 4, and 6 of the ``Well-posed group" column are significantly different from the values in rows 1, 3 and 5 of the same column, respectively. 
The average values of $f(z_{\rm def})$ and $f(z_{\rm pro})$ for the well-posed group, excluding hinf2, are summarized in the second column. 
From Table \ref{Table: compare normalized dual var norm}, we can see that the average values of $f(z_{\rm def})$ and $f(z_{\rm pro})$ in the third column are greater than in the second column. 
Thus, the dual solutions $z_{\rm def}$ and $z_{\rm pro}$ obtained by the solvers and Algorithm \ref{postpro alg} for the ill-posed group are likely to include a reducing direction $(f, -\mathcal{A}^* f)$ such that the value of $\|-\mathcal{A}^* f\|$ is very large.
Furthermore, we can expect that the dual optimal solution set of hinf2 includes solutions $(y_{min}, z_{min})$ and $(y_{max}, z_{max})$ such that $\|z_{min}\| << \|z_{max}\|$.

\begin{table}
\caption{Comparison of the average values of $f(z) = \frac{\|z\|}{1+ \displaystyle \max_{i}| c_i | + \displaystyle \max_{i, j}| A_{ij} |}$ for $z_{\rm def}$ and $z_{\rm pro}$}
\label{Table: compare normalized dual var norm}
\begin{center}
\setlength{\tabcolsep}{5pt}
\footnotesize
\begin{tabular}{cccc} \toprule
Method					&Well-posed group	&Well-posed group excluding hinf2	&Ill-posed group	\\	\midrule
Mosek					&1.04e+03	&6.85e+02	&6.75e+05	\\
Mosek + Algorithm \ref{postpro alg}	&3.32e+03	&6.85e+02	&2.63e+10	\\
SDPA						&1.04e+03	&6.85e+02	&8.81e+04	\\
SDPA + Algorithm \ref{postpro alg}	&3.32e+03	&6.85e+02	&2.74e+11	\\
SDPT3					&6.70e+02	&6.85e+02	&4.66e+08	\\
SDPT3 + Algorithm \ref{postpro alg}	&3.33e+03	&6.85e+02	&7.87e+09	\\
\bottomrule
\end{tabular}
\end{center}
\end{table}

\begin{figure}[htbp]
    \begin{tabular}{cc}
      \begin{minipage}[t]{0.475\hsize}
        \centering
        \includegraphics[keepaspectratio, scale=0.425]{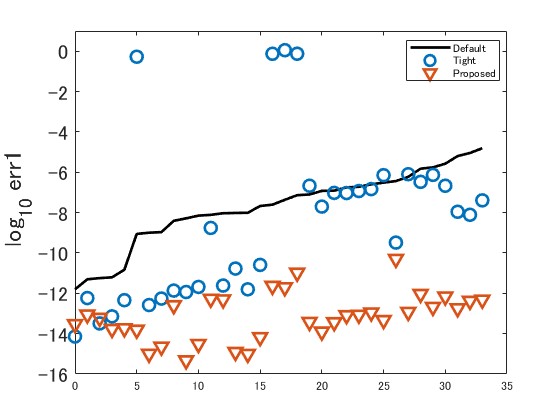}
        \subcaption{The values of $\log_{10} {\rm err_1}(x,y,z)$}
        \label{fig: Mosek-well-err1}
      \end{minipage} &
      \begin{minipage}[t]{0.475\hsize}
        \centering
        \includegraphics[keepaspectratio, scale=0.425]{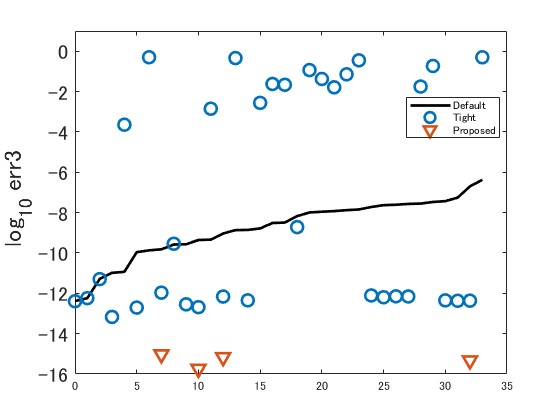}
	\subcaption{The values of $\log_{10} {\rm err_3}(x,y,z)$}
        \label{fig: Mosek-well-err3}
      \end{minipage} \\

      \begin{minipage}[t]{0.475\hsize}
        \centering
        \includegraphics[keepaspectratio, scale=0.425]{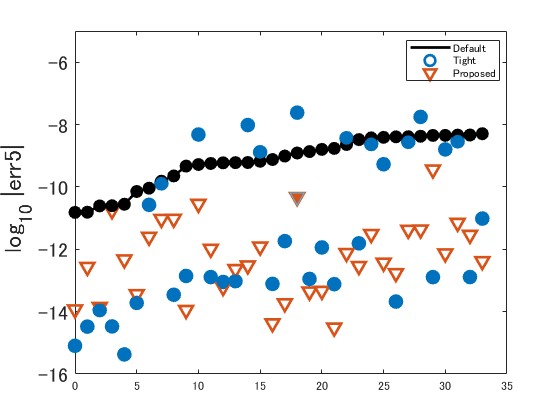}
	\subcaption{The values of $\log_{10} |{\rm err_5}(x,y,z)|$}
        \label{fig: Mosek-well-err5}
      \end{minipage} &
   
      \begin{minipage}[t]{0.475\hsize}
        \centering
        \includegraphics[keepaspectratio, scale=0.425]{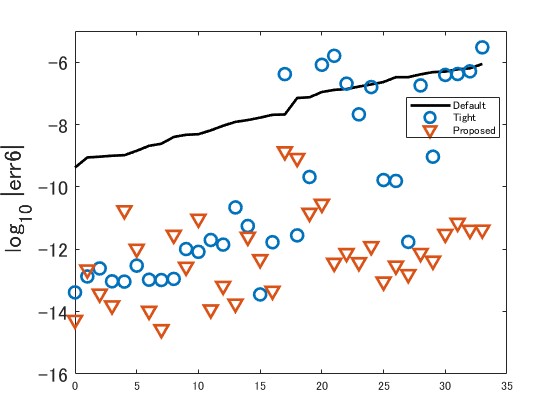}
	\subcaption{The values of $\log_{10} |{\rm err_6}(x,y,z)|$}
        \label{fig: Mosek-well-err6}
      \end{minipage}
   \end{tabular}
\caption{Numreical results for the well-posed group with Mosek}
\label{fig: Mosek-well}
\end{figure}

\begin{figure}[htbp]
    \begin{tabular}{cc}
      \begin{minipage}[t]{0.475\hsize}
        \centering
        \includegraphics[keepaspectratio, scale=0.425]{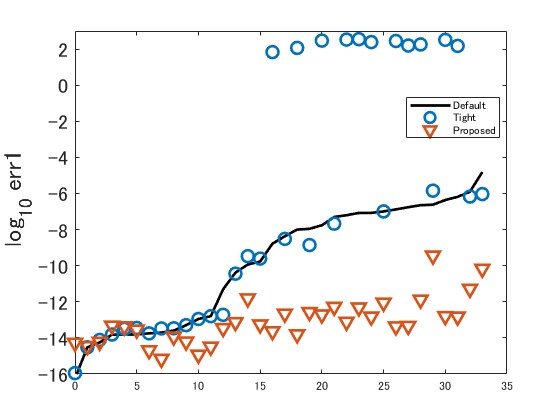}
        \subcaption{The values of $\log_{10} {\rm err_1}(x,y,z)$}
        \label{fig: SDPA-well-err1}
      \end{minipage} &
      \begin{minipage}[t]{0.475\hsize}
        \centering
        \includegraphics[keepaspectratio, scale=0.425]{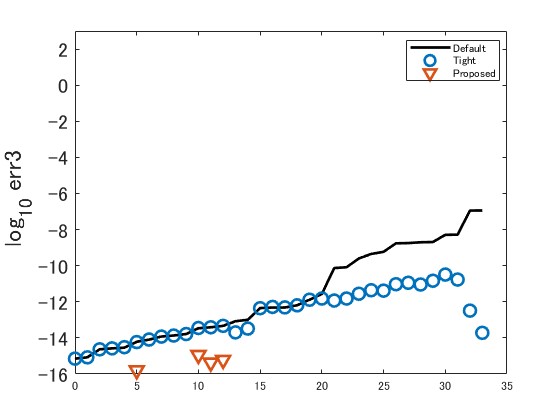}
	\subcaption{The values of $\log_{10} {\rm err_3}(x,y,z)$}
        \label{fig: SDPA-well-err3}
      \end{minipage} \\

      \begin{minipage}[t]{0.475\hsize}
        \centering
        \includegraphics[keepaspectratio, scale=0.425]{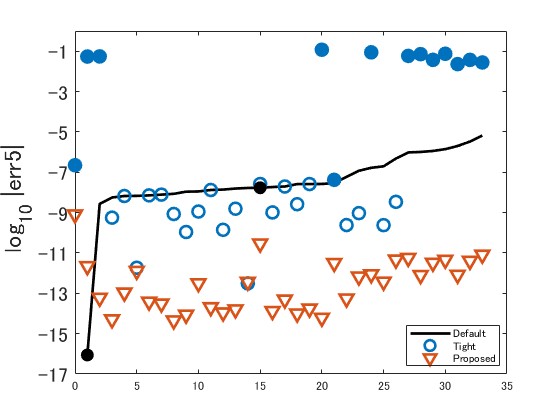}
	\subcaption{The values of $\log_{10} |{\rm err_5}(x,y,z)|$}
        \label{fig: SDPA-well-err5}
      \end{minipage} &
   
      \begin{minipage}[t]{0.475\hsize}
        \centering
        \includegraphics[keepaspectratio, scale=0.425]{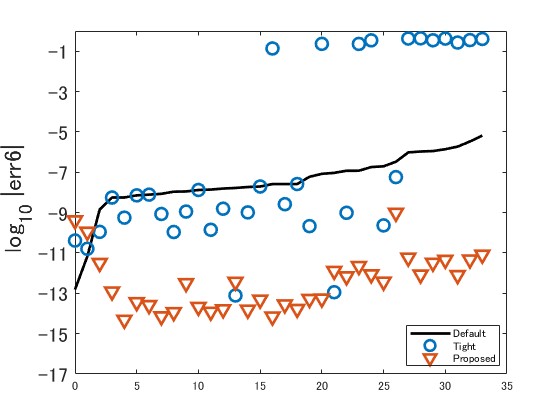}
	\subcaption{The values of $\log_{10} |{\rm err_6}(x,y,z)|$}
        \label{fig: SDPA-well-err6}
      \end{minipage}
   \end{tabular}
\caption{Numreical results for the well-posed group with SDPA}
\label{fig: SDPA-well}
\end{figure}

\begin{figure}[htbp]
    \begin{tabular}{cc}
      \begin{minipage}[t]{0.475\hsize}
        \centering
        \includegraphics[keepaspectratio, scale=0.425]{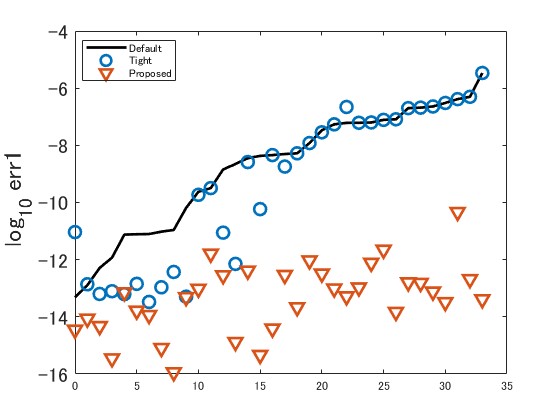}
        \subcaption{The values of $\log_{10} {\rm err_1}(x,y,z)$}
        \label{fig: SDPT3-well-err1}
      \end{minipage} &
      \begin{minipage}[t]{0.475\hsize}
        \centering
        \includegraphics[keepaspectratio, scale=0.425]{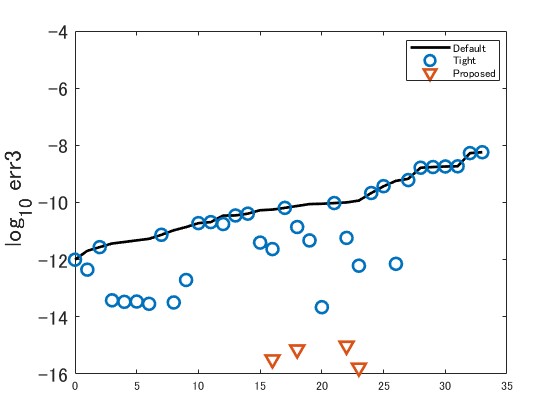}
	\subcaption{The values of $\log_{10} {\rm err_3}(x,y,z)$}
        \label{fig: SDPT3-well-err3}
      \end{minipage} \\

      \begin{minipage}[t]{0.475\hsize}
        \centering
        \includegraphics[keepaspectratio, scale=0.425]{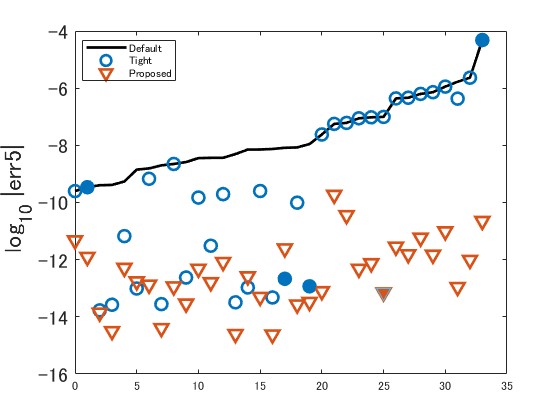}
	\subcaption{The values of $\log_{10} |{\rm err_5}(x,y,z)|$}
        \label{fig: SDPT3-well-err5}
      \end{minipage} &
   
      \begin{minipage}[t]{0.475\hsize}
        \centering
        \includegraphics[keepaspectratio, scale=0.425]{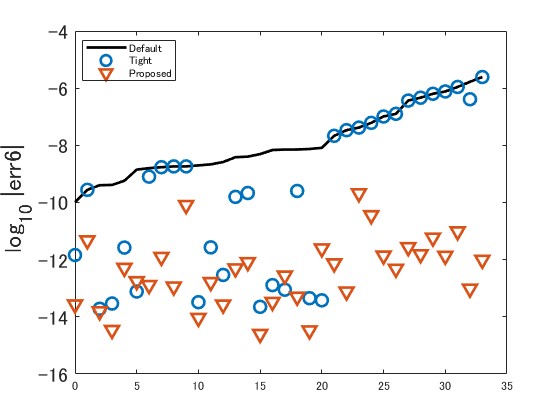}
	\subcaption{The values of $\log_{10} |{\rm err_6}(x,y,z)|$}
        \label{fig: SDPT3-well-err6}
      \end{minipage}
   \end{tabular}
\caption{Numreical results for the well-posed group with SDPT3}
\label{fig: SDPT3-well}
\end{figure}

\begin{figure}[htbp]
    \begin{tabular}{cc}
      \begin{minipage}[t]{0.475\hsize}
        \centering
        \includegraphics[keepaspectratio, scale=0.425]{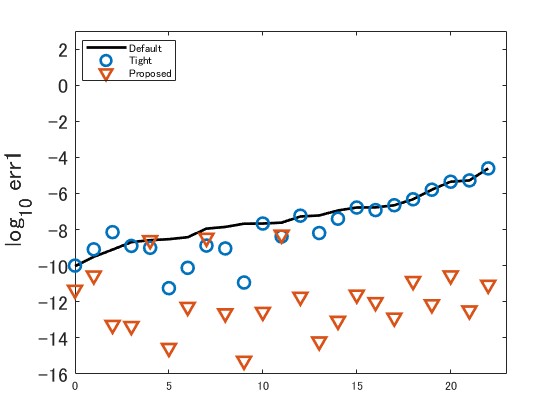}
        \subcaption{The values of $\log_{10} {\rm err_1}(x,y,z)$}
        \label{fig: Mosek-ill-err1}
      \end{minipage} &
      \begin{minipage}[t]{0.475\hsize}
        \centering
        \includegraphics[keepaspectratio, scale=0.425]{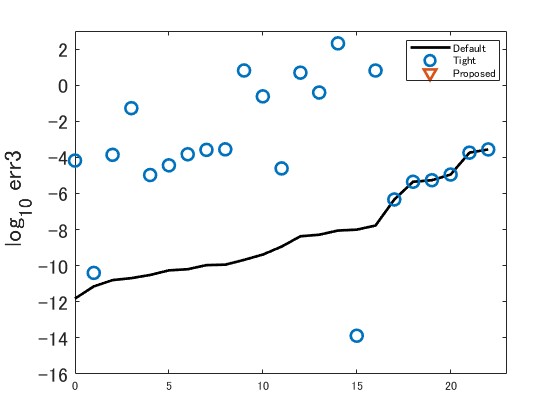}
	\subcaption{The values of $\log_{10} {\rm err_3}(x,y,z)$}
        \label{fig: Mosek-ill-err3}
      \end{minipage} \\

      \begin{minipage}[t]{0.475\hsize}
        \centering
        \includegraphics[keepaspectratio, scale=0.425]{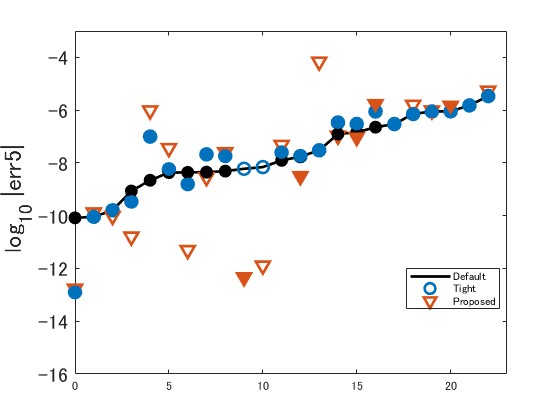}
	\subcaption{The values of $\log_{10} |{\rm err_5}(x,y,z)|$}
        \label{fig: Mosek-ill-err5}
      \end{minipage} &
   
      \begin{minipage}[t]{0.475\hsize}
        \centering
        \includegraphics[keepaspectratio, scale=0.425]{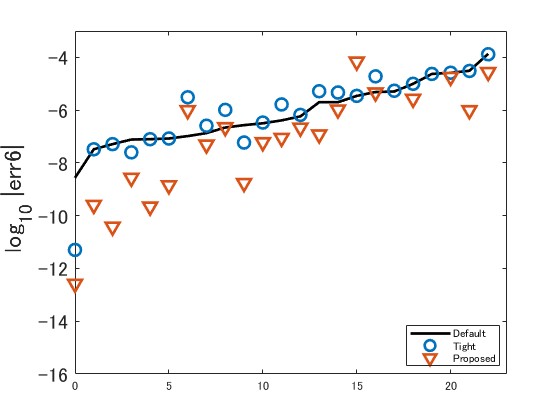}
	\subcaption{The values of $\log_{10} |{\rm err_6}(x,y,z)|$}
        \label{fig: Mosek-ill-err6}
      \end{minipage}
   \end{tabular}
\caption{Numreical results for the ill-posed group with Mosek}
\label{fig: Mosek-ill}
\end{figure}

\begin{figure}[htbp]
    \begin{tabular}{cc}
      \begin{minipage}[t]{0.475\hsize}
        \centering
        \includegraphics[keepaspectratio, scale=0.425]{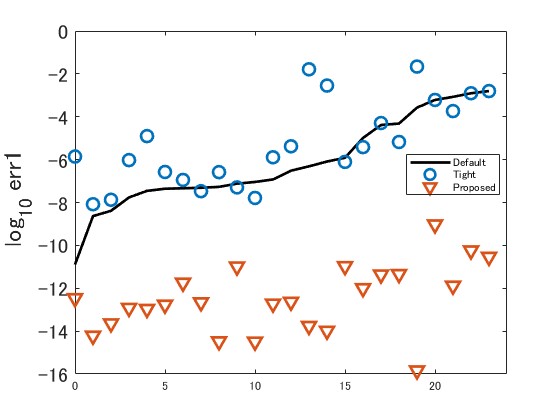}
        \subcaption{The values of $\log_{10} {\rm err_1}(x,y,z)$}
        \label{fig: SDPA-ill-err1}
      \end{minipage} &
      \begin{minipage}[t]{0.475\hsize}
        \centering
        \includegraphics[keepaspectratio, scale=0.425]{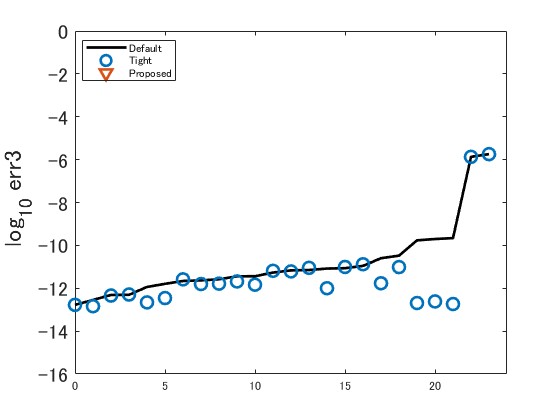}
	\subcaption{The values of $\log_{10} {\rm err_3}(x,y,z)$}
        \label{fig: SDPA-ill-err3}
      \end{minipage} \\

      \begin{minipage}[t]{0.475\hsize}
        \centering
        \includegraphics[keepaspectratio, scale=0.425]{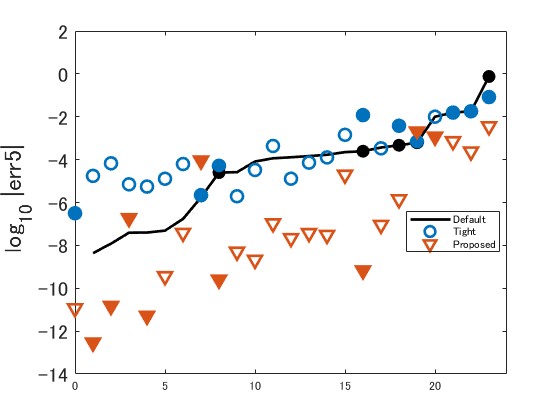}
	\subcaption{The values of $\log_{10} |{\rm err_5}(x,y,z)|$}
        \label{fig: SDPA-ill-err5}
      \end{minipage} &
   
      \begin{minipage}[t]{0.475\hsize}
        \centering
        \includegraphics[keepaspectratio, scale=0.425]{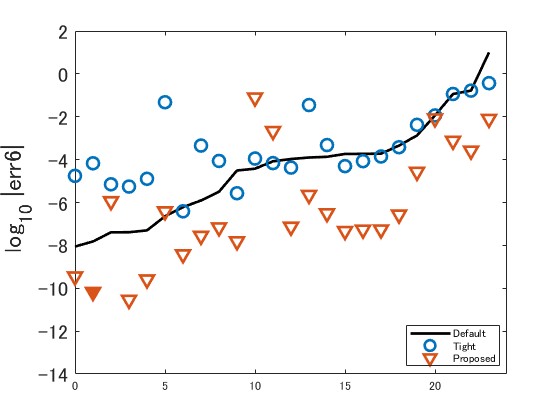}
	\subcaption{The values of $\log_{10} |{\rm err_6}(x,y,z)|$}
        \label{fig: SDPA-ill-err6}
      \end{minipage}
   \end{tabular}
\caption{Numreical results for the ill-posed group with SDPA}
\label{fig: SDPA-ill}
\end{figure}

\begin{figure}[htbp]
    \begin{tabular}{cc}
      \begin{minipage}[t]{0.475\hsize}
        \centering
        \includegraphics[keepaspectratio, scale=0.425]{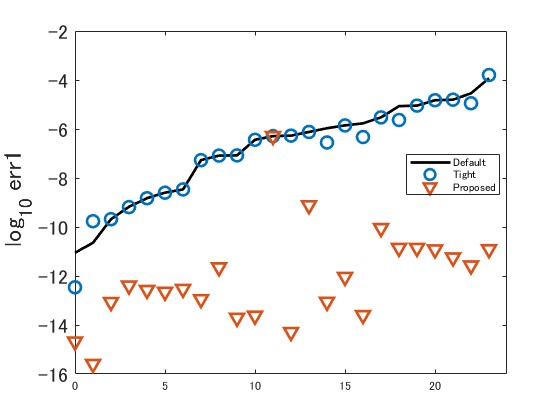}
        \subcaption{The values of $\log_{10} {\rm err_1}(x,y,z)$}
        \label{fig: SDPT3-ill-err1}
      \end{minipage} &
      \begin{minipage}[t]{0.475\hsize}
        \centering
        \includegraphics[keepaspectratio, scale=0.425]{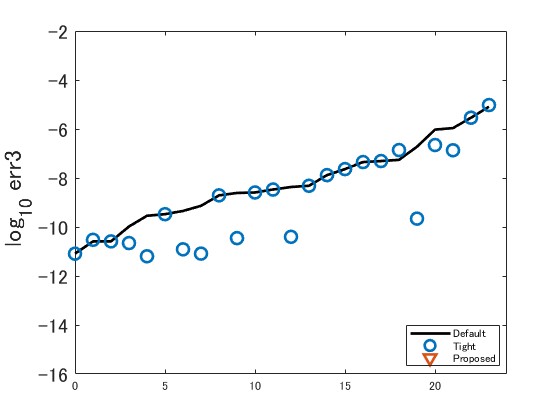}
	\subcaption{The values of $\log_{10} {\rm err_3}(x,y,z)$}
        \label{fig: SDPT3-ill-err3}
      \end{minipage} \\

      \begin{minipage}[t]{0.475\hsize}
        \centering
        \includegraphics[keepaspectratio, scale=0.425]{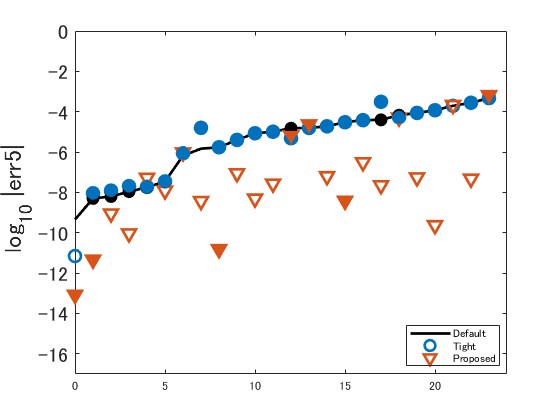}
	\subcaption{The values of $\log_{10} |{\rm err_5}(x,y,z)|$}
        \label{fig: SDPT3-ill-err5}
      \end{minipage} &
   
      \begin{minipage}[t]{0.475\hsize}
        \centering
        \includegraphics[keepaspectratio, scale=0.425]{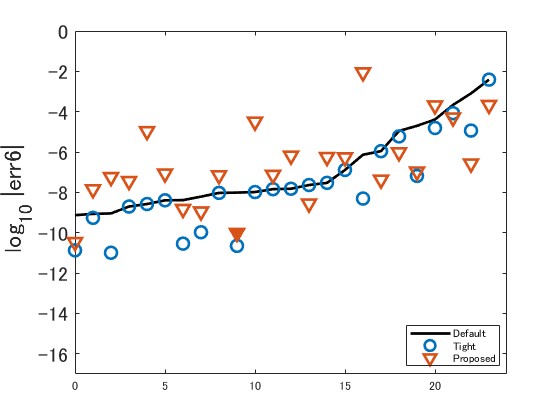}
	\subcaption{The values of $\log_{10} |{\rm err_6}(x,y,z)|$}
        \label{fig: SDPT3-ill-err6}
      \end{minipage}
   \end{tabular}
\caption{Numreical results for the ill-posed group with SDPT3}
\label{fig: SDPT3-ill}
\end{figure}


Finally, let us compare the execution time of Algorithm \ref{postpro alg} for each instance. 
Figures \ref{fig: TimeSize-Mosek}, \ref{fig: TimeSize-SDPA} and \ref{fig: TimeSize-SDPT3} show the execution time of Algorithm \ref{postpro alg} using $(x_{\rm def}, y_{\rm def}, z_{\rm def})$ returned from Mosek, SDPA and SDPT3, respectively.
In these figures, the horizontal axis shows instances sorted in ascending order of problem size.
In this study, we defined the problem size as $m \times d$, where $m$ is the number of constraints and $d$ is the dimension of the Euclidean space $\mathbb{E}$ corresponding to $\mathcal{K}$. 
The line graph shows the execution time of Algorithm \ref{postpro alg}, and the bar graph shows the problem size.
From Figures \ref{fig: TimeSize-Mosek}-\ref{fig: TimeSize-SDPT3}, we can observe the following: 
\begin{itemize}
\item As the problem size increases, the execution time of Algorithm \ref{postpro alg} tends to increase.
\item There was a variation in the execution time of Algorithm \ref{postpro alg}, even for instances of similar problem size.
\end{itemize}
 
The above results were observed due to the structure of the projective rescaling method. 
The computational cost of the projection and rescaling algorithm proposed in \cite{Kanoh2023} is
\begin{equation}
\label{comp cost}
\mathcal{O} \left( -\frac{r}{\log \xi} \log \left( \frac{1}{\varepsilon}\right) \left( m^3+m^2d +  \frac{1}{\xi^2} p^2 r_{\max}^2  \left( \max \left( C^{\rm sd} , md \right) \right) \right) \right)
\end{equation}
where $C^{\rm sd}$ is the computational cost of the spectral decomposition.
In (\ref{comp cost}), the maximum number of iterations of the main algorithm, the computational cost of the projection matrix, the maximum number of iterations of the basic procedure and the computational cost per iteration of the basic procedure corresponds to $-\frac{r}{\log \xi} \log \left( \frac{1}{\varepsilon}\right)$, $m^3+m^2d$, $\frac{1}{\xi^2} p^2 r_{\max}^2$ and $\max \left( C^{\rm sd}, md \right)$, respectively.
Since the increase in problem size will increase the time required to compute the projection matrix, the execution time of Algorithm \ref{postpro alg} in Figures \ref{fig: TimeSize-Mosek}-\ref{fig: TimeSize-SDPT3} tends to increase monotonically. 
Here, we note that the main algorithm and the basic procedure can be terminated in less than their maximum number of iterations.
The main algorithm (and the basic procedure) might terminate in a significantly different number of iterations for problems with similar problem sizes.
Therefore, there is expected to be some variation in the execution time of Algorithm \ref{postpro alg} for problems with similar problem sizes.

\begin{figure}[H]
\begin{center}
\includegraphics[keepaspectratio, scale=0.4]{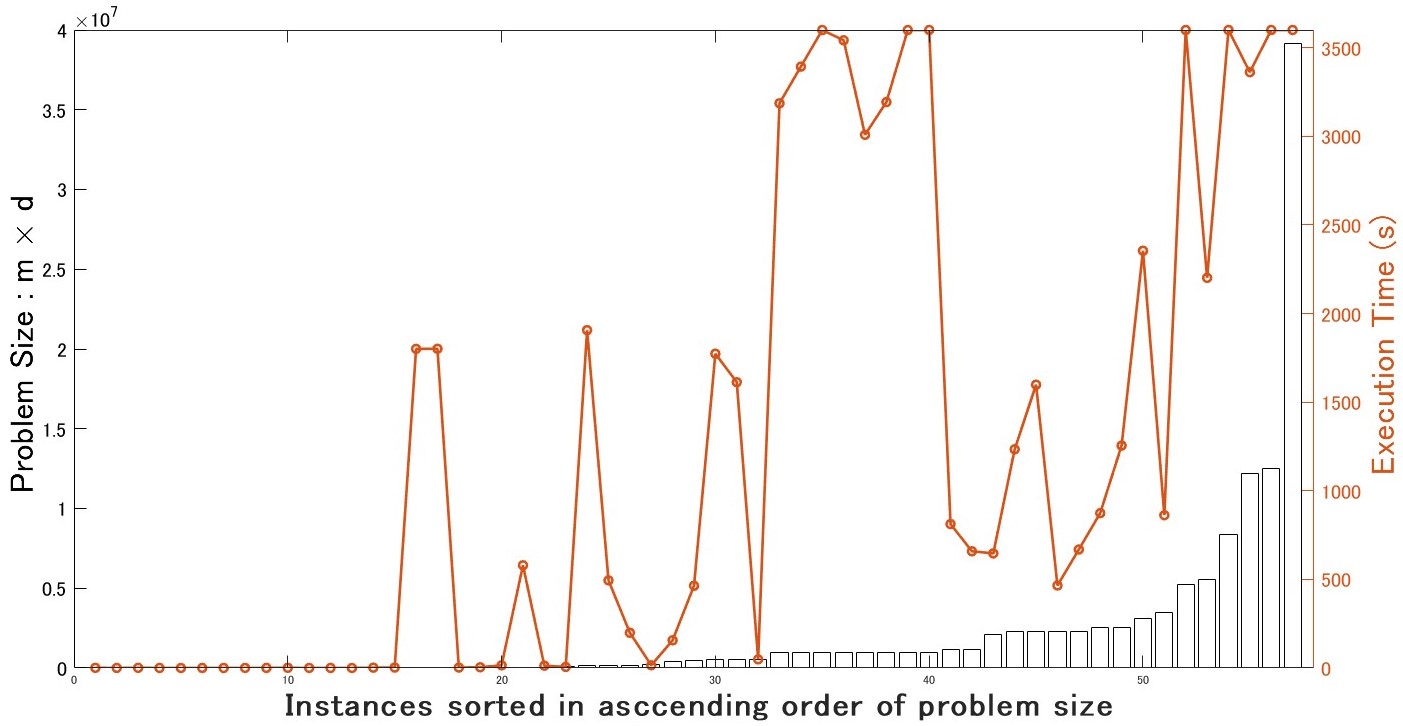}
\end{center}
\caption{Execution time of Algorithm \ref{postpro alg} using $(x_{\rm def}, y_{\rm def}, z_{\rm def})$ returned from Mosek}
\label{fig: TimeSize-Mosek}
\end{figure}

\begin{figure}[H]
\begin{center}
\includegraphics[keepaspectratio, scale=0.4]{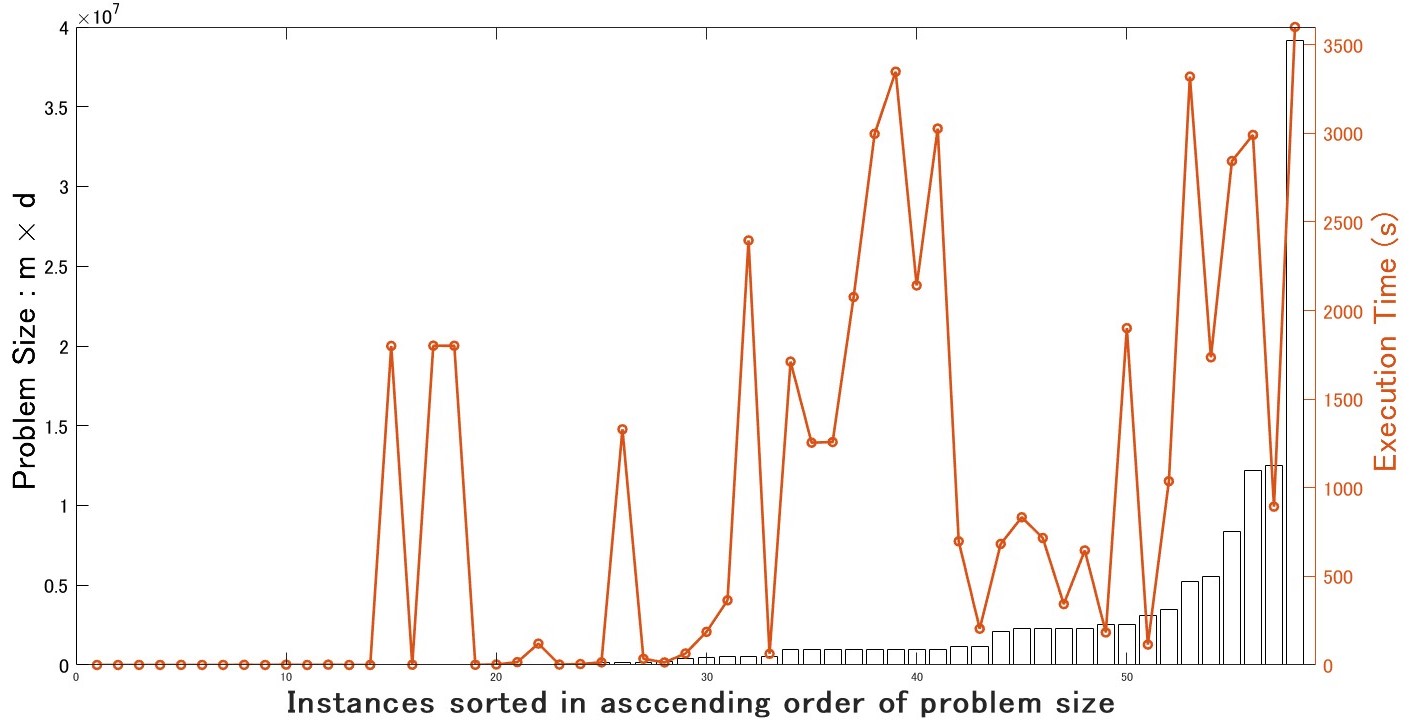}
\end{center}
\caption{Execution time of Algorithm \ref{postpro alg} using $(x_{\rm def}, y_{\rm def}, z_{\rm def})$ returned from SDPA}
\label{fig: TimeSize-SDPA}
\end{figure}

\begin{figure}[H]
\begin{center}
\includegraphics[keepaspectratio, scale=0.4]{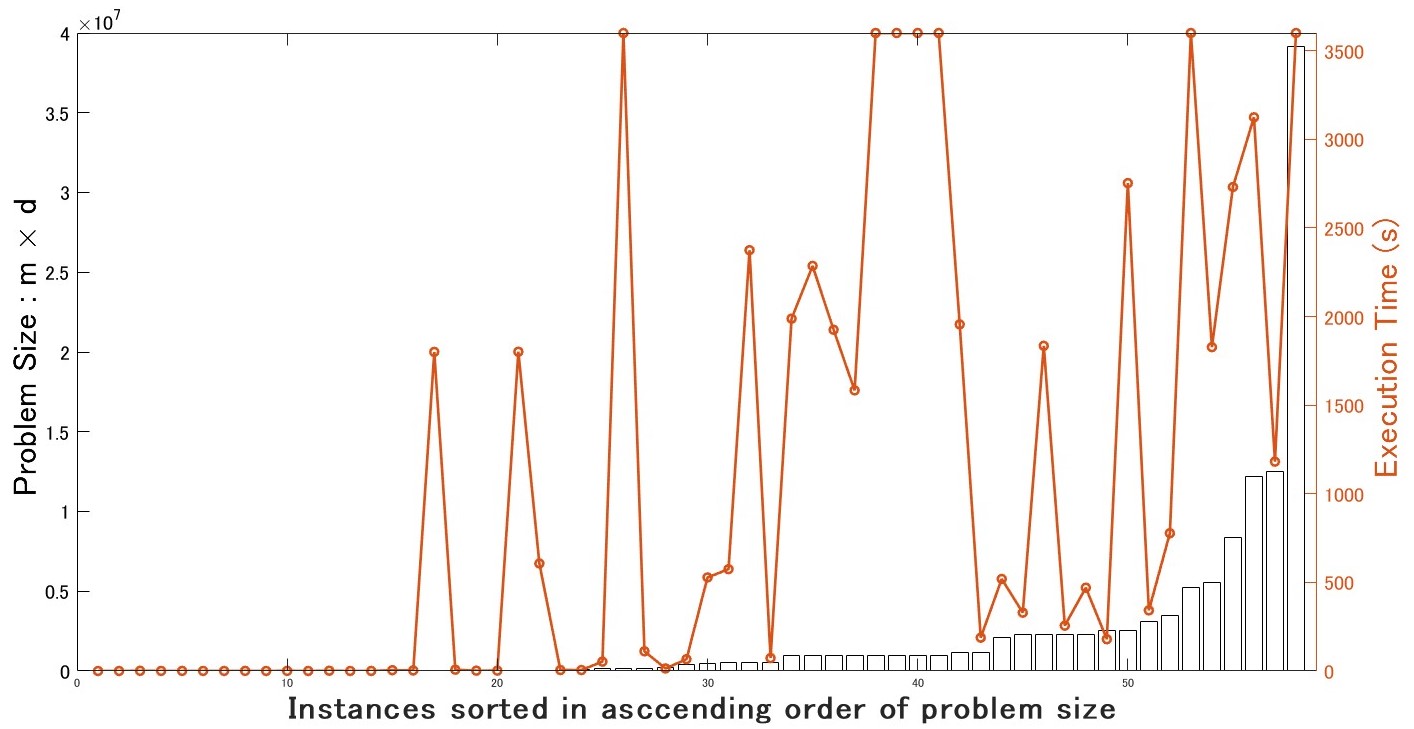}
\end{center}
\caption{Execution time of Algorithm \ref{postpro alg} using $(x_{\rm def}, y_{\rm def}, z_{\rm def})$ returned from SDPT3}
\label{fig: TimeSize-SDPT3}
\end{figure}

\subsection{Experiments to determine feasibility status}
\label{sec: numerical results check status}
The numerical results in the previous section showed that Algorithm \ref{postpro alg} obtained more accurate approximate optimal solutions for the well-posed group than the solvers.
On the other hand, the numerical results for the ill-posed group showed that Algorithm \ref{postpro alg} failed to detect that the primal problem is in weak status.
Algorithm \ref{postpro alg}, theoretically, can obtain a reducing direction for (P) (or (D)) if (P) (or (D)) is in weak status, but Algorithm \ref{postpro alg} returned not a reducing direction for (P) but an approximate interior feasible solution to (P) for the instances of the ill-posed group. 
Detecting the feasibility status of (P) and (D) and finding the reducing directions for (P) and (D) can be replaced by solving a certain SDP whose primal and dual problems are strongly feasible, \cite{Lourenco2021}.
Thus, using the results of \cite{Lourenco2021}, we tested whether Algorithm \ref{postpro alg} can detect the feasibility status of SDP more accurately than the solvers.
We conducted this experiment with the ill-posed instances of SDPLIB and the weakly infeasible cases \cite{Liu2018}.
These instances have the primal problems in weak status.

\subsubsection{Preparation to explain the flow of the experiment}
\label{sec: check status pre}
Before going to the outline of our experiments, we derive the $(P^D_{\mathcal{K}})$ and $(D^D_{\mathcal{K}})$, which is closely related to the feasibility status of (P), by using the following lemma.
\begin{lemma}[Lemma 3.4 in \cite{Lourenco2021}]
\label{lemma: Lourenco2021}
For (P) and (D), consider the following pair of primal and dual problems.
\begin{equation}
\begin{array}{lll}
(P^D_{\mathcal{K}})	&{\rm inf}_{x,t,w}	&t	\\
			&{\rm sub.to}	&-\langle c , x-t e \rangle + t - w = 0 \\
			&			&\langle ex \rangle + w = 1 \\
			&			&\mathcal{A}x - t \mathcal{A}e = 0 \\
			&			&(x,t,w) \in \mathcal{K} \times \mathbb{R}_+ \times \mathbb{R}_+
\end{array}
\notag
\end{equation}

\begin{equation}
\begin{array}{lll}
(D^D_{\mathcal{K}})	&{\rm sup}_{y_1,y_2,y_3}	&y_2	\\
			&{\rm sub.to}		&c y_1 - e y_2 - \mathcal{A}^* y_3 \in \mathcal{K} \\
			&				&1 - y_1 (1+\langle c,e \rangle ) + \langle e, \mathcal{A}^* y_3 \rangle \geq 0 \\
			&				&y_1 - y_2 \geq 0 
\end{array}
\notag
\end{equation}

The following properties hold.
\begin{enumerate}
\item $(P^D_{\mathcal{K}})$ and $(D^D_{\mathcal{K}})$ are strongly feasible. \\

Let $(x^*,t^*,w^*)$ be an optimal solution to $(P^D_{\mathcal{K}})$ and $(y_1^*, y_2^*, y_3^*)$ be an optimal solution to $(D^D_{\mathcal{K}})$.

\item The optimal value is zero if and only if (D) is not strongly feasible.
In this case, one of the two alternatives below must hold:
\begin{enumerate}
\item $x^*$ is an improving ray of (P), or
\item $x^*$ is a reducing direction for (D).
\end{enumerate}
\item The optimal value is positive if and only if (D) is strongly feasible.
\end{enumerate}
\end{lemma}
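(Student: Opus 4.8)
The plan is to derive the lemma from strong duality together with the feasibility characterizations already recorded. First I would check that both $(P^D_{\mathcal{K}})$ and $(D^D_{\mathcal{K}})$ are strongly feasible, so that Proposition~\ref{pro: strong duality} yields a common finite optimal value $v^{*}$, attained on both sides; since the objective $t$ of $(P^D_{\mathcal{K}})$ satisfies $t\ge 0$ on its feasible region, $v^{*}\ge 0$. It then remains to prove that $v^{*}=0$ if and only if (D) is not strongly feasible, to read off the improving-ray/reducing-direction dichotomy from a primal optimal solution, and to observe that item~3 is the logical complement of item~2.

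For $(P^D_{\mathcal{K}})$ I would try $x = te$, which automatically satisfies $\mathcal{A}x - t\mathcal{A}e = 0$ and reduces $-\langle c, x-te\rangle + t - w = 0$ to $w = t$; the normalization $\langle e,x\rangle + w = 1$ then becomes $t(r+1) = 1$ with $r$ the rank of $\mathcal{K}$, so $t = w = 1/(r+1) > 0$ and $x = te \in {\rm int}\,\mathcal{K}$, an interior feasible point. For $(D^D_{\mathcal{K}})$, the point $(y_1,y_2,y_3) = (0,-\varepsilon,0)$ with $\varepsilon > 0$ has conic slack $\varepsilon e \in {\rm int}\,\mathcal{K}$ and scalar slacks $1 > 0$ and $\varepsilon > 0$, hence is strictly feasible. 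A preliminary (and purely mechanical) step here is to confirm that $(P^D_{\mathcal{K}})$ and $(D^D_{\mathcal{K}})$ really do form a primal--dual pair over $\mathcal{K}\times\mathbb{R}_{+}\times\mathbb{R}_{+}$ in the sense of Section~\ref{sec2-1} — the displayed constraints are written a little informally, e.g.\ $\langle ex\rangle$ should read $\langle e,x\rangle$ — so that Proposition~\ref{pro: strong duality} applies verbatim.

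For item~2, suppose $v^{*}=0$ and let $(x^{*},t^{*},w^{*})$ be a primal optimal solution, so $t^{*}=0$; the constraints collapse to $\mathcal{A}x^{*}=0$, $\langle c,x^{*}\rangle = -w^{*}\le 0$, and $\langle e,x^{*}\rangle + w^{*}=1$, with $x^{*}\in\mathcal{K}$ and $w^{*}\ge 0$. If $x^{*}=0$ the last two equations force $w^{*}=1$ and $w^{*}=0$ simultaneously, a contradiction, so $x^{*}\neq 0$. Then $x^{*}$ is a nonzero element of $\mathcal{K}$ with $\mathcal{A}x^{*}=0$ and $\langle c,x^{*}\rangle\le 0$, so (D) is not strongly feasible by Proposition~\ref{pro: not sf iff}(2); moreover $w^{*}>0$ gives alternative~(a) ($x^{*}$ is an improving ray of (P)) and $w^{*}=0$ gives alternative~(b) ($x^{*}$ is a reducing direction for (D)). Conversely, if (D) is not strongly feasible, Proposition~\ref{pro: not sf iff}(2) supplies a nonzero $x\in\mathcal{K}$ with $\mathcal{A}x=0$ and $\langle c,x\rangle\le 0$; since $\langle e,x\rangle = {\rm trace}(x) > 0$ for nonzero $x\in\mathcal{K}$ by Proposition~\ref{prop:lambda-Jordan}, one gets $\langle e-c,x\rangle\ge\langle e,x\rangle > 0$, so with $\lambda = 1/\langle e-c,x\rangle$ the triple $(\lambda x,\,0,\,-\lambda\langle c,x\rangle)$ is feasible for $(P^D_{\mathcal{K}})$ with objective $0$, whence $v^{*}=0$. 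Finally, item~3 follows: $v^{*}\ge 0$ always, so $v^{*}>0$ is equivalent to $v^{*}\neq 0$, which by item~2 is equivalent to (D) being strongly feasible.

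I expect the proof to be essentially routine once the pairing is in place; the two points that genuinely deserve attention are the adjoint bookkeeping certifying $(D^D_{\mathcal{K}})$ as the dual of $(P^D_{\mathcal{K}})$ (so strong duality and attainment are available), and the rescaling step, which works precisely because $\langle e,x\rangle$ is strictly positive on $\mathcal{K}\setminus\{0\}$ and therefore lets the normalization $\langle e,x\rangle + w = 1$ be met.
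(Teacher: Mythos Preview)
Your argument is correct in every step: the interior points you exhibit for $(P^D_{\mathcal{K}})$ and $(D^D_{\mathcal{K}})$ work, the adjoint bookkeeping does confirm they are a genuine primal--dual pair over $\mathcal{K}\times\mathbb{R}_+\times\mathbb{R}_+$, and your analysis of the $t^*=0$ case together with the rescaling via $\lambda=1/\langle e-c,x\rangle$ cleanly closes the equivalence in item~2. The paper itself does not prove this lemma at all---it simply cites Lemma~3.4 of Louren\c{c}o, Muramatsu and Tsuchiya~\cite{Lourenco2021}---so there is no approach to compare against; your write-up is essentially the standard argument one would expect to find in that reference, and it is self-contained here.
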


\begin{proof}
See Lemma 3.4 in \cite{Lourenco2021}.
\end{proof}

Note that \cite{Lourenco2021} showed that Lemma \ref{lemma: Lourenco2021} holds not only for the identity element $e$ of $\mathcal{K}$ but also for any interior point of $\mathcal{K}$.
Similar to Lemma \ref{lemma: Lourenco2021}, Proposition \ref{pro: check status p}, which is not in \cite{Lourenco2021}, can be easily proved.

\begin{proposition}
\label{pro: check status p}
For (P) and (D), consider the following pair of primal and dual problems.
\begin{equation}
\begin{array}{lll}
(P^P_{\mathcal{K}})&{\rm inf}_{y,z,t,w}	&t	\\
			&{\rm sub.to}	&b^\top y + t - w = 0 \\
			&			&\langle e , z \rangle + w = 1 \\
			&			&-\mathcal{A}^*y - z + t e = 0 \\
			&			&(y,s,t,w) \in \mathbb{R}^m \times \mathcal{K} \times \mathbb{R}_+ \times \mathbb{R}_+ \\
\\
(D^P_{\mathcal{K}})&{\rm sup}_{x_1, x_2, x_3}	&x_2	\\
			&{\rm sub.to}		&\mathcal{A} x_3 - x_1 b = 0 \\
			&				&1 - x_1 - \langle e, x_3 \rangle \geq 0 \\
			&				&x_1 - x_2 \geq 0 \\
			&				&x_3 - x_2 e \in \mathcal{K}
\end{array}
\notag
\end{equation}
The following properties hold.
\begin{enumerate}
\item Both $(P^P_{\mathcal{K}})$ and $(D^P_{\mathcal{K}})$ are strongly feasible.
\item Let $(y^*,z^*,t^*,w^*)$ be a primal optimal solution.
The optimal value is zero if and only if (P) is not strongly feasible.
Moreover, if the optimal value is zero, $(y^*,z^*)$ is an improving ray of (D) or a reducing direction for (P).
\item Let $(x_1^*, x_2^*, x_3^*)$ be a dual optimal solution.
If the optimal value is positive, $\frac{1}{x_1^*} x_3^*$ is an interior feasible solution to (P).
\end{enumerate}
\end{proposition}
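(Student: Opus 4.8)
The plan is to follow the template of the proof of Lemma \ref{lemma: Lourenco2021} (Lemma 3.4 of \cite{Lourenco2021}), using the fact that $(D^P_{\mathcal{K}})$ is the Lagrangian dual of $(P^P_{\mathcal{K}})$. The latter I would verify directly: forming the Lagrangian of $(P^P_{\mathcal{K}})$ with the free variable $y$, the conic variable $z\in\mathcal{K}$, and $t,w\ge 0$, eliminating the primal variables, and using $\mathcal{K}^*=\mathcal{K}$, one arrives exactly at $(D^P_{\mathcal{K}})$. Once this correspondence is established, parts 2 and 3 become a case analysis of optimal solutions combined with the characterizations of weak status in Proposition \ref{pro: not sf iff}.

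For part 1, I would exhibit explicit interior feasible points. For $(P^P_{\mathcal{K}})$, taking $y=0$ forces $z=te$ and $w=t$ from the first and third equalities, and then $\langle e,z\rangle+w=1$ becomes $t(r+1)=1$ with $r$ the rank of $\mathcal{K}$; this gives the Slater point $(y,z,t,w)=(0,\tfrac{1}{r+1}e,\tfrac{1}{r+1},\tfrac{1}{r+1})$ with $z\in\operatorname{int}\mathcal{K}$, $t,w>0$. For $(D^P_{\mathcal{K}})$, the point $(x_1,x_2,x_3)=(0,-1,0)$ satisfies $\mathcal{A}x_3-x_1b=0$, $1-x_1-\langle e,x_3\rangle=1>0$, $x_1-x_2=1>0$, and $x_3-x_2e=e\in\operatorname{int}\mathcal{K}$. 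By Proposition \ref{pro: strong duality}, strong duality then holds and both problems attain their common optimal value $p^*$, and since every feasible point of $(P^P_{\mathcal{K}})$ has $t\ge0$ we get $p^*\ge 0$.

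For part 2, note that $p^*=0$ iff $(P^P_{\mathcal{K}})$ has an optimal solution $(y^*,z^*,t^*,w^*)$ with $t^*=0$; in that case the third equality gives $z^*=-\mathcal{A}^*y^*\in\mathcal{K}=\mathcal{K}^*$, the first gives $b^\top y^*=w^*\ge0$, and the second gives $\langle e,z^*\rangle+w^*=1$. If $w^*>0$ then $b^\top y^*>0$, so $y^*$ is an improving ray of (D); if $w^*=0$ then $b^\top y^*=0$ and $\langle e,z^*\rangle=1$, so $z^*=-\mathcal{A}^*y^*\in\mathcal{K}\setminus\{0\}$ (hence $\mathcal{A}^*y^*\ne0$), i.e.\ $y^*$ is a reducing direction for (P); in both cases $(y^*,z^*)$ is an improving ray of (D) or a reducing direction for (P), and Proposition \ref{pro: not sf iff}(1) shows (P) is not strongly feasible. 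Conversely, if (P) is not strongly feasible, Proposition \ref{pro: not sf iff}(1) yields $\hat y$ with $\hat z:=-\mathcal{A}^*\hat y\in\mathcal{K}\setminus\{0\}$ and $b^\top\hat y\ge0$; setting $\alpha:=(\langle e,\hat z\rangle+b^\top\hat y)^{-1}>0$, the point $(\alpha\hat y,\alpha\hat z,0,\alpha b^\top\hat y)$ is feasible for $(P^P_{\mathcal{K}})$, so $p^*\le0$ and thus $p^*=0$.

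For part 3, suppose $p^*>0$. By strong duality a dual optimal solution $(x_1^*,x_2^*,x_3^*)$ has $x_2^*=p^*>0$, so $x_1^*\ge x_2^*>0$, and $x_3^*-x_2^*e\in\mathcal{K}$ together with $x_2^*e\in\operatorname{int}\mathcal{K}$ gives $x_3^*\in\operatorname{int}\mathcal{K}$; combined with $\mathcal{A}x_3^*-x_1^*b=0$ this shows $\tfrac{1}{x_1^*}x_3^*\in\operatorname{int}\mathcal{K}$ and $\mathcal{A}(\tfrac{1}{x_1^*}x_3^*)=b$, i.e.\ an interior feasible solution of (P). The step needing genuine care is the primal--dual correspondence between $(P^P_{\mathcal{K}})$ and $(D^P_{\mathcal{K}})$ — which is what legitimizes transferring attainment and the value $p^*$ between them — together with matching the two cases $w^*>0$ and $w^*=0$ to the exact definitions of improving ray and reducing direction; the remaining computations are routine.
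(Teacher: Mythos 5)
Your proof is correct and follows essentially the same route as the paper's: the same Slater points $(0,\tfrac{1}{\langle e,e\rangle+1}e,\tfrac{1}{\langle e,e\rangle+1},\tfrac{1}{\langle e,e\rangle+1})$ and $(0,-1,0)$, the same normalized certificate construction for the "only if" direction of part 2, and the same case split on $w^*$ and positivity argument for part 3. The only difference is that you make explicit the Lagrangian-duality correspondence and the appeal to strong duality, which the paper leaves implicit.
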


\begin{proof}
\noindent (1): Let $(y^0, z^0, t^0, w^0) = \left( 0,\frac{1}{\langle e,e \rangle + 1} e,\frac{1}{\langle e,e \rangle + 1},\frac{1}{\langle e,e \rangle + 1} \right)$ and $(x_1^0, x_2^0, x_3^0) = (0, -1, 0)$.
Then $(y^0, z^0, t^0, w^0)$ and $(x_1^0, x_2^0, x_3^0)$ are interior feasible solutions to $(P^P_{\mathcal{K}})$ and $(D^P_{\mathcal{K}})$, respectively.
\medskip \\
\noindent (2): If $t^* = 0$, then $b^\top y^* = w^* \geq 0$ and $-\mathcal{A}^*y^* = z^* \in \mathcal{K}$ hold, which implies that (P) is not strongly feasible by Proposition \ref{pro: not sf iff}.
Conversely, if (P) is not strongly feasible, there exists $(y,z) \in \mathbb{R}^m \times \mathcal{K}$ such that $- \mathcal{A}^*y=z$, $\mathcal{A}^*y \neq 0$ and $b^\top y \geq 0$ by Proposition \ref{pro: not sf iff}.
Let $k = \langle e,z \rangle + b^\top y$.
Since $b^\top y \geq 0$, $z \in \mathcal{K}$ and $z \neq 0$ hold, $k$ is positive.
Letting $(\bar{y}, \bar{z}, \bar{t}, \bar{w}) = (\frac{1}{k} y, \frac{1}{k} z, 0, \frac{1}{k} b^\top y)$, we can easily see that $(\bar{y}, \bar{z}, \bar{t}, \bar{w})$ is an optimal solution for $(P^P_{\mathcal{K}})$, which implies that the optimal value is zero.

Suppose that $t^*=0$.
If $b^\top y^* =0$, then $z^* \neq 0$ holds since $w^* = 0$ and hence, $(y^*, z^*)$ is a reducing direction for (P).
If $b^\top y^* >0$, we can easily see that $(y^*, z^*)$ is an improving ray of (D).
\medskip \\
\noindent (3): If $x_2^* > 0$, then $x_1^* > 0$ and $x_3^* \in \mbox{int} \mathcal{K}$ hold.
Since $x_1^*$ and $x_3^*$ satisfy $\mathcal{A} x_3^* - x_1^* b = 0$, $\frac{1}{x_1^*} x_3^*$ is an interior feasible solution to (P).
\end{proof}

With some modifications to $(P^P_{\mathcal{K}})$ and $(D^P_{\mathcal{K}})$, we have optimization problems that Algorithm \ref{postpro alg} can handle well.

\begin{proposition}
\label{pro: check status p modify}
For (P) and (D), consider the following pair of primal and dual problems.
\begin{equation}
\begin{array}{lll}
(\overline{P}^P_{\mathcal{K}})&{\rm inf}_{\alpha, \beta, \gamma, s}	&\alpha	\\
			&{\rm sub.to}	&-\alpha + \beta + \gamma + \langle e,s \rangle = 0 \\
			&			&\frac{\alpha}{1 + \langle e,e \rangle} (b - \mathcal{A}e) - \gamma b + \mathcal{A}s = \frac{1}{1 + \langle e,e \rangle} (b - \mathcal{A}e)	\\
			&			&(\alpha, \beta, \gamma, s) \in \mathbb{R}_+ \times \mathbb{R}_+ \times \mathbb{R}_+ \times \mathcal{K} \\
\\
(\overline{D}^P_{\mathcal{K}})&{\rm sup}_{\kappa, f}	&\frac{1}{1 + \langle e,e \rangle} (b - \mathcal{A}e)^\top f	\\
			&{\rm sub.to}		&1 + \kappa - \frac{1}{1 + \langle e,e \rangle} (b - \mathcal{A}e)^\top f \geq 0 \\
			&				&-\kappa \geq 0 \\
			&				&-\kappa + b^\top f \geq 0 \\
			&				&-\kappa e - \mathcal{A}^* f \in \mathcal{K}
\end{array}
\notag
\end{equation}
The following properties hold.
\begin{enumerate}
\item Both $(\bar{P}^P_{\mathcal{K}})$ and $(\bar{D}^P_{\mathcal{K}})$ are strongly feasible.
\item The optimal value is smaller than or equal to 1.
\item Let $(\kappa^*, f^*)$ be a dual optimal solution.
If the optimal value is equal to 1, $(f^*,-\mathcal{A}^* f^*)$ is an improving ray of (D) or a reducing direction for (P).
\item Let $(\alpha^*, \beta^*, \gamma^*, s^*)$ be a primal optimal solution.
If the optimal value is smaller than 1, then
\begin{equation}
\notag
\frac{1+ \langle e,e \rangle}{\gamma^* (1 + \langle e,e \rangle) + 1 - \alpha^*} \left( s^* + \frac{1-\alpha^*}{1+\langle e,e \rangle} e\right)
\end{equation}
is an interior feasible solution to (P).
\end{enumerate}
\end{proposition}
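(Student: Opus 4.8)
The plan is to read $(\overline{P}^P_{\mathcal{K}})$ and $(\overline{D}^P_{\mathcal{K}})$ as an ordinary conic primal--dual pair over the symmetric cone $\mathbb{R}_+\times\mathbb{R}_+\times\mathbb{R}_+\times\mathcal{K}$ --- after the routine Lagrangian check that $(\overline{D}^P_{\mathcal{K}})$ is indeed the conic dual of $(\overline{P}^P_{\mathcal{K}})$ --- so that Propositions \ref{pro: strong duality} and \ref{pro: not sf iff} become available, and then to prove the four items by exhibiting explicit points and doing short algebra. (An alternative is to note that the affine change of variables $x_2=\frac{1-\alpha}{1+\langle e,e\rangle}$, $x_1=x_2+\gamma$, $x_3=s+x_2 e$ carries $(\overline{P}^P_{\mathcal{K}})$ onto $(D^P_{\mathcal{K}})$ and to transfer everything from Proposition \ref{pro: check status p}; but the direct route below is shorter and self-contained.)

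For item 1 I would write down a strictly feasible point of each problem: $(\alpha,\beta,\gamma,s)=(2+\langle e,e\rangle,\,1,\,1,\,e)$ for $(\overline{P}^P_{\mathcal{K}})$ and $(\kappa,f)=(-\epsilon,0)$ with any $0<\epsilon<1$ for $(\overline{D}^P_{\mathcal{K}})$, and simply verify the constraints. Strong feasibility of both sides then gives, via Proposition \ref{pro: strong duality}, a zero duality gap together with attainment of the optimum on both sides --- this is the fact used in items 3 and 4. Item 2 is immediate: $(\alpha,\beta,\gamma,s)=(1,1,0,0)$ is feasible for $(\overline{P}^P_{\mathcal{K}})$ with objective value $1$, so the optimal value is at most $1$.

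For item 3, assume the optimal value equals $1$. By the zero gap there is a dual optimal $(\kappa^*,f^*)$ with $\frac{1}{1+\langle e,e\rangle}(b-\mathcal{A}e)^\top f^*=1$; substituting this into the first dual constraint forces $\kappa^*\ge 0$, and with the second constraint $-\kappa^*\ge 0$ this gives $\kappa^*=0$. The remaining two dual constraints then read $b^\top f^*\ge 0$ and $-\mathcal{A}^* f^*\in\mathcal{K}$. If $b^\top f^*>0$, then $(f^*,-\mathcal{A}^* f^*)$ is an improving ray of (D); if $b^\top f^*=0$, then the identity $(b-\mathcal{A}e)^\top f^* = b^\top f^* - \langle e,\mathcal{A}^* f^*\rangle = 1+\langle e,e\rangle$ forces $\mathcal{A}^* f^*\neq 0$, so $(f^*,-\mathcal{A}^* f^*)$ is a reducing direction for (P); in either case the certificate is of the kind described in Proposition \ref{pro: not sf iff}.

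For item 4, let $(\alpha^*,\beta^*,\gamma^*,s^*)$ be a primal optimal solution (which exists by Proposition \ref{pro: strong duality}) with $\alpha^*<1$, put $D:=\gamma^*(1+\langle e,e\rangle)+1-\alpha^*>0$, and note that the claimed expression equals $x:=\frac{1}{D}\big((1+\langle e,e\rangle)s^*+(1-\alpha^*)e\big)$. Since $s^*\in\mathcal{K}$, $1-\alpha^*>0$ and $D>0$, we get $x\in{\rm int}\,\mathcal{K}$. To see $\mathcal{A}x=b$, solve the second primal equality for $\mathcal{A}s^*$, substitute into $\mathcal{A}x$, and check that the two $\mathcal{A}e$ contributions cancel, leaving $\mathcal{A}x=\frac{b}{D}\big((1-\alpha^*)+(1+\langle e,e\rangle)\gamma^*\big)=b$. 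Everything here is elementary; the only points needing care are verifying at the outset that $(\overline{D}^P_{\mathcal{K}})$ really is the conic dual of $(\overline{P}^P_{\mathcal{K}})$ (so that the strong-duality attainment statement is legitimately invoked, and so that $\langle e,e\rangle>0$ makes the various denominators positive) and getting the signs right in the cancellation in item 4.
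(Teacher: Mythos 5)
Your proof is correct, and items (3) and (4) follow the paper's own argument essentially verbatim (forcing $\kappa^*=0$ from the first two dual constraints, then splitting on $b^\top f^*=0$ versus $b^\top f^*>0$ using the identity $(b-\mathcal{A}e)^\top f^*=b^\top f^*+\langle e,-\mathcal{A}^*f^*\rangle=1+\langle e,e\rangle$; and, for (4), solving the second primal equality for $\mathcal{A}s^*$ so that the $\mathcal{A}e$ terms cancel). Where you genuinely diverge is in items (1) and (2): the paper proves strong feasibility by pushing the interior points of $(P^P_{\mathcal{K}})$ and $(D^P_{\mathcal{K}})$ from its Proposition \ref{pro: check status p} through the explicit affine substitutions $(\alpha,\beta,\gamma,s)=(1-(1+\langle e,e\rangle)x_2,\,1-x_1-\langle e,x_3\rangle,\,x_1-x_2,\,x_3-x_2e)$ and $(\kappa,f)=(-t,y)$, and it obtains the bound in (2) from a dual optimal solution of $(D^P_{\mathcal{K}})$ with $x_2^*\ge 0$; you instead exhibit the direct strictly feasible points $(2+\langle e,e\rangle,1,1,e)$ and $(-\epsilon,0)$, and the feasible point $(1,1,0,0)$ of objective value $1$, all of which I have checked satisfy the constraints (e.g.\ $\frac{2+\langle e,e\rangle}{1+\langle e,e\rangle}(b-\mathcal{A}e)-b+\mathcal{A}e=\frac{1}{1+\langle e,e\rangle}(b-\mathcal{A}e)$). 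Your route is shorter and self-contained, and it has the merit of making explicit the Lagrangian verification that $(\overline{D}^P_{\mathcal{K}})$ really is the conic dual of $(\overline{P}^P_{\mathcal{K}})$ --- a step the paper silently assumes but which is needed before invoking Proposition \ref{pro: strong duality}; what you lose is the structural link to $(P^P_{\mathcal{K}})$ and $(D^P_{\mathcal{K}})$, which the paper's transfer maps make visible and which explains where the reformulation comes from.
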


\begin{proof}
\noindent (1): For any feasible solution $(x_1, x_2, x_3)$ to $(D^P_{\mathcal{K}})$, let
\begin{equation}
\notag
(\alpha, \beta, \gamma, s) = \left( 1- (1+ \langle e,e \rangle) x_2, 1-x_1-\langle e, x_3 \rangle, x_1 - x_2, x_3 - x_2 e \right).
\end{equation}
Then, $(\alpha, \beta, \gamma, s)$ is a feasible solution to $(\overline{P}^P_{\mathcal{K}} )$.
If $(x_1, x_2, x_3)$ is an interior feasible solution to $(D^P_{\mathcal{K}})$, we can easily see that $(\alpha, \beta, \gamma, s)$ is an interior feasible solution to $(\overline{P}^P_{\mathcal{K}})$.
Similarly, for any feasible solution $(y, z, t, w)$ to $(P^P_{\mathcal{K}})$, let $(\kappa, f) = (-t, y)$.
Then, $(\kappa, f)$ is a feasible solution to $(\overline{D}^P_{\mathcal{K}})$ because $-\kappa + b^\top f = w$ and
\begin{align*}
1+\kappa-\frac{1}{1+\langle e,e \rangle} (b-\mathcal{A}e)^\top f
&= \frac{\langle e,e \rangle}{1+\langle e,e \rangle} + \frac{1}{1+\langle e,e \rangle} \left( 1+ \kappa(1+\langle e,e \rangle) - (b-\mathcal{A}e)^\top f \right) \\
&= \frac{\langle e,e \rangle}{1+\langle e,e \rangle} + \frac{1}{1+\langle e,e \rangle} \left( 1+ \langle e, \kappa e + \mathcal{A}^* f \rangle + \kappa - b^\top f \right) \\
&= \frac{\langle e,e \rangle}{1+\langle e,e \rangle} + 1 - \langle e,z \rangle - w = \frac{\langle e,e \rangle}{1+\langle e,e \rangle}
\end{align*}
hold.
Moreover if $(y, z, t, w)$ is an interior feasible solution to $(P^P_{\mathcal{K}})$, we cab easily see that $(\kappa, f)$ is an interior feasible solution to $(\overline{D}^P_{\mathcal{K}})$.
Therefore $(\overline{P}^P_{\mathcal{K}})$ and $(\overline{D}^P_{\mathcal{K}})$ are strongly feasible.
\medskip \\
\noindent (2): $(D^P_{\mathcal{K}})$ has an optimal solution $(x_1^*, x_2^*, x_3^*)$ such that $x_2^* \geq 0$.
Noting that
\begin{equation}
\notag
(\alpha, \beta, \gamma, s) = \left( 1- (1+ \langle e,e \rangle) x^*_2, 1-x^*_1-\langle e, x^*_3 \rangle, x^*_1 - x^*_2, x^*_3 - x^*_2 e \right)
\end{equation}
is a feasible solution to $(\overline{P}^P_{\mathcal{K}})$, we can find that the optimal value of $(\overline{P}^P_{\mathcal{K}})$ is smaller than or equal to $1 - (1+ \langle e,e \rangle) x^*_2$.
Since $(\overline{P}^P_{\mathcal{K}})$ and $(\overline{D}^P_{\mathcal{K}})$ are strongly feasible and $1 + \langle e,e \rangle > 0$, the optimal values of $(\overline{P}^P_{\mathcal{K}})$ and $(\overline{D}^P_{\mathcal{K}})$ are smaller than or equal to 1.
\medskip \\
\noindent (3): If the optimal value is 1, we have
\begin{equation}
\notag
1 + \kappa^* - \frac{1}{1 + \langle e,e \rangle} (b - \mathcal{A}e)^\top f^* = \kappa^*\geq 0.
\end{equation}
Since $-\kappa^* \geq 0$ holds, we find $\kappa^*=0$.
Thus, $f^*$ satisfies $b^\top f^* \geq 0$, $-\mathcal{A}^* f^* \in \mathcal{K}$ and $b^\top f^* + \langle e, -\mathcal{A}^*f^* \rangle = 1+ \langle e,e \rangle$.
If $b^\top f^* = 0$, then $\langle e, -\mathcal{A}^*f^* \rangle = 1+ \langle e,e \rangle$ holds, which implies that $-\mathcal{A}^*f^* \neq 0$.
Therefore, $(f^*, -\mathcal{A}^*f^*)$ is a reducing direction for (P).
If $b^\top f^* > 0$, then we can easily see that $(f^*, -\mathcal{A}^*f^*)$ is an improving ray of (D).
\medskip \\
\noindent (4): Since $(\alpha^*, \beta^*, \gamma^*, s^*)$ is a feasible solution to $(\overline{P}^P_{\mathcal{K}})$, we find that
\begin{equation}
\notag
\mathcal{A} \left( s^* + \frac{1-\alpha^*}{1+\langle e,e \rangle} e\right) = \left( \gamma^* + \frac{1-\alpha^*}{1+\langle e,e \rangle} \right) b
\end{equation}
holds.
Let $\alpha^* < 1$.
Noting that $\gamma^* \geq 0$ and $s^* \in \mathcal{K}$, we can easily see that
\begin{equation}
\notag
\frac{1+ \langle e,e \rangle}{\gamma^* (1 + \langle e,e \rangle) + 1 - \alpha^*} \left( s^* + \frac{1-\alpha^*}{1+\langle e,e \rangle} e\right)
\end{equation}
is an interior feasible solution to (P). 
\end{proof}

\subsubsection{Experimental flow and results}
Now, let us explain the outline of our experiments to determine the feasibility status of SDPLIB instances and the weakly infeasible instances from \cite{Liu2018}. 
The flow of this experiment is almost the same as that of Section \ref{sec: numerical results post-pro}.
In other words, we obtained approximate optimal primal-dual solutions $(\alpha^*, \beta^*, \gamma^*, s^*, \kappa^*, f^*)$ to $(\overline{P}^P_{\mathcal{K}})$ and $(\overline{D}^P_{\mathcal{K}})$ using the solvers with the default setting, the solvers with the tight setting or Algorithm \ref{postpro alg}. 
Then, we compared these solutions in terms of accuracy and optimality. 
While the accuracy and optimality of outputs were measured using DIMACS errors in Section \ref{sec: numerical results post-pro}, we evaluated the outputs $(\alpha^*, \beta^*, \gamma^*, s^*, \kappa^*, f^*)$ by checking the values of $1-\alpha^*$, $1-\frac{1}{1 + \langle e,e \rangle} (b - \mathcal{A}e)^\top f^*$, $b^\top f^*$ and $\lambda_{\min} (-\mathcal{A}^* f^*)$ in this experiment. 
In what follows, $\bar{\theta}_{P}$ and $\bar{\theta}_{D}$ denote the primal and dual objective values of $(\overline{P}^P_{\mathcal{K}})$ and $(\overline{D}^P_{\mathcal{K}})$ for $(\alpha^*, \beta^*, \gamma^*, s^*, \kappa^*, f^*)$, respectively.

The first test cases are the ill-posed instances of SDPLIB defined in Section \ref{sec: numerical results post-pro}.
When using the tight tolerances, Mosek obtained approximate optimal solutions for these instances with such high accuracy that there was no need to use Algorithm \ref{postpro alg}. 
Thus, we did not apply Algorithm \ref{postpro alg} and just summarized the results of Mosek for the ill-posed group in Table \ref{Table: ill-posed status}.
From Table \ref{Table: ill-posed status}, we can see that Mosek, using the tight setting, determined that the primal problem for the ill-posed instances was in weak status and obtained highly accurate reduction directions.

\begin{table}
\caption{Results of feasibility determination experiments for the ill-posed group instances from SDPLIB: Mosek with the tight settings}
\label{Table: ill-posed status}
\begin{center}
\setlength{\tabcolsep}{5pt}
\footnotesize
\begin{tabular}{cccccc} \toprule
Instance	&$1-\bar{\theta}_P$	&$1-\bar{\theta}_D$	&$b^\top f^*$	&$\lambda_{\min} (-\mathcal{A}^* f^*)$	&time(s)	\\	\midrule
gpp100   &8.22e-15   &8.33e-15   &4.58e-16   &-1.29e-14   &4.26e-02  \\
gpp124-1   &2.95e-14   &3.00e-14   &7.31e-17   &-2.76e-14   &6.61e-02  \\
gpp124-2   &2.95e-14   &3.00e-14   &7.31e-17   &-2.76e-14   &6.40e-02  \\
gpp124-3   &2.95e-14   &3.00e-14   &7.31e-17   &-2.76e-14   &6.55e-02  \\
gpp124-4   &2.95e-14   &3.00e-14   &7.31e-17   &-2.76e-14   &6.52e-02  \\
hinf1   &4.22e-13   &4.17e-13   &-2.83e-13   &-3.69e-13   &8.78e-03  \\
hinf3   &2.22e-15   &2.44e-15   &-1.39e-15   &-3.08e-15   &8.45e-03  \\
hinf4   &4.04e-14   &4.07e-14   &-2.61e-14   &-2.82e-14   &8.88e-03  \\
hinf5   &1.39e-13   &1.39e-13   &-9.24e-14   &-1.34e-13   &1.02e-02  \\
hinf6   &3.47e-13   &3.30e-13   &-2.12e-13   &-1.97e-13   &8.48e-03  \\
hinf7   &1.84e-13   &1.84e-13   &-1.25e-13   &-1.17e-13   &9.54e-03  \\
hinf8   &5.98e-14   &5.91e-14   &-4.43e-14   &-4.40e-14   &9.56e-03  \\
hinf10   &1.62e-13   &1.63e-13   &-5.36e-14   &-5.15e-14   &1.13e-02  \\
hinf11   &5.38e-14   &5.33e-14   &-3.91e-14   &-3.76e-14   &1.22e-02  \\
hinf12   &4.44e-16   &3.33e-16   &1.30e-21   &-1.30e-21   &1.02e-02  \\
hinf13   &4.64e-14   &4.65e-14   &-1.99e-14   &-3.16e-14   &1.95e-02  \\
hinf14   &8.50e-14   &8.48e-14   &-4.53e-14   &-6.84e-14   &2.25e-02  \\
hinf15   &1.96e-13   &1.96e-13   &-1.53e-14   &-1.19e-13   &2.48e-02  \\
qap5   &1.00e-13   &1.03e-13   &-7.82e-14   &-9.85e-14   &1.03e-02  \\
qap6   &3.17e-13   &3.07e-13   &-3.55e-14   &-1.03e-13   &2.52e-02  \\
qap7   &2.16e-13   &2.19e-13   &-1.71e-13   &-2.01e-13   &3.40e-02  \\
qap8   &1.55e-15   &1.78e-15   &5.68e-14   &-2.50e-15   &8.68e-02  \\
qap9   &8.88e-16   &6.66e-16   &-5.68e-14   &-6.36e-18   &1.17e-01  \\
qap10   &1.55e-15   &1.55e-15   &-5.68e-14   &-9.00e-15   &2.86e-01  \\
\bottomrule
\end{tabular}
\end{center}
\end{table}

The following test cases are the weakly infeasible instances of \cite{Liu2018}.
These instances are classified into four sets, ``clean-10-10", ``clean-20-10", ``messy-10-10" and ``messy-20-10". 
For example, the set ``clean-10-10" contains instances where $m=10$ and $r=10$, and the set ``clean-20-10" contains instances where $m=20$ and $r=10$. 
The instance set labeled ``messy" includes instances with a less obvious structure leading to weak infeasibility than the set labeled ``clean.'' 
The results for the four instance sets are summarized in Figures \ref{fig: Mosek-weakinf}-\ref{fig: SDPT3-weakinf} and Table \ref{Table: WeakInf Time}.
Since each set contains 100 instances, we use box plots to report the values of $|1-\bar{\theta}_P|$, $|1-\bar{\theta}_D|$, $|b^\top f^*|$ and $| \min \{ \lambda_{\min} (-\mathcal{A}^*f^*), 0 \} |$ logarithmized by the base 10.
In these graphs, ``Val1", ``Val2", ``Val3", and ``Val4" represent $|1-\bar{\theta}_P|$, $|1-\bar{\theta}_D|$, $|b^\top f^*|$ and $| \min \{ \lambda_{\min} (-\mathcal{A}^*f^*), 0 \} |$, respectively.
Blue objects represent the results obtained with the solver using the default setting; red objects represent the results obtained with Algorithm \ref{postpro alg}, and yellow objects denote results obtained with the solver using the tight settings. 
Table \ref{Table: WeakInf Time} summarizes the average execution time for each method.
Because the problem size was small, there was no significant difference in the execution time of each method.

\begin{table}[H]
\caption{Average execution time for each method in the feasibility determination experiment}
\label{Table: WeakInf Time}
\begin{center}
\setlength{\tabcolsep}{5pt}
\footnotesize
\begin{tabular}{ccccc} \toprule
Method	&clean-10-10	&clean-20-10	&messy-10-10	&messy-20-10	\\	\midrule
Mosek (default)
&8.31e-03	&9.88e-03	&8.11e-03	&9.05e-03	\\
Algorithm \ref{postpro alg} using Mosek
&3.31e-01	&1.27e+00	&8.83e-01	&1.13e+00	\\
Mosek (tight)
&1.07e-02	&1.51e-02	&1.31e-02	&1.95e-02	\\
SDPA (default)
&1.99e+00	&2.05e+00	&2.02e+00	&2.14e+00	\\
Algorithm \ref{postpro alg} using SDPA
&1.62e-01	&1.81e-01	&2.52e-01	&2.87e-01	\\
SDPA (tight)
&2.17e+00	&2.90e+00	&4.20e+00	&3.51e+00	\\
SDPT3 (default)
&7.86e-02	&8.52e-02	&8.31e-02	&9.77e-02	\\
Algorithm \ref{postpro alg} using SDPT3
&1.99e-01	&2.25e-01	&3.05e-01	&3.02e-01	\\
SDPT3 (tight)
&1.09e-01	&1.21e-01	&1.05e-01	&1.15e-01	\\
\bottomrule
\end{tabular}
\end{center}
\end{table}

First, let us compare the results on the instances labeled ``clean".
From Figures \ref{fig: Mosek-weakinf} and \ref{fig: SDPT3-weakinf}, we see that Mosek and SDPT3, using the tight settings, obtained the approximate optimal values with such high accuracy that there is no need to use Algorithm \ref{postpro alg}.
For the SDPA using the tight tolerances, the optimal values were obtained with roughly the same accuracy as when using the default settings. 
Figures \ref{fig: Mosek-weakinf}-\ref{fig: SDPT3-weakinf} also show that Algorithm \ref{postpro alg} consistently obtained highly accurate optimal values regardless of the solver used together. 
Next, let us compare the results on the instances labeled ``messy".
Here, we note that Mosek returned incorrect outputs for some instances when using the tight tolerances. 
While Mosek with the default setting obtained approximate optimal solutions $(\alpha^*, \beta^*, \gamma^*, s^*, \kappa^*, f^*)$ for all instances, Mosek with the tight setting returned incorrect infeasibility certificates to $(\overline{P}^P_{\mathcal{K}})$ for 23 instances of ``messy-10-10" and 35 instances of ``messy-20-10".
Figures \ref{fig: Mosek-weakinf}-\ref{fig: SDPT3-weakinf} show that the accuracy of the optimal values obtained by Mosek and SDPT3 was worse than the results for the instance sets labeled ``clean". 
On the other hand, the accuracy of the dual optimal values $\bar{\theta}_D$ obtained by SDPA using tight settings was better than the accuracy of the dual optimal values obtained for ``clean" instances.
Algorithm \ref{postpro alg} obtained optimal values for the instance sets labeled ``messy" with the same accuracy as for the instance sets labeled ``clean.''
In addition, Figures \ref{fig: Mosek-weakinf}-\ref{fig: SDPT3-weakinf} show that the solvers and Algorithm \ref{postpro alg} obtained approximate reducing directions for the weakly infeasible instances of \cite{Liu2018}, but their accuracy was inferior to the results for the ill-posed instances of SDPLIB in Table \ref{Table: ill-posed status}.

\begin{figure}[htbp]
\hspace{-1cm}
    \begin{tabular}{lr}
      \begin{minipage}[t]{0.485\hsize}
        \centering
        \includegraphics[keepaspectratio, scale=0.65]{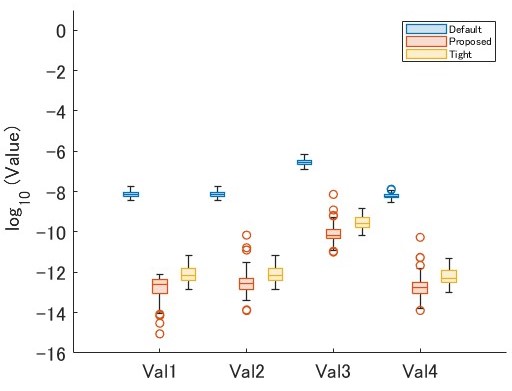}
        \subcaption{The results on clean-10-10}
        \label{fig: Mosek-clean10}
      \end{minipage} \hspace{0.5cm}&\hspace{0.5cm}
      \begin{minipage}[t]{0.485\hsize}
        \centering
        \includegraphics[keepaspectratio, scale=0.65]{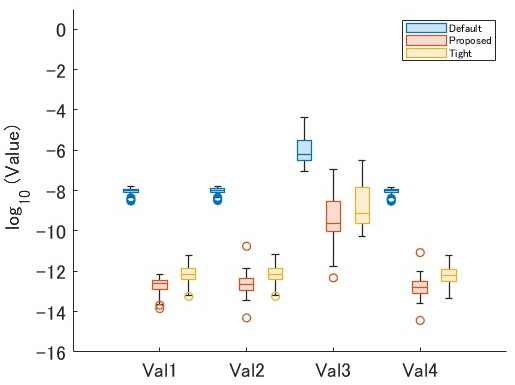}
	\subcaption{The results on clean-20-10}
        \label{fig: Mosek-clean20}
      \end{minipage} \\

      \begin{minipage}[t]{0.485\hsize}
        \centering
        \includegraphics[keepaspectratio, scale=0.65]{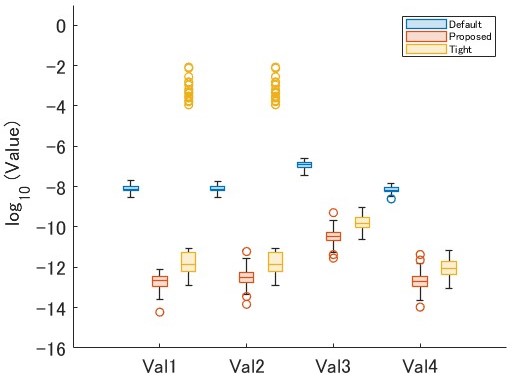}
	\subcaption{The results on messy-10-10}
        \label{fig: Mosek-messy10}
      \end{minipage} \hspace{0.5cm}&\hspace{0.5cm}
      \begin{minipage}[t]{0.485\hsize}
        \centering
        \includegraphics[keepaspectratio, scale=0.65]{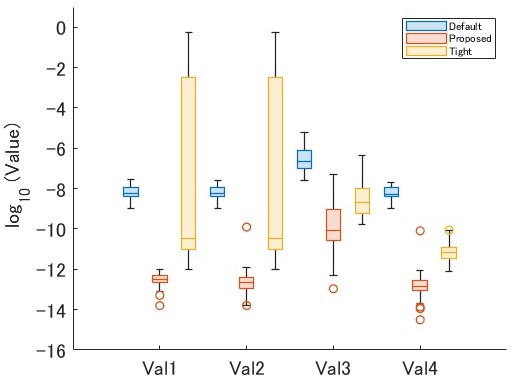}
	\subcaption{The results on messy-20-10}
        \label{fig: Mosek-messy20}
      \end{minipage}
   \end{tabular}
\caption{Numreical results for the weakly infeasible instances of \cite{Liu2018} with Mosek}
\label{fig: Mosek-weakinf}
\end{figure}

\begin{figure}[htbp]
\hspace{-1cm}
    \begin{tabular}{lr}
      \begin{minipage}[t]{0.485\hsize}
        \centering
        \includegraphics[keepaspectratio, scale=0.65]{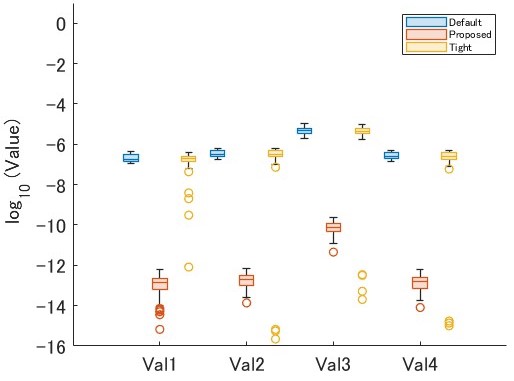}
        \subcaption{The results on clean-10-10}
        \label{fig: SDPA-clean10}
      \end{minipage} \hspace{0.5cm}&\hspace{0.5cm}
      \begin{minipage}[t]{0.485\hsize}
        \centering
        \includegraphics[keepaspectratio, scale=0.65]{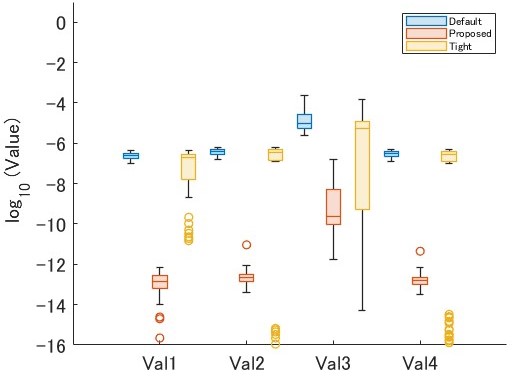}
	\subcaption{The results on clean-20-10}
        \label{fig: SDPA-clean20}
      \end{minipage} \\

      \begin{minipage}[t]{0.485\hsize}
        \centering
        \includegraphics[keepaspectratio, scale=0.65]{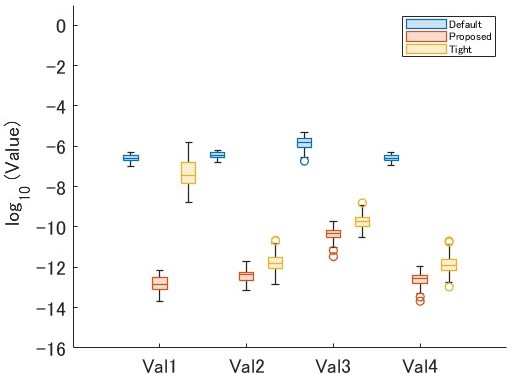}
	\subcaption{The results on messy-10-10}
        \label{fig: SDPA-messy10}
      \end{minipage} \hspace{0.5cm}&\hspace{0.5cm}
      \begin{minipage}[t]{0.485\hsize}
        \centering
        \includegraphics[keepaspectratio, scale=0.65]{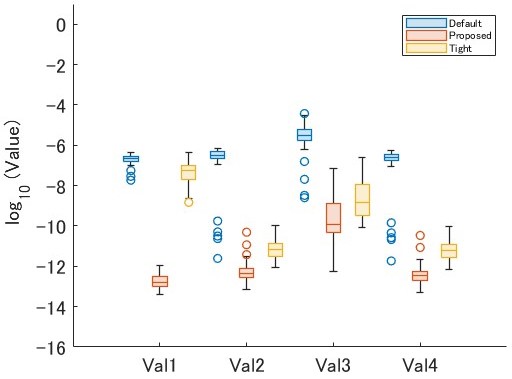}
	\subcaption{The results on messy-20-10}
        \label{fig: SDPA-messy20}
      \end{minipage}
   \end{tabular}
\caption{Numreical results for the weakly infeasible instances of \cite{Liu2018} with SDPA}
\label{fig: SDPA-weakinf}
\end{figure}

\begin{figure}[htbp]
\hspace{-1cm}
    \begin{tabular}{lr}
      \begin{minipage}[t]{0.485\hsize}
        \centering
        \includegraphics[keepaspectratio, scale=0.65]{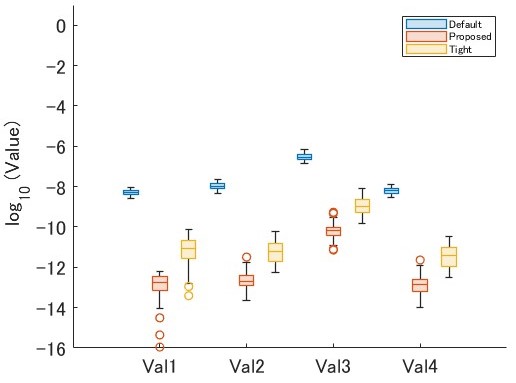}
        \subcaption{The results on clean-10-10}
        \label{fig: SDPT3-clean10}
      \end{minipage} \hspace{0.5cm}&\hspace{0.5cm}
      \begin{minipage}[t]{0.485\hsize}
        \centering
        \includegraphics[keepaspectratio, scale=0.65]{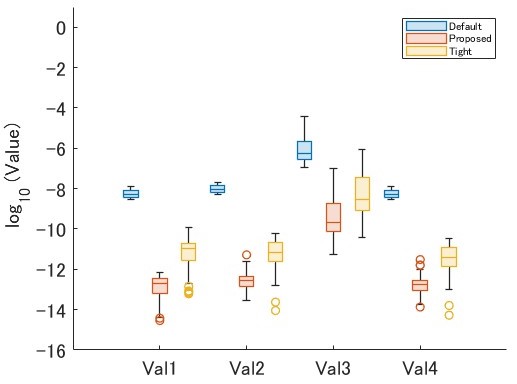}
	\subcaption{The results on clean-20-10}
        \label{fig: SDPT3-clean20}
      \end{minipage} \\

      \begin{minipage}[t]{0.485\hsize}
        \centering
        \includegraphics[keepaspectratio, scale=0.65]{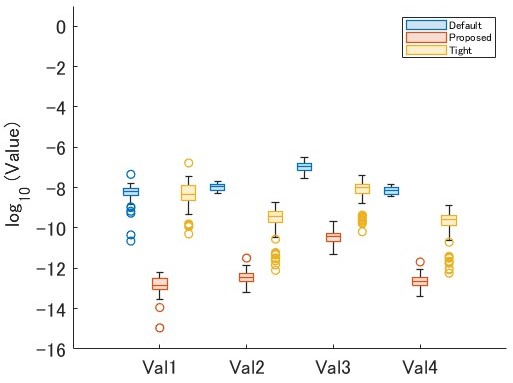}
	\subcaption{The results on messy-10-10}
        \label{fig: SDPT3-messy10}
      \end{minipage} \hspace{0.5cm}&\hspace{0.5cm}
      \begin{minipage}[t]{0.485\hsize}
        \centering
        \includegraphics[keepaspectratio, scale=0.65]{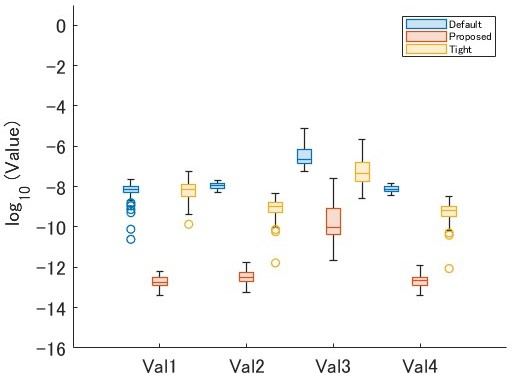}
	\subcaption{The results on messy-20-10}
        \label{fig: SDPT3-messy20}
      \end{minipage}
   \end{tabular}
\caption{Numreical results for the weakly infeasible instances of \cite{Liu2018} with SDPT3}
\label{fig: SDPT3-weakinf}
\end{figure}

\newpage

\section{Concluding remarks}
\label{sec: pp conclusion}
In this study, we proposed the algorithm for solving SCPs using projection and rescaling methods. 
Although our algorithm can solve SCPs by itself, we proposed it intending to use it as a post-processing step for the interior point method.
In addition, we proposed some techniques to make our algorithm more practical. 
We also conducted numerical experiments with SDPLIB instances and compared the accuracy of the approximate optimal solution obtained from our method with that obtained from Mosek, SDPA, and SDPT3. 
Our numerical results showed that
\begin{itemize}
\item For the well-posed group, i.e., the primal and dual problems are expected to be strongly feasible, our algorithm consistently obtained a more accurate approximate optimal solution than that returned from the solvers. 
\item For the ill-posed group, i.e., at least one of the primal or dual problems is expected not to be strongly feasible, our algorithm obtained a more accurate feasible solution than the solvers. However, our algorithm did not stably return a solution with good optimality for the ill-posed group compared to the results for the well-posed group. 
\end{itemize}
In addition, using the formulation in Proposition \ref{pro: check status p modify}, we performed feasibility status determination experiments on the ill-posed instances of SDPLIB and the weakly infeasible instances of \cite{Liu2018}.
For the ill-posed instances of SDPLIB, the solver obtained approximate optimal solutions with such high accuracy that there was no need to use our algorithm.
However, for the instances of \cite{Liu2018}, our algorithm obtained more accurate optimal values than the solvers, which implies that our algorithm can contribute to detecting the feasibility status of SDP.
The numerical results also indicated the difficulties related to the execution time of our algorithm.

To overcome this problem, we have two future directions. 
The first one is to consider an efficient computation of a projection matrix.
If the problem size is large, the computation of the projection matrix is expected to take much more time.
Thus, efficient methods of computing projection matrices will reduce the execution time of our algorithm.
For example, using an approximate projection matrix instead of an exact one might make our algorithm more practical, although the output solution may be less accurate.  
The second is to process the operations of Algorithms \ref{Practical p alg} and \ref{Practical d alg} more efficiently using parallel computation. 
Algorithms \ref{Practical p alg} and \ref{Practical d alg} choose the input value $\theta \in (LB, UB)$, call the projection and rescaling algorithm with the corresponding feasibility problem, and update $UB$ or $LB$ with $\theta$ according to the output from the projection and rescaling algorithm until $UB-LB \leq \theta_{acc}$, where $\theta_{acc}$ is an accuracy parameter specified by the user.
Therefore, if Algorithms \ref{Practical p alg} and \ref{Practical d alg} execute the projection and rescaling algorithms in parallel for multiple input values, the execution time of these algorithms can be reduced, which will reduce the execution time of Algorithm \ref{postpro alg}. 
In addition, we should consider why the projection and rescaling methods could obtain feasible solutions with higher accuracy than the solvers. 
To examine the differences between the projection and rescaling methods and interior point methods, it might be a good idea first to explore the problem structure that the projection and rescaling methods can solve more accurately than the solvers.

\section*{Acknowledgement}
This work was supported by JSPS KAKENHI Grant Numbers JP21J20875, 22K18866, 23H01633.

\bibliographystyle{plain}
%

%
%
%
%
%
%
%
%
%
%
%
%

\appendix

\section{Details of the modification in Section \ref{sec: tech2} }
\label{Appendix A}
Algorithm \ref{d update pro} computes the direction $d$ such that $b^\top d > 0$ using $y_{tmp}$ and $\bar{y}$, and then sets the value of $u$ as the upper bound on the step size $\alpha$. 
After that, Algorithm \ref{d update pro} adjust the step size $\alpha$ and compute $y := \bar{y} + \alpha d$ until $c-\mathcal{A}^*y \in \mathcal{K}$ or $b^\top (\alpha d) \leq 1$e-16 holds. 
There is room for improvement in setting the value of $u$ and choosing the step size $\alpha$. 
However, recall that Algorithms \ref{p alg} and \ref{d alg} are used in a post-processing step.
Since the approximate optimal value is known in advance, the projection and rescaling algorithms in Algorithm \ref{p alg} and \ref{d alg} will return an approximate optimal solution from the first iteration. 
Thus, we can expect $\alpha$ satisfying $c-\mathcal{A}^*(\bar{y} + \alpha d) \in \mathcal{K}$ to be small, which implies that further study on how to choose $u$ and $\alpha$ is not essential. 

\begin{algorithm}[H]
 \caption{Update procedure for dual feasible solutions}
 \label{d update pro}
 \begin{algorithmic}[1]
 \renewcommand{\algorithmicrequire}{\textbf{Input: }}
 \renewcommand{\algorithmicensure}{\textbf{Output: }}
 \renewcommand{\stop}{\textbf{stop }}
 \renewcommand{\return}{\textbf{return }}

 \STATE \algorithmicrequire $\mathcal{A}$, $b$, $c$, $\mathcal{K}$, $y_{tmp}$ and $\bar{y}$ such that $c-\mathcal{A}^*\bar{y} \in \mathcal{K}$.
 \STATE \algorithmicensure A feasible solution $(y,z)$ to (D).
 \IF {$b^\top \bar{y} = b^\top y_{tmp}$}
 \STATE $u \leftarrow 0$
 \ELSE
 \IF {$b^\top \bar{y} > b^\top y_{tmp}$}
 \STATE $d \leftarrow \bar{y} - y_{tmp}$, $u \leftarrow 5$
 \ELSE
 \STATE $d \leftarrow y_{tmp} - \bar{y}$
 \IF {$c-\mathcal{A}^*y_{tmp} \in \mathcal{K}$}
 \STATE $\bar{y} \leftarrow y_{tmp}$, $u \leftarrow 5$
 \ELSE
 \STATE $u \leftarrow 1$
 \ENDIF
 \ENDIF
 \ENDIF
 \IF {$u>0$}
 \STATE $\alpha \leftarrow u$
 \WHILE {$b^\top (\alpha d) >$1e-16}
 \STATE $y \leftarrow \bar{y} + \alpha d$
 \IF{$c-\mathcal{A}^*y \in \mathcal{K}$}
 \STATE \stop Algorithm \ref{d update pro} and \return $(y, c-\mathcal{A}^*y)$ 
 \ELSE
 \STATE $u \leftarrow \alpha$, $\alpha \in (0, u)$
 \ENDIF
 \ENDWHILE
 \STATE \return $(\bar{y}, c-\mathcal{A}^*\bar{y})$
 \ELSE
 \STATE \return $(\bar{y}, c-\mathcal{A}^*\bar{y})$
 \ENDIF

 \end{algorithmic} 
 \end{algorithm}

\section{Further discussion of the modification in Section \ref{sec: tech7}}
\label{Appendix B}
Let us consider the following simple minimization problem (Ex.B).
The feasible region of (Ex.B) is illustrated in Figure \ref{figure: Ex.1}.
The optimal solution $x^*$ is $(1,0)^\top \in \mathbb{R}^2$ and the optimal value $\theta^*$ is $1$. 
\begin{equation}
\notag
\begin{array}{cccll}
{\rm (Ex.B)} & \displaystyle \min_x \ x_1& {\rm s.t.} & \frac{1}{4} x_1 + x_2 = 1, &x = 
\begin{pmatrix}
x_1 \\
x_2
\end{pmatrix} \in \mathbb{R}^2_{++}.
\end{array}
\end{equation}

\begin{figure}[htb]
\begin{center}
\includegraphics[keepaspectratio, scale=0.325]{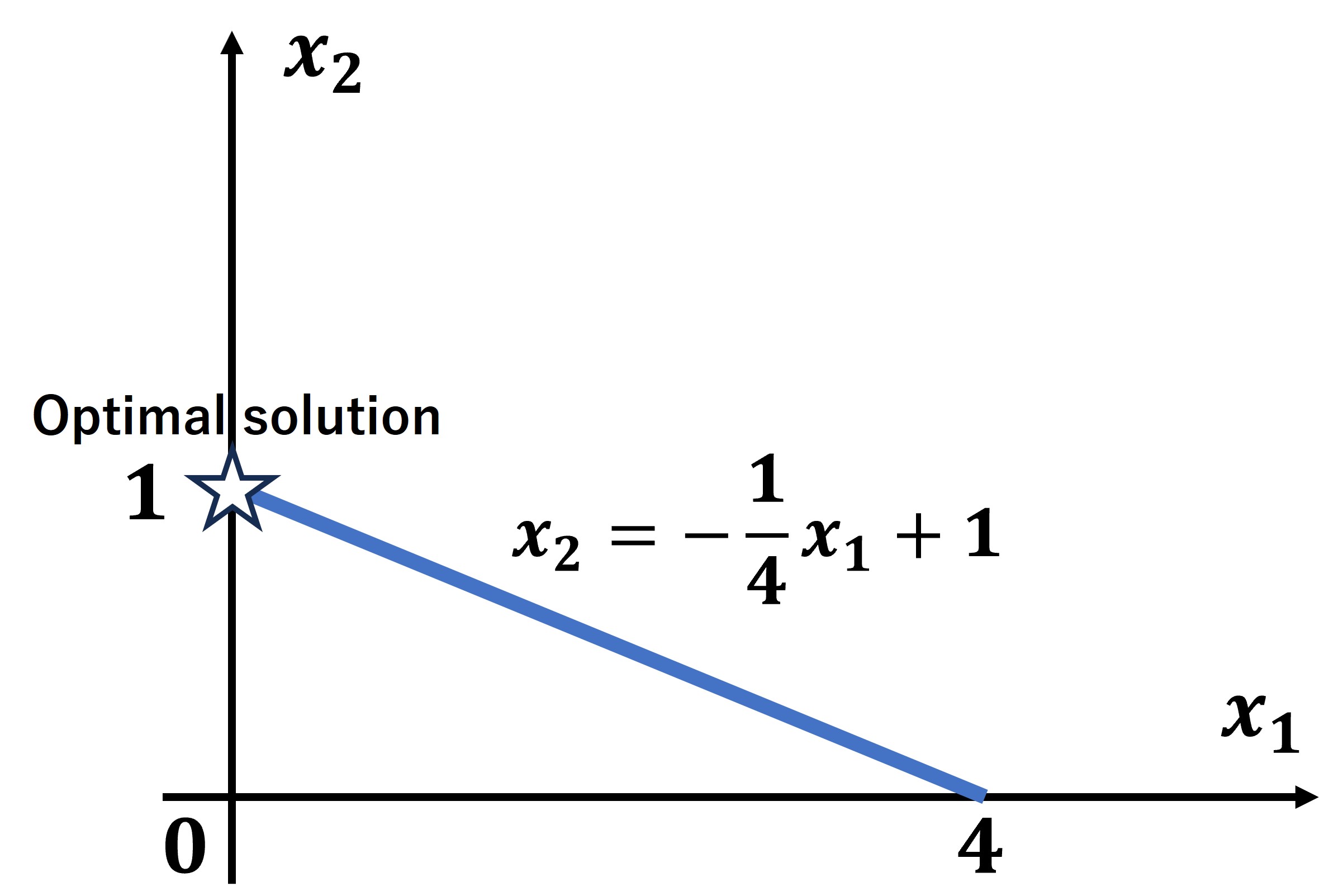}
\end{center}
\caption{Feasible region of (Ex.B)}
\label{figure: Ex.1}
\end{figure}

Using the formulation in Section \ref{sec: modeling1}, the feasibility problem $\mbox{FP}_{S_\infty} (\mbox{ker} \mathcal{A}(\theta), \mathbb{R}^4_{++})$ is obtained, where $\theta \in \mathbb{R}$ and 
\begin{equation}
\notag
\mathcal{A}(\theta) = 
\begin{pmatrix}
\frac{1}{4}	&1	&-1	&0	\\
1	&0	&-\theta	&1
\end{pmatrix}
.
\end{equation}
Here, let us define $v \in \mathbb{R}^4_{++}$ as $v := (\frac{1}{\sqrt{\alpha}}, \frac{1}{\sqrt{\beta},} 1, 1)^\top$ for any $(\alpha, \beta)^\top \in \mathbb{R}^2_{++}$ and consider the scaled feasibility problem $\mbox{FP}_{S_\infty} (\mbox{ker} \bar{\mathcal{A}} (\theta), \mathbb{R}^4_{++})$ such that $\mbox{ker} \bar{\mathcal{A}} (\theta) = Q_v (\mbox{ker} \mathcal{A} (\theta))$.
\begin{equation}
\notag
\begin{array}{llll}
\mbox{FP}_{S_\infty} (\mbox{ker} \bar{\mathcal{A}} (\theta), \mathbb{R}^4_{++}):	&\mbox{find}
\begin{pmatrix}
x_1 \\
x_2 \\
\tau \\
\rho
\end{pmatrix}
\in \mathbb{R}^4
& {\rm s.t.} &
\begin{pmatrix}
\frac{\alpha}{4}	&\beta	&-1	&0	\\
\alpha	&0	&-\theta	&1
\end{pmatrix}
\begin{pmatrix}
x_1 \\
x_2 \\
\tau \\
\rho
\end{pmatrix}
=0, \\
&&&0 < x_1 \leq 1, \ 0< x_2 \leq 1, \\
&&&0 < \tau \leq 1, \ 0 < \rho \leq 1.
\end{array}
\end{equation}
To know the value of $\delta_\infty (\mbox{ker} \bar{\mathcal{A}} (\theta) \cap \mathbb{R}^4_{++})$, let us represent the feasible region of $\mbox{FP}_{S_\infty} (\mbox{ker} \bar{\mathcal{A}} (\theta), \mathbb{R}^4_{++})$ on the $\tau$-$\rho$ plane. 
Noting that $x_1= \frac{\theta}{\alpha}\tau - \frac{1}{\alpha} \rho$ and $x_2 = \frac{4-\theta}{4\beta} \tau + \frac{1}{4\beta} \rho$ hold for any feasible solution $(x_1, x_2, \tau, \rho)$ to $\mbox{FP}_{S_\infty} (\mbox{ker} \bar{\mathcal{A}} (\theta), \mathbb{R}^4_{++})$, the feasible region of $\mbox{FP}_{S_\infty} (\mbox{ker} \bar{\mathcal{A}} (\theta), \mathbb{R}^4_{++})$ is given by
\begin{equation}
\notag
\left\{ \left( \frac{\theta}{\alpha} \tau - \frac{1}{\alpha} \rho, \frac{4-\theta}{4\beta}\tau + \frac{1}{4\beta} \rho, \tau, \rho \right) : 0 < \theta \tau - \rho \leq \alpha, 0 < (4-\theta) \tau + \rho \leq 4\beta, 0 < \tau, \rho \leq 1 \right\}.
\end{equation}
For example, if $0 < \theta < 4$, $\frac{\alpha + 4\beta}{4} < \frac{4\beta}{4 - \theta}$, $\theta \beta \leq 1$, and $\frac{\alpha + 4\beta}{4} \leq 1$ hold, the feasible region of $\mbox{FP}_{S_\infty} (\mbox{ker} \bar{\mathcal{A}} (\theta), \mathbb{R}^4_{++})$ is as shown in Figure \ref{figure: example1}. 
In this case, we can easily see that the point giving the maximum value of $\delta_\infty (\mbox{ker} \bar{\mathcal{A}} (\theta) \cap \mathbb{R}^4_{++})$ lies in the red line segment in Figure \ref{figure: example1}. 
Thus, the value of $\delta_\infty (\mbox{ker} \bar{\mathcal{A}} (\theta) \cap \mathbb{R}^4_{++})$ is represented as the optimal value of a maximization problem $\max_{\tau \in {\rm dom} T} f(\tau)$, where $f(\tau) = \frac{4}{\alpha} \tau (\tau - \beta) (4\beta - (4-\theta) \tau)$ and ${\rm dom}\ T = (\beta, \frac{\alpha+4\beta}{4}]$.
This maximization problem can be solved with derivatives and simple calculations.
The function $f(\tau)$ is obtained by substituting $x_1 = \frac{\theta}{\alpha} \tau - \frac{1}{\alpha} \rho, x_2=1$, and $\rho = 4 \beta - (4-\theta) \tau$ into $x_1 x_2 \tau \rho$. 
Next, let us consider a more complex case. 
If $0 < \theta < 4$, $\frac{\alpha + 4\beta}{4} < \frac{4\beta}{4 - \theta}$, $\theta \beta \leq 1$, and $\beta \leq 1 < \frac{\alpha + 4\beta}{4}$ hold, the feasible region of $\mbox{FP}_{S_\infty} (\mbox{ker} \bar{\mathcal{A}} (\theta), \mathbb{R}^4_{++})$ is as shown in Figure \ref{figure: example2}.
The point giving the maximum value of $\delta_\infty (\mbox{ker} \bar{\mathcal{A}} (\theta) \cap \mathbb{R}^4_{++})$ lies in the red or blue line segment in Figure \ref{figure: example2}.
Thus, the value of $\delta_\infty (\mbox{ker} \bar{\mathcal{A}} (\theta) \cap \mathbb{R}^4_{++})$ is represented as the optimal value of $\max \{ \max_{\tau \in {\rm dom} T} f(\tau), \max_{\rho \in {\rm dom} R} g(\rho)\}$, where $f(\tau) = \frac{4}{\alpha} \tau (\tau - \beta) (4\beta - (4-\theta) \tau)$, $g(\rho) = \left( \frac{\theta}{\alpha} - \frac{1}{\alpha} \rho \right) \left( \frac{4-\theta}{4\beta} + \frac{1}{4\beta} \rho \right) \rho$, ${\rm dom}\ T = (\beta, 1]$, and ${\rm dom}\ R = [ \max \{0, \theta - \alpha \}, -(4-\theta) + 4\beta]$.
The function $g(\rho)$ is obtained by substituting $x_1 = \frac{\theta}{\alpha} \tau - \frac{1}{\alpha} \rho, x_2=\frac{4-\theta}{4\beta} \tau + \frac{1}{4\beta} \rho$, and $\tau=1$ into $x_1 x_2 \tau \rho$.

\begin{figure}[htbp]
    \begin{tabular}{l}
      \begin{minipage}[t]{\linewidth}
        \includegraphics[keepaspectratio, scale=0.5]{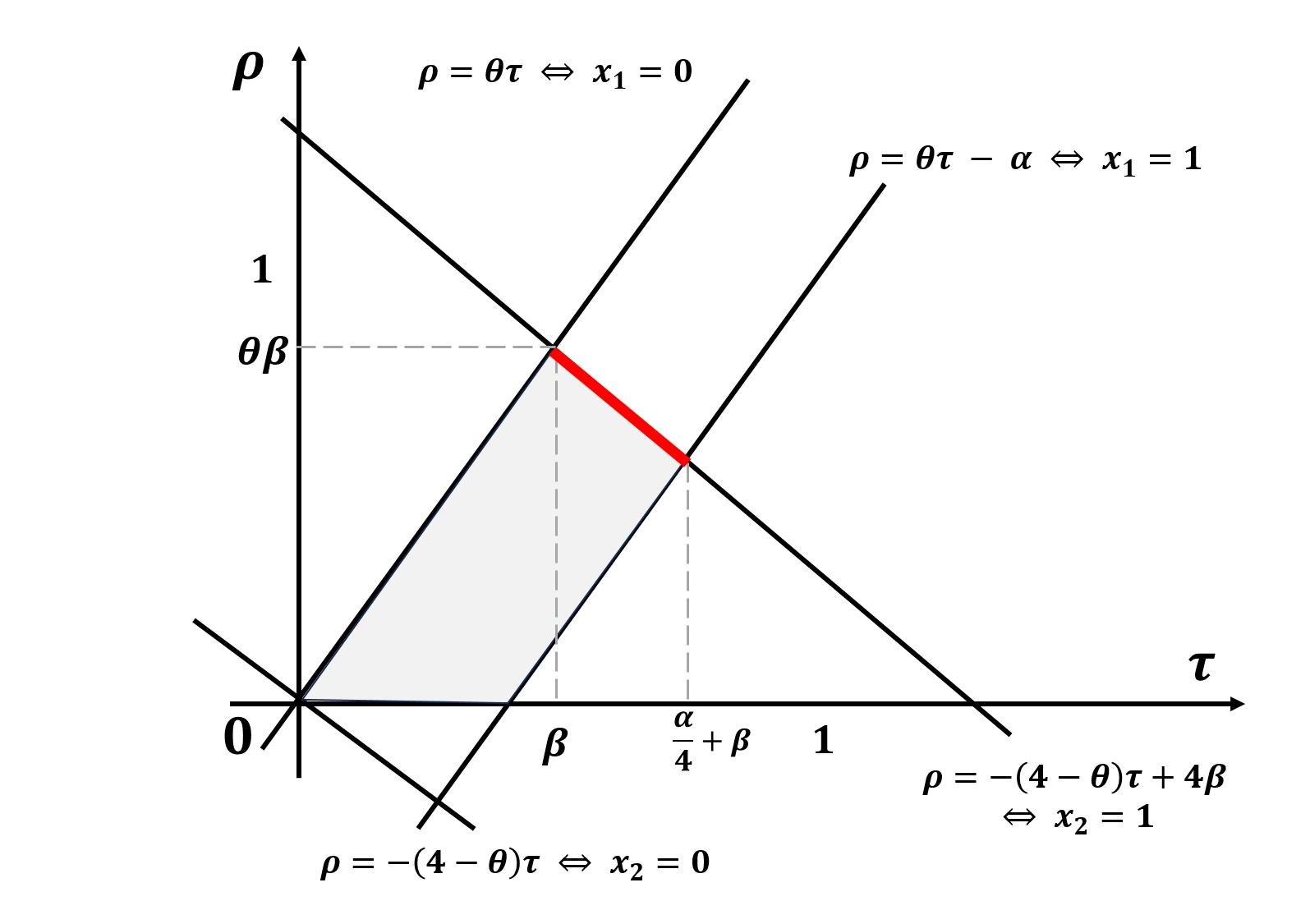}
        \subcaption{$0 < \theta < 4$, $\frac{\alpha + 4\beta}{4} < \frac{4\beta}{4 - \theta}$, $\theta \beta \leq 1$, and $\frac{\alpha + 4\beta}{4} \leq 1$}
        \label{figure: example1}
      \end{minipage} \vspace{5mm} \\
      \begin{minipage}[t]{\linewidth}
        \includegraphics[keepaspectratio, scale=0.5]{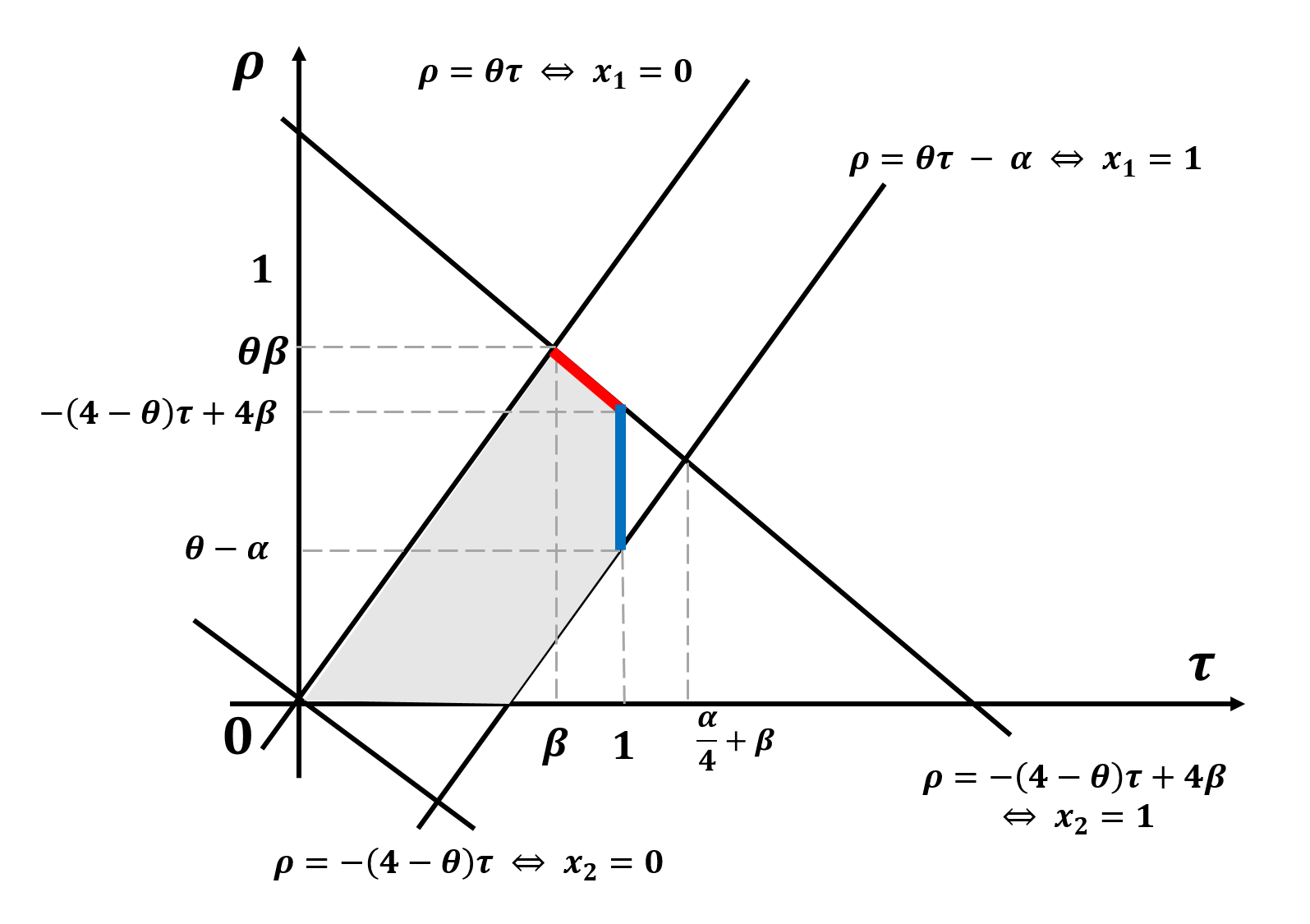}
        \subcaption{$0 < \theta < 4$, $\frac{\alpha + 4\beta}{4} < \frac{4\beta}{4 - \theta}$, $\theta \beta \leq 1$, and  $\beta \leq 1 < \frac{\alpha + 4\beta}{4}$}
	\label{figure: example2}
      \end{minipage} 
    \end{tabular}
     \caption{Feasible region of $\mbox{FP}_{S_\infty} (\mbox{ker} \bar{\mathcal{A}} (\theta), \mathbb{R}^4_{++})$}
\end{figure}

Considering the other cases in the same way, the values of $\delta_\infty (\mbox{ker} \bar{\mathcal{A}} (\theta) \cap \mathbb{R}^4_{++})$ are summarized in Tables \ref{Table: case 1}-\ref{Table: case 4}. 
The columns in these tables show the relations that hold for $\alpha, \beta$, and $\theta$. 
For example, the second column of Table \ref{Table: case 1} shows the case where $0 < \theta < 4$, $\frac{\alpha + 4\beta}{4} < \frac{4\beta}{4 - \theta}$, $\theta \beta < 1$ and $1 \leq \beta$ hold. 
The ``${\rm dom}\ T$" and ``${\rm dom}\ R$" rows show the domain of $\tau$ and $\rho$, respectively. 
The ``$\delta_\infty$" row shows the value of $\delta_\infty (\mbox{ker} \bar{\mathcal{A}} (\theta) \cap \mathbb{R}^4_{++})$.
Based on Tables \ref{Table: case 1}-\ref{Table: case 4}, we computed the value of $\delta_\infty (\mbox{ker} \bar{\mathcal{A}} (\theta) \cap \mathbb{R}^4_{++})$ by substituting specific values into $\alpha, \beta$, and $\theta$, and then checked whether Assumption \ref{assumption} holds and the point $(\alpha, \beta)^\top \in \mathbb{R}^2_{++}$ such that $\delta_\infty (\mbox{ker} \bar{\mathcal{A}} (\theta) \cap \mathbb{R}^4_{++}) \geq \delta_\infty (\mbox{ker} \mathcal{A} (\theta) \cap \mathbb{R}^4_{++})$ can likely hold. 

First, let us see whether Assumption \ref{assumption} holds using Figure \ref{figure: Assumption}.
Figure \ref{figure: Assumption} shows a heatmap of the value of $\delta_\infty (\mbox{ker} \bar{\mathcal{A}} (\theta) \cap \mathbb{R}^4_{++})$ when $\beta = 1 - \frac{\alpha}{4}$, i.e., $(\alpha, \beta)^\top$ satisfies the linear constraint of (Ex.B). 
In Figure \ref{figure: Assumption}, the horizontal axis represents the value of $\theta$, the vertical axis represents the value of $\alpha$, and the cells show the values of $\delta_\infty (\mbox{ker} \bar{\mathcal{A}} (\theta) \cap \mathbb{R}^4_{++})$ corresponding to $\theta$ and $\alpha$. 
This figure provides us with an intuitive understanding that Assumption \ref{assumption} holds. 
We confirmed Assumption \ref{assumption} holds for this example by checking the values of all cells.

Next, let us see what $\alpha$ and $\beta$ are most likely to satisfy $\delta_\infty (\mbox{ker} \bar{\mathcal{A}} (\theta) \cap \mathbb{R}^4_{++}) \geq \delta_\infty (\mbox{ker} \mathcal{A} (\theta) \cap \mbox{int } \bar{\mathcal{K}})$.
We computed the value of $\delta_\infty (\mbox{ker} \bar{\mathcal{A}} (\theta) \cap \mathbb{R}^4_{++})$ for each $\theta \in \{ 0.1, 0.2, \dots, 4.0 \}$, $\alpha \in \{ 0.1, 0.2, \dots, 4.2 \}$, and $\beta \in \{ 0.1, 0.2, \dots, 1.3 \}$, and then made Figure \ref{figure: approximate scale}. 
In each graph in Figure \ref{figure: approximate scale}, the horizontal axis represents the value of $\alpha$, the vertical axis represents the value of $\beta$, and the cells show the values of $\delta_\infty (\mbox{ker} \bar{\mathcal{A}} (\theta) \cap \mathbb{R}^4_{++})$ corresponding to $\theta, \alpha$, and $\beta$. 
We note that $\alpha = \beta = 1$ implies $\mbox{ker} \bar{\mathcal{A}} (\theta) = \mbox{ker} \mathcal{A} (\theta)$.
From these figures, we can observe the following: 
\begin{itemize}
\item When $\max \{ \theta_p, \theta - 1 \}  = \theta -1$, i.e., $\theta > 1$, the closer the point $(\alpha, \beta)^\top$ is to the interior feasible solution $(x_1, x_2)$ such that $x_1 = \theta-1$, i.e., $(\theta-1, \frac{5 - \theta}{4})^\top$ the more likely it seems that $\delta_\infty (\mbox{ker} \bar{\mathcal{A}} (\theta) \cap \mathbb{R}^4_{++}) \geq \delta_\infty (\mbox{ker} \mathcal{A} (\theta) \cap \mathbb{R}^4_{++})$ holds unless $e$ is an approximate feasible solution for (P) such that $\langle c, e \rangle \simeq \theta-1$. 
\item When $\max \{ \theta_p, \theta - 1 \}  = \theta_p$, i.e., $0 < \theta < 1$ the closer the point $(\alpha, \beta)^\top$ is to the optimal solution $(0, 1)^\top$, the more likely it seems that $\delta_\infty (\mbox{ker} \bar{\mathcal{A}} (\theta) \cap \mathbb{R}^4_{++}) \geq \delta_\infty (\mbox{ker} \mathcal{A} (\theta) \cap \mathbb{R}^4_{++})$ holds.
\end{itemize}

\begin{figure}[H]
\begin{center}
\includegraphics[keepaspectratio, scale=0.375]{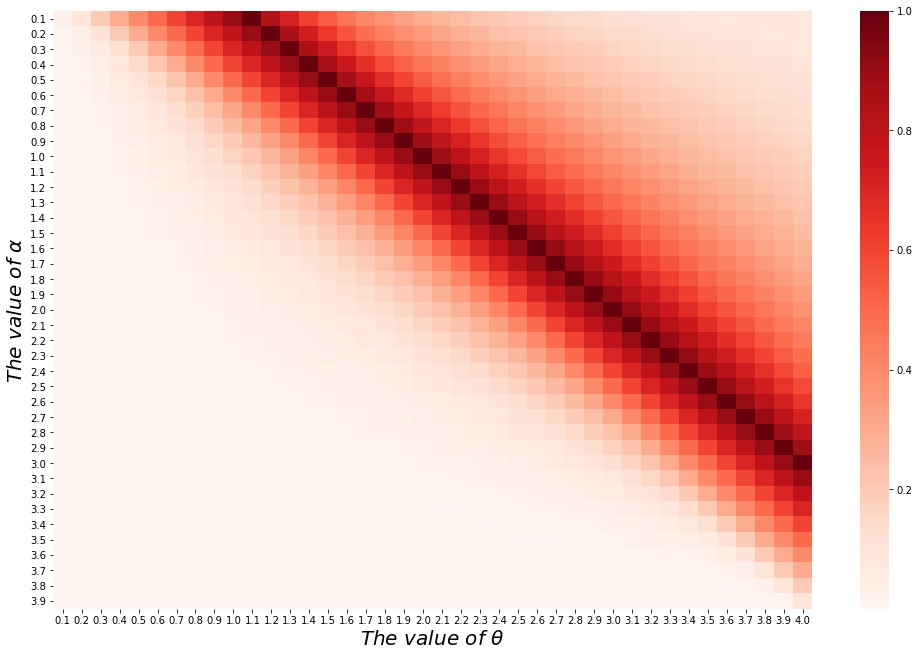}
\end{center}
\caption{Heatmap of the value of $\delta_\infty (\mbox{ker} \bar{\mathcal{A}} (\theta) \cap \mathbb{R}^4_{++})$ when $(\alpha, \beta)$ = $(\alpha, 1-\frac{\alpha}{4})$}
\label{figure: Assumption}
\end{figure}

\begin{figure}[htbp]
    \begin{tabular}{cc}
      \begin{minipage}[t]{0.5\hsize}
        \centering
        \includegraphics[keepaspectratio, scale=0.5825]{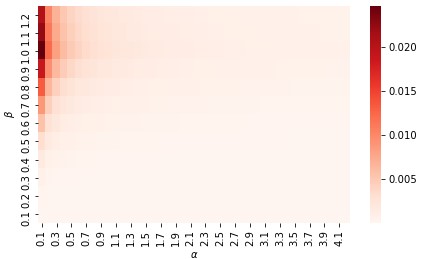}
        \subcaption{$\theta = 0.1$}
        \label{composite}
      \end{minipage} &
      \begin{minipage}[t]{0.5\hsize}
        \centering
        \includegraphics[keepaspectratio, scale=0.5825]{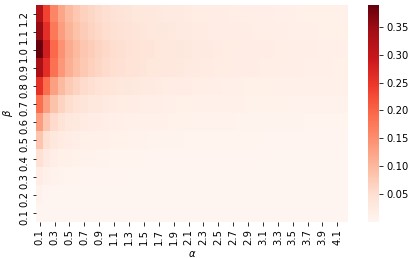}
        \subcaption{$\theta = 0.5$}
        \label{Gradation}
      \end{minipage} \\

      \begin{minipage}[t]{0.5\hsize}
        \centering
        \includegraphics[keepaspectratio, scale=0.5825]{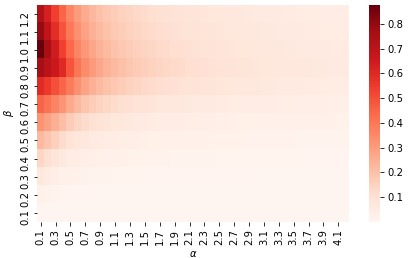}
        \subcaption{$\theta = 1$}
        \label{Gradation}
      \end{minipage} &
   
      \begin{minipage}[t]{0.5\hsize}
        \centering
        \includegraphics[keepaspectratio, scale=0.5825]{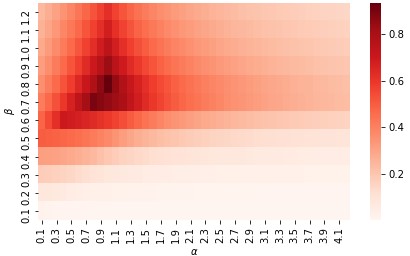}
        \subcaption{$\theta = 2$}
        \label{fill}
      \end{minipage} \\

      \begin{minipage}[t]{0.5\hsize}
        \centering
        \includegraphics[keepaspectratio, scale=0.5825]{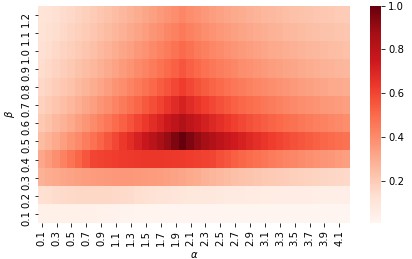}
        \subcaption{$\theta = 3$}
        \label{transform}
      \end{minipage} &
      \begin{minipage}[t]{0.5\hsize}
        \centering
        \includegraphics[keepaspectratio, scale=0.5825]{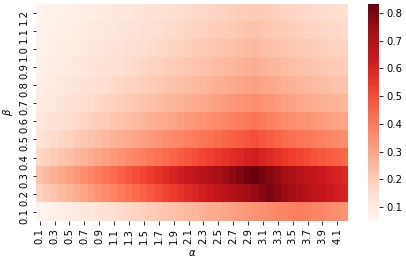}
        \subcaption{$\theta = 4$}
        \label{Gradation}
      \end{minipage} 
    \end{tabular}
     \caption{The values of $\delta_\infty (\mbox{ker} \bar{\mathcal{A}} (\theta) \cap \mathbb{R}^4_{++})$}
     \label{figure: approximate scale}
\end{figure}

\begin{landscape}
\begin{table}
\caption{The values of $\delta_\infty (\mbox{ker} \bar{\mathcal{A}} (\theta) \cap \mathbb{R}^4_{++})$ when $0 < \theta < 4$ and $\frac{\alpha + 4\beta}{4} < \frac{4\beta}{4 - \theta}$.}
\label{Table: case 1}
\begin{center}
\setlength{\tabcolsep}{5pt}
\footnotesize
\begin{tabular}{c|c|c|c|c|c|c|c|c|c|c|c|c|c} \toprule
	&\multicolumn{13}{c}{$\frac{\alpha + 4\beta}{4} < \frac{4\beta}{4 - \theta}$}	\\	\midrule
	&\multicolumn{3}{c|}{$\theta \beta < 1$}	&\multicolumn{7}{c|}{$\frac{\theta - 4}{4} \alpha + \theta \beta \leq 1 < \theta \beta$}	&\multicolumn{3}{c}{$1 < \frac{\theta - 4}{4} \alpha + \theta \beta$}	\\	\midrule
	&\multirow{3}{*}{$1\leq \beta$}	&\multirow{3}{*}{$\beta < 1 < \frac{\alpha+4\beta}{4}$}	&\multirow{3}{*}{$\frac{\alpha+4\beta}{4} < 1$}
	&\multicolumn{3}{c}{$1 \leq \beta$}	&\multicolumn{3}{c|}{$\beta < 1 \leq \frac{\alpha+4\beta}{4}$}	&\multirow{3}{*}{$\frac{\alpha+4\beta}{4} < 1$}
	&\multirow{3}{*}{$1 < \frac{1}{\theta}$}	&\multirow{3}{*}{$\frac{1}{\theta} \leq 1 < \frac{1+\alpha}{\theta}$}	&\multirow{3}{*}{$\frac{1+\alpha}{\theta} \leq 1$}
	\\	\cmidrule(lr){5-10}

	&	&	&	&\multirow{2}{*}{$1 \leq \frac{1}{\theta}$}	&\multicolumn{2}{c|}{$\frac{1}{\theta} < 1$}	&\multirow{2}{*}{$1 \leq \frac{4\beta-1}{4-\theta}$}	&\multicolumn{2}{c|}{$\frac{4\beta-1}{4-\theta} < 1$}	&	&	&	&	\\	\cmidrule(lr){6-7}	\cmidrule(lr){9-10}
	&	&	&	&	&$\theta-\alpha < 1$	&$\theta - \alpha \geq 1$	&	&$\theta - \alpha < 1$	&$\theta - \alpha \geq 1$	&	&	&	&	\\ \hline \hline

${\rm dom} T$	&	&$(\beta, 1]$	&$(\beta, \frac{\alpha+4\beta}{4}]$	&	&	&$\frac{1+\alpha}{\theta}$	&	&$\frac{1+\alpha}{\theta}$	&$[\frac{4\beta-1}{4-\theta}, 1]$	&$[\frac{4\beta-1}{4-\theta}, \frac{\alpha + 4\beta}{4}]$	&	&	&$\frac{1+\alpha}{\theta}$	\\

${\rm dom} R$	&$[L, \theta)$	&$[L, 4\beta - (4-\theta)]$ 	&	&$[L, \theta)$ 	&$[L, 1]$ 	&$1$	&$[L, 1]$ 	&$1$	&$[L, 4\beta - (4-\theta)]$ 	&	&$[L, \theta)$ 	&$[L, 1)$ 	&$1$	\\

$\delta_\infty$	&$G$	&$FG$	&$F$	&$G$	&$G$	&$V$	&$G$	&$V$	&$FG$	&$F$	&$G$	&$G$	&$V$	\\ \bottomrule
\end{tabular}
\begin{tablenotes}
\item[a] $L := \max \{0, \theta - \alpha \}$, \ $F := \displaystyle \max_{\tau \in {\rm dom} T} f(\tau)$, \ $G := \displaystyle \max_{\rho \in {\rm dom} R} g(\rho)$, \ $FG := \max \{ \displaystyle \max_{\tau \in {\rm dom} T} f(\tau),  \displaystyle \max_{\rho \in {\rm dom} R} g(\rho)\}$, $V := \frac{1+\alpha}{4\beta \theta^2}((4-\theta)(1+\alpha)+\theta)$.
\end{tablenotes}
\end{center}
\end{table}

\begin{table}
\caption{The values of $\delta_\infty (\mbox{ker} \bar{\mathcal{A}} (\theta) \cap \mathbb{R}^4_{++})$ when $0 < \theta < 4$ and $\frac{\alpha + 4\beta}{4} \geq \frac{4\beta}{4 - \theta}$.}
\label{Table: case 2}
\begin{center}
\setlength{\tabcolsep}{5pt}
\footnotesize
\begin{tabular}{c|c|c|c|c|c|c|c} \toprule
	&\multicolumn{7}{c}{$\frac{\alpha + 4\beta}{4} \geq \frac{4\beta}{4 - \theta}$}	\\	\midrule
	&\multicolumn{3}{c|}{$\theta \beta \leq 1$}	&\multicolumn{4}{c}{$\theta \beta > 1$}	\\	\midrule
	&$1 \leq \beta$	&$\beta < 1 \leq \frac{4\beta}{4-\theta}$	&$\frac{4\beta}{4-\theta} < 1$
	&$1 \leq \frac{1}{\theta}$	&$\frac{1}{\theta} < 1 \leq \frac{4\beta - 1}{4-\theta}$	&$\frac{4\beta - 1}{4-\theta} < 1 \leq \frac{4\beta}{4 - \theta}$	&$\frac{4\beta}{4 - \theta} < 1$	\\ \hline \hline

${\rm dom} T$	&	&$(\beta, 1]$	&$(\beta, \frac{4\beta}{4-\theta}]$	&	&	&$[\frac{4\beta-1}{4-\theta}, 1]$	&$[\frac{4\beta-1}{4-\theta}, \frac{4\beta}{4-\theta}]$	\\
${\rm dom} R$	&$(0, \theta)$	&$(0, 4\beta - (4-\theta)]$	&	&$(0, \theta)$	&$(0, 1]$	&$(0, 4\beta - (4-\theta)]$	&	\\
$\delta_\infty$	&$\displaystyle \max_{\rho \in {\rm dom} R} g(\rho)$	&$\max \{ \displaystyle \max_{\tau \in {\rm dom} T} f(\tau),  \displaystyle \max_{\rho \in {\rm dom} R} g(\rho)\}$	&$\displaystyle \max_{\tau \in {\rm dom} T} f(\tau)$	&$\displaystyle \max_{\rho \in {\rm dom} R} g(\rho)$	&$\displaystyle \max_{\rho \in {\rm dom} R} g(\rho)$	&$\max \{ \displaystyle \max_{\tau \in {\rm dom} T} f(\tau),  \displaystyle \max_{\rho \in {\rm dom} R} g(\rho)\}$	&$\displaystyle \max_{\tau \in {\rm dom} T} f(\tau)$	\\	\bottomrule
\end{tabular}
\end{center}
\end{table}

\begin{table}
\caption{The values of $\delta_\infty (\mbox{ker} \bar{\mathcal{A}} (\theta) \cap \mathbb{R}^4_{++})$ when $\theta = 4$ and $\frac{\alpha}{4} \geq \beta$.}
\label{Table: case 3}
\begin{center}
\setlength{\tabcolsep}{5pt}
\footnotesize
\begin{tabular}{c|c|c|c|c|c|c|c|c} \toprule
	&\multicolumn{8}{c}{$\frac{\alpha}{4} \geq \beta$}	\\	\midrule
	&\multirow{3}{*}{$1\leq \beta$}	&\multicolumn{2}{c|}{$\beta < 1 \leq \frac{1}{4\alpha}$}	&\multicolumn{3}{c|}{$\frac{1}{4\alpha} < 1 \leq \frac{\alpha+4\beta}{4}$}	&\multicolumn{2}{c}{$\frac{\alpha+4\beta}{4} < 1$}	\\	\cmidrule(lr){3-9}
	&	&\multirow{2}{*}{$4\beta < 1$}	&\multirow{2}{*}{$1\leq 4\beta$}	&\multirow{2}{*}{$4\beta < 1$}	&\multicolumn{2}{c|}{$1 \leq 4\beta$}	&\multirow{2}{*}{$4\beta < 1$}		&\multirow{2}{*}{$1 \leq 4\beta$} \\	\cmidrule(lr){6-7}
	&	&	&	&	&$4-\alpha<1$	&$1\leq 4-\alpha$	&	&	\\ \hline \hline

${\rm dom} T$	&	&	&	&	&	&$\frac{1+\alpha}{4}$	&$\frac{\alpha+4\beta}{4}$	&$\frac{1+\alpha}{4}$	\\
${\rm dom} R$	&$(0, 1]$	&$(0, 4\beta]$	&$(0, 1]$	&$[4 - \alpha, 4\beta]$	&$[4 - \alpha, 1]$	&$1$	&$4\beta$	&$1$	\\
$\delta_\infty$	&$\displaystyle \max_{\rho \in {\rm dom} R} g(\rho)$	&$\displaystyle \max_{\rho \in {\rm dom} R} g(\rho)$	&$\displaystyle \max_{\rho \in {\rm dom} R} g(\rho)$	&$\displaystyle \max_{\rho \in {\rm dom} R} g(\rho)$	&$\displaystyle \max_{\rho \in {\rm dom} R} g(\rho)$	&$\frac{1+\alpha}{16\beta}$	&$\alpha \beta + 4\beta^2$	&$\frac{1+\alpha}{16\beta}$	\\	\bottomrule
\end{tabular}
\end{center}
\end{table}

\begin{table}
\caption{The values of $\delta_\infty (\mbox{ker} \bar{\mathcal{A}} (\theta) \cap \mathbb{R}^4_{++})$ when $\theta = 4$ and $\frac{\alpha}{4} < \beta$.}
\label{Table: case 4}
\begin{center}
\setlength{\tabcolsep}{5pt}
\footnotesize
\begin{tabular}{c|c|c|c|c|c|c|c|c} \toprule
	&\multicolumn{8}{c}{$\frac{\alpha}{4} < \beta$}	\\	\midrule
	&\multirow{2}{*}{$1\leq \frac{\alpha}{4}$}	&\multicolumn{2}{c|}{$\frac{\alpha}{4} < 1 \leq \beta$}	&\multicolumn{3}{c|}{$\beta < 1 \leq \frac{\alpha+4\beta}{4}$}	&\multicolumn{2}{c}{$\frac{\alpha+4\beta}{4}<1$}	\\	\cmidrule(lr){3-9}
	&	&$4-\alpha<1$	&$1\leq 4- \alpha$	&$4\beta < 1$	&$4-\alpha < 1 \leq 4\beta$	&$1 \leq 4-\alpha$	&$4\beta<1$	&$1 \leq 4\beta$	\\ \hline \hline
${\rm dom} T$	&	&	&$\frac{1+\alpha}{4}$	&	&	&$\frac{1+\alpha}{4}$	&$\frac{\alpha+4\beta}{4}$	&$\frac{1+\alpha}{4}$	\\
${\rm dom} R$	&$(0, 1]$	&$[4-\alpha, 1]$	&$1$	&$[4-\alpha, 4\beta]$	&$[4-\alpha, 1]$	&$1$	&$4\beta$	&$1$	\\
$\delta_\infty$	&$\displaystyle \max_{\rho \in {\rm dom} R} g(\rho)$	&$\displaystyle \max_{\rho \in {\rm dom} R} g(\rho)$	&$\frac{1+\alpha}{16\beta}$	&$\displaystyle \max_{\rho \in {\rm dom} R} g(\rho)$	&$\displaystyle \max_{\rho \in {\rm dom} R} g(\rho)$	&$\frac{1+\alpha}{16\beta}$	&$\alpha \beta + 4\beta^2$	&$\frac{1+\alpha}{16\beta}$	\\	\bottomrule
\end{tabular}
\end{center}
\end{table}

\end{landscape}

%
%
\section{Detailed settings of the proposed algorithm}
\label{Appendix C}

\subsection{Practical termination conditions}
\label{Appendix C1}
\subsubsection{Termination condition to deal with cases suffering from numerically unstable outputs}
\label{Appendix C1-1}
The projection and rescaling algorithms can return incorrect outputs due to numerical errors caused by scaling operations.
Such outputs prevent Algorithms \ref{Practical p alg} and \ref{Practical d alg} from working correctly.
Thus, we added the practical termination conditions to our algorithms.
In our numerical experiments, Algorithms \ref{Practical p alg} and \ref{Practical d alg} were set to terminate when 30 consecutive incorrect outputs were returned from the projection and rescaling method.
We defined Algorithms \ref{Practical p alg} and \ref{Practical d alg} as obtaining incorrect outputs from the projection and rescaling method if they obtain any output except the following:
\begin{itemize}
\item a reducing direction for (P) or (D),
\item an improving ray of (P) or (D),
\item vectors $(y, \gamma)$ that satisfies $\gamma > 0$ and $\lambda_{\min} (c + \mathcal{A}^* \frac{1}{\gamma} y) \geq$ -1e-4, 
\item vectors $(x,\tau,\rho)$ that satisfies $\tau > 0$, $\|\mathcal{A} \frac{1}{\tau} x - b\| \leq$ 1e-4 and $\lambda_{\min} (\frac{1}{\tau} x) \geq$ -1e-4, or
\item a certificate that there is no $\varepsilon$-feasible solution to the corresponding feasibility problem, i.e., $\mbox{FP} ( \mbox{ker} \mathcal{A}(\theta^k), \mbox{int} \bar{\mathcal{K}})$ or $\mbox{FP} (\mbox{range} {\mathcal{A}(\theta^k)}^*, \mbox{int} \bar{\mathcal{K}})$.
\end{itemize}

\subsubsection{Termination condition to deal with cases suffering from non-useful outputs}
\label{Appendix C1-2}
The projection and rescaling algorithms can return non-useful outputs for Algorithms \ref{Practical p alg} and \ref{Practical d alg}.
For example, if the projection and rescaling method called in Algorithm \ref{Practical p alg} returns a certificate that there is no $\varepsilon$-feasible solution to the input feasibility problem $\mbox{FP} ( \mbox{ker} \mathcal{A}(\theta^k), \mbox{int} \bar{\mathcal{K}})$, all Algorithm \ref{Practical p alg} can do is update the value of $LB$.
Since such output is likely to be obtained when (P) is not strongly feasible, we considered it prudent to terminate Algorithm \ref{Practical p alg} when the projection and rescaling method begin to return such outputs in succession.
Thus, in the numerical experiments, Algorithms \ref{Practical p alg} and \ref{Practical d alg} were set to terminate when the projection and rescaling algorithms proved 30 consecutive times that there was no $\varepsilon$-feasible solution to the corresponding feasibility problem. 
Note that in our experiments, Algorithms \ref{Practical p alg} and \ref{Practical d alg} did not terminate with this termination condition.

\subsection{Implementation details of the projection and rescaling algorithm}
Algorithm \ref{postpro alg} used the projection and rescaling algorithm proposed in \cite{Kanoh2023}, which employs the smooth perceptron scheme \cite{Soheili2012, Soheili2013} in the basic procedure. 
The reason for using this projection and rescaling method is that the numerical experiments in \cite{Kanoh2023} show that their method obtained accurate approximate solutions in a shorter time than the other methods. 
We set the termination parameter as $\xi = 1/4$ in the basic procedure for the same reasons stated in \cite{Kanoh2023}.
In addition, we set the accuracy parameter as $\varepsilon =$ 1e-16 in the main algorithm, which allows the projection and rescaling algorithm to obtain approximate optimal solutions for (P) and (D) near the boundaries of the cone $\mathcal{K}$.

In Algorithm \ref{postpro alg}, the projections $\mathcal{P}_{{\rm ker}\mathcal{A}(\theta)}$ and $\mathcal{P}_{{\rm range} \mathcal{A}(\theta)^*}$ were computed in the same way as described in \cite{Kanoh2023}. 
That is, we computed the projections $\mathcal{P}_{{\rm ker}\mathcal{A}(\theta)}$ and $\mathcal{P}_{{\rm range} \mathcal{A}(\theta)^*}$ using the singular value decomposition. 
Let $A \in \mathbb{R}^{m+1 \times d+2}$ be a matrix representing the linear operator $\mathcal{A}(\theta)$ and $I$ be the identity matrix.
Suppose that the singular value decomposition of a matrix $A$ is given by 
$A = U \Sigma V^\top = U
\begin{pmatrix}
\Sigma_{m+1} \ O
\end{pmatrix}
V^\top
$ where $U \in \mathbb{R}^{m+1 \times m+1}$ and $V \in \mathbb{R}^{d+2 \times d+2}$ are orthogonal matrices, and $\Sigma_{m+1} \in \mathbb{R}^{m+1 \times m+1}$ is a diagonal matrix with $m+1$ singular values on the diagonal.
Since the projections $\mathcal{P}_{{\rm ker}\mathcal{A}(\theta)}$ and $\mathcal{P}_{{\rm range} \mathcal{A}(\theta)^*}$ are given by $\mathcal{P}_{{\rm ker}\mathcal{A}(\theta)} = I - A^\top (AA^\top)^{-1} A$ and $\mathcal{P}_{{\rm range} \mathcal{A}(\theta)^*} = A^\top (AA^\top)^{-1} A$, respectively, we can compute these projections as follows: 
\begin{align*}
\mathcal{P}_{{\rm range} \mathcal{A}(\theta)^*}
&= A^\top (AA^\top)^{-1} A 	\\
&= A^\top (U \Sigma \Sigma^\top U^\top)^{-1} A \\
&= A^\top U^{-\top} (\Sigma_{m+1}^2)^{-1} U^{-1} A \\
&= V \Sigma^\top \Sigma_{m+1}^{-2} \Sigma V^\top 
= V
\begin{pmatrix}
I_{m+1} & O \\
O & O
\end{pmatrix}
V^\top = V_{:, 1:m+1} V_{:,1:m+1}^\top,
\end{align*}
and $\mathcal{P}_{{\rm ker}\mathcal{A}(\theta)} = I - \mathcal{P}_{{\rm range} \mathcal{A}(\theta)^*} = I - V_{:, 1:m+1} V_{:,1:m+1}^\top$, where $V_{:, 1:m+1}$ represents the submatrix from column $1$ to column $m+1$ of $V$.

\subsection{How to compute $(y, \gamma)$ from $(z, \omega, \kappa) \in \mbox{range} \mathcal{A}(\theta)^* \cap \bar{\mathcal{K}}$}
If a nonzero point $(z, \omega, \kappa) \in \mbox{range} \mathcal{A}(\theta)^* \cap \bar{\mathcal{K}}$ is obtained from the projection and rescaling methods in Algorithms \ref{Practical p alg} and \ref{Practical d alg}, we compute $(y, \gamma) \in \mathbb{R}^{m+1}$ such that $z = \mathcal{A}^*y + \gamma c$, $\omega = -b^\top y - \gamma \theta$ and $\kappa = \gamma$.
The matrices $U$, $\Sigma$, and $V$ described in the previous section are also used to compute such a $(y, \gamma)$. 
Suppose that $A \in \mathbb{R}^{m+1 \times d+2}$ is a matrix representation of the linear operator $\mathcal{A}(\theta)$ and $A$ is decomposed into $A = U \Sigma V^\top$ by the singular value decomposition.
Then, we find 
\begin{equation}
\notag
\begin{pmatrix}
z\\
\omega \\
\kappa
\end{pmatrix}
=
A^\top
\begin{pmatrix}
y \\
\gamma
\end{pmatrix}
= V
\begin{pmatrix}
\Sigma_{m+1} \ O
\end{pmatrix}^\top U^\top
\begin{pmatrix}
y \\
\gamma
\end{pmatrix}
.
\end{equation}
Since $U$ and $V$ are orthogonal matrices, we have
\begin{equation}
\notag
\begin{pmatrix}
y \\
\gamma
\end{pmatrix}
=
U
\begin{pmatrix}
\Sigma_{m+1}^{-1} \ O
\end{pmatrix}
V^\top
\begin{pmatrix}
z\\
\omega \\
\kappa
\end{pmatrix}
.
\end{equation}
\subsection{Modification of the basic procedure}
The basic procedure of \cite{Kanoh2023} requires a constant $\xi \in \mathbb{R}$ such that $0<\xi<1$ as an input.
This procedure finds a Jordan frame $\{c_1, c_2, \dots, c_r \}$ such that $\langle c_i, x \rangle \leq \xi$ holds for any feasible solution $x$ of the input problem $\mbox{FP}_{S_\infty} (\mathcal{L}, \mbox{int } \mathcal{K})$ and for some $i \in \{1, 2, \dots, r\}$ in at most $\frac{p^2 r_{max}^2}{\xi^2}$ iterations, where $\mathcal{K}$ is a Cartesian product of $p$ simple symmetric cones $\mathcal{K}_1, \dots, \mathcal{K}_p$, $r = \sum_{i=1}^p r_i$ is a rank of $\mathcal{K}$ and $r_{max} = \max \{ r_1, \dots, r_p \}$.
Whether $\langle c_i, x \rangle \leq \xi$ is valid is determined by calculating the upper bound $u_i$ of $\langle c_i, x \rangle$. 
If such a Jordan frame $\{c_1, c_2, \dots, c_r \}$ is obtained, the basic procedure terminates and returns the sets $C := \{c_1, c_2, \dots, c_r \}$ and $H := \{ i : u_i \leq \xi \}$ to the main algorithm.   
Then, the main algorithm scales the problem $\mbox{FP}_{S_\infty} (\mathcal{L}, \mbox{int } \mathcal{K})$ as $\mbox{FP}_{S_\infty} (Q_v (\mathcal{L}), \mbox{int } \mathcal{K})$, where $v = \frac{1}{\sqrt{\xi}}  \sum_{h \in H}  c_h + \sum_{h \notin H} c_h$.
In this case, however, the main algorithm can scale the problem more efficiently using $v = \sum_{h \in H} \frac{1}{\sqrt{u_h}} c_h + \sum_{h \notin H} c_h$, which will reduce the computational time of Algorithms \ref{Practical p alg} and \ref{Practical d alg}.
Furthermore, if the same constant $\xi$ is used for all feasibility problems, Algorithms \ref{Practical p alg} and \ref{Practical d alg} might encounter problems where the basic procedure requires many iterations, close to the maximum number of iterations $\frac{p^2 r_{max}^2}{\xi^2}$.
Therefore, we modified the projection and rescaling algorithm of \cite{Kanoh2023} called in Algorithms \ref{Practical p alg} and \ref{Practical d alg} as follows:
\begin{itemize}
\item Modification of the basic procedure
\begin{itemize}
\item Let $k$ be the number of iterations of the basic procedure.
The termination condition varies depending on the value of $k$. 
\begin{enumerate}
\item If $k \leq 100$, the basic procedure terminates when a Jordan frame  $\{c_1, c_2, \dots, c_r \}$ such that $\langle c_i, x \rangle \leq u_i \leq \xi$ holds for any feasible solution $x$ of the input problem $\mbox{FP}_{S_\infty} (\mathcal{L}, \mbox{int } \mathcal{K})$ and for some $i \in \{1, 2, \dots, r\}$ is obtained. Then, return $C := \{c_1, c_2, \dots, c_r \}$, $H := \{ i : u_i \leq \xi \}$ and $U := \{ u_i : u_i \leq \xi \}$ to the main algorithm. 
\item If $k > 100$, the basic procedure terminates when a Jordan frame  $\{c_1, c_2, \dots, c_r \}$ such that $\langle c_i, x \rangle \leq u_i < 1$ holds for any feasible solution $x$ of the input problem $\mbox{FP}_{S_\infty} (\mathcal{L}, \mbox{int } \mathcal{K})$ and for some $i \in \{1, 2, \dots, r\}$ is obtained. Then, return $C := \{c_1, c_2, \dots, c_r \}$, $H := \{ i : u_i < 1 \}$ and $U := \{ u_i : u_i < 1\}$ to the main algorithm. 
\end{enumerate}
\end{itemize}
\item Modification of the main algorithm
\begin{enumerate}
\item Suppose that the basic procedure returns the sets $C$, $H$ and $U$ at the $k$-th iteration of the main algorithm.
Then, scale the linear subspace $\mathcal{L}^k$ as $L^{k+1} \leftarrow Q_v(L^k)$, where $v = \sum_{h \in H} \frac{1}{\sqrt{u_h}} c_h + \sum_{h \notin H} c_h$. 
\end{enumerate}
\end{itemize}

\section{Computational results on SDPLIB}
The detailed results of the experiments conducted in Section \ref{sec: numerical results post-pro} are summarized in Tables \ref{Table: MosekDef-Well}-\ref{Table: ProSDPT3-ill}. 
Please refer to the respective solver's documentation for the meaning of the messages returned by each solver in Tables \ref{Table: MosekDef-Well}-\ref{Table: SDPT3Tight-ill}. 
Tables \ref{Table: ProMosek-Well}-\ref{Table: ProSDPT3-ill} summarize the results of Algorithm \ref{postpro alg}. 
The columns ``Algorithm \ref{Practical d alg}" and ``Algorithm \ref{Practical p alg}" in Tables \ref{Table: ProMosek-Well}-\ref{Table: ProSDPT3-ill} summarize the execution time of each algorithm and what termination conditions they met to finish. 
``Complete" means that the algorithm terminated because $UB-LB \leq \theta_{acc}$ was satisfied, ``Time Over" means that the algorithm terminated because its execution time exceeded 30 minutes, and ``Numerical Error" means that the algorithm terminated because the termination condition defined in Appendix \ref{Appendix C1} was met. 
\begin{landscape}
\subsection{Results of Mosek}
\begin{table}[H]
\caption{Results of Mosek using the default setting on well-posed group instances}
\label{Table: MosekDef-Well}
\begin{center}
\setlength{\tabcolsep}{5pt}
\footnotesize

\end{center}
\end{table}
\end{landscape}

\begin{landscape}
\subsection{Results of the proposed method}
\begin{table}[H]
\caption{Results of Algorithm \ref{postpro alg} using the solutions from Mosek on well-posed group instances}
\label{Table: ProMosek-Well}
\begin{center}
\setlength{\tabcolsep}{5pt}
\footnotesize

\end{center}
\end{table}
\end{landscape}

\end{document}